\newcommand{\Y}[1]{{\tiny\yng(#1)}}
\newcommand{\w}[1][]{\omega_{#1}}
\newcommand{\s}{\sigma}
\newcommand{\y}{\lambda}
\renewcommand{\a}{\alpha}
\renewcommand{\t}{\tau}
\newcommand{\sMod}{\text{--Mod}}
\newcommand{\Ind}{\text{Ind}}
\newcommand{\Res}{\text{Res}}
\newcommand{\Hom}{\text{Hom}}
\newcommand{\Stab}{\text{Stab}}
\newcommand{\End}{\text{End}}
\newcommand{\Span}{\text{Span}}
\renewcommand{\dim}{\text{dim}}
\newcommand{\rank}{\text{rank}}
\newcommand{\FIW}{\text{FI}_{\mathcal{W}}}
\newcommand{\oFIW}{\text{FI}_{\overline{\mathcal{W}}}}
\newcommand{\FI}{\text{FI}}
\newcommand{\coker}{\text{coker}}
\newcommand{\SL}{\text{SL}}
\newcommand{\Sp}{\text{Sp}}
\newcommand{\SO}{\text{SO}}
\newcommand{\cC}{\mathcal{C}}
\newcommand{\cI}{\mathcal{I}}
\newcommand{\bn}{{\bf n}}
\newcommand{\bm}{ {\bf m}}
\newcommand{\ba}{ {\bf a}}
\newcommand{\bX}{ {\bf X}}
\newcommand{\W}{\mathcal{W}}
\newcommand{\oW}{\overline{\mathcal{W}}}
\newcommand{\sh}{\sharp}
\newcommand{\C}{\mathbb{C}}
\newcommand{\Q}{\mathbb{Q}}
\newcommand{\Z}{\mathbb{Z}}
\newcommand{\G}{{\bf G}}
\newcommand{\B}{{\bf B}}
\newtheorem{thm}{Theorem}[section]
\newtheorem{prop}[thm]{Proposition}
\newtheorem{lem}[thm]{Lemma}
\newtheorem{cor}[thm]{Corollary}
\theoremstyle{definition}
\newtheorem{defn}[thm]{Definition}
\newtheorem{rem}[thm]{Remark}
\newtheorem{example}[thm]{Example}
\newtheorem{problem}[thm]{Problem}
\theoremstyle{plain}
\date{Dec 2014}
\title{$\FIW$--modules and stability criteria for representations of classical Weyl groups }
\author{Jennifer C. H. Wilson}
\begin{document}
\maketitle

\begin{abstract}

 In this paper we develop machinery for studying sequences of representations of any of the three families of classical Weyl groups, extending work of Church,  Ellenberg, Farb, and Nagpal \cite{CEF}, \cite{CEFN} on the symmetric groups $S_n$ to the signed permutation groups $B_n$ and the even-signed permutation groups $D_n$. For each family $\W_n$, we present an algebraic framework where a sequence $V_n$ of $\W_n$--representations is encoded into a single object we call an \emph{$\FIW$--module}. We prove that if an $\FIW$--module $V$ satisfies a simple \emph{finite generation} condition then the structure of the sequence is highly constrained. One consequence is that the sequence is \emph{uniformly representation stable} in the sense of Church--Farb, that is, the pattern of irreducible representations in the decomposition of each $V_n$ eventually stabilizes in a precise sense. Using the theory developed here we obtain new results about the cohomology of generalized flag varieties associated to the classical 
Weyl groups, and more generally the $r$-diagonal coinvariant algebras.
 
 We analyze the algebraic structure of the category of $\FIW$--modules, and introduce restriction and induction operations that enable us to study interactions between the three families of groups.  We use this theory to prove analogues of Murnaghan's 1938 stability theorem for Kronecker coefficients for the families $B_n$ and $D_n$. The theory of $\FIW$--modules gives a conceptual framework for stability results such as these. 
 
\end{abstract}

\setcounter{tocdepth}{2}
{\small \tableofcontents }

\section{Introduction}

Let $\W_n$ denote any of the one-parameter families of Weyl groups: the symmetric groups $S_n$, the hyperoctahedral groups (signed permutation groups) $B_n$, or the even-signed permutation groups $D_n$. In this paper we develop theory to study sequences $\{V_n\}$ of $\W_n$--representations. These Weyl groups' connections to Lie theory and realizations as finite reflection groups make such sequences prevalent in a broad range of mathematical subject areas. Our work builds on the theory of FI--modules developed by Church, Ellenberg, Farb, and Nagpal to study sequences of $S_n$--representations \cite{CEF}, \cite{CEFN}.

We prove that if a sequence of $\W_n$--representations has the structure of what we call a \emph{finitely generated $\FIW$--module} (Section \ref{SectionDefnFIWModules}), there are strong constraints on the growth of the representations $V_n$ and the pattern of irreducible $\W_n$--representations in the decomposition of $V_n$. 

To establish this finitely generated $\FIW$--module structure, it is enough to verify certain elementary compatibility and finiteness conditions on $\{V_n \}$. These conditions are often easily checked, and hold for numerous examples of sequences in geometry, algebraic topology, algebra, and combinatorics. Our work implies any such sequence of $\W_n$--representations over characteristic zero is \emph{uniformly representation stable} (defined in Section \ref{BackgroundRepStability}). In particular, in the notation of Section  \ref{BackgroundRepStability}, the multiplicity of each irreducible $\W_n$--representation $V(\y)_n$ in $V_n$ is eventually independent of $n$.

\newtheorem*{FinGenIffRepStabIntro}{Theorem \ref{FinGenIffRepStable}}
\begin{FinGenIffRepStabIntro} {\bf (Finite generation $\Longleftrightarrow$ Uniform representation stability).}
 Let $V$ be an $\FIW$--module over a field of characteristic zero. Then $\{V_n, (I_n)_* \}$ is uniformly representation stable  if and only if $V$ is finitely generated. 
\end{FinGenIffRepStabIntro}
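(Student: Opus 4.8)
The plan is to prove the two implications separately; the converse (uniform representation stability $\Rightarrow$ finite generation) is short, while the forward direction carries the weight. For the converse: if $\{V_n,(I_n)_*\}$ is uniformly representation stable then, by definition, each $V_n$ is finite-dimensional and there is an $N$ so that for all $n\ge N$ the map $(I_n)_*\colon V_n\to V_{n+1}$ is injective and $V_{n+1}$ is spanned by the $\W_{n+1}$-orbit of $(I_n)_*(V_n)$. This ``span'' condition says exactly that the sub-$\FIW$-module of $V$ generated by $\bigoplus_{n\le N}V_n$ agrees with $V$ in all degrees $\ge N$; since it also contains $V_n$ for $n\le N$, the module $V$ is generated by the finite-dimensional space $\bigoplus_{n\le N}V_n$, hence is finitely generated.

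For the forward direction, suppose $V$ is finitely generated. By definition finitely many elements $v_i\in V_{m_i}$ generate $V$ under the $\FIW$-action, which is the same as a surjection of $\FIW$-modules $\bigoplus_{i=1}^{k}M(m_i)\twoheadrightarrow V$, where $M(m)$ denotes the representable $\FIW$-module on a generator in degree $m$. It therefore suffices to prove: (a) each $M(m)$ is uniformly representation stable; (b) a finite direct sum of uniformly representation stable $\FIW$-modules is uniformly representation stable; and (c) a quotient $\FIW$-module of a uniformly representation stable $\FIW$-module is uniformly representation stable (finite-dimensionality of the pieces is automatic, since it holds for $M(m)$ and is preserved by the operations in (b) and (c)). Part (b) is routine: multiplicities add, injectivity and the span condition hold coordinatewise, and one takes the maximum of the finitely many stable ranges. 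Part (a) is an explicit branching-rule computation: one identifies $M(m)_n$ as a concrete permutation-type $\W_n$-representation --- for $S_n$ it is $k[\mathrm{Inj}([m],[n])]$, and for $B_n$, $D_n$ the evident signed analogues --- decomposes it into irreducibles using the Pieri/branching rules for $\W_n$, and checks directly that the multiplicity of each $V(\y)_n$ stabilizes and that the maps $(I_n)_*$ are eventually injective with the span property, reading off an explicit uniform range as one goes.

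The heart of the matter is (c). A quotient of an arbitrary representation-stable sequence need not be representation stable, so one must exploit that the kernel $K\subseteq\bigoplus_i M(m_i)$ is itself an $\FIW$-submodule: by the Noetherian property of $\FIW$-modules over a characteristic-zero field (which I would establish beforehand as part of the structural analysis of the category), $K$ is finitely generated, hence by (a) and (b) it is itself uniformly representation stable. Because we are in characteristic zero, the short exact sequence $0\to K_n\to\bigl(\bigoplus_i M(m_i)\bigr)_n\to V_n\to 0$ splits $\W_n$-equivariantly, so the multiplicity of $V(\y)_n$ in $V_n$ is the difference of the eventually constant multiplicities in $\bigl(\bigoplus_i M(m_i)\bigr)_n$ and in $K_n$, hence eventually constant; the span condition passes to quotients with no hypothesis; and injectivity of $(I_n)_*$ on $V_n$ in the stable range follows from injectivity on $\bigl(\bigoplus_i M(m_i)\bigr)_n$ together with the multiplicity count. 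Taking the maximum of all ranges produces a uniform stable range for $V$. I expect two points to need genuine care: first, carrying out the branching analysis of (a) uniformly across all three families --- the $D_n$ case is the awkward one, since an irreducible of $B_n$ may split or remain irreducible on restriction to $D_n$, making the bookkeeping of which $V(\y)_n$ occur more delicate; and second, keeping every step \emph{quantitative}, so the final stable range is a genuine constant independent of $n$, which is what forces one to track explicit ``weight'' and ``stability degree'' bounds for finitely generated $\FIW$-modules rather than merely qualitative finiteness.
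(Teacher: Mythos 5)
Your converse direction matches the paper's (Theorem \ref{RepStabilityImpliesFinGen}): the surjectivity criterion plus finite-dimensionality of the $V_n$ gives a generating set in degrees $\leq N$. The problem is in your forward direction, specifically step (c). You write that the kernel $K \subseteq \bigoplus_i M(m_i)$ is finitely generated by the Noetherian property, ``hence by (a) and (b) it is itself uniformly representation stable.'' But (a) and (b) only establish uniform representation stability for the free modules $M(m)$ and their finite direct sums; $K$ is merely a finitely generated submodule of a free module, not a direct sum of $M(m_i)$'s, so to conclude that $K$ is uniformly representation stable you are invoking precisely the implication ``finitely generated $\Rightarrow$ uniformly representation stable'' that you are trying to prove. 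Iterating your own scheme (present $K$ as a quotient of a free module, pass to its kernel, \dots) produces an infinite regress with no decreasing parameter, so as written the argument is circular. This is exactly the difficulty the paper's machinery is built to avoid: instead of subtracting multiplicities of a kernel, it introduces the coinvariants functor $\Phi_a$ and the notions of injectivity/surjectivity degree, shows $M_{\W}(\bm)$ has injectivity degree $0$ and surjectivity degree $m$ (Proposition \ref{StabilityDegreeM(m)}), controls kernels and cokernels of maps of $\FIW$--modules at the level of $\Phi_a$ (Proposition \ref{StabilityKernelCokernel}), deduces a stability degree bound $\max(g,r)$ for a finitely presented module (Lemma \ref{FinGenImpliesStabilityDeg}, with finite presentation supplied by Noetherianity), and then converts stability of coinvariants plus the weight bound of Theorem \ref{WnDiagramSizes} into uniform multiplicity stability via the lemma that such representations are determined by their coinvariants (Lemma \ref{CoinvariantsDetermineRep}, Theorem \ref{StabilityDegreeImpliesRepStability}). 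Your injectivity claim for the quotient (``follows from injectivity on the free module together with the multiplicity count'') is also only sketched and is nontrivial; in the paper it is handled by the vanishing criterion for coinvariants (Corollary \ref{Coinvariants0Vanishes}).

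A secondary divergence: you plan to carry out the branching analysis directly in type D, which you correctly flag as delicate. The paper deliberately does not do this; it defines $V(\y)_n$ in type D as a restriction from $B_n$, proves $V_n \cong (\Res_{D}^{BC}\Ind_{D}^{BC} V)_n$ for $n$ beyond the generation degree (Proposition \ref{VisResIndV}, via the Kan-extension induction), and transfers the $\FI_{BC}$ result. If you repair (c), you should either adopt this induction/restriction route for $D_n$ or be prepared to prove the type-D analogues of the branching and ``determined by coinvariants'' lemmas yourself, which the paper avoids precisely because the $D_n$ branching rules (split representations, unordered pairs of partitions) do not cooperate.
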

 See Section \ref{SectionFinGenRepStability} for bounds on the stable ranges.

In a sequel \cite{FIW2} we will show that if a sequence of $\W_n$--representations over characteristic zero has the structure of a finitely generated $\FIW$--module, the characters have, for $n$ large, a stable and very special form: they are given by a \emph{character polynomial}, a polynomial in the signed cycle counting functions, which is independent of $n$. We will show moreover that, given a finitely generated $\FIW$--module $V$ over any field, the dimensions of the representations $V_n$ are \emph{eventually polynomial} in $n$.

In Section \ref{SectionCoinvariantAlgebras} we give applications to the diagonal coinvariant algebras $\cC^{(r)}(n)$ associated to $S_n$, $B_n$, and $D_n$. For $r=1$ these are the cohomology algebras of the associated generalized flag varieties. In type A, these results recover work of Church, Ellenberg, Farb, and Nagpal \cite[Theorems 3.4]{CEF}, \cite[Theorem 1.9]{CEFN}.

\newtheorem*{CoinvariantApplicationIntro}{ Corollaries \ref{CoinvariantAlgRepStable}, \ref{CoinvariantAlgCharPoly}, and \ref{CoinvariantAlgPolyDim}}
\begin{CoinvariantApplicationIntro}
Let $\cC^{(r)}(n)$ denote the $r$-diagonal coinvariant algebra associated to the Weyl groups $\W_n$ with coefficients in a field $k$.
 \begin{itemize}
  \item If $k$ has characteristic zero, each graded piece $\cC^{(r)}_J(n)$ is uniformly multiplicity stable. 
    \item If $k$ has characteristic zero, the characters of the graded piece $\cC^{(r)}_J(n)$ are eventually equal to a character polynomial of degree at most  $|J|$.
    \item Over an arbitrary field $k$, the dimensions of the graded pieces $\cC^{(r)}_J(n)$ are (for $n$ large) equal to a polynomial in $n$.
 \end{itemize}

\end{CoinvariantApplicationIntro}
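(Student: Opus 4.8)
The plan is to realize the assignment $n\mapsto\cC^{(r)}(n)$ as a graded co-$\FIW$-algebra, to show that the dual $\FIW$-module of each graded piece $\cC^{(r)}_J$ is finitely generated of weight at most $|J|$, and then to read the three conclusions off the general theory: uniform multiplicity stability from Theorem~\ref{FinGenIffRepStable}, and the character-polynomial and eventual-polynomiality-of-dimensions statements from the results announced in the introduction (applied to a finitely generated $\FIW$-module over a field of characteristic zero, respectively over an arbitrary field), with the degree bound $|J|$ coming from the weight bound.

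To set up the structure, write $k[X_{[n]}]$ for the polynomial ring on the $rn$ variables $x^{(a)}_i$ ($1\le a\le r$, $i\in[n]$), graded by multidegree in $\N^r$ and carrying the diagonal $\W_n$-action, so that $\cC^{(r)}(n)=k[X_{[n]}]/\I_n$ where $\I_n$ is the ideal generated by the positive-degree diagonal $\W_n$-invariants. To a morphism of $\FIW$ from $[n]$ to $[m]$ — an injection $f\colon[n]\hookrightarrow[m]$ carrying the relevant signs — I would attach the $k$-algebra homomorphism $k[X_{[m]}]\to k[X_{[n]}]$ sending $x^{(a)}_j\mapsto \pm x^{(a)}_{f^{-1}(j)}$ for $j\in f([n])$ and $x^{(a)}_j\mapsto 0$ otherwise, the sign depending only on $f$. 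A routine check shows this is functorial, degree-preserving, and equivariant for the induced inclusion $\W_n\hookrightarrow\W_m$; hence it carries a positive-degree $\W_m$-invariant — which is in particular $\W_n$-invariant — to a positive-degree $\W_n$-invariant or to $0$, and therefore sends $\I_m$ into $\I_n$. Thus $n\mapsto\cC^{(r)}(n)$ becomes a quotient co-$\FIW$-algebra with residual $\W_n$-action, each $\cC^{(r)}_J$ becomes a co-$\FIW$-module, and $k[X_{[n]}]_J\twoheadrightarrow\cC^{(r)}_J(n)$ is a surjection of co-$\FIW$-modules. No explicit description of the diagonal invariant ring is used — which matters because for $r\ge 2$ it is not a polynomial algebra — and the only content beyond the type $A$ treatment of \cite{CEF,CEFN} is the bookkeeping of the signs for $B_n$, and the correspondingly adapted category for $D_n$.

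Then I would dualize. The $\FIW$-module $(k[X]_J)^\vee$ has, at $[n]$, the basis dual to the multidegree-$J$ monomials in $x^{(\bullet)}_{[n]}$, and the structure maps carry dual basis vectors to signed dual basis vectors; since a monomial of multidegree $J$ involves at most $|J|=j_1+\dots+j_r$ of the indices $1,\dots,n$, every such dual basis vector at $[m]$ is the image of one at $[s]$ for some $s\le|J|$. Hence $(k[X]_J)^\vee$ is generated in degrees $\le|J|$, so it is finitely generated and of weight $\le|J|$. Its submodule $(\cC^{(r)}_J)^\vee$ is then finitely generated, by the Noetherian property of the category of $\FIW$-modules, and it inherits weight $\le|J|$. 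Over a field of characteristic zero the $\W_n$-representations are self-dual, so $\cC^{(r)}_J(n)\cong(\cC^{(r)}_J)^\vee(n)$ as $\W_n$-representations and the two sequences have the same multiplicities and characters: Theorem~\ref{FinGenIffRepStable} applied to $(\cC^{(r)}_J)^\vee$ shows the multiplicities of the $\W_n$-irreducibles in $\cC^{(r)}_J(n)$ are eventually constant, uniformly in the irreducible (the first statement), and the character-polynomial result gives eventual agreement with a character polynomial of degree at most the weight, hence at most $|J|$ (the second). Over an arbitrary field $\dim_k\cC^{(r)}_J(n)=\dim_k(\cC^{(r)}_J)^\vee(n)$, so, since $(\cC^{(r)}_J)^\vee$ is finitely generated, these dimensions are eventually polynomial in $n$ (the third).

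The delicate step will be the verification that $\I$ is a sub-co-$\FIW$-module — equivalently that the structure maps are $\W_n$-equivariant with the signs for $B_n$, and the conventions for $D_n$, fixed consistently; granting that, together with the Noetherianity of $\FIW$-modules and the general consequences of finite generation recalled above, the corollaries follow formally. When $r=1$ and the characteristic is zero there is an alternative that avoids working with co-$\FIW$-modules: identify $\cC^{(1)}(n)$ with the space of $\W_n$-harmonic polynomials inside $k[X_{[n]}]$ and check that this subspace is preserved by the covariant structure $x^{(a)}_i\mapsto\pm x^{(a)}_{f(i)}$ (using that the defining invariant differential operators are generated by ones pulled back from smaller index sets), exhibiting $\cC^{(1)}_J$ directly as a finitely generated $\FIW$-module — but this route is special to the cohomological case and does not yield the characteristic-free dimension statement.
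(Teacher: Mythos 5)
Your proposal is correct and follows essentially the same route as the paper: realize $\cC^{(r)}$ as a quotient co--$\FIW$--algebra by checking that the ideals $\cI_n$ are preserved by the projection maps, dualize, deduce finite generation of $(\cC^{(r)}_J)^\vee$ from the Noetherian property inside the dual of the ambient polynomial algebra with the weight/generation bound $\leq |J|$, and then invoke self-duality of $\W_n$--representations together with Theorem \ref{FinGenIffRepStable} and the character-polynomial and dimension results of \cite{FIW2}. The only cosmetic difference is that the paper packages the finite-type and weight bounds via Proposition \ref{CoModulesGenerateFiniteType} (the dual algebra is generated in degree one) and additivity of weight under tensor products, whereas you argue directly on monomial supports in $(k[\bX^{(r)}]_J)^\vee$; the content is the same.
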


The set of $\FIW$--modules has a rich algebraic structure. $\FIW$--modules in many ways resemble modules over a ring: there are natural notions of $\FIW$--module maps with quotients, kernels, and cokernels defined pointwise.  We prove in Section \ref{SectionNoetherian} that $\FIW$--modules are Noetherian:
\newtheorem*{NoetherianIntro}{Theorem \ref{Noetherian}}
\begin{NoetherianIntro} {\bf ($\FIW$--modules are Noetherian).} Let $k$ be a Noetherian ring. Then any sub--$\FIW$--module of a finitely generated $\FIW$--module over $k$ is itself finitely generated.
\end{NoetherianIntro}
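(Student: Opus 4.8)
The plan is to mirror the strategy used by Church--Ellenberg--Farb--Nagpal in \cite{CEFN} for ordinary $\FI$--modules, which in turn runs parallel to the classical proof that finitely generated modules over a Noetherian ring are Noetherian. The key external input is the Noetherianity of the category $\FI$ itself (equivalently, of $\FI$--modules over a Noetherian ring), which one wants to leverage rather than reprove. So the first step is to set up a comparison between $\FIW$ and $\FI$. Concretely, there is a forgetful-type functor relating $\FIW$--modules to $\FI$--modules (or a product of such), obtained from the map of categories sending a finite set with the relevant $\W$--structure to its underlying finite set; alternatively one exhibits $\FIW$ as a category built from $\FI$ by a wreath-product or ``decorated'' construction. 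The point is that a finitely generated $\FIW$--module, when restricted along this functor, becomes a finitely generated $\FI$--module (up to bounding generation degree, which the functor controls).

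\medskip

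The second step is the reduction: given a finitely generated $\FIW$--module $M$ over the Noetherian ring $k$ and a sub--$\FIW$--module $N \subseteq M$, I would pass to the underlying $\FI$--modules $\Res M$ and $\Res N$, where $\Res N \subseteq \Res M$. By the already-known Noetherianity of $\FI$--modules over a Noetherian ring (the analogue of Theorem \ref{Noetherian} in type A, which we may cite), $\Res N$ is a finitely generated $\FI$--module. Now one must transfer finite generation back across the functor: a set of $\FI$--module generators for $\Res N$, living in degrees $\le d$ say, also generates $N$ as an $\FIW$--module, because the $\FIW$--morphisms include all the ``underlying'' $\FI$--morphisms (the pure, unsigned injections) and these already suffice to propagate a generating set up through all $\W_n$ once one also uses that each $N_n$ is a $\W_n$--subrepresentation of $M_n$ and $\W_n$ acts on the whole picture. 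In fact the cleanest formulation: the restriction functor is faithful and detects finite generation, i.e. an $\FIW$--module is finitely generated if and only if its underlying $\FI$--module is; granting this, the theorem is immediate from the type-A case.

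\medskip

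So the real content is the lemma that $\Res$ reflects finite generation, and that is the step I expect to be the main obstacle. One direction (finitely generated $\FIW$--module has finitely generated underlying $\FI$--module) should follow from the fact that $\W_n$ is finite, so generation of $M_n$ over the span of images of lower $M_m$'s under $\FIW$--maps differs from generation under $\FI$--maps only by the finite group action, which can be absorbed by enlarging the generating set by a bounded factor. The other direction (if $\Res M$ is finitely generated then so is $M$) is the subtle one, because a priori $\FIW$ has \emph{more} morphisms than $\FI$, so $\FIW$--submodules are a priori harder to generate, not easier --- one needs that the extra morphisms (the sign changes, and in type $D$ the parity constraint) do not create submodules that fail to be finitely generated even though their $\FI$--restriction is. I would handle this by an explicit generation argument: take $\FI$--generators $x_1, \dots, x_r$ of $\Res N$ in degrees $\le d$; I claim they generate $N$ over $\FIW$. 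Given $v \in N_n$, finite generation over $\FI$ writes $v$ as a $k$-combination of images $f_*(x_i)$ with $f$ an unsigned injection; since these $f$ are in particular $\FIW$--morphisms, $v$ lies in the $\FIW$--span of the $x_i$. Thus no enlargement is even needed in this direction, and the apparent asymmetry dissolves. The one genuine check is that the type-$D$ category $\oFIW$ still admits enough unsigned morphisms for this to run --- the parity condition restricts which sign patterns are allowed but does not remove the plain inclusions, so the argument survives, perhaps after passing to $\FIW$ of type $B$ as an intermediate step and using that a $D_n$--submodule generates a $B_n$--submodule of controlled degree.
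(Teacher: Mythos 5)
Your strategy is the same as the paper's: restrict along $\FI_A \subseteq \FIW$, show that restriction carries finitely generated $\FIW$--modules to finitely generated $\FI_A$--modules, invoke the Noetherian property for $\FI$--modules over a Noetherian ring from \cite{CEFN}, and observe that an $\FI_A$--generating set for the sub--$\FIW$--module is automatically an $\FIW$--generating set because every $\FI_A$--morphism is an $\FIW$--morphism; this is exactly the proof of Theorem \ref{Noetherian}. The one place your argument is thinner than the paper's is the spot you yourself flagged, type D, and your stated reason there is not the right one. The content of the key lemma in types A and B/C is not merely that the plain inclusions are present, but that every morphism in $\Hom_{\FI_{BC}}(\bm,\bn)$ factors as an unsigned permutation in $S_n$, composed with $I_{m,n}$, composed with an element of $B_m$, which is what lets all sign changes be absorbed into a finite ($\W_m$--orbit) enlargement of the generating set at the source. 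In type D this factorization fails in degree $m$: a morphism $\bm \to \bn$ with $m<n$ may reverse an odd number of signs, while $\End_{\FI_D}(\bm)\cong D_m$ reverses only evenly many, so ``the plain inclusions survive'' does not by itself give generation of $\Res^D_A V$ in degree $\leq m$. The paper's repair (Proposition \ref{RestrictionPreservesFinGen}(\ref{W'ntoSn})) is a degree shift: factor $f=\sigma\circ I_{m+1,n}\circ g$ with $\sigma\in S_n$ and $g\in\Hom_{\FI_D}(\bm,{\bf (m+1)})$, so the restriction is generated in degree $\leq m+1$ --- still finite, which is all the Noetherian argument requires (your alternative hedge of detouring through type B/C can also be made to work, but is more roundabout). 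With that one correction your proof coincides with the paper's.
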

There are direct sum and tensor product operations on $\FIW$--modules, which we analyze in Section \ref{SectionFIAlgebras}. In Sections \ref{Section:Restriction} and \ref{Section:Induction} we develop restriction and induction operations between sequences of the different families of Weyl groups, using the category-theoretic concept of a Kan extension. This algebraic structure provides a conceptual framework and an assortment of tools for analyzing sequences of $\W_n$--representations. 

Results of this $\FIW$--modules theory include an analogue of Murnaghan's 1938 stability theorem for Kronecker coefficients \cite{MurnaghanKronecker} for the hyperoctahedral group $B_n$ and even-signed permutation group $D_n$, which we prove in Section \ref{SectionFIAlgebras}. These are stated here using notation for rational irreducible $B_n$ and $D_n$--representations defined in Section \ref{BackgroundRepStability}.

\newtheorem*{MurnBC}{Theorem \ref{MurnaghanWn}}
\begin{MurnBC}{\bf (Murnaghan's stability theorem for $B_n$).} For any pair of double partitions $\y= (\y^+, \y^-)$ and $\mu = (\mu^+, \mu^-)$, there exist nonnegative integers $g^{\nu}_{\y, \mu}$, independent of $n$, such that for all $n$ sufficiently large:
\begin{equation}\tag{\ref{Eqn:MurnaghanBn}} V(\y)_n \otimes V(\mu)_n = \bigoplus_{\nu}  g^{\nu}_{\y, \mu} V(\nu)_n. \end{equation} 
The coefficients $g^{\nu}_{\y, \mu}$ are nonzero for only finitely many double partitions $\nu$.
\end{MurnBC}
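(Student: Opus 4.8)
The plan is to adapt the argument of Church, Ellenberg, Farb (and Nagpal) for $S_n$: rather than realizing the irreducible $V(\lambda)_n$ directly as the value of an $\FIW$--module (which is delicate, since $V(\lambda)$ is not in any obvious way an $\FIW$--module), I will work only with the induced $\FIW$--modules $M(\lambda)$ and their tensor products --- all genuine finitely generated $\FIW$--modules --- and then pass to the representation ring by Möbius inversion. Recall that for a double partition $\lambda$ with $|\lambda| = m$ the module $M(\lambda)$ is generated in degree $\le m$ and has $M(\lambda)_n = \Ind_{B_m\times B_{n-m}}^{B_n}(V_\lambda\boxtimes\mathbf 1)$. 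The first step is to record its decomposition into irreducibles: inducing against the trivial representation $\mathbf 1$ of $B_{n-m}$ leaves the negative constituent of $\lambda$ unchanged and adjoins a horizontal strip of size $n-m$ to the padded positive constituent (the type--$B$ Pieri rule), so that in the stable labelling, for all $n$ large,
\[
M(\lambda)_n \;=\; \bigoplus_{\mu} A_{\lambda\mu}\, V(\mu)_n ,
\]
where the $A_{\lambda\mu}$ are nonnegative integers independent of $n$, $A_{\lambda\lambda} = 1$, and $A_{\lambda\mu} = 0$ unless $\mu\subseteq\lambda$ (componentwise containment of the two constituent partitions). Thus $A = (A_{\lambda\mu})$ is unitriangular for the order by $\subseteq$ (or, refined to a total order, by $|\mu|$); it is therefore invertible over $\Z$, its inverse $B = A^{-1}$ has $B_{\lambda\mu}\in\Z$ with $B_{\lambda\mu} = 0$ unless $\mu\subseteq\lambda$, and for each $\lambda$ only finitely many $B_{\lambda\mu}$ are nonzero. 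Consequently, in the representation ring of $B_n$ and for all $n$ sufficiently large,
\[
V(\lambda)_n \;=\; \sum_{\alpha\subseteq\lambda} B_{\lambda\alpha}\, M(\alpha)_n ,
\]
an identity of virtual characters whose coefficients do not depend on $n$.

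Next I invoke the algebra of $\FIW$--modules. For any double partitions $\alpha,\beta$, the tensor product $M(\alpha)\otimes M(\beta)$ is a finitely generated $\FIW$--module, generated in degree at most $|\alpha| + |\beta|$ (Section \ref{SectionFIAlgebras}). Over a field of characteristic zero, Theorem \ref{FinGenIffRepStable} then shows that $M(\alpha)\otimes M(\beta)$ is uniformly representation stable: there are nonnegative integers $e^{\nu}_{\alpha\beta}$, independent of $n$, with $(M(\alpha)\otimes M(\beta))_n = \bigoplus_\nu e^{\nu}_{\alpha\beta}\, V(\nu)_n$ for $n$ large. Moreover a finitely generated $\FIW$--module generated in degree $\le d$ is a quotient of a finite direct sum of modules $M(k)$ with $k\le d$, and every irreducible $V(\nu)_n$ occurring in $M(k)_n$ has $|\nu|\le k$; hence $e^{\nu}_{\alpha\beta} = 0$ unless $|\nu|\le|\alpha| + |\beta|$.

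Combining the two steps finishes the proof. Since $(M(\alpha)\otimes M(\beta))_n = M(\alpha)_n\otimes M(\beta)_n$, substituting the expansions of $V(\lambda)_n$ and $V(\mu)_n$ from the first step gives, for all $n$ sufficiently large,
\[
V(\lambda)_n\otimes V(\mu)_n \;=\; \sum_{\alpha\subseteq\lambda,\ \beta\subseteq\mu} B_{\lambda\alpha}B_{\mu\beta}\,\bigl(M(\alpha)\otimes M(\beta)\bigr)_n \;=\; \bigoplus_{\nu} g^{\nu}_{\lambda,\mu}\, V(\nu)_n ,
\]
where $g^{\nu}_{\lambda,\mu} := \sum_{\alpha\subseteq\lambda,\ \beta\subseteq\mu} B_{\lambda\alpha}B_{\mu\beta}\,e^{\nu}_{\alpha\beta}$ is an integer independent of $n$. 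It is nonzero for only finitely many $\nu$, since the sum over $(\alpha,\beta)$ is finite by the first step and each $e^{\nu}_{\alpha\beta}$ vanishes unless $|\nu|\le|\alpha| + |\beta|\le|\lambda| + |\mu|$; and $g^{\nu}_{\lambda,\mu}\ge 0$ because it is the eventual value of the genuine multiplicity $[V(\lambda)_n\otimes V(\mu)_n : V(\nu)_n]$. This is precisely (\ref{Eqn:MurnaghanBn}).

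I expect the main obstacle to be the triangular decomposition of $M(\lambda)_n$ in the first step: this is where the representation theory of $B_n$ genuinely enters, and one must carry out the branching computation carefully enough to obtain true unitriangularity --- so that $A$ is invertible over $\Z$ with finitely supported inverse rows, which is what makes each $g^{\nu}_{\lambda,\mu}$ a finite sum --- rather than merely identifying a leading term. Everything else is formal, relying only on the finite generation of tensor products of $\FIW$--modules and on Theorem \ref{FinGenIffRepStable}. The identical argument, with the $D_n$ branching rule in place of the $B_n$ one, gives Murnaghan's stability theorem for the even-signed permutation groups $D_n$.
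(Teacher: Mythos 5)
Your proof is correct, but it takes a different route from the paper's. The paper's own argument is a two-step appeal to machinery it has already built: Definition \ref{Defn:FIModuleV(y)} and Proposition \ref{FIModuleV(y)} realize the sequence $\{V(\lambda)_n\}$ itself as a finitely generated $\FI_{BC}$--module $V(\lambda) = \t_{\geq |\lambda|}M_{BC}(\lambda)$ (the weight-truncation functor $\t_{\geq d}$ of Section 4.5, whose sub--$\FI_{BC}$--module property rests on the same branching rules you use), so that $V(\lambda)\otimes V(\mu)$ is finitely generated by Proposition \ref{TensorsPreserveFinGen} and uniformly representation stable by Theorem \ref{FinGenImpliesRepStability}, which is the statement. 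You instead avoid constructing $V(\lambda)$ as an $\FIW$--module and perform a unitriangular inversion of the Pieri matrix (Equation (\ref{Eqn:WnInducedDecomp})) to express $V(\lambda)_n$ as a finite $\Z$--combination of the $M_{BC}(\alpha)_n$ with $n$--independent coefficients, then apply stability only to the honest finitely generated modules $M_{BC}(\alpha)\otimes M_{BC}(\beta)$; the triangularity you flag as the main obstacle is in fact already supplied by the paper's Section 2.1.2 (the Pieri expansion is multiplicity-free with leading term $V(\lambda)_n$), so your step 1 is sound, as are the weight bound $|\nu|\leq|\alpha|+|\beta|$ (Theorem \ref{WnDiagramSizes}) and the nonnegativity argument via eventual genuine multiplicities. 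What the paper's route buys is brevity and an explicit stable range coming from the generation degree of $V(\lambda)\otimes V(\mu)$, plus a construction ($V(\lambda)$ as an $\FI_{BC}$--module) that is reused elsewhere; what your route buys is independence from the $\t_{\geq d}$ construction, at the cost of virtual-character bookkeeping and a less explicit onset of stability. One small divergence in your closing remark: for $D_n$ the paper does not rerun the argument with type-D branching rules (which are subtler); it simply restricts the stable $B_n$--decomposition to $D_n$ to obtain Corollary \ref{MurnaghanDn}, and you could do the same.
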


Theorem \ref{MurnaghanWn} implies the following:

\newtheorem*{MurnD}{Corollary \ref{MurnaghanDn}}
\begin{MurnD}{\bf (Murnaghan's stability theorem for $D_n$).} With double partitions $\y= (\y^+, \y^-)$ and $\mu = (\mu^+, \mu^-)$ as above, for all $n$ sufficiently large the tensor product of the $D_n$--representations $V(\y)_n \otimes V(\mu)_n$ has a stable decomposition:
$$ V(\y)_n \otimes V(\mu)_n = \bigoplus_{\nu}  g^{\nu}_{\y, \mu} V(\nu)_n $$ 
where $g^{\nu}_{\y, \mu}$ are the structure constants of Equation (\ref{Eqn:MurnaghanBn}).
\end{MurnD}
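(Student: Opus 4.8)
The plan is to deduce the $D_n$ statement from the $B_n$ statement (Theorem \ref{MurnaghanWn}) by the standard device of restricting representations along the index-two inclusion $D_n \hookrightarrow B_n$. First I would record the branching behavior: every irreducible $B_n$--representation $V(\y)_n$, upon restriction to $D_n$, either remains irreducible or splits as a sum of two non-isomorphic irreducibles (the latter happening exactly when $\y = (\y^+, \y^-)$ has $\y^+ = \y^-$, the ``degenerate'' case), and conversely every irreducible $D_n$--representation arises this way. Dually, an irreducible $D_n$--representation induces up to $B_n$ either as a single irreducible or as a sum of two, in the mirror pattern. The key point is that for $n$ large, the double partitions $\y$ and $\mu$ appearing in the statement are fixed and hence lie well outside the degenerate range (their components are unequal once $n$ exceeds $|\y^+| + |\y^-|$ forces padding of the first row), so $V(\y)_n$ and $V(\mu)_n$ are \emph{irreducible} as $D_n$--representations and coincide with the restrictions of the corresponding $B_n$--representations.

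Granting that, the argument is: tensor product commutes with restriction, so as $D_n$--representations
\[
V(\y)_n \otimes V(\mu)_n \;=\; \Res^{B_n}_{D_n}\bigl(V(\y)_n \otimes V(\mu)_n\bigr) \;=\; \bigoplus_{\nu} g^{\nu}_{\y,\mu}\, \Res^{B_n}_{D_n} V(\nu)_n,
\]
using Equation (\ref{Eqn:MurnaghanBn}) for $n$ large. It then remains to rewrite each $\Res^{B_n}_{D_n} V(\nu)_n$ in terms of irreducible $D_n$--representations. For the finitely many $\nu$ with $g^{\nu}_{\y,\mu} \ne 0$, once $n$ is large enough all of them are non-degenerate, so each restriction is a single irreducible $D_n$--representation $V(\nu)_n$, and one obtains precisely the claimed formula with the same structure constants. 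The finiteness of the support is inherited directly from Theorem \ref{MurnaghanWn}.

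The main obstacle — really the only subtlety — is bookkeeping around the degenerate double partitions and the identification of $D_n$--irreducibles with $B_n$--restrictions in the stable range. One must check that the indexing of rational irreducible $D_n$--representations adopted in Section \ref{BackgroundRepStability} is exactly the one for which $V(\nu)_n^{D_n} = \Res^{B_n}_{D_n} V(\nu)_n$ for all $n$ past the relevant threshold, and confirm that the (finite) set of $\nu$ with nonzero coefficient eventually avoids the degenerate locus — which is immediate since that locus depends only on $|\nu|$ being bounded while the stabilization forces the first rows to grow. I would also remark that the two representations $V(\y)_n$ and $V(\mu)_n$ themselves must be non-degenerate for the restriction-tensoring step to produce irreducibles on the left-hand side, which again holds for $n$ large. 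No deeper input is needed: the corollary is genuinely a formal consequence of the $B_n$ theorem together with Clifford theory for the index-two subgroup.
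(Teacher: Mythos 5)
Your proposal is correct and takes essentially the same route as the paper: restrict both sides of Equation (\ref{Eqn:MurnaghanBn}) to $D_n$, noting that restriction commutes with tensor products. The Clifford-theoretic bookkeeping about degenerate double partitions is harmless but not needed, because the paper's notation $V(\nu)_n$ for $D_n$--representations is \emph{defined} as $\Res^{B_n}_{D_n} V(\nu)_n$ (and is allowed to be the split sum $V_{\{\nu^-,+\}}\oplus V_{\{\nu^-,-\}}$), so no identification of the restrictions with irreducible $D_n$--representations is required.
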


In the context of $\FIW$--module theory, these stability results follow easily from a structural property of $\FIW$--modules: tensor products of finitely generated $\FIW$--modules are themselves finitely generated $\FIW$--modules. 

Many aspects of the theory of $\FIW$--modules parallels the work \cite{CEF} and \cite{CEFN}. We encounter several additional challenges, however, particularly in type D. Section \ref{SectionEarlierWork} summarizes the relationship to recent work and new phenomena in this paper. 

\subsection{ $\FIW$--modules and finite generation} \label{SectionDefnFIWModules}

We will now define our central concepts, $\FIW$--modules and finite generation.

\begin{defn}{\bf (The Category $\FIW$).} \label{DefnFIW} Let $\W_n$ denote the Weyl group in type A$_{n-1}$, B$_{n}$/C$_n$, or D$_{n}$, and accordingly let $\FIW$ denote the category $\FI_A$, $\FI_{BC}$, or $\FI_D$, as shown in the table below. \\

\resizebox{\textwidth}{!}{
\noindent {\footnotesize
\begin{tabular}{|p{1.2cm}|p{3.2cm}|p{7.6cm}|}
\hline Category & Objects &  Morphisms \\ \hline && \\ 
 $\FI_{BC}$ & $\bn = \{ \pm 1, \pm 2, \ldots, \pm n \}$& $\{$ injections  $f:\bm \to \bn \text{  $\; | \; f(-a) = -f(a) \; \; \forall$ $a \in \bm$} \}$ \\ 
 & ${\bf 0} = \varnothing $ & \qquad $\End(\bn) \cong B_n$ \\ && \\
$\FI_D$ &$\bn = \{ \pm 1, \pm 2, \ldots, \pm n \}$ \newline ${\bf 0} = \varnothing$  &  $\{$ injections $f:\bm \to \bn \; |\; f(-a) = -f(a)$ $\; \; \forall$ $a \in \bm$;  \newline  isomorphisms must reverse an even number of signs $\}$ \\ 
&& \qquad $\End(\bn) \cong D_n$ \\  && \\
$\FI_A$  & $\bn = \{ \pm 1, \pm 2, \ldots, \pm n \}$ \newline ${\bf 0} = \varnothing $  &  $\{$ injections $f:\bm \to \bn \; | \;  f(-a) = -f(a)$ $\; \; \forall$ $a \in \bm$;  \newline  $f$ preserves signs$\}$ \indent $ \newline \qquad \End(\bn) \cong S_n$ \\ && \\ \hline
\end{tabular}
} }
\quad \\

\noindent In each case, the objects of $\FIW$ are indexed by the natural numbers $\Z_{\geq 0}$; we will write these objects in boldface throughout the paper. The endomorphisms $\End(\bn)$ are isomorphic to the group $\W_n$, and the morphisms are generated by $\End(\bn)$ and the natural inclusions $I_{n}: \bn \hookrightarrow {\bf (n+1) }$. The category $\FI_A$ is equivalent to the category $\FI$ defined by Church--Ellenberg--Farb \cite{CEF} as the category of {\bf  F}inite sets and {\bf  I}njective maps. There are inclusions of categories $\FI_A \hookrightarrow \FI_D \hookrightarrow \FI_{BC}$. \end{defn}

\begin{defn} {\bf ($\FIW$--module).} \label{DefnFIWModule} Let $\FIW$ denote $\FI_A$, $\FI_{BC}$, or $\FI_D$, and accordingly let $\W_n$ denote $S_n$, $B_n$, or $D_n$. We define an \emph{$\FIW$--module} $V$ over a ring $k$ to be a (covariant) functor from $\FIW$ to the category of $k$--modules. We will assume $k$ is commutative and with unit. The image of an $\FIW$--module is a sequence of $\W_n$--representations $V_n := V(\bn)$ equipped with an array of maps $V_m \to V_n$ compatible with the $\W_n$--action. For $f \in \Hom_{\FIW}(\bm, \bn)$, we write $f_*$ (or simply $f$) to denote the linear map $V(f): V_m \to V_n$. 
\end{defn}
This definition of an $\FI_A$--module is equivalent to that of an $\FI$--module given by \cite{CEF}. A schematic of an $\FIW$--module is shown in Figure \ref{fig:FIWModule}. 

\begin{figure}[h!]
\begin{center}
\setlength\fboxsep{5pt}
\setlength\fboxrule{0.5pt}
\fbox{ \includegraphics[scale=3.5]{./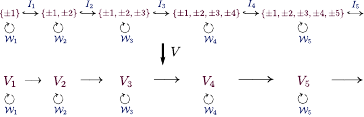}}
 \caption{ {\small An $\FIW$--module $V$} }
\label{fig:FIWModule}
\end{center}
\end{figure}

We similarly define a \emph{co--$\FIW$--module} over a ring $k$ as a functor from the dual category $\FIW^{op}$ to $k$--Modules.

\begin{defn} {\bf  (Finite generation, Degree of generation).} \label{DefnFinGen}  We say an $\FIW$--module $V$ is \emph{finitely generated} if there is a finite set of elements of $\coprod_{n=0}^{\infty} V_n$ that are not contained in any proper sub--$\FIW$--module. The images of these elements under the $\FIW$ morphisms span each $k[\W_n]$--module  $V_n$. We say $V$ is finitely generated in \emph{degree $\leq d$} if it has a finite generating set $\{ v_i \}$ with $v_i \in V_{m_i}$,  $m_i \leq d$ for each $i$. \end{defn} 
 
 \begin{example} \label{Example:PolyAlg} {\bf (Some finitely and infinitely generated $\FIW$--modules).} For a basic example to illustrate Definition \ref{DefnFinGen}, let $V_n := k[x_1, \ldots, x_n]$ be the polynomial ring on $n$ variables $x_i$ with the obvious inclusions $V_{n-1} \hookrightarrow V_n$. The group $\W_n$ acts on $V_n$ by permuting and (for $D_n$ or $B_n$) negating the variables. The $\FIW$--module formed by the spaces $V_n$ is infinitely generated, but for each integer $d \geq 0$ the subspaces of homogeneous degree-$d$ polynomials $k[x_1, \ldots, x_n]_{(d)}$ form a sub--$\FIW$--module finitely generated in degree $\leq d$. Figure \ref{fig:FinGenExample} shows a finite generating set for the $\FIW$--module of homogeneous degree-$2$ polynomials. \end{example}
 \begin{figure}[h!]
\begin{center}
\setlength\fboxsep{5pt}
\setlength\fboxrule{0.5pt}
\fbox{ \includegraphics[scale=4]{./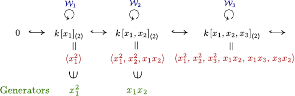}}
 \caption{ {\small The finitely generated $\FIW$--module $k[x_1, \ldots, x_n]_{(2)}$}  }
\label{fig:FinGenExample}
\end{center}
\end{figure}

The property of being finitely generated is easy to verify in many applications, but has strong implications for the structure of the underlying sequence of $\W_n$--representations. 

\subsection{Representation stability of finitely generated $\FIW$--modules} 

Prior to their work with Ellenberg on $\FI$--modules, Church and Farb defined and developed the theory of \emph{representation stability} for families of groups $G_n$ including $S_n$ and $B_n$ \cite{RepStability}. For a sequence $V_n$ of rational $G_n$--representations to be representation stable, the multiplicities of the irreducible constituents $V(\y)_n$ of $V_n$ must eventually be constant in $n$; crucial to this definition is the appropriate classification of irreducible $G_n$--representations $V(\y)_n$ as functions of $n$. We describe these definitions in more detail in Section \ref{BackgroundRepStability}, where we also introduce a definition of representation stability for sequences of $D_n$--representations. 

It is shown in \cite[Theorem 1.14]{CEF} that, for sequences of $S_n$--representations with the structure of an $\FI$--module, finite generation is equivalent to uniform representation stability. We prove this phenomenon holds more generally:

\newtheorem*{FinGenRepStab}{Theorems \ref{FinGenImpliesRepStability} and \ref{RepStabilityImpliesFinGen}}
\begin{FinGenRepStab} {\bf ($\FIW$--modules are uniformly representation stable iff they are finitely generated).}
Suppose that $k$ is a field of characteristic zero, and $\W_n$ is $S_n$, $D_n$, or $B_n$. Let $V$ be a finitely generated $\FI_{\W}$--module. Take $d$ to be an upper bound on the weight of $V$, $g$ an upper bound on its degree of generation, and $r$  an upper bound on its relation degree.  Then $\{V_n \}$ is uniformly representation stable with respect to the maps induced by the natural inclusions $I_n: \bn \to {\bf (n+1)}$, stabilizing once $n \geq \max(g,r)+d$; when $\W_n$ is $D_n$ and $d=0$ we need the additional condition that $n \geq g+1$. 

Suppose conversely that $V$ is an $\FIW$--module, and that $\{V_n, (I_n)_* \}$ is uniformly representation stable for $n \geq N$. Then $V$ is finitely generated in degree $\leq N$. 
\end{FinGenRepStab}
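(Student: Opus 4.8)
The two implications are of quite different character, so I would prove them separately, and the backward direction first, since it is short and uses none of the structure theory of $\FIW$--modules. Assume $\{V_n,(I_n)_*\}$ is uniformly representation stable for $n \geq N$. As a representation stable sequence consists of finite--dimensional $\W_n$--representations, $W := \bigoplus_{n\leq N} V_n$ is finite dimensional; let $V'\subseteq V$ be the sub--$\FIW$--module it generates. Then $V'_n = V_n$ for $n\leq N$, and for $n>N$ the surjectivity clause of representation stability says the span of the $\W_n$--orbit of $(I_{n-1})_*(V_{n-1})$ is all of $V_n$; since $(I_{n-1})_*$ is an $\FIW$--morphism and $\W_n=\End_{\FIW}(\bn)$, this forces $V_n\subseteq V'_n$ whenever $V_{n-1}\subseteq V'_{n-1}$. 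Induction on $n$ gives $V'=V$, so any basis of $W$ is a finite generating set in degrees $\leq N$.

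For the forward direction I would follow the template of \cite{CEF}, in two stages, while tracking ranges. \emph{Stage one: representable modules.} For each $m$, the representable $M(\bm)$ has $M(\bm)_n = k[\Hom_{\FIW}(\bm,\bn)]$, and $\Hom_{\FIW}(\bm,\bn)$ is a transitive $\W_n$--set: the standard inclusion $\bm\hookrightarrow\bn$ has stabilizer the copy of $\W_{n-m}$ on the coordinates $\{\pm(m{+}1),\dots,\pm n\}$ --- in type D one must check that the relevant subgroup really is $D_{n-m}$ and that the $D_n$--action is still transitive. Hence $M(\bm)_n\cong\Ind^{\W_n}_{\W_{n-m}}k$, so by Frobenius reciprocity the multiplicity of $V(\y)_n$ in $M(\bm)_n$ is that of the trivial $\W_{n-m}$--representation in $\Res^{\W_n}_{\W_{n-m}}V(\y)_n$, computed by iterating the classical branching rules for $S_n$, $B_n$, $D_n$ (removing a box from a constituent partition at each step). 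I would use this to show that for $n$ large relative to $m$ (range of order $n\geq 2m$) the multiplicities, the $\W_n$--equivariant decompositions, and the maps induced by $I_n$ all stabilize in the precise sense of uniform representation stability.

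\emph{Stage two: general finitely generated modules.} By the Noetherian theorem (Theorem \ref{Noetherian}) a finitely generated $V$ admits a finite presentation
\[ \bigoplus_{j} M(\bl_j)\ \xrightarrow{\ \phi\ }\ \bigoplus_{i} M(\bm_i)\ \longrightarrow\ V\ \longrightarrow\ 0, \]
with $m_i\leq g$ and $\ell_j\leq r$, so $V_n$ is the cokernel of a $\W_n$--equivariant map between the uniformly representation stable sequences of Stage one. I would then verify the three defining conditions of uniform representation stability for $V$ directly: surjectivity is inherited verbatim from $\bigoplus_i M(\bm_i)$; injectivity of $(I_n)_*$ on $V_n$ follows from that on the representables together with commutativity of the defining squares; and multiplicity stability follows because $\phi$, being an $\FIW$--map, is compatible with the $(I_n)_*$, so once $n$ is past the stable ranges of both the target ($\sim g$) and the source ($\sim r$) the $\W_n$--decomposition of $\coker\phi_n$ is constant. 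The additive $d$ in the threshold $n\geq\max(g,r)+d$ appears because a constituent $V(\y)_n$ only enters the Church--Farb stable range once $n\geq|\y|+\y_1$, and finite generation in degree $\leq g$ bounds $|\y|$ (so some such $d\le g$ exists). The type D correction $n\geq g+1$ when $d=0$ comes from the exceptional behaviour of $D_1,D_2,D_3$ together with the splitting of certain $B_n$--irreducibles on restriction to $D_n$, which I would handle by transporting the type BC statement across $\FI_D\hookrightarrow\FI_{BC}$.

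\emph{Main obstacle.} The crux is Stage one: pinning down the stable decomposition of $M(\bm)_n=\Ind^{\W_n}_{\W_{n-m}}k$ \emph{uniformly} in the indexing (double) partition, and carrying this out in type D despite the split irreducibles and the more intricate branching rule. Given that, the backward direction, the presentation argument, and the range bookkeeping are comparatively formal.
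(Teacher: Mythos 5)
Your backward direction is correct and is essentially the paper's argument. The forward direction, however, has a genuine gap at exactly the step you dismiss as bookkeeping: passing uniform representation stability to the cokernel of $\phi\colon\bigoplus_j M(\bl_j)\to\bigoplus_i M(\bm_i)$. Knowing that the multiplicities in source and target stabilize, and that $\phi$ commutes with the $(I_n)_*$, does not determine the multiplicities in $\coker\phi_n$: the multiplicity of $V(\y)_n$ there is the multiplicity in the target minus the rank of the induced map on $\y$--isotypic multiplicity spaces, and showing those ranks are eventually constant is the heart of the theorem, not a formal consequence. (Your injectivity claim has the same defect: quotients of modules with injective structure maps need not have injective structure maps, e.g.\ $H_0(V)^{\FIW}$--type quotients, so "injectivity is inherited from the representables" is not an argument.) The paper resolves this by working not with multiplicities but with the coinvariants functors $\Phi_a(V)_n=(V_{n+a})_{\W_n}$, which are exact in characteristic zero: it proves $M_{\W}(\bm)$ has injectivity degree $0$ and surjectivity degree $m$ (Proposition \ref{StabilityDegreeM(m)}), that these degrees behave controllably under kernels and cokernels (Proposition \ref{StabilityKernelCokernel}), hence a presentation with generator degree $g$ and relation degree $r$ forces stability degree $\leq\max(g,r)$ (Lemma \ref{FinGenImpliesStabilityDeg}); and then that weight $\leq d$ plus stability degree $\leq s$ gives uniform stability for $n\geq s+d$ (Theorem \ref{StabilityDegreeImpliesRepStability}), using Lemma \ref{StabilityDegreeBoundsLambda1} and the fact that $B_n$--representations of bounded weight and bounded first row are determined by their coinvariants $(V_n)_{B_{n-a}}$, $0\leq a\leq d$ (Lemma \ref{CoinvariantsDetermineRep}). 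Your plan would need to reconstruct this (or an equivalent device) before Stage two goes through; as written it assumes the conclusion at the decisive step.

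The type D part is also missing its mechanism. "Transporting the type BC statement across $\FI_D\hookrightarrow\FI_{BC}$" cannot mean restriction, which goes the wrong way, and the naive pointwise induction $\Ind_{D_n}^{B_n}V_n$ is not an $\FI_{BC}$--module (Remark \ref{NaiveInduction}). The paper's route is the left Kan extension $\Ind_D^{BC}$: it preserves generator and relation degrees (Corollary \ref{IndGenRelDegrees}, via Proposition \ref{IndMisM}), the unit map $V\to\Res_D^{BC}\Ind_D^{BC}V$ is injective (Proposition \ref{VintoResIndV}, which requires a hands-on construction of a left inverse), and $V_n\cong(\Res_D^{BC}\Ind_D^{BC}V)_n$ for $n>g$ (Proposition \ref{VisResIndV}); this last isomorphism, not low-rank accidents of $D_1,D_2,D_3$, is the source of the extra condition $n\geq g+1$ when $d=0$. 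Direct branching/multiplicity analysis in type D, which your Stage one would require, is precisely what the paper avoids because of the split irreducibles and the subtler branching rules.
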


The classification of rational irreducible $B_n$ and $D_n$--representations are described in Sections \ref{RepTheoryWn} and \ref{RepTheoryDn}, and the precise definition of $V(\y)_n$ and criteria for representation stability are given in Section \ref{BackgroundRepStability}.

\subsection{Forthcoming results} 

\noindent {\bf Character polynomials in type B/C and D. \quad} Let $k$ be a field of characteristic zero. If $V$ is finitely generated $\FIW$--module over $k$, we have shown that the sequence $\{V_n\}$ is uniformly representation stable, and in a sequel \cite{FIW2} we will show that this has strong implications for the characters $\chi_n$ of $V_n$. Specifically, we show that the sequence of characters is, for $n$ large, equal to a \emph{character polynomial} which does not depend on $n$. This was proven for symmetric groups in \cite[Theorem 2.67]{CEF}, and in \cite[Section \ref{FIW2-SectionCharPolys}]{FIW2} we extend these results to the groups $D_n$ and $B_n$.

Conjugacy classes of the hyperoctahedral group are classified by \emph{signed cycle type}, as described in Section \ref{RepTheoryWn}. Define class functions $X_r$, $Y_r$ on $\coprod_{n=0}^{\infty} B_n$ such that 
\begin{align*}
 X_r (\w) & \text{ is the number of positive $r$--cycles in $\w$,} \\
Y_r (\w) & \text{ is the number of negative $r$--cycles in $\w$.}
\end{align*}
The functions $X_r, Y_r$ are algebraically independent, and the polynomial ring $$k[X_1, Y_1, X_2, Y_2, \ldots]$$ spans the class functions on $B_n$ for each $n \geq 0$. Elements of this ring are \emph{character polynomials}. They also restrict to class functions on the subgroups  $D_n \subset B_n$. We prove the following. 

\newtheorem*{CharacterPolynomials}{Theorem \ref{FIW2-WnPersinomial} \cite{FIW2}}
\begin{CharacterPolynomials}
{\bf (Characters of finitely generated $\FI_{\W}$--modules are eventually polynomial).}
Let $k$ be a field of characteristic zero. Suppose that $V$ is a finitely generated $\FI_{BC}$--module with weight $\leq d$ and stability degree $\leq s$, or,  alternatively, suppose that $V$ is a finitely generated $\FI_D$--module with weight $\leq d$ such that $\Ind_{D}^{BC}\;V$ has stability degree $\leq s$. In either case, there is a unique polynomial $$F_V \in k[X_1, Y_1, X_2, Y_2, \ldots]$$ such that the character of $\W_n$ on $V_n$ is given by $F_V$ for all $n \geq s +d$. The polynomial $F_V$ has degree $\leq d$, with deg$(X_i)=$deg$(Y_i)=i$.
\end{CharacterPolynomials}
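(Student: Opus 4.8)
The plan is to reduce the statement about $\W_n$-characters to a combinatorial statement about counting functions, following the strategy of \cite[Theorem 2.67]{CEF} but adapted to signed cycle type. First I would recall that, because $V$ is finitely generated and $k$ has characteristic zero, the uniform representation stability theorem (Theorems \ref{FinGenImpliesRepStability}, \ref{RepStabilityImpliesFinGen}) tells us that the multiplicities of the irreducibles $V(\y)_n$ in $V_n$ stabilize once $n$ is large; moreover the relevant bound should be phrased in terms of the weight $d$ and stability degree $s$ (this is where the hypothesis on $\Ind_D^{BC} V$ in the type D case enters — one passes to the $\FI_{BC}$-module $\Ind_D^{BC} V$, which restricts back to $V$ on $D_n$, so it suffices to control characters there). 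So it is enough to prove the theorem for each irreducible sequence $V(\y)_n$ separately: show that the character of $\W_n$ on $V(\y)_n$, evaluated on an element of given signed cycle type, agrees for $n \gg 0$ with a fixed polynomial in the $X_r, Y_r$ of degree $\leq |\y|$.

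The key computation is therefore a branching/character formula for the irreducible $B_n$-representations $V(\y)_n = V(\y^+,\y^-)_n$. The plan is to use the explicit model: $V(\y^+,\y^-)_n$ is obtained by inducing from $B_{|\y^+|+|\y^-|+(n-|\y|)} $-parabolic-type subgroups the external tensor product of a ``padded'' irreducible indexed by $\y^+$ enlarged by a row of $n-|\y|$ boxes and the irreducible indexed by $\y^-$, tensored with the sign-type character that distinguishes the $\pm$ parts. Using the Frobenius-type character formula for $B_n$ (Specht's wreath-product formula, expressing characters of $B_n \cong S_2 \wr S_n$ in terms of power sums in two sets of variables, one tracking positive cycles and one tracking negative cycles), the value of $\chi_{V(\y)_n}$ on an element $\w$ with $a_r$ positive $r$-cycles and $b_r$ negative $r$-cycles becomes a sum over ways of assigning the cycles of $\w$ to the constituent partitions. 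The ``large block'' indexed by the first row of length $n-|\y|$ absorbs most of the cycles, and as in type A the leading behavior is governed by the falling-factorial-like count $\binom{a_1}{k}$-type expressions; one shows each such assignment count is a polynomial in the $a_r = X_r(\w)$ and $b_r = Y_r(\w)$ of the claimed degree, and that the dependence on $n$ disappears for $n \geq s+d$.

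Next I would verify uniqueness of $F_V$: since the $X_r, Y_r$ are algebraically independent as functions on $\coprod_n B_n$ (two infinite families of algebraically independent class functions, one for each conjugacy class datum), a polynomial that agrees with a given character for all large $n$ is determined, because for each fixed target degree one can find, for $n$ large enough, group elements realizing enough distinct tuples $(a_1,b_1,a_2,b_2,\dots)$ to pin down all coefficients. The degree bound $\deg F_V \leq d$ follows by tracking, through the character formula above, that a box in row/column position contributing to a partition of total size $\leq d$ can be ``hit'' by cycles of total length at most $d$, so no monomial of weighted degree exceeding $d$ appears; linearity of the character in direct summands then gives the bound for general finitely generated $V$ from the bound on each irreducible constituent (of which only finitely many appear, each of size $\leq d$).

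The main obstacle I anticipate is the bookkeeping in the $B_n$ character formula — specifically, making the ``stable assignment count'' argument precise in the presence of the two interleaved families of cycles (positive and negative $r$-cycles) and the sign twist that separates $\y^+$ from $\y^-$. In type A one only tracks unsigned cycles and the combinatorics of the Murnaghan–Nakayama / hook-removal rule on a single large row; here one must simultaneously handle how negative cycles interact with the ``$S_2$-part'' of the wreath product (they carry the nontrivial character of $S_2$ and so pick up signs), and show that the resulting alternating sums still collapse to honest polynomials in $X_r, Y_r$ with the correct degree and with $n$-independence kicking in at exactly $n \geq s+d$. Getting the stable range sharp — rather than merely ``for $n$ sufficiently large'' — will require carefully matching the representation-stability bounds from Theorems \ref{FinGenImpliesRepStability}--\ref{RepStabilityImpliesFinGen} with the point at which the combinatorial counts stabilize, and in the type D case checking that passing to $\Ind_D^{BC} V$ does not inflate the stability degree beyond $s$.
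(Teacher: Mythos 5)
This statement is not proved in the present paper at all: it is quoted from the sequel \cite{FIW2}, so there is no in-paper argument to compare yours against. That said, your strategy is the natural adaptation of \cite[Theorem 2.67]{CEF} that the machinery of this paper is built to support: use finite generation to stabilize multiplicities, reduce to the irreducible sequences $V(\y)_n$, prove a per-irreducible character polynomial via the wreath-product (Specht/Frobenius) character theory for $B_n$, and handle type D by passing to $\Ind_{D}^{BC}V$ and restricting back. As an outline this is sound and is almost certainly the intended route.

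Two places, however, need more than you give. First, the heart of the theorem is exactly the step you defer as ``bookkeeping'': that for each double partition $\y=(\y^+,\y^-)$ there is a single polynomial $q_\y\in k[X_1,Y_1,X_2,Y_2,\ldots]$ of degree $|\y|$ with $\chi_{V(\y)_n}=q_\y$ for all $n\geq |\y|+\y^+_1$; without carrying out (or citing) this wreath-product computation, your argument is a reduction rather than a proof. Second, the sharp range $n\geq s+d$ does not follow from ``representation stability plus eventual polynomiality of irreducibles'' in the abstract: you need that every constituent $V(\y)_n$ of $V_n$ satisfies $|\y|\leq d$ (weight) and $\y^+_1\leq s$ (this is Lemma \ref{StabilityDegreeBoundsLambda1}), so that the per-irreducible threshold $|\y|+\y^+_1$ is at most $s+d$, and you need the multiplicity stabilization in the same range, which is what Theorem \ref{StabilityDegreeImpliesRepStability} supplies; your appeal to Theorems \ref{FinGenImpliesRepStability}--\ref{RepStabilityImpliesFinGen} alone only gives ``$n$ sufficiently large.'' In type D there are two further checks: the identification $V_n\cong(\Res_{D}^{BC}\Ind_{D}^{BC}V)_n$ of Proposition \ref{VisResIndV} holds only for $n$ beyond the degree of generation, so you must verify this degree is controlled by $s$ and $d$ so as not to shrink the claimed range; and uniqueness of $F_V$ needs a separate word, since for $\FI_D$--modules the character is prescribed only on $D_n$--classes (an even number of negative cycles), so you must argue that agreement on those signed-cycle data for all large $n$ already pins down the polynomial.
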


\noindent Weight and stability degree are defined in Sections \ref{Section:Weight} and \ref{Section:CoinvariantsAndStabilityDeg}; these quantities are necessarily finite for $V$ finitely generated.  

{ \noindent \bf Eventually polynomial dimensions. \quad} Let $V$ be a finitely generated $\FIW$--module. If $V$ is defined over characteristic zero, then we can take the associated character polynomial $F_V$ and evaluate it at the identity elements to compute dim$(V_n) = F_V(n,0,0,0, \ldots).$ We conclude the following.

\newtheorem*{polygrowth}{Corollary \ref{FIW2-PolyGrowth} \cite{FIW2}}
\begin{polygrowth} {\bf (Polynomial growth of dimension).}
 Let $V$ be an $\FIW$--module over a field of characteristic zero, and suppose $V$ is finitely generated in degree $\leq d$. Then for large $n$, dim$(V_n)$ is equal to a polynomial in $n$ of degree at most $d$. Equality holds for $n$ in the stable range given in \cite[Theorem \ref{FIW2-WnPersinomial}]{FIW2}.
\end{polygrowth}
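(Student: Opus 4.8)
The plan is to deduce the statement directly from the character-polynomial theorem, Theorem \ref{FIW2-WnPersinomial}, by evaluating the character polynomial at the identity element. First I would record the finiteness facts that make the hypotheses of Theorem \ref{FIW2-WnPersinomial} available: a finitely generated $\FIW$--module over a field has finite weight, bounded above by its degree of generation $d$ (this follows from the results of Section \ref{Section:Weight}, since $V$ is a quotient of a finite sum of representable modules generated in degrees $\leq d$, each of which has weight $\leq d$), and it has finite stability degree $s$ (Section \ref{Section:CoinvariantsAndStabilityDeg}); in type D one instead uses that $\Ind_D^{BC} V$ has finite stability degree. Consequently Theorem \ref{FIW2-WnPersinomial} supplies a character polynomial $F_V \in k[X_1, Y_1, X_2, Y_2, \ldots]$ of weighted degree $\leq d$ (with $\deg X_i = \deg Y_i = i$) such that the character $\chi_n$ of $\W_n$ acting on $V_n$ equals $F_V$ for all $n \geq s + d$.

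Next I would identify the signed cycle type of the identity element of $\W_n$: every point is fixed and carries positive sign, so the identity is a product of exactly $n$ positive $1$-cycles, whence $X_1(\mathrm{id}) = n$ while $X_r(\mathrm{id}) = 0$ for $r \geq 2$ and $Y_r(\mathrm{id}) = 0$ for all $r$. Since $\dim_k V_n = \chi_n(\mathrm{id})$, we conclude $\dim_k V_n = F_V(n, 0, 0, 0, \ldots)$ for every $n \geq s + d$.

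Finally I would observe that substituting $X_1 \mapsto n$ and all other variables $\mapsto 0$ kills every monomial of $F_V$ except the pure powers $X_1^j$, and a monomial $\prod_i X_i^{a_i} Y_i^{b_i}$ of weighted degree $\sum_i i(a_i + b_i) \leq d$ can be such a pure power only if it is $X_1^j$ with $j \leq d$. Hence $F_V(n, 0, 0, \ldots) = \sum_{j=0}^{d} c_j n^j$ for suitable $c_j \in k$, a polynomial in $n$ of degree at most $d$, and it agrees with $\dim_k V_n$ exactly on the range $n \geq s + d$ coming from Theorem \ref{FIW2-WnPersinomial}.

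I do not expect a genuine obstacle: the substance lies entirely in Theorem \ref{FIW2-WnPersinomial}, and the rest is elementary bookkeeping. The only points needing a little care are the degree bound for the polynomial in $n$ — one must use that $X_1$ has weight $1$ and that every variable other than $X_1$ is sent to zero, so no monomial of weighted degree $\leq d$ can contribute a power of $n$ larger than $d$ — and, in type D, the routine check that the stability-degree hypothesis must be imposed on $\Ind_D^{BC} V$ rather than on $V$ itself, exactly as in the statement of Theorem \ref{FIW2-WnPersinomial}.
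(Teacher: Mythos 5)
Your proposal is correct and is exactly the argument the paper intends: the paper's own derivation of this corollary is to apply Theorem \ref{FIW2-WnPersinomial} and evaluate the character polynomial $F_V$ at the identity, whose signed cycle type gives $X_1 = n$ and all other variables $0$, so that $\dim_k V_n = F_V(n,0,0,\ldots)$ is a polynomial in $n$ of degree at most $d$ on the same stable range. Your supporting remarks (weight $\leq d$ via Theorem \ref{WnDiagramSizes}, finite stability degree via Lemma \ref{FinGenImpliesStabilityDeg}, and the type D hypothesis on $\Ind_D^{BC} V$) match the paper's setup.
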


This ``eventually polynomial'' growth of dimension holds even over positive characteristic. The following theorem uses results of Church--Ellenberg--Farb--Nagpal, who prove the theorem for finitely generated $\FI_A$--modules \cite[Theorem 1.2]{CEFN}.

\newtheorem*{polygrowthcharp}{Theorem \ref{FIW2-PolyDimCharP} \cite{FIW2}}
\begin{polygrowthcharp}{\bf (Polynomial growth of dimension over arbitrary fields).} Let $k$ be any field, and let $V$ be a finitely generated $\FIW$--module over $k$. Then there exists an integer-valued polynomial $P(T) \in \Q[T]$ such that $$ \dim_k (V_n) = P(n) \qquad \text{for all $n$ sufficiently large.}$$
\end{polygrowthcharp}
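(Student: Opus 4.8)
The plan is to reduce to the type A case, established in \cite[Theorem 1.2]{CEFN}, via restriction along the inclusion of categories $\FI_A \hookrightarrow \FIW$ of Definition~\ref{DefnFIW}. This inclusion is the identity on objects, so restriction sends an $\FIW$--module $V$ to an $\FI_A$--module $\Res V$ with $(\Res V)_n = V_n$ as $k$--modules, and in particular $\dim_k(\Res V)_n = \dim_k V_n$ for every $n$. It therefore suffices to prove that $\Res V$ is a finitely generated $\FI_A$--module whenever $V$ is a finitely generated $\FIW$--module: then \cite[Theorem 1.2]{CEFN} applied to $\Res V$ supplies an integer-valued polynomial $P \in \Q[T]$ with $\dim_k V_n = \dim_k(\Res V)_n = P(n)$ for $n$ large.

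To see that restriction preserves finite generation, fix a finite generating set $\{v_i\}$ for $V$ with $v_i \in V_{m_i}$ and $m_i \leq d$. Every morphism $f \colon \bm \to \bn$ of $\FIW$ is a sign-equivariant injection, and by extending it to a signed permutation of $\bn$ we may write $f = \sigma \circ \iota$ with $\iota \colon \bm \hookrightarrow \bn$ the standard inclusion and $\sigma \in \W_n \cong \End(\bn)$; in type D, when $m < n$, we exploit a coordinate left free by $f$ to choose the extension $\sigma$ inside $D_n$ rather than $B_n$. Factor $\sigma = \tau\epsilon$ with $\tau$ a coordinate permutation in $S_n$ and $\epsilon$ a product of sign changes. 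Sign changes at coordinates outside $\{1,\dots,m\}$ become trivial after precomposition with $\iota$, so only the sign change on the first $m$ coordinates matters, and there are only finitely many possibilities. In types A and B/C this sign change $\epsilon'$ lies in $\W_m$, and from $\epsilon \circ \iota = \iota \circ \epsilon'$ we get $f_* v_i = (\tau \circ \iota)_*\bigl(\epsilon'_* v_i\bigr)$ with $\tau \circ \iota$ a morphism of $\FI_A$ and $\epsilon'_* v_i \in V_{m_i}$; hence the finite set $\{\epsilon'_* v_i\}$ generates $\Res V$ in degree $\leq d$. In type D the sign change on the first $m$ coordinates may reverse an odd number of signs and so fail to lie in $D_m$; instead we factor the (legitimate, because non-isomorphism) $\FI_D$--morphism $\epsilon \circ \iota \colon \bm \to \bn$ through ${\bf (m+1)}$ as $\iota' \circ g$, where $\iota' \colon {\bf (m+1)} \hookrightarrow \bn$ is the standard inclusion and $g \colon \bm \to {\bf (m+1)}$ is a sign-flipping injection. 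Then $f_* v_i = (\tau \circ \iota')_*\bigl(g_* v_i\bigr)$ with $\tau \circ \iota'$ a morphism of $\FI_A$ and $g_* v_i \in V_{m_i+1}$, so $\Res V$ is finitely generated in degree $\leq d+1$.

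The only genuinely new point is this type D wrinkle: since $D_m$ contains only the even sign changes, a sign change cannot in general be absorbed back into an endomorphism of $\bm$, and one must pass up one level to ${\bf (m+1)}$ --- harmless, at the cost of a $+1$ in the degree of generation. Everything else is the routine bookkeeping of factoring $\FIW$--morphisms through the standard inclusions, and no Noetherian input is needed since we exhibit an explicit finite generating set. (In characteristic zero the statement alternatively drops out of the character polynomial Theorem~\ref{FIW2-WnPersinomial} by evaluating $F_V$ at the identity, as in Corollary~\ref{FIW2-PolyGrowth}; the restriction argument above has the advantage of being characteristic-free.)
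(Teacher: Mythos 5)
Your proposal is correct and follows the paper's own route: the theorem is proved by reducing to type A via restriction along $\FI_A \hookrightarrow \FIW$ and invoking \cite[Theorem 1.2]{CEFN}, exactly as indicated in Section~\ref{Section:Restriction}. The only difference is cosmetic — you re-derive the fact that restriction preserves finite generation (including the type D degree shift to $m+1$) with the same factorization $f = \sigma \circ I_{m,n}$ used in the paper's proof of Proposition~\ref{RestrictionPreservesFinGen}, rather than citing that proposition directly.
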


\noindent {\bf $\FIW\sh$--modules. \quad} In \cite[Section \ref{FIW2-SectionFISharp}]{FIW2} we describe a certain class of $\FI_{BC}$--modules called \emph{$\FI_{BC}\sh$--modules}, analogues of the $\FI\sh$--modules (``FI sharp modules'') defined by Church--Ellenberg--Farb. An $\FI_{BC}\sh$--module is a sequence of $B_n$--representations that simultaneously admits a functor from $\FI_{BC}$ and a functor from the dual category $\FI_{BC}^{\mathrm{op}}$ in some compatible sense; see \cite[ Definition \ref{FIW2-DefnFISharp}]{FIW2}.

A finitely generated $\FI_{BC}\sh$--module structure places even stronger constraints on the structure of a sequence of $B_n$--representations. For example, we show in  \cite[Section \ref{FIW2-CharacterPolynomialsFISharp}]{FIW2} that a $\FI_{BC}\sh$--module finitely generated in degree $\leq d$ has characters equal to a unique character polynomial of degree at most $d$ for \emph{all} values of $n$, and dimensions given by a polynomial in $n$ of degree at most $d$ for all $n$. 

We prove in \cite[Theorem \ref{FIW2-ClassifyingFISharp}]{FIW2} that $\FI_{BC}\sh$--modules are direct sums of sequences of the form
$$ \left\{ \bigoplus_{m=0} \Ind_{B_{m}\times B_{n-m}}^{B_n} U_m \boxtimes k \right\}_n .  $$ \noindent Here, $k$ denotes the trivial $B_{n-m}$--representation, and $U_m$ is a $B_m$--representation, possibly $0$. The external tensor product $(U_m \boxtimes k)$ is the $k$--module $(U_m \otimes_k k)$ as a $(B_{m}\times B_{n-m})$--representation. The result \cite[Theorem \ref{FIW2-ClassifyingFISharp}]{FIW2} extends \cite[Theorem 2.24]{CEF}, which is the analogous statement in type A. 

\subsection{An application: diagonal coinvariant algebras.}

The theory of $\FIW$--modules developed in this paper gives new, concrete results about a variety of known objects in geometry and combinatorics. In Section \ref{SectionCoinvariantAlgebras} we give applications to the diagonal coinvariant algebras associated to the reflection groups $\W_n$.

Let $k$ be a field, and consider the canonical action of $\W_n$ on $$V_n := k^n \cong \Span_k \langle x_1, \ldots, x_n \rangle .$$ The group $S_n$ acts by permutation matrices, $B_n$ by signed permutation matrices, and $D_n$ by signed permutation matrices with an even number of entries equal to $-1$. 

There is an induced diagonal action of $\W_n$ on $V_n^{\oplus r}$, and so an induced action on the symmetric algebra Sym$(V_n^{\oplus r})$, isomorphic to the polynomial algebra $$k[x_1^{(1)}, \ldots x_n^{(1)}, \ldots, x_1^{(r)}, \ldots, x_n^{(r)}].$$ The \emph{$r$-diagonal coinvariant algebra} $\cC^{(r)}(n)$ is the quotient of this algebra by the ideal $\cI_n$ of constant-term-free $\W_n$--invariant polynomials. The algebra  $\cC^{(r)}(n)$ has a natural multigrading by $r$-tuples $J=(j_1, \ldots, j_r)$, where $j_{\ell}$ specifies the total degree of a monomial in the variables $x_1^{\ell}, \ldots, x_n^{\ell}$. 

The coinvariant algebras $\cC^{(1)}(n)$ were studied classically for their connections to representation theory of Lie groups. The $r$-diagonal coinvariant algebras have been studied since the 1990s, with major contributors including Garsia, Haiman, Hagland, Gordon, Bergeron, and Biagioli; see Section \ref{SectionCoinvariantAlgebras} for more history. Haiman \cite{HaimanCombinatorics} and Bergeron \cite{BergeronBook} offer in-depth background on coinvariant algebras and their many connections to other areas of algebraic combinatorics. 

In Section \ref{SectionCoinvariantAlgebras} we prove that each multigraded component $\cC^{(r)}_J(n)$ of $\cC^{(r)}(n)$ is a finitely generated co--$\FIW$--module. Understanding the characters of the multigraded components of $\cC^{(r)}(n)$ is a well-known open problem; little is known except for very small values of $r$ and $n$. The following result, inspired by the work of \cite{CEF} and \cite{CEFN} on diagonal coinvariant algebras in type A, reveals underlying structure and patterns in these sequences of representations. 

\newtheorem*{CoinvariantAlgebrasStable}{Theorem \ref{CoinvariantAlgebrasCoFI}}
\begin{CoinvariantAlgebrasStable}
Let $k$ be a field, and let $V_n \cong k^n$ be the canonical representation of $\W_n$ by (signed) permutation matrices. Given $r \in \Z_{>0}$, the sequence of coinvariant algebras $\cC^{(r)} := k[V_{\bullet}^{\bigoplus r}]/\cI$ is a graded co--$\FIW$--algebra of finite type. When $k$ has characteristic zero, the weight of the multigraded component $\cC^{(r)}_J$ is $\leq |J|$. \end{CoinvariantAlgebrasStable}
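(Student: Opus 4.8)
The plan is to build the co--$\FIW$--algebra structure by hand from the canonical representation, and then to read off finite type and the weight bound by dualizing to ordinary $\FIW$--modules, where the Noetherian property and the known behaviour of weight under tensor products are available. Coordinate restriction makes $\bn \mapsto k[V_n^{\oplus r}] = \mathrm{Sym}(V_n^{\oplus r})$ a graded co--$\FIW$--algebra: an injection $f : \bm \hookrightarrow \bn$ in $\FIW$ induces the $k$--algebra homomorphism that relabels (up to sign) the variables indexed by $\mathrm{im}(f)$ and sends the remaining variables to $0$. The one thing to verify is that this map carries $\cI_n$ into $\cI_m$; this holds because $f$ realizes $\W_m$ as the subgroup of $\W_n$ permuting and signing the coordinates in $\mathrm{im}(f)$ and fixing the rest, so the image of a $\W_n$--invariant under ``set the complementary variables to $0$'' is $\W_m$--invariant and stays constant-term-free, and an algebra homomorphism respects the ideal such elements generate. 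Because the invariant subalgebra is multigraded, $\cI_n$ is multihomogeneous, so $\cC^{(r)} = k[V_\bullet^{\oplus r}]/\cI$ is a multigraded co--$\FIW$--algebra and each $\cC^{(r)}_J$ is a co--$\FIW$--module.

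Next I would establish finite type. A co--$\FIW$--module with finite-dimensional terms is finitely generated precisely when the dual $\FIW$--module is, so it suffices to show the $\FIW$--module $(\cC^{(r)}_J)^{\vee}$, with $n$-th term $\big(\cC^{(r)}_J(n)\big)^{*}$, is finitely generated. The natural surjections $V_n^{\otimes |J|} \twoheadrightarrow \bigotimes_{\ell=1}^{r} \mathrm{Sym}^{j_\ell}(V_n) = k[V_n^{\oplus r}]_J \twoheadrightarrow \cC^{(r)}_J(n)$ exhibit $\cC^{(r)}_J$ as a quotient co--$\FIW$--module of $V_\bullet^{\otimes |J|}$ with the coordinate-restriction structure, and, using that $\W_n$ acts orthogonally so that $V_n \cong V_n^{*}$, the dual $\FIW$--module of this is again $V_\bullet^{\otimes |J|}$ with its standard covariant $\FIW$--module structure. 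Thus $(\cC^{(r)}_J)^{\vee}$ is a sub--$\FIW$--module of $V_\bullet^{\otimes |J|}$. Since $V_\bullet$ is finitely generated in degree $\le 1$ (a single basis vector of $V_1$ generates it) and tensor products of finitely generated $\FIW$--modules are finitely generated (Section \ref{SectionFIAlgebras}), $V_\bullet^{\otimes |J|}$ is finitely generated in degree $\le |J|$. By Theorem \ref{Noetherian} its sub--$\FIW$--module $(\cC^{(r)}_J)^{\vee}$ is finitely generated, so $\cC^{(r)}_J$ is a finitely generated co--$\FIW$--module and $\cC^{(r)}$ is of finite type. Nothing here uses the characteristic of $k$.

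For the weight bound, assume $k$ has characteristic zero and run the same embedding $(\cC^{(r)}_J)^{\vee} \hookrightarrow V_\bullet^{\otimes |J|}$ through the theory of weight from Section \ref{Section:Weight}. The canonical $\FIW$--module $V_\bullet = k^\bullet$ has weight $\le 1$: for $S_n$ it is the sum of the trivial and standard representations, and for $B_n$ and $D_n$ it is the irreducible reflection representation, whose label has a single box off the first row. Weight is subadditive under tensor products and non-increasing on sub-- and quotient $\FIW$--modules, so $V_\bullet^{\otimes |J|}$ has weight $\le |J|$ and hence so does $(\cC^{(r)}_J)^{\vee}$; that is, $\cC^{(r)}_J$ has weight $\le |J|$.

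I expect the genuine work to be the naturality bookkeeping rather than any single hard estimate: checking precisely that $\cI$ is a co--$\FIW$--ideal — the ``restriction of an invariant is an invariant'' point, which is exactly where the nesting $\W_m \subseteq \W_n$ is used — and keeping the variances straight when passing between $\cC^{(r)}$ and its dual $\FIW$--module, so that Theorem \ref{Noetherian}, stability of finite generation under tensor products, and the behaviour of weight apply verbatim. By contrast the argument is uniform across the three families; type D presents no additional obstacle here, since all that is needed is the existence of enough $\FIW$--morphisms out of small objects and the fact that $k^\bullet$ has weight $\le 1$ in every type.
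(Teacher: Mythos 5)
Your proposal is correct and follows essentially the same route as the paper: coordinate restriction gives the co--$\FIW$--algebra structure with $\cI$ a co--$\FIW$--submodule, finite type comes from dualizing and combining the Noetherian property (Theorem \ref{Noetherian}) with finite generation of tensor powers of the degree-one piece, and the weight bound comes from realizing $\cC^{(r)}_J$ as a subquotient of the $|J|$-fold tensor power of the canonical module together with subadditivity of weight (Proposition \ref{TensorsPreserveFinGen}). The only difference is cosmetic: you inline, for this particular algebra, the argument the paper packages as Proposition \ref{CoModulesGenerateFiniteType}, invoked there via the observation that $\cC^{(r)}(n)^*$ is generated in degree one.
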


\newtheorem*{CoinvariantAlgebrasRepStable}{Corollary \ref{CoinvariantAlgRepStable} }
\begin{CoinvariantAlgebrasRepStable}
 Let $k$ be a field of characteristic zero. For $n$ sufficiently large (depending on the $r$-tuple $J$), the sequence $\cC^{(r)}_J(n)$ is uniformly multiplicity stable. 
\end{CoinvariantAlgebrasRepStable}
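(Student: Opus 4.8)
The plan is to deduce this from Theorem \ref{CoinvariantAlgebrasCoFI} together with the equivalence between finite generation and uniform representation stability (Theorem \ref{FinGenImpliesRepStability}), after passing to $k$-linear duals. Note first that $\cC^{(r)}_J(n)$ carries no canonical $\W_n$-equivariant maps $\cC^{(r)}_J(n)\to\cC^{(r)}_J(n+1)$ — the inclusion $k[x^{(\bullet)}_1,\dots,x^{(\bullet)}_n]\hookrightarrow k[x^{(\bullet)}_1,\dots,x^{(\bullet)}_{n+1}]$ does not descend to the quotients by $\cI_n$, $\cI_{n+1}$ — so $\cC^{(r)}_J$ genuinely lives on the co side and the natural statement is uniform \emph{multiplicity} stability. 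By Theorem \ref{CoinvariantAlgebrasCoFI}, for each fixed $r$-tuple $J$ the multigraded component $\cC^{(r)}_J$ is a finitely generated co--$\FIW$--module over $k$, and since $k$ has characteristic zero its weight is at most $|J|$. Moreover each space $\cC^{(r)}_J(n)$ is finite-dimensional over $k$, being a subquotient of the finite-dimensional space of multidegree-$J$ polynomials in the $rn$ variables $x^{(\ell)}_i$.

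Next I would form the pointwise dual $V:=(\cC^{(r)}_J)^{\vee}$, that is, $V_n=\Hom_k(\cC^{(r)}_J(n),k)$ with $\W_n$ acting contragrediently and with $V(f)$ the transpose of $\cC^{(r)}_J(f)$ for each morphism $f$ of $\FIW$. Because each $\cC^{(r)}_J(n)$ is finite-dimensional, $V$ is an honest $\FIW$--module, and the duality dictionary for co--$\FIW$--modules over a field converts finite generation of $\cC^{(r)}_J$ — with a bound on its degree of generation provided by Theorem \ref{CoinvariantAlgebrasCoFI} — into finite generation of $V$, while its weight remains bounded by $|J|$. Then Theorem \ref{FinGenImpliesRepStability}, applied to the finitely generated $\FIW$--module $V$ over the characteristic-zero field $k$, gives that $\{V_n\}$ is uniformly representation stable, with an explicit stable range governed by $|J|$ and the generation and relation degrees of $\cC^{(r)}_J$; in particular the multiplicity of each irreducible $V(\y)_n$ in $V_n$ is eventually independent of $n$.

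Finally I would transfer this back to $\cC^{(r)}_J$. Every irreducible representation of $S_n$, $B_n$, or $D_n$ is realizable over $\Q$, hence self-dual, so $V_n\cong \cC^{(r)}_J(n)^{*}\cong \cC^{(r)}_J(n)$ as $\W_n$-representations; therefore the multiplicities in $\cC^{(r)}_J(n)$ coincide with those in $V_n$ and are eventually constant, which is precisely uniform multiplicity stability of $\{\cC^{(r)}_J(n)\}$, with $n$ large depending only on $J$ (and $r$).

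The one genuinely delicate point is the dualization step: making precise that the $k$-linear dual of a finitely generated co--$\FIW$--module of bounded weight is a finitely generated $\FIW$--module of the same weight, and tracking how the degree of generation of $\cC^{(r)}_J$ controls the generation and relation degrees — hence the stable range — of $V$. This is where finite-dimensionality of the $\cC^{(r)}_J(n)$ is essential, and where one must verify that "finite type" in Theorem \ref{CoinvariantAlgebrasCoFI} yields, graded piece by graded piece, a co--$\FIW$--module that is finitely generated in the strong sense required. Everything else — self-duality of Weyl group representations and the black-box invocation of Theorem \ref{FinGenImpliesRepStability} — is routine.
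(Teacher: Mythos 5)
Your proposal follows the paper's own route: Theorem \ref{CoinvariantAlgebrasCoFI} gives the co--$\FIW$--structure with weight $\leq |J|$, the dual $(\cC^{(r)}_J)^*$ is a finitely generated $\FIW$--module, Theorem \ref{FinGenImpliesRepStability} gives uniform representation stability of the dual, and self-duality of $\W_n$--representations transfers the multiplicities back. The only thing to note is that the step you flag as delicate is essentially definitional in the paper: ``finite type'' for a graded co--$\FIW$--module is \emph{defined} by requiring the dual $V^*_n = \Hom_k(V_n,k)$ to be of finite type (and likewise for weight/slope), so no separate duality dictionary needs to be verified beyond what Theorem \ref{CoinvariantAlgebrasCoFI} already asserts.
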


\newtheorem*{CoinvariantAlgebrasCharPoly}{Corollary \ref{CoinvariantAlgCharPoly}}
\begin{CoinvariantAlgebrasCharPoly}
Let $k$ be a field of characteristic zero. For $n$ sufficiently large (depending on the $r$-tuple $J$), the characters of $\cC^{(r)}_J(n)$ are given by a character polynomial $F_J$ of degree $\leq |J|$. In particular the dimension of $\cC^{(r)}_J(n)$ is given by the degree $|J|$ polynomial $\dim_k \cC^{(r)}_J(n) = F_J(n, 0, 0, 0 \ldots )$ for all $n$ in the stable range. 
\end{CoinvariantAlgebrasCharPoly}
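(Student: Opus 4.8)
The plan is to reduce to the character polynomial theorem of the sequel, Theorem \ref{FIW2-WnPersinomial}, by dualizing. By Theorem \ref{CoinvariantAlgebrasCoFI}, the sequence $\cC^{(r)}_J$ is a finitely generated co--$\FIW$--module of weight $\leq |J|$. Each space $\cC^{(r)}_J(n)$ is finite-dimensional over $k$, being a quotient of the finite-dimensional space of polynomials of multidegree $J$ in the $rn$ variables. Taking the pointwise $k$-linear dual therefore converts the contravariant functor $\cC^{(r)}_J$ into a genuine (covariant) $\FIW$--module $V$ with $V_n = \Hom_k(\cC^{(r)}_J(n), k)$, and finite-dimensionality ensures that the double dual recovers $\cC^{(r)}_J$. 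I would then check that finite generation, the weight bound $\leq |J|$, and finiteness of the stability degree all pass to $V$: since the irreducible representations of the classical Weyl groups are self-dual, each $V_n \cong \cC^{(r)}_J(n)$ as a $\W_n$--representation, so the constituents and the weight are unchanged; finite generation of the co--$\FIW$--module dualizes to finite generation of $V$; and finiteness of the stability degree of $V$ is automatic from its finite generation (Section \ref{Section:CoinvariantsAndStabilityDeg}). In type D one additionally needs $\Ind_D^{BC} V$ to have finite stability degree, which holds because induction preserves finite generation (Section \ref{Section:Induction}).

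With $V$ in hand, Theorem \ref{FIW2-WnPersinomial} supplies a unique polynomial $F_V \in k[X_1, Y_1, X_2, Y_2, \ldots]$ of degree $\leq |J|$ (with $\deg X_i = \deg Y_i = i$) such that the character of $\W_n$ on $V_n$ equals $F_V$ for all $n \geq s + |J|$, where $s$ bounds the relevant stability degree. It remains to transfer this back to $\cC^{(r)}_J(n)$. Because the characters of $S_n$, $B_n$, and $D_n$ are real-valued (equivalently, the signed cycle type of an element is unchanged under inversion), we have $\chi_{\cC^{(r)}_J(n)} = \chi_{V_n}$, so $\chi_{\cC^{(r)}_J(n)} = F_V$ on the same range. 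Setting $F_J := F_V$ gives the character polynomial assertion, with the stable range inherited from Theorem \ref{FIW2-WnPersinomial}.

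For the dimension statement, evaluate the character at the identity of $\W_n$, which has $n$ positive $1$-cycles and no other cycles, so $X_1(\mathrm{id}) = n$ while $X_r(\mathrm{id}) = Y_r(\mathrm{id}) = 0$ for all remaining indices. Hence $\dim_k \cC^{(r)}_J(n) = \chi_{\cC^{(r)}_J(n)}(\mathrm{id}) = F_J(n, 0, 0, 0, \ldots)$, a polynomial in $n$ of degree at most $|J|$, valid throughout the stable range.

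The main obstacle I anticipate is the bookkeeping in the dualization step: making precise that a finitely generated co--$\FIW$--module with finite-dimensional graded pieces has a pointwise dual that is a finitely generated $\FIW$--module of the same weight and finite stability degree, together with the extra induction hypothesis required in type D so that Theorem \ref{FIW2-WnPersinomial} applies verbatim. Once that correspondence is set up cleanly, everything else is a direct application of Theorem \ref{FIW2-WnPersinomial} and the elementary inversion-invariance of signed cycle type.
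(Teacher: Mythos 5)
Your argument is essentially the paper's own: the paper likewise dualizes the finite-type co--$\FIW$--module $\cC^{(r)}_J$, invokes self-duality of $\W_n$--representations (equivalently, that elements are conjugate to their inverses) to identify the characters of $\cC^{(r)}_J(n)$ with those of the dual finitely generated $\FIW$--module of weight $\leq |J|$, and then applies the character polynomial theorem of the sequel, evaluating at the identity for the dimension statement. Your extra bookkeeping about stability degree and the $\Ind_D^{BC}$ hypothesis in type D is exactly what makes that theorem apply, so the proposal is correct and matches the paper's route.
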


\newtheorem*{coinvariantalgpolydim}{Corollary \ref{CoinvariantAlgPolyDim}}
\begin{coinvariantalgpolydim}
Let $k$ be an arbitrary field. Then for each $r$-tuple $J$, there exists a polynomial $P_J \in \Q[T]$ (depending on $k$) so that
$\dim_k \; \cC^{(r)}_J(n) = P_J(n)$ for all $n$ sufficiently large (depending on $k$ and $J$).
\end{coinvariantalgpolydim}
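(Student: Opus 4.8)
The plan is to reduce Corollary \ref{CoinvariantAlgPolyDim} to the two facts already in hand: first, that each multigraded component $\cC^{(r)}_J$ is a finitely generated co--$\FIW$--module (Theorem \ref{CoinvariantAlgebrasCoFI}); and second, that finitely generated $\FIW$--modules over an arbitrary field have eventually polynomial dimension (Theorem \ref{FIW2-PolyDimCharP}, via \cite[Theorem 1.2]{CEFN}). Since $\cC^{(r)}_J(n)$ is a \emph{co}--$\FIW$--module rather than an $\FIW$--module, the first step is to check that this distinction is immaterial for the dimension count: the vector spaces $\cC^{(r)}_J(n)$ form a finitely generated co--$\FIW$--module precisely when their $k$--linear duals $\cC^{(r)}_J(n)^{\vee}$ form a finitely generated $\FIW$--module, and $\dim_k \cC^{(r)}_J(n) = \dim_k \cC^{(r)}_J(n)^{\vee}$, so it suffices to run the argument on the dual $\FIW$--module. (Alternatively one notes that all of the Noetherian and polynomiality machinery of Sections \ref{SectionNoetherian} applies verbatim to co--$\FIW$--modules, since $\FIW^{op}$ enjoys the same combinatorial properties; either formulation works.)

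Next I would invoke Theorem \ref{CoinvariantAlgebrasCoFI} to conclude that $\cC^{(r)}_J$ is finitely generated as a co--$\FIW$--module; here it matters that this holds over an \emph{arbitrary} field $k$, which is exactly what the statement of Theorem \ref{CoinvariantAlgebrasCoFI} provides (the weight bound $\leq |J|$ is only asserted in characteristic zero, but for the dimension statement we do not need the weight bound). Passing to the dual gives a finitely generated $\FIW$--module $W^{(J)}$ over $k$ with $\dim_k W^{(J)}_n = \dim_k \cC^{(r)}_J(n)$. Applying Theorem \ref{FIW2-PolyDimCharP} to $W^{(J)}$ produces an integer-valued polynomial $P(T) \in \Q[T]$ with $\dim_k W^{(J)}_n = P(n)$ for all $n$ sufficiently large. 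Setting $P_J := P$ completes the proof. The dependence of $P_J$ on $k$ (noted in the statement) simply reflects the fact that in positive characteristic the dimension of the coinvariant algebra can genuinely depend on the characteristic; nothing in the argument needs to track it beyond carrying $k$ along.

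The only genuine content beyond bookkeeping is the duality step, and the main thing to be careful about is that finite generation really does pass between a co--$\FIW$--module and its pointwise dual. This is true because $\FIW$ has the property that $\Hom_{\FIW}(\bm,\bn)$ is finite for all $\bm,\bn$ and every object receives morphisms from only finitely many others of smaller or equal "size", so dualizing a finitely generated co--$\FIW$--module termwise yields an $\FIW$--module that is again finitely generated (generated in the same degree); this is the same observation used implicitly in \cite{CEF}, \cite{CEFN} to move between $\FI$-- and co--$\FI$--modules, and it is recorded in Section \ref{SectionCoinvariantAlgebras} when co--$\FIW$--modules are introduced. I expect this to be the only step requiring a sentence of justification; the rest is a direct citation of Theorems \ref{CoinvariantAlgebrasCoFI} and \ref{FIW2-PolyDimCharP}.

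Thus the proof is short: establish finite generation of $\cC^{(r)}_J$ over $k$ (Theorem \ref{CoinvariantAlgebrasCoFI}), dualize to get a finitely generated $\FIW$--module of the same pointwise dimensions, and quote the eventually-polynomial-dimension theorem (Theorem \ref{FIW2-PolyDimCharP}) to extract $P_J \in \Q[T]$. No step presents a real obstacle; the work has all been done in the cited results, and this corollary is the routine specialization of the general theory to the coinvariant algebra example.
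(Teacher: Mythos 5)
Your proposal is correct and follows essentially the same route as the paper: Theorem \ref{CoinvariantAlgebrasCoFI} gives finite type of $\cC^{(r)}_J$ over an arbitrary field, and the eventually-polynomial-dimension result \cite[Theorem \ref{FIW2-PolyDimCharP}]{FIW2} (resting on \cite[Theorem 1.2]{CEFN} via restriction to $\FI_A$) is applied to the dual. The duality step you flag is in fact definitional here, since the paper defines finite type for a co--$\FIW$--module precisely as finite generation of its pointwise dual $V^*_n = \Hom_k(V_n,k)$, so no extra argument is needed.
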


\noindent Theorem \ref{CoinvariantAlgebrasCoFI} and its corollaries were proven in type A over characteristic zero by Church--Ellenberg--Farb \cite[Theorem 3.4]{CEF}. In later work with Nagpal these authors extend their work to fields of arbitrary characteristic \cite[Proposition 4.2]{CEFN}, and in particular they prove Corollary \ref{CoinvariantAlgPolyDim} in type A \cite[Theorem 1.9]{CEFN}.

In the special case $r=1$, the algebras $\cC^{(1)}(n)$ are isomorphic to the cohomology rings of the generalized flag varieties associated to the Lie groups in type $\W$; see Section \ref{SectionCoinvariantAlgebras} for details. A corollary of Theorem \ref{CoinvariantAlgebrasCoFI} is representation stability and
the existence of character polynomials for these cohomology groups. 

In Section \ref{SectionCoinvariantAlgebras} we state the character polynomials in type B/C for $|J|\leq 3$; in general, computing these character polynomials is an open problem. 

\subsection{Remarks on the general theory}

%framework synonyms: foundations, context, perspective, groundwork, sphere, arena, realm, theatre
We briefly highlight some key tools and results of the theory of $\FIW$--modules.\\

\noindent {\bf  The structure of finitely generated $\FIW$--modules. \quad} A crucial fact about finitely generated $\FIW$--modules is that they can be realized as quotients of sequences of the form $$ \left\{ \bigoplus_{m=0}^g \Big( \Ind_{\W_{n-m}}^{\W_n} k \Big)^{\oplus \ell_m} \right\}_n ,$$ where $k$ denotes the trivial $\W_{n-m}$--representation, and the multiplicities $\ell_m$ are finite (see Proposition \ref{FinGen}). Over fields of characteristic zero, the combinatorics of these induced representations is governed by the branching rules for each family $\W_n$ -- rules that are well understood for $S_n$ and $B_n$, though more complex for $D_n$ (see, for example, Geck-Pfeiffer \cite{GeckPfeiffer}).   

 We prove in Theorem \ref{Noetherian} that sub--$\FIW$--modules of finitely generated $\FIW$--modules are themselves finitely generated. This Noetherian property was proven for $\FI_A$--modules by Church--Ellenberg--Farb \cite[Theorem 2.6]{CEF} over Noetherian rings containing $\Q$, and later proven by Church--Ellenberg--Farb--Nagpal \cite[Theorem 1.1]{CEFN} over arbitrary Noetherian rings; our proof uses their results. These properties of finite generation are used extensively throughout this paper. \\

\noindent {\bf  Restriction and Induction of $\FIW$--Modules \quad} Given the inclusions of categories $\FI_A \hookrightarrow \FI_D \hookrightarrow \FI_{BC}$, there is a natural restriction operation of $\FI_{BC}$ and $\FI_D$--modules down to $\FI_D$ or $\FI_A$--modules, and we show in Proposition \ref{RestrictionPreservesFinGen} that the restriction of functors between these categories preserves the property of finite generation.

Given an inclusion of categories $\FIW \subset \oFIW$ and an $\FIW$--module $V$, the sequence of $\oW_n$--representations $\{\Ind_{\W_n}^{\oW_n} V_n\}_n$ does not in general have the structure of an $\oFIW$--module; see Remark \ref{NaiveInduction}. In Section \ref{Section:Induction} we show that there nonetheless exist induction operations on $\FIW$--modules using the theory of \emph{Kan extensions}; this insight owes to Peter May. These operations place the theory of $\FIW$--modules for these three families of groups in a unified setting, and moreover appear to be of theoretical interest in their own right. 

 \subsection{Relationship to earlier work} \label{SectionEarlierWork}
 
 \subsubsection{Recent work}

{\noindent \bf Representation stability. \quad} In 2010, Church--Farb \cite{RepStability} introduced the concept of \emph{representation stability} for sequences of rational representations of several families of groups: $S_n$, $B_n$, and the linear groups SL$_n(\Q)$, GL$_n(\Q)$, and Sp$_{2n}(\Q)$. For each family they formulated stability criteria in terms of the pattern of irreducible subrepresentations, patterns which they show appear in ubiquitous examples throughout mathematics. They gave a host of applications to classical representation theory, the cohomology of groups arising in geometric group theory, Lie algebras and their homology,  the (equivariant) cohomology of flag and Schubert varieties, and algebraic combinatorics. \\

{\noindent \bf FI--modules. \quad}  Two years later Church--Ellenberg--Farb \cite{CEF} significantly refined the theory for sequences of $S_n$--representations by introducing FI--modules. This new work accomplished several things: They established criteria for representation stability that are simple and easily verified -- a finitely generated FI--modules structure. They strengthened their results with the observation that the characters of a representation stable sequence have an associated character polynomial, and gave a number of consequences including polynomial growth of dimension. They gave a framework for studying sequences of $S_n$--representations over arbitrary coefficient rings, which does not depend on the combinatorial particulars of the classification of irreducible rational representations. The category FI, and the concept of finite generation, are natural and elementary constructs. Their theory provides a structured, unified context and a vocabulary to describe patterns and stability 
phenomenon that perhaps could not be captured otherwise. 

Using the theory of FI--modules, Church--Ellenberg--Farb proved new results about a number of fundamental sequences $V_n$ of $S_n$--representations. These include the cohomology of the $n$-point configuration space of a manifold, the cohomology of the moduli space of $n$-puncture surfaces, certain subalgebras of the cohomology of the genus $n$ Torelli group, and the diagonal $S_n$-coinvariant algebras on $r$ sets of $n$ variables. 

Jimenez Rolland \cite{JimenezRollandRepStability, JimenezRollandMCGasFI} studied additional applications of this theory to moduli spaces, pure mapping class groups, and diffeomorphism groups of certain punctured manifolds. She introduced FI[$G$]-modules associated to a group $G$: functors from FI to the category of $k[G]$--modules. She obtained classical homological stability results for certain wreath products. 
\\

{\noindent \bf Central stability. \quad} Putman \cite{Putman2012Stability} independently developed a theory that extends representation stability to positive characteristic. He established stability results for level $q$ congruence subgroups of $GL_n(R)$ for a large class of rings $R$ with ideals $q$. His main definition, \emph{central stability}, is closely related to the notion of a finitely generated FI--module. Putman proved that central stability implies representation stability and polynomial dimension growth. He integrated his theory of central stability with the classical homological stability machinery developed by Quillen. This representation-theoretic homological stability apparatus applies to a variety of geometric and algebraic applications over numerous coefficient systems. \\ 

{\noindent \bf FI--modules over Noetherian rings. \quad} Shortly after the appearance of Putman's work, the work on FI--modules \cite{CEF} were strengthened further by Church--Ellenberg--Farb--Nagpal \cite{CEFN}. These authors extended several results to broader classes of coefficients: they proved polynomial growth of dimension over fields of positive characteristic, and the Noetherian property over arbitrary Noetherian rings. They generalized their results for several of the above applications to coefficients in the integers or positive characteristic. \\ 

{\noindent \bf Twisted commutative algebras. \quad} In 2010, Snowden \cite{SnowdenSyzygies} independently proved, using different language, several fundamental properties of FI--modules. His work centres on modules over a class of objects called \emph{twisted commutative algebras}; FI--modules are an example. His results include the Noetherian and polynomial growth properties for finitely generated complex FI--modules, results which he used to study syzygies of Segre embeddings. See Sam--Snowden \cite{SamSnowdenIntroductionTCA} for an accessible introduction to the theory of twisted commutative algebras. 

Following the work of Church--Ellenberg--Farb, Sam--Snowden \cite{SamSnowdenGLEquivariant} performed a deeper analysis of the category of FI--modules over a field of characteristic zero and proved a number of algebraic and homological finiteness properties. We would be interested to better understand how the work of Snowden and Sam--Snowden relates to the theory of $\FIW$--modules developed here. \\

{\noindent \bf Gamma modules. \quad} The results of Church--Ellenberg--Farb on $\FI\sh$--modules have strong parallels to work of Pirashvili on $\Gamma$--modules \cite{PirashviliGamma, PirashviliHodge}, functors from the category of finite based sets and (not necessarily injective) based maps. Pirashvili's results include a characterization of integer $\Gamma$--modules \cite[Theorem 3.1]{PirashviliGamma} analogous to the classification of $\FI\sh$--modules \cite[Theorem 2.24]{CEF} and our results for $\FIW\sh$--modules \cite[Theorem \ref{FIW2-ClassifyingFISharp}]{FIW2}.
 
 \subsubsection{New obstacles and new phenomena}
 Much of the theory of $\FIW$--modules parallels the work of Church--Ellenberg--Farb \cite{CEF}, and frequently their methods of proof adapt to our more general context. Some additional hurdles and some new phenomena do emerge, however, for the Weyl groups $B_n$ and $D_n$. These include: \\
 
\noindent   {\bf Restriction and induction. \quad } The restriction and induction adjoint operations between the three categories $\FI_A$, $\FI_D$, and $\FI_{BC}$ give $\FIW$--modules a new level of structure. In Sections \ref{Section:Restriction} and \ref{Section:Induction} we define and study these operations from a category-theoretic perspective.  \\
 
 \noindent  {\bf Branching rules in type D. \quad } The combinatorics of the branching rules for the hyperoctahedral groups, like the symmetric groups, are well understood. With these formulas, many of the methods of proof used by Church--Ellenberg--Farb \cite{CEF} for $\FI_A$--modules adapt beautifully to $\FI_{BC}$--modules, including the proof that finite generation is equivalent to uniform representation stability. In contrast, the branching rules for the groups $D_n$ are more subtle, and it is not clear that the methods in \cite{CEF} adapt so readily to type D. We proceed instead by analyzing the restriction and inductions operations between $\FI_D$ and $\FI_{BC}$. To recover the main results in type D, we relate a finitely generated $\FI_D$--module $V$ to the $\FI_D$--module $\Res^{BC}_D \Ind^{BC}_D V,$  defined in Section \ref{Section:Induction}, and appeal to our results for finitely generated $\FI_{BC}$--modules.\\
 
\noindent {\bf Representation stability in type D. \quad } It was initially unclear how \emph{representation stability} ought to be defined for representations of the even-signed permutation groups $D_n$. The classification of irreducible $D_n$--representations (Section \ref{RepTheoryDn}), which involves \emph{unordered} pairs of partitions and 'split' representations in even degree, did not suggest any deterministic growth rules of the form defined by Church--Farb \cite{RepStability} for $S_n$ and $B_n$ (Section \ref{BackgroundRepStability}). More to the point,  it was not clear that we could expect any specific constraints on the patterns of irreducible representations in a class of sequences as broad and commonly occurring as the finitely generated $\FI_D$--modules. 

The definition of representation stability in type D was ultimately written late in the course of this project, after the discovery of an unanticipatedly strong result: If $V$ is a finitely generated $\FI_D$--module, then, for $n$ sufficiently 
large,  $V_n$ is the restriction of a $B_n$--representation.

\subsection*{Acknowledgments}

I would like to thank Tom Church, Jordan Ellenberg, and Benson Farb for laying the mathematical groundwork for this project, and for extensive discussion about their and this work.  I am also grateful to Rita Jimenez Rolland for many useful conversations. I would like to thank Peter May for helpful conversations and suggestions throughout the project. I would also like to thank Peter May and Daniel Schaeppi for sharing their expertise on category theory. I appreciate the close readings and detailed comments from the anonymous referees. Above all, I would like to express my gratitude to Benson Farb, my PhD advisor, for suggesting this project, and for his ongoing guidance. His support made this work possible.

I am grateful for the support of a PGS D Scholarship from the Natural Sciences and Engineering Research Council of Canada.

\section{Background} \label{SectionBackground}
\subsection{The Weyl groups of classical type}

The classical Weyl groups comprise three one-parameter families of finite reflection groups. The symmetric group $S_n$ is the Weyl group of type A$_{n-1}$; the hyperoctahedral group (or signed permutation group) $B_n$ is the Weyl group of the (dual) root systems of types B$_n$ and C$_n$, and its subgroup the even-signed permutation group $D_n$ is the Weyl group of type D$_n$. We briefly review the representation theory of these groups. 

We note that the finite dimensional complex representations of $S_n$, $B_n$, and $D_n$ are defined over the rational numbers \cite[Theorem 5.4.5, Theorem 5.5.6, Corollary 5.6.4]{GeckPfeiffer}.

\subsubsection{The symmetric group $S_n$} \label{RepTheorySn}

The rational representation theory of the symmetric group $S_n$ is well understood; a standard reference is Fulton--Harris \cite{FultonHarris}. The irreducible representations of $S_n$ are in natural bijection with the set of partitions $\y$ of $n$, which we denote $$\y = ( \y_0, \y_1, \ldots, \y_r), \qquad \text{ with } \y_0 \geq \y_1 \geq \cdots \geq \y_r \text{ and } \y_0 + \y_1 + \cdots + \y_r=n.$$ Each integer $\y_i$ is a \emph{part} or \emph{addend} of the partition. We write $\y \vdash n$ or  $|\y|=n$ to indicate the size of the partition. The \emph{length} $\ell(\y)$ of $\y$ is the number of parts. We write $V_{\y}$ to denote the $S_n$--representation associated to $\y$. 

\subsubsection{The hyperoctahedral group $B_n$} \label{RepTheoryWn}

The hyperoctahedral group $B_n$ is the wreath product $$B_n=\Z / 2 \Z \wr S_n := (\Z / 2\Z)^n \rtimes S_n,$$ where $S_n$ acts on $(\Z / 2\Z)^n$ by permuting the coordinates. It is the symmetry group of the $n$--hypercube, dually, the $n$--hyperoctahedron. There is a canonical representation of $B_n$ as the group of \emph{signed permutation matrices}, that is,  $n\times n$ generalized permutation matrices with nonzero entries $\pm 1$. We can also characterize the hyperoctahedral group as the symmetry group of the set $$\{ \{-1,1\}, \{-2, 2\}, \ldots, \{-n, n\} \},$$ where the $k^{th}$ factor of $(\Z/2\Z)^n$ transposes the elements in the block $\{-k, k \}$, and $S_n$ permutes the $n$ blocks. As such, $B_n$ is also called the \emph{signed permutation group}. 

It is often convenient to consider $B_n$ as a subgroup of the symmetric group $S_{\Omega}$ that acts on the $2n$ letters $$\Omega = \{-1, 1, -2, 2, \ldots, -n, n \}.$$ We frequently write elements of  $B_n$ in the cycle notation of permutations of $\Omega$. \\

{ \noindent \bf  The rational representation theory of $B_n$.  \quad} The representation theory of the hyperoctahedral group was developed by Young in the 1920s \cite{YoungHyperoctahedral}, and further refined by authors including Mayer \cite{MayerTypeCn}; Geissinger and Kinch \cite{GeissingerKinch}; al-Aamily, Morris, and Peel \cite{alAamilyMorrisPeel}; and Naruse \cite{NaruseCn}. It is described in \cite{GeckPfeiffer}.

The rational irreducible representations of $B_n$ can be built up from those of the symmetric group $S_n$.  These irreducible $B_n$--representations are classified by \emph{double partitions} of $n$, that is, ordered pairs of partitions $(\y, \nu) \text{ with}$ $|\y|+|\nu|=n.$ 

 For $\y \vdash n$, define $V_{( \y, \varnothing)}$ to be the $B_n$--representation pulled back from $S_n$--representation $V_{\y}$ under the surjection $\pi : B_n \twoheadrightarrow S_n$.  Let $\Q^{\varepsilon}$ denote the one-dimensional representation associated to the character $$\varepsilon: B_n \cong (\Z / 2\Z)^n \rtimes S_n \twoheadrightarrow \{ \pm 1 \}$$ where the canonical generators of $(\Z/2\Z)^n$ act by $(-1)$, and elements of $S_n$ act trivially. Define $$V_{( \varnothing, \nu)}  : = V_{( \nu, \varnothing)} \otimes \Q^{\varepsilon}.$$ 
 \noindent Then, generally, for $\y \vdash m$ and $\nu \vdash (n-m)$, we define 
$$ V_{(\y, \nu)} := \Ind_{B_m \times B_{n-m}}^{B_n} V_{(\y, \varnothing)} \boxtimes V_{(\varnothing,\nu)}, $$ 
where $\boxtimes$ denotes the external tensor product of the $B_m$--representation $V_{(\y, \varnothing)}$ with the $B_{n-m}$--representation $V_{(\varnothing,\nu)}$. Each double partition $(\y, \nu)$ yields a distinct irreducible representation of $B_n$, and every irreducible representation has this form. \\

{ \noindent \bf  The conjugacy classes of $B_n$.  \quad} The conjugacy classes of $B_n$ were described by Young \cite{YoungHyperoctahedral}. More modern exposition can be found in, for example, Carter \cite{CarterConjugacy}, or Naruse \cite{NaruseCn}. A similar classification is found in \cite[Chapter 3]{GeckPfeiffer}. 

\begin{defn}{\bf (Positive and negative cycles in $B_n$; cycle structure).} \label{DefnBnCycleType} 

An element of $B_n$, viewed as a permutation of $\{ \pm 1, \ldots, \pm n\}$, can be decomposed into cycles. 

A \emph{negative $r$--cycle} is a factor of the form $$\sigma = (s_1 \; s_2 \; \cdots s_r \; -s_1 \; -s_2 \; \cdots -s_r).$$ The element $\sigma$ is mapped to an $r$--cycle under the canonical surjection $B_n \twoheadrightarrow S_n$, and $\sigma^r$ is a product of the $r$ involutions $(s_i \; -s_i )$. Negative cycles reverse the sign of an odd number of digits $\{\pm 1, \ldots, \pm n\}$.

A \emph{positive $r$--cycle} is a factor of the form $$\alpha = (a_1 \; a_2 \; \cdots \; a_r)(- a_1 \; - a_2 \; \cdots \; - a_r) \qquad \text{with $|a_i| \neq |a_j|$ if $i \neq j$}.$$ The element $\alpha$ maps to an $r$-cycle in $S_n$, and $\alpha^r$ is the identity element. Positive cycles negate an even number of digits. 

For example, $(1 \; 2)(-1\; -2)$ and $(1 \; -2)(-1 \; 2)$ are both examples of positive two-cycles in $B_n$, whereas  $(1 \; 2 \; -1\; -2)$  and $(1 \; -2 \; -1\; 2)$ are both negative two-cycles. \end{defn}

The cycle structure of an element $\w \in B_n$ is encoded by double partitions $(\y, \nu)$ of $n$, where $\y$ designates the lengths of the positive cycles, and $\mu$ designates the lengths of the negative cycles. The double partition $(\y, \nu)$ is called the \emph{signed cycle type} of the element $\w$. The following result dates back to Young \cite{YoungHyperoctahedral}. See also \cite[Proposition 24]{CarterConjugacy} and \cite[Proposition 3.4.7]{GeckPfeiffer}. 

\begin{prop} {\bf (Classification of conjugacy classes of $B_n$).} Two elements $x, y \in B_n$ are conjugate in $B_n$ if and only if they have the same signed cycle type. Thus the conjugacy classes of $B_n$ are classified by double partitions $(\y, \nu)$ of $n$. \end{prop}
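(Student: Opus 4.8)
The plan is to route everything through the cycle decomposition of a signed permutation: first show that each $\w\in B_n$ has a well-defined signed cycle type, then establish that conjugation preserves it and that equality of signed cycle type implies conjugacy, and finally read off the classification by double partitions.

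For the decomposition I would view $\w$ as a permutation of $\Omega=\{\pm1,\dots,\pm n\}$; since $\w(-a)=-\w(a)$, the element $\w$ commutes with the sign involution $\iota\colon a\mapsto -a$, so $\iota$ permutes the cycles of $\w$. A $\w$-cycle $C$ is either not fixed by $\iota$, in which case $C$ and $\iota(C)$ are disjoint cycles of the same length $r$ whose union is a positive $r$-cycle in the sense of Definition~\ref{DefnBnCycleType}, or fixed by $\iota$, in which case it contains some $a$ together with $-a$; writing $-a=\w^j(a)$ gives $\w^{2j}(a)=a$, so the length $L$ of $C$ divides $2j$ but not $j$, which forces $L$ to be even, say $L=2r$ with $j=r$, exhibiting $C$ as a negative $r$-cycle. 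Recording the lengths of the positive cycles as a partition $\y$ and of the negative cycles as a partition $\nu$ gives the signed cycle type, and $|\y|+|\nu|=n$ because each positive or negative $r$-cycle occupies exactly $r$ of the pairs $\{-k,k\}$.

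The forward implication is then immediate: if $y=g\w g^{-1}$ with $g\in B_n$, then $g$ carries the cycles of $\w$ to the cycles of $y$ bijectively and length-preservingly, and since $g$ also commutes with $\iota$ we have $\iota(gC)=g(\iota C)$, so $g$ respects the dichotomy above and hence sends positive $r$-cycles to positive $r$-cycles and negative $r$-cycles to negative $r$-cycles. For the converse I would, given equal signed cycle type, fix a length-preserving bijection between the positive cycles of $\w$ and those of $y$ (and likewise between the negative cycles), and define $g$ on each matched pair by sending the listed entries of $\w$'s cycle to the correspondingly listed entries of $y$'s cycle, e.g. $g(a_i)=b_i$, $g(-a_i)=-b_i$ on matched positive cycles $(a_1\cdots a_r)(-a_1\cdots -a_r)$ and $(b_1\cdots b_r)(-b_1\cdots -b_r)$, and similarly on matched negative cycles. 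This $g$ is independent of the chosen starting points, covers $\Omega$, satisfies $g(-a)=-g(a)$ so lies in $B_n$, and a cycle-by-cycle check gives $g\w g^{-1}=y$. Finally the map from conjugacy classes to double partitions of $n$ is well-defined and injective by these two steps, and surjective because any $(\y,\nu)$ is realized by the block-diagonal element assembled from model positive cycles of lengths the parts of $\y$ and model negative cycles of lengths the parts of $\nu$.

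I expect the only real friction to be in the first step — pinning down the canonical decomposition into positive and negative cycles, in particular the order argument that an $\iota$-invariant cycle has even length and folds in half into a negative cycle. Everything afterward is the standard ``match up the cycles'' argument familiar from the $S_n$ case; alternatively one can package the whole proof via the wreath-product structure $B_n=\Z/2\Z\wr S_n$, where the signed cycle type records, for each cycle of the $S_n$-part, its length together with the product in $\Z/2\Z$ of the attached coordinate entries, and the classification of conjugacy classes of wreath products over an abelian group is classical.
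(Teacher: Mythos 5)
Your argument is correct and essentially self-contained, but note that the paper does not prove this proposition at all: it is treated as classical, with citations to Young, Carter \cite[Proposition 24]{CarterConjugacy}, and Geck--Pfeiffer \cite[Proposition 3.4.7]{GeckPfeiffer}. What you have written is the standard proof those sources give (or the equivalent wreath-product formulation $B_n=\Z/2\Z\wr S_n$ that you mention at the end), so there is no divergence of method to weigh — you have simply supplied the argument the paper outsources. Two small wording points, neither a genuine gap: in the analysis of an $\iota$-invariant cycle, ``$L$ divides $2j$ but not $j$'' gives $L$ even, but the cleaner statement is that $j\equiv L/2 \pmod{L}$, so $\w^{L/2}(a)=-a$ and the cycle folds into a negative $(L/2)$-cycle (your ``$j=r$'' should be read as choosing $j$ minimal); and the conjugator $g$ you build in the converse is not independent of the chosen starting points — different listings give different conjugators — but any fixed choice of listings yields a well-defined $g\in B_n$ with $g\w g^{-1}=y$, which is all that is needed.
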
 

{ \noindent \bf  Branching rules and Pieri's formula for $B_n$.  \quad} The branching rules for $B_n$ are a main tool in our development of the theory of $\FI_{BC}$--modules over fields of characteristic zero. These rules are described in, for example, Geck--Pfeiffer \cite[Lemma 6.1.3]{GeckPfeiffer}: %for an irreducible representation $V(\nu^+, \nu^-)$ of $B_n$,  

\begin{equation} \Ind_{B_a \times B_{n-a}}^{B_n} V_{(\y^+,\y^-)}\boxtimes V_{(\mu^+, \mu^-)} = \bigoplus_{(\nu^+, \nu^-)} C_{\y^+, \mu^+}^{\nu^+} C_{\y^-, \mu^-}^{\nu^-}  V_{(\nu^+, \nu^-)}  \label{Eqn:BranchingRule} \end{equation} 

\noindent where $ C_{\y, \mu}^{\nu}$ denotes the Littlewood--Richardson coefficient. We will use in particular Pieri's formula, the case where $V_{(\mu^+, \mu^-)}$ is the trivial representation $k=V_{( (n-a), \varnothing)}$.
\begin{align}
 \Ind^{B_n}_{B_a \times B_{n-a}} V_{(\y^+, \y^-)} \boxtimes k & = \bigoplus_{(\nu^+, \, \nu^-)} C_{\y^+, (n-a)}^{\nu^+} C_{\y^-, \varnothing}^{\nu^-}  V_{(\nu^+,\nu^-)} \notag \\
 & = \bigoplus_{\nu^+} C_{\y^+, (n-a)}^{\nu^+} V_{(\nu^+,\y^-)} \notag \\
 & = \bigoplus_{\nu^+ } V_{(\nu^+,\y^-)} \label{Eqn:WnInducedDecomp}
\end{align}
where the final sum is taken over all partitions $\nu^+$ that can be constructed by adding $(n-a)$ boxes to $\y^+$, with no two boxes added to the same column. Some small cases are shown in Figure \ref{fig:BnBranchingRules}.

\begin{figure}[h!]
\begin{center}
\fbox{ \includegraphics[scale=1.1]{./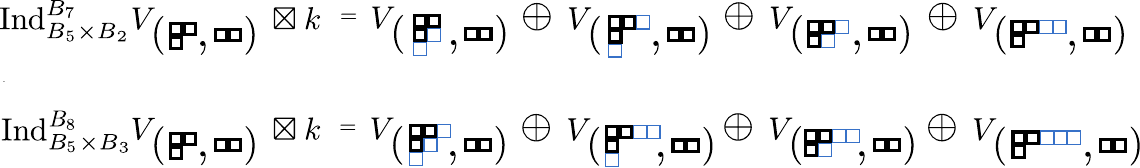}}
\caption{ {\small Illustrating the branching rules for $B_n$.}} 
\label{fig:BnBranchingRules}
\end{center}
\end{figure}

\noindent By Frobenius reciprocity, the multiplicity of $V_{(\y^+,\nu^-)}\boxtimes \; k$ in the restriction $\Res^{B_n}_{B_a \times B_{n-a}} V_{(\nu^+, \nu^-)}$ is 
\begin{align}
   \left\{ \begin{array}{ll} \label{Eqn:WnInvariantsDecomp}
         1 & \mbox{if $\nu^+$ can be constructed by removing $(n-a)$ boxes from distinct columns of $\y^+$,} \\
         0 & \mbox{otherwise.}\end{array}  \right. 
\end{align}

The decomposition of induced representations $\Ind_{B_{n-1}}^{B_n} V_{(\y^+, \y^-)}$ are described by Geck--Pfeiffer \cite[Lemma 6.1.9]{GeckPfeiffer}:
\begin{align}
 \Ind_{B_{n-1}}^{B_n} V_{(\y^+, \y^-)} = \bigoplus_{\overline{\y^+}} V_{(\overline{\y^+}, \y^-)} + \bigoplus_{\overline{\y^-}} V_{(\y^+, \overline{\y^-})} \label{Eqn:PieriGeneral} 
\end{align}
summed over all $\overline{\y^+}$ that can be constructed by adding a single box to $\y^+$, and all $\overline{\y^-}$ that can be constructed by adding a single box to $\y^-$.  By iteratively applying this law to the trivial $B_{n-m}$--module $k$, we find:
\begin{align}
 \Ind_{B_{n-m}}^{B_n} k & = \Ind_{B_{n-1}}^{B_n} \cdots \Ind_{B_{n-m}}^{B_{n-m+1}} V((n-m), \varnothing) \notag \\
                        & = \bigoplus_{\y^+, \y^-} V_{(\y^+, \y^-)} \label{Eqn:Pieri}  
\end{align}
summed over $V(\y^+, \y^-)$ with multiplicity equal to the number of ways that the double partition $(\y^+, \y^-)$ can be built up from $( (n-m), \varnothing)$ by adding one box at a time to either partition. There are no restrictions on columns, though the addition of each box must form a valid double partition. \\

{ \noindent \bf  Restriction from $B_n$ to $S_n$.  \quad} The restriction of a $B_n$--representation $V_{(\y^+, \y^-)}$ to $S_n \subseteq B_n$ is
\begin{align}
 \Res_{S_{n}}^{B_n} V_{(\y^+, \y^-)} = \bigoplus_{\y} C_{\y^+, \y^-}^{\y} V_{\y} \label{Eqn:ResWntoSn} 
\end{align}
where $C_{\y^+, \y^-}^{\y}$ again is the Littlewood--Richardson coefficient (Geck--Pfeiffer \cite[Lemma 6.1.4]{GeckPfeiffer}).

\subsubsection{The even-signed permutation group $D_n$} \label{RepTheoryDn}

We described a representation of the hyperoctahedral group $\varepsilon : B_n \to \Z / 2\Z$ that counts the number of $-1$'s (mod $2$) appearing in a signed permutation matrix $w$. The kernel of this map is the index--$2$ normal subgroup $D_n$ of $B_n$, the \emph{even-signed permutation group}. If we classify elements of $B_n$ by cycle type as in Definition \ref{DefnBnCycleType}, the subgroup $D_n$ comprises exactly those elements of $B_n$ with an even number of negative cycles.\\

{ \noindent \bf  The rational representation theory of $D_n$.  \quad} The representation theory of $D_n$ is given, for example, in \cite[Chapter 5.6]{GeckPfeiffer}. The irreducible representations derive from those of $B_n$. Given an irreducible representation $V_{(\y, \nu)}$ of $B_n$, the restriction to the action $D_n$ decomposes as either one or two distinct irreducible representations. When $\y \neq \nu$, the two irreducible $B_n$--representations  $V_{(\y, \nu)}$ and $V_{(\nu, \y)}$ restrict to the same representation of $D_n$; each distinct set of nonequal partitions $\{ \y, \nu \} $ gives a different irreducible representation $V_{\{\y, \nu\}}$ of $D_n$. When $n$ is even, for any partition $\y \vdash \frac{n}{2}$, the irreducible $B_n$--representation $V_{(\y, \y)}$ restricts to a sum of two nonisomorphic irreducible $D_n$--representations of equal dimension.  Thus, the irreducible representations of $D_n$ are classified by the set 
$$ \left\{ \{ \y, \nu \} \; \vert \; \y \neq \nu, \; |\y|+|\nu| = n \right\} \coprod  \left\{ (\y, \pm) \; \middle| \; |\y|=\frac{n}{2} \right\}, $$ 
with the 'split' irreducible representations $V_{(\y, +)}$ and $V_{(\y, -)}$ only occurring for even $n$. \\

{ \noindent \bf  The conjugacy classes of $D_n$. \quad} The structure of the conjugacy classes of $D_n$ was described by Young \cite{YoungHyperoctahedral}, and more recently by Carter \cite[Proposition 25]{CarterConjugacy} and \cite[Proposition 3.4.12]{GeckPfeiffer}. As with $B_n$, the conjugacy classes of $D_n$ are classified by signed cycle type, with one exception. When $n$ is even, the elements for which all cycles are positive and have even length are now split between two conjugacy classes, as follows:  

Suppose that $\y = (\y_1, \y_2, \ldots, \y_\ell)$ is a partition of $n$ with all parts $\y_i$ of even length. Then
$$ \alpha^+  := (1 \; 2 \; \cdots \; \y_1)\; (-1 \; -2 \; \cdots \; -\y_1) \; (1+\y_1 \; 2+\y_1 \ldots \y_2+\y_1) ( -1-\y_1 \; -2-\y_1 \ldots -\y_2-\y_1) \cdots $$
$$ \text{ and } \alpha^- : = (1 \;  -1) \alpha^+  (1 \; -1) $$
are representatives of the two conjugacy classes of elements with signed cycle type $(\y, \varnothing)$, which we will denote $(\y, +)$ and $(\y, -)$, respectively. In summary:

\begin{prop} {\bf (Classification of conjugacy classes of $D_n$)}
 The conjugacy classes of $D_n$ are classified by the set 
\begin{align*} & \{ (\y, \nu) \; | \; |\y| + |\nu| = n, \text{  $\nu$ has an even number of parts;} \text{if $\nu = \varnothing$ then not all parts of $\y$ are even } \} \\
\coprod \; & \left\{ (\y, \pm) \; | \; |\y| = n, \text{ all parts of $\y$ are even } \right\} 
\end{align*}
\noindent with the 'split' conjugacy classes $(\y, \pm)$ only occurring when $n$ is even. 
\end{prop}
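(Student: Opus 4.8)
The plan is to deduce the classification of conjugacy classes of $D_n$ from that of $B_n$ together with the standard behaviour of conjugacy classes under passage to an index--$2$ subgroup. Recall that $D_n = \ker(\varepsilon)$, and that for $w \in B_n$ of signed cycle type $(\lambda, \nu)$ one has $\varepsilon(w) = (-1)^{\ell(\nu)}$, since each negative cycle reverses the sign of an odd number of digits while each positive cycle reverses the sign of an even number. Hence the $B_n$--conjugacy classes contained in $D_n$ are precisely those whose signed cycle type $(\lambda, \nu)$ has $\nu$ with an even number of parts. Since $[B_n : D_n] = 2$, each such $B_n$--class $C$ is the union of either one or two $D_n$--conjugacy classes, and it is the union of two if and only if $C_{B_n}(w) \subseteq D_n$ for $w \in C$ (equivalently $C_{B_n}(w) = C_{D_n}(w)$); otherwise $C$ is a single $D_n$--class. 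So the task reduces to deciding, for each relevant signed cycle type, whether the $B_n$--centralizer of a representative lies in $D_n$.

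For the ``non--splitting'' cases I would exhibit an explicit centralizing element outside $D_n$. If $\nu \neq \varnothing$, a negative cycle $\sigma$ occurring as a factor of $w$ commutes with $w$ and has $\varepsilon(\sigma) = -1$, so $C$ does not split. If $\nu = \varnothing$ but $\lambda$ has a part of odd length $r$, then the involution negating all the digits appearing in the corresponding positive cycle commutes with $w$ and has $\varepsilon$--value $(-1)^r = -1$, so again $C$ does not split. For the ``splitting'' case, suppose $\nu = \varnothing$ and every part of $\lambda$ is even. Here I would invoke the (standard) description of $C_{B_n}(w)$: any $g$ commuting with $w$ permutes the positive cycles of $w$ among those of equal length, and on each block of digits belonging to a positive $r$--cycle of $w$, $g$ restricts either to a sign--preserving map or to a map contributing exactly $r$ sign changes; since every such $r$ is even, the total number of sign changes is even and $\varepsilon(g) = 1$. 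Therefore $C_{B_n}(w) \subseteq D_n$ and $C$ splits into two $D_n$--classes.

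It remains to match the two classes of the split case with the elements $\alpha^+$ and $\alpha^-$ named in the statement. When all parts of $\lambda$ are even, $\alpha^+$ has signed cycle type $(\lambda, \varnothing)$, and $\alpha^- = (1\;{-1})\,\alpha^+\,(1\;{-1})$ has the same signed cycle type because $(1\;{-1}) \in B_n$; they are $B_n$--conjugate but not $D_n$--conjugate, since an element $h \in D_n$ with $h\alpha^+ h^{-1} = \alpha^-$ would force $(1\;{-1})h \in C_{B_n}(\alpha^+)$, an element of $\varepsilon$--value $-1$, contradicting the previous paragraph. Assembling the cases: the $D_n$--conjugacy classes are indexed by the signed cycle types $(\lambda,\nu)$ with $\nu$ having an even number of parts, with the types $(\lambda,\varnothing)$ all of whose parts are even removed, each remaining type contributing a single class; together with symbols $(\lambda, \pm)$ for $|\lambda| = n$ with all parts of $\lambda$ even, each contributing two classes. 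The latter can occur only for $n$ even, since a partition all of whose parts are even has even size. This is exactly the stated indexing set.

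The only genuine work is the splitting direction --- verifying that the $B_n$--centralizer of an element whose cycles are all positive of even length is contained in $D_n$ --- where one must carefully account for the parity of the number of sign changes contributed by an arbitrary centralizing element. The rest is either cited (the classification of conjugacy classes of $B_n$) or a routine application of the index--$2$ subgroup lemma.
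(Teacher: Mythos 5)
Your proof is correct, and it is worth noting that the paper does not actually prove this proposition at all: it records the splitting representatives $\alpha^{\pm}$ and cites Young, Carter, and Geck--Pfeiffer for the classification. Your argument supplies a self-contained proof along the standard lines: reduce to the classification of $B_n$--classes, observe $\varepsilon(w)=(-1)^{\#\text{negative cycles}}$ so that exactly the types with an even number of negative parts lie in $D_n$, and then apply the index--$2$ splitting criterion (a $B_n$--class in $D_n$ splits iff the $B_n$--centralizer of a representative lies in $D_n$). The non-splitting cases are handled correctly by exhibiting an odd centralizing element (a negative cycle factor, or the block negation of a positive cycle of odd length), and the splitting case by checking that every centralizing element contributes $0$ or $r$ sign changes per positive $r$--cycle block, hence an even total when all parts are even; this last description is literally valid for the standard representative with all letters positive, which suffices since splitting is a class property, though you should say so (for a positive cycle written with mixed signs the per-block count can differ even if its parity does not). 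Your verification that $\alpha^{+}$ and $\alpha^{-}$ are not $D_n$--conjugate, via $(1\,{-1})h\in C_{B_n}(\alpha^{+})$ having $\varepsilon$--value $-1$, and the observation that split types force $n$ even, complete the match with the stated indexing set. In short: the paper buys the result by citation; your route makes the only genuinely delicate point explicit, namely the centralizer computation for elements all of whose cycles are positive of even length.
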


\subsection{Representation stability} \label{BackgroundRepStability}

In a precursor to their work on FI--modules, Church and Farb \cite{RepStability} define a form of stability for a sequence $\{ V_n \}$ of $G_n$--representations, for various families of groups $G_n$ with inclusions $G_n \hookrightarrow G_{n+1}$, including the symmetric and hyperoctahedral groups. We recall their definitions, and additionally introduce a notion of stability for the even-signed permutation groups $D_n$.

For the symmetric groups $S_n$, in order to compare representations for different values of $n$, Church--Farb identify those irreducible representations associated to partitions of $n$ that differ only in their largest parts -- that is, two irreducible representations are considered 'the same' if the Young diagram for one can be constructed by adding boxes to the top row of the Young diagram for the other. 

Accordingly, for a partition $\y = ( \y_1, \y_2, \ldots, \y_t)$ of $m$, we write $V(\y)_n$ to denote the irreducible $S_n$--representation associated to $$\y[n]:= ( (n-m) , \y_1, \y_2, \ldots, \y_t)$$ whenever it is defined, that is,  
\begin{align*}
 V(\y)_n &:= \left\{ \begin{array}{ll}
         V_{\y[n]} & \mbox{$(n-m)\geq \y_1$},\\
        0 & \mbox{otherwise}.\end{array} \right.
\end{align*}
 We call  $\y[n] \vdash n$ the \emph{padded partition} associated to $\y$. 

Similarly, for the hyperoctahedral groups $B_n$, two double partitions are identified if they differ only in the largest part of the first partition. For a double partition $\y = ( \y^+, \y^-)$ with $\y^+ \vdash m $ and $\y^- \vdash \ell $, we define $$\y[n] : = ( \y^+[n-\ell], \y^-)$$ to be the \emph{padded double partition} associated to $\y=(\y^+, \y^-)$, and we write $V( \y )_n$ or $V( \y^+, \y^-)_n$ to denote the irreducible $B_n$--representation 
\begin{align*}
 V(\y)_n &:= \left\{ \begin{array}{ll}
         V_{\y[n]} & \mbox{$(n-\ell- m)\geq \y^+_1$},\\
        0 & \mbox{otherwise}.\end{array} \right.
\end{align*}

Finally, we introduce a stable notation for certain representations of the even-signed permutation groups $D_n$. Let $\y = ( \y^+, \y^-)$ be a double partition with $\y^+ \vdash m$ and $\y^- \vdash \ell $. Then we write $V( \y )_n$ to denote the $D_n$--representation $$V(\y)_n := \Res_{D_n}^{B_n} V(\y)_n.$$ Explicitly, $V(\y)_n$ is the $D_n$--representation
\begin{align*}
 V(\y)_n &= \left\{ \begin{array}{ll}
         V_{ \{ \y^+[n-\ell], \; \y^- \}} & \mbox{$(n-\ell -m)\geq \y^+_1$ and $\y^+[n-\ell] \neq \y^-$},\\
         V_{ \{  \y^-, \; + \}} \oplus V_{ \{  \y^-, \; - \}} & \mbox{$(n-\ell-m)\geq \y^+_1$ and $\y^+[n-\ell] = \y^-$}, \\
        0 & \mbox{otherwise}.\end{array} \right.
\end{align*}
 We note that  $V(\y)_n$ is an irreducible $D_n$--representation for all but at most one value of $n$.

\begin{defn} \label{DefnConsistent} {\bf (Consistent sequence).}
Let $\{ V_n \}$ be a sequence of $G_n$--representations with maps $\phi_n :V_n \to V_{n+1}$. The sequence $\{V_n, \phi_n \}$ is \emph{consistent} if $\phi_n$ is equivariant with respect to the $G_n$--action on $V_n$ and the $G_n$--action on $V_{n+1}$ under restriction to the subgroup $G_n \hookrightarrow G_{n+1}$. \end{defn}

\begin{defn} \label{DefnRepStability} {\bf (Representation stability).}
A consistent sequence $\{V_n, \phi_n\}$ of finite dimensional $G_n$--representations is \emph{representation stable} if it satisfies three properties: \\[-10pt]

\noindent I. \textbf{\emph{Injectivity.}} The maps $\phi_n :V_n \to V_{n+1}$ are injective, for all $n$ sufficiently large. \\[-10pt]

\noindent II. \textbf{\emph{Surjectivity.}} The image $\phi_n(V_n)$ generates $V_{n+1}$ as a $k[G_{n+1}]$--module, for all $n$ sufficiently large. \\[-10pt]

\noindent III. \textbf{\emph{Multiplicities.}} Decompose $V_n$ into irreducible $G_n$--representations: $$V_n = \bigoplus_{\y}c_{\y,n}V(\y)_n.$$ For each $\y$, the multiplicity $c_{\y,n}$ of $V(\y)_n$ is eventually independent of $n$. 

\end{defn}

\begin{defn} {\bf (Uniform representation stability).}
Let $\{ V_n, \phi_n \}$ be a representation stable sequence with the multiplicity $c_{\y,n}$ constant for all $n \geq N_{\y}$. The sequence $\{ V_n, \phi_n \}$ is \emph{uniformly} representation stable if $N = N_{\y}$ can be chosen independently of $\y$.
\end{defn}

\section{$\FIW$--modules and related constructions}

\subsection{The category $\FIW$}

In this section we recall the main definitions and establish some notation. Let $\W_n$ denote the Weyl group $S_n$, $D_n$, or $B_n$. In Definition \ref{DefnFIW} we defined the category $\FIW$ with objects indexed by the natural numbers $\Z_{\geq 0}$, and morphisms generated by its endomorphisms $\End(\bn) \cong \W_n$ and the canonical inclusions $I_n: \bn \hookrightarrow {\bf (n+1)}.$ 

 Throughout this paper, we will let $I_n$ denote this natural inclusion of sets, and write $I_{m,n}: \bm \to \bn$ to denote the composite $$ I_{m,n} := I_{n-1} \circ \ldots \circ I_m.$$  Any $\FIW$ morphism $f: \bm \to \bn$ factors as the composite of $I_{m,n}$ with some (signed) permutation $\s \in \W_n$. \\

{\noindent  \bf The stabilizer of $I_{m,n}$. \quad} The group $\W_n$ acts transitively on the set of morphisms $\Hom_{\FIW}(\bm, \bn)$ by postcomposition. We denote by $H_{m,n} = H_{m,n}^{\W}$ the stabilizer of $I_{m,n}$ in $\W_n$. As depicted in Figure \ref{fig:Stabilizer}, the group $H_{m,n}$ is the copy of $\W_{n-m} \subseteq \W_n$ that pointwise fixes the image $I_{m,n}(\bm) \subseteq \bn$. 

\begin{figure}[h!]
\begin{center}
\setlength\fboxsep{5pt}
\setlength\fboxrule{0.5pt}
\fbox{ \includegraphics[scale=3.5]{./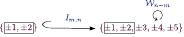}}
\caption{ {\small The stabilizer $H_{m,n}$}}
\label{fig:Stabilizer}
\end{center}
\end{figure}

\begin{rem}\label{HomFID=HomFIBC}  {\bf ($\Hom_{\FI_D}(\bm, \bn) = \Hom_{\FI_{BC}}(\bm, \bn)$ for $m\neq n$).} When $m < n$, the $\FI_D$ morphisms $\Hom_{\FI_D}(\bm, \bn)$ may reverse an even or odd number of signs. These morphisms are by definition generated by $I_{m, n}$ and $\End_{\FI_D}(\bn) \cong D_n$. For example, although $(1 \;-1)(n \; -n) \in D_n,$ the involution $(n \; -n)$ is in the stabilizer of $I_{m,n}$. Thus
$$(1 \;-1)(n \; -n) \circ I_{m,n} = (1 \;-1) \circ I_{m,n} \in \Hom_{\FI_D}(\bm, \bn) $$ negates only $\pm1$. When $m \neq n$, the set of $\FI_D$ morphisms $f: \bm \to \bn$ coincides exactly with the set of $\FI_{BC}$ morphisms $f: \bm \to \bn$.
 
\end{rem}

%We denote by $N_{m,n}$ the normalizer of $H_{m,n}$. For type $A$, $N_{m,n} \cong S_m \times S_{n-m}$, and for type $B$/$C$, $N_{m,n} \cong W_m \times W_{n-m}$. For type $D$, $N_{m,n} \cong (W_m \times W_{n-m}) \cap D_n$.

\subsection{$\FIW$--modules}

Recall from Definition \ref{DefnFIWModule} that an $\FIW$--module over a ring $k$ is a functor $V: \FIW \to k\text{--Mod}.$

For a fixed family of Weyl groups $\W_n$ and ring $k$, the set of all $\FIW$--modules over $k$ form a category. A \emph{map of $\FIW$--modules} $F: V \to V'$ is a natural transformation, that is, it is a sequence of maps $F_n: V_n \to V_n '$ that commute with the $\FIW$ morphisms in the sense that $$F_n \circ V(f) = V'(f) \circ F_m \qquad \text{for every $f \in \Hom_{\FIW}(\bm, \bn)$}.$$   

\begin{example} The spaces $V_n = \Q^n$ form an $\FIW$--module with the canonical action of $\W_n$ by (signed) permutation matrices, and the standard inclusions $(I_n)_*: \Q^n \hookrightarrow \Q^{n+1}.$
\end{example}

\begin{example} Church--Ellenberg--Farb showed in \cite[Proposition 2.56]{CEF} that, for any partition $\y$ of $n$, the sequence of $S_n$--representations $V_n = V(\y)_n$ admits an $\FI_A$--module structure. We will show in Definition \ref{Defn:FIModuleV(y)} and Proposition \ref{FIModuleV(y)} that, analogously, for any double partition $\y=(\y^+, \y^-)$ of $n$, the sequence of $B_n$--representations $V_n = V(\y)_n$ admits an $\FI_{BC}$--module structure. Restriction of this $\FI_{BC}$--module to $\FI_D$ gives the sequence of $D_n$--representations $V_n = V(\y)_n$ an $\FI_D$--module structure.
\end{example}

{\noindent \bf Recognizing $\FIW$--modules. \quad} An $\FIW$--module gives a consistent sequence of $\W_n$--representations in the sense of Definition \ref{DefnConsistent}, as the images of the natural inclusions $I_n$ give maps $\phi_n = (I_n)_*: V_n \to V_{n+1}$ compatible with the action of $\W_n = \End(\bn)$. Not all consistent sequences arise from $\FIW$--modules, however. The following lemma gives necessary and sufficient conditions for a consistent sequence $\{V_n, \phi_n \}$ of $\W_n$--representations to have the structure of an $\FIW$--module. 

\begin{lem} \label{PromotionToFI} {\bf ($\FIW$--modules vs. consistent sequences).}
  A consistent sequence $\{V_n, \phi_n \}$ of $\W_n$--representations can be promoted to an $\FIW$--module with $\phi_n = (I_n)_*$ if
and only if, for all $m, n$, the stabilizer $$H_{m,n}:= \Stab(I_{m,n}) \cong \W_{n-m}$$ acts trivially on the image of $I_{m,n}(V_m) \subseteq V_n$. 
\end{lem}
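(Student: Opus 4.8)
The forward direction is essentially a restatement of functoriality: if $\{V_n,\phi_n\}$ comes from an $\FIW$--module $V$ with $\phi_n=(I_n)_*$, then any $h\in H_{m,n}=\Stab(I_{m,n})$ satisfies $h\circ I_{m,n}=I_{m,n}$ in $\FIW$, so applying the functor gives $V(h)\circ V(I_{m,n})=V(I_{m,n})$, i.e. $h$ acts trivially on $I_{m,n}(V_m)=\mathrm{im}\,V(I_{m,n})$. I would dispatch this in two lines.

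For the converse, the plan is to build the functor $V$ explicitly on morphisms, given only the data $\{V_n,\phi_n\}$ and the triviality hypothesis, and then check well-definedness and functoriality. First I would record the structural fact (already noted in the excerpt) that every $f\in\Hom_{\FIW}(\bm,\bn)$ factors as $f=\sigma\circ I_{m,n}$ for some $\sigma\in\W_n$, and that the set of such $\sigma$ is precisely a coset $\sigma_0 H_{m,n}$ of the stabilizer. I would then \emph{define} $V(f):=\sigma\circ(I_{m,n})_*$, where $(I_{m,n})_*:=\phi_{n-1}\circ\cdots\circ\phi_m:V_m\to V_n$ and $\sigma$ acts via the given $\W_n$--representation structure on $V_n$; for endomorphisms $f\in\W_n=\End(\bn)$ this is just the given action, and $V(I_n)=\phi_n$ by construction.

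The main obstacle — and the only place the hypothesis is used — is showing $V(f)$ is well-defined independent of the choice of factorization $f=\sigma\circ I_{m,n}$. If $\sigma\circ I_{m,n}=\sigma'\circ I_{m,n}$, then $\sigma^{-1}\sigma'\in H_{m,n}$, so $\sigma^{-1}\sigma'$ acts trivially on $(I_{m,n})_*(V_m)$ by hypothesis; hence $\sigma'\circ(I_{m,n})_*=\sigma\circ(I_{m,n})_*$ as maps $V_m\to V_n$. Here I'd need to be slightly careful that the hypothesis as stated (triviality on the image $I_{m,n}(V_m)$) is exactly triviality of the endomorphism $V(h)$ restricted to $\mathrm{im}(I_{m,n})_*$, which is immediate from the definition of $(I_{m,n})_*$ as the iterated composite of the $\phi$'s.

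Once well-definedness is secured, functoriality $V(g\circ f)=V(g)\circ V(f)$ follows by a routine computation: write $f=\sigma\circ I_{m,n}$, $g=\tau\circ I_{n,p}$, observe that $\sigma\circ I_{m,n}$ can be rewritten (using the consistency/equivariance of the $\phi_n$ with the group actions, Definition \ref{DefnConsistent}) so that $g\circ f = (\tau\cdot\widetilde\sigma)\circ I_{m,p}$ for an appropriate lift $\widetilde\sigma\in\W_p$ of $\sigma$, and then check both sides agree as maps $V_m\to V_p$ — the key identity being $\phi_n\circ(\text{action of }\sigma)=(\text{action of }\iota(\sigma))\circ\phi_n$ where $\iota:\W_n\hookrightarrow\W_p$ is the standard inclusion, which is precisely what consistency of the sequence provides. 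Identities go to identities trivially. This establishes that $V:\FIW\to k\text{--Mod}$ is a functor with $V(I_n)=\phi_n$, completing the proof.
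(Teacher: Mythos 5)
Your proposal is correct and follows essentially the same route as the paper: prove the forward direction from functoriality, and for the converse define $f_* := \sigma_* \circ \phi_{m,n}$ from a factorization $f = \sigma \circ I_{m,n}$, using the stabilizer hypothesis exactly for well-definedness and consistency of the sequence for compatibility with composition. Your extra detail on the functoriality check (the identity $\phi_n \circ \sigma_* = \iota(\sigma)_* \circ \phi_n$) is just an expansion of what the paper leaves as "straightforward to check."
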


\begin{proof} An element of $\tau \in H_{m,n}$ by definition satisfies $\tau \circ I_{m,n} = I_{m,n}$, so given any $\FIW$--module $V$ these elements necessarily act trivially on the image $(I_{m,n})_*(V_m) \subseteq V_n.$  

Conversely, consider a consistent sequence $\{V_n, \phi_n\}$ of $\W_n$--representations. Define $\phi_{m,n}$ to be the composite $\phi_{n-1} \circ \cdots \circ \phi_m.$ Given any $f \in \Hom_{\FIW}(\bm, \bn)$, factor $f = \s \circ I_{m,n}$ for some $\s \in \W_n$. We can then realize $\{V_n, \phi_n\}$ as an $\FIW$--module by assigning $$ V: f \mapsto f_* := \s_* \circ \phi_{m,n};$$ the condition of the lemma is precisely the condition needed to ensure that this assignment is well-defined, independent of choice of factorization of $f$. It is straightforward to check that the consistency of the sequence $\{V_n, \phi_n \}$ ensures that the assignment $f \mapsto f_*$ respects composition. 
\end{proof}

The result of Lemma \ref{PromotionToFI} was proven for $\FI_A$--modules by Church--Ellenberg--Farb \cite[Lemma 2.1]{CEF}; they show that a consistent sequence $\{V_n, \phi_n \}$ of $S_n$--representations can be promoted to an $\FI_A$--module if and only if for all $m \leq n$, $\s, \s' \in S_n$, and $v \in V_n$ lying in the image of $V_m$ ,
$$ \s \vert _{ \{ 1, 2, \ldots, m \} } = \s' \vert _{ \{ 1, 2, \ldots, m \} } \qquad \Longrightarrow \qquad  \s (v) = \s'(v).$$

\begin{example}\label{NonExampleRegularReps}{\bf (The regular representations do not form an $\FIW$--module).} When $\W_n$ is any of $S_n$, $B_n$, or $D_n$, the sequence of regular representations $V_n := k[\W_n]$ is a consistent sequence that is not an $\FIW$--module. In each case, for example, the permutation that transposes $n$ and $(n-1)$ acts nontrivially on the image $I_{(n-2), n}(V_{n-2})$, violating the conditions of Lemma \ref{PromotionToFI}.
\end{example}

\begin{example}\label{NonExampleAltReps}{\bf (Alternating and sign representations do not form $\FIW$--modules).}  A second example: The sequence of alternating representations $V_n \cong k$ of the symmetric groups $S_n$, or its pullbacks to $B_n$ or $D_n$, give a consistent sequence with no $\FIW$--module structure. Again, the $2$-cycle that transposes $n$ and $(n-1)$ acts nontrivially on the image $I_{(n-2), n}(V_{n-2})$. For similar reasons, the sign representations $\varepsilon$ defined in Section \ref{RepTheoryWn} provide a consistent sequence of $B_n$--representations with no $\FI_{BC}$--module structure.    
\end{example}

In summary: to verify that a sequence $\{V_n, \phi_n \}$ has $\FIW$--module structure, we must check two conditions. The sequence must be consistent in the sense of Definition \ref{DefnConsistent}, and it must satisfy the condition on stabilizers described in Lemma \ref{PromotionToFI}.

\subsection{The $\FIW$--modules $M_{\W}(\bm)$ and $M_{\W}(U)$}

In analogy to \cite[Definition 2.5]{CEF}, we define the $\FIW$--modules $M_{\W}(\bm)$. These are in a sense the 'free' finitely generated $\FIW$--modules; we will see in Proposition \ref{FinGen} that every finitely generated $\FIW$--module is a quotient of a sum of $\FIW$--modules of this form. This property will be critical to our development of the theory of $\FIW$--modules. 

\begin{defn} {\bf (The $\FIW$--module $M_{\W}(\bm)$).} \label{Defn:M(m)} Define $M_{\W}(\bm)$ to be the $\FIW$--module such that $M_{\W}(\bm)_n$ is the $k$-module with basis $\Hom_{\FIW}(\bm, \bn)$ and an action of $\W_n$ by post-composition. 

Since $\W_n$ acts transitively on $\Hom_{\FIW}(\bm, \bn)$, we can identify the $\W_n$--set $\Hom_{\FIW}(\bm, \bn)$ with the cosets of the stabilizer $H_{m,n}:=\Stab(I_{m,n}) \cong \W_{n-m} \subseteq \W_n.$ This gives an isomorphism of $\W_n$--representations $$M_{\W}(\bm)_n \cong \Ind_{\W_{n-m}}^{\W_n} k$$ where $k$ has a trivial $\W_m$ action. Over a field of characteristic zero, the decomposition of these representations are described in Pieri's rules; see Equation (\ref{Eqn:Pieri}) for the hyperoctahedral formula.
 \end{defn} 
 
 Observe $M_{\W}(\bm)_n = 0$ when $n<m.$ The first nonzero degree $n=m$ is the regular representation $M_{\W}(\bm)_m \cong k[\W_m].$ In general, $M_{\W}(\bm)_n$ can be considered the permutation representation of $\W_n$ on the set of $m$-tuples $$\Big(f(1), f(2), \ldots, f(m)\Big), \qquad f(j) \in \bn,$$ that designate the images of the $\FIW$ morphisms $f: \bm \to \bn$.  
 
\begin{example}{\bf ($M_{\W}({\bf 0})$ and $M_{\W}({\bf 1})$)} The $\FIW$--module $M_{\W}({\bf 0})$ is the sequence of trivial representations $M_{\W}({\bf0})_n \cong k.$ 

The $\FI_A$--module $M_{A}({\bf 1})$ is the sequence of canonical $S_n$--representations as permutation matrices. Over characteristic zero, in the notation of Section \ref{BackgroundRepStability}, we get the following decomposition:
$$M_{A}({\bf 1})_n \cong V( \Y{1})_n \oplus V(\varnothing)_n \qquad \text{for all $n$}.$$

The $\FI_{BC}$--module $M_{BC}({\bf 1})$ is the sequence of $(2n)$-dimensional representations of $B_n$ permuting a basis $\{ e_{1}, e_{-1}, \ldots, e_{n}, e_{-n} \}$. Over characteristic zero, $M_{BC}({\bf 1})_n$ decomposes as follows.
$$ M_{BC}({\bf 1})_n = V( \varnothing, \Y{1})_n \oplus V( \Y{1}, \varnothing)_n \oplus V( \varnothing, \varnothing)_n \qquad \text{for all $n$.} \qquad $$ Here,
 $$ V( \varnothing, \Y{1})_n = \Big\langle (e_1 - e_{-1}), \ldots , (e_n - e_{-n}) \Big\rangle $$ is the canonical $B_n$--representation by signed permutation representations, and 
$$ V( \Y{1}, \varnothing)_n \oplus V( \varnothing, \varnothing)_n = \Big\langle (e_1 + e_{-1}), \ldots , (e_n + e_{-n}) \Big\rangle $$ 
is the pullback of the canonical $S_n$ permutation representation. It is an exercise to verify that these decompositions are consistent with Pieri's rule, Equation (\ref{Eqn:Pieri}).

The representation $M_{D}({\bf 1})_1$ is trivial, but for $n>1$ the $D_n$--representation $M_{D}({\bf 1})_n$ is the restriction of the $B_n$--representation $M_{BC}({\bf 1})_n$ described above. 
\end{example}

\begin{rem} \label{RemarkMDisMBC} Recall from Remark \ref{HomFID=HomFIBC} that $\Hom_{\FI_D}(\bm, \bn) = \Hom_{\FI_{BC}}(\bm, \bn)$ whenever $m\neq n$.
There is therefore an isomorphism if $D_n$--representations $$M_D(\bm)_n \cong \Res^{B_n}_{D_n} M_{BC}(\bm)_n \qquad \text{whenever $m \neq n$.}$$ 
 These isomorphisms will be crucial to our study of induction of $\FI_D$--modules in Section \ref{Section:Induction}.
\end{rem}

\subsubsection{An adjunction}

\begin{defn}
 Let $\W_m$--Rep denote the category of $\W_m$--representations over a ring $k$. For each fixed integer $m \geq 0$, analogous to the definition of $\mu_m$ given by Church--Ellenberg--Farb \cite{CEF}, we define the functor
\begin{align*}
\mu_m: \W_m\text{--Rep} & \longrightarrow \FIW \text{--Mod} \\
U & \longmapsto M_{\W}(\bm) \otimes_{k[\W_m]} U.
\end{align*}
\end{defn}
As in \cite[Proposition 2.6]{CEF}, we note that since $M_{\W}(\bm)_n \cong \Ind_{\W_{n-m}}^{\W_n} k$ is isomorphic to $k[\W_n / \W_{n-m}],$ we can equivalently describe $\mu_m$ by the formula:
\begin{align*}
 (\mu_m(U))_n &= \left\{ \begin{array}{ll}
         0 & \mbox{$ n < m $},\\
        \Ind_{\W_m \times \W_{n-m}}^{\W_n} U \boxtimes k & \mbox{$ n \geq m$}.\end{array} \right.
\end{align*}
where $\boxtimes$ denotes the external tensor product, and $k$ denotes the trivial $\W_{n-m}$--representation.

\begin{prop} \label{Adjunction}
 The functor $$\mu_m:  \W_m\text{--Rep}  \longrightarrow \FIW \text{--Mod}$$ is the left adjoint to the forgetful functor
 \begin{align*}
\pi_m :  \FIW \text{--Mod} & \longrightarrow \W_m \text{--Rep} \\
 V & \longmapsto V(\bm). 
\end{align*}
\end{prop}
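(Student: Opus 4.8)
The plan is to exhibit a natural bijection
\[
\Hom_{\FIW\text{--Mod}}\bigl(\mu_m(U),\, V\bigr) \;\cong\; \Hom_{\W_m\text{--Rep}}\bigl(U,\, \pi_m(V)\bigr)
\]
for every $\W_m$--representation $U$ and every $\FIW$--module $V$, natural in both variables. First I would unwind the left-hand side: a map of $\FIW$--modules $F\colon \mu_m(U) \to V$ is a natural transformation, so it is determined by the component maps $F_n\colon \bigl(M_{\W}(\bm)\otimes_{k[\W_m]} U\bigr)_n \to V_n$ compatible with all $\FIW$--morphisms. The key observation is that $\mu_m(U)$ is generated in degree $m$: every element of $M_{\W}(\bm)_n$ is an image of the basis element $I_{m,n}\in\Hom_{\FIW}(\bm,\bn)$ under some $\s\in\W_n$, hence every element of $\mu_m(U)_n$ is in the $\FIW$--span of $\mu_m(U)_m = k[\W_m]\otimes_{k[\W_m]} U \cong U$. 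Consequently $F$ is determined by its component $F_m\colon U \to V_m = \pi_m(V)$, which is $\W_m$--equivariant since $F$ is a natural transformation and $\W_m = \End_{\FIW}(\bm)$. This defines the forward map $F \mapsto F_m$.

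For the reverse direction, given a $\W_m$--equivariant map $g\colon U \to \pi_m(V) = V_m$, I would define $F_n$ on a simple tensor $f\otimes u$ (with $f\in\Hom_{\FIW}(\bm,\bn)$, $u\in U$) by $F_n(f\otimes u) := V(f)\bigl(g(u)\bigr)$, and extend $k$-linearly. The main thing to check here is \emph{well-definedness over the tensor product relation}: for $\tau\in\W_m$ we need $F_n(f\tau \otimes u) = F_n(f\otimes \tau u)$, i.e.\ $V(f\tau)(g(u)) = V(f)(g(\tau u))$. This follows because $V(f\tau) = V(f)\circ V(\tau)$ by functoriality, $V(\tau) = \tau_*$ acts on $V_m$, and $g$ is $\W_m$--equivariant, so $V(f)(V(\tau)(g(u))) = V(f)(g(\tau u))$. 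Then I would verify that the collection $\{F_n\}$ is genuinely a natural transformation: for $h\in\Hom_{\FIW}(\bn,\bn')$ one has $F_{n'}(h_*(f\otimes u)) = F_{n'}((h\circ f)\otimes u) = V(h\circ f)(g(u)) = V(h)\bigl(V(f)(g(u))\bigr) = V(h)(F_n(f\otimes u))$, again just functoriality of $V$.

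Finally I would confirm that the two constructions are mutually inverse — starting from $g$, building $F$, and restricting to degree $m$ recovers $F_m(1\otimes u) = V(\mathrm{id}_{\bm})(g(u)) = g(u)$; conversely starting from $F$, the reconstructed map agrees with $F$ on all simple tensors $f\otimes u = f_*(1\otimes u)$ because $F$ is natural — and that the bijection is natural in $U$ and $V$, which is routine from the explicit formulas. I do not expect any serious obstacle: the only subtlety is the well-definedness check over $\otimes_{k[\W_m]}$, which is exactly where the hypothesis that $g$ is $\W_m$--equivariant is used, and the fact that $\mu_m(U)$ is concentrated-and-generated in degree $m$, which is immediate from the transitivity of the $\W_n$--action on $\Hom_{\FIW}(\bm,\bn)$ recorded in Definition \ref{Defn:M(m)}. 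This is the standard ``free functor / forgetful functor'' adjunction argument, parallel to \cite[Proposition 2.6]{CEF}, and the proof is essentially a diagram-chase through the functoriality of $V$.
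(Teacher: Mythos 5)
Your proposal is correct and follows the same route the paper intends: the paper simply cites the argument of \cite[Proposition 2.6]{CEF} (the standard free/forgetful adjunction, with $\W_m$ in place of $S_m$), and your explicit bijection, the well-definedness check over $\otimes_{k[\W_m]}$, and the use of generation of $\mu_m(U)$ in degree $m$ are exactly that argument spelled out.
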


The proof of the adjunction follows from the same argument given for \cite[Proposition 2.6]{CEF}, by considering any Weyl group $\W_m$ in place of the symmetric group $S_m$. 

We remark that $$\mu_m(k[\W_m]) = M_{\W}(\bm) \otimes_{k[\W_m]} k[\W_m] \cong M_{\W}(\bm) .$$ 

 More generally, if $U$ is a finite-dimensional $\W_m$--representation, we denote $\mu_m(U)$ by $M_{\W}(U)$. Following \cite[Definition 2.7]{CEF}, we extend the functor $M_{\W}$ to $\bigoplus_m \W_m$--Rep. 

\begin{defn}
 Define $M_{\W}$ to be the map 
\begin{align*}
M_{\W}: \bigoplus_m \W_m \text{--Rep} & \longrightarrow \FIW \text{--Mod} \\
  U_m & \longmapsto   \mu_m (U_m)
\end{align*}
\end{defn}

\subsection{Generation of $\FIW$--modules}

Church--Ellenberg--Farb defined notions of span, finite generation, and degree of generation for FI--modules, which apply equally in the more general context of $\FIW$--modules. These definitions are summarized below. 

\begin{defn} {\bf (Span; Generating set).}
 If $V$ is an $\FIW$--module, and $S$ is a subset of the disjoint union $\coprod V_n$, then the \emph{span} of $S$, denoted span$_V(S)$, is the minimal $\FIW$--submodule of $V$ containing the elements of $S$. We call span$_V(S)$ the \emph{sub-FI--module generated by $S$}.
\end{defn}

Recall from Definition \ref{DefnFinGen} that an $\FIW$--module $V$ is \emph{finitely generated} if there is a finite set of elements $S = \{v_1, \ldots, v_l \} \subseteq \coprod V_n$ such that span$_V(S)=V$. Moreover $V$ is \emph{generated in degree $\leq m$} if $V= \text{ span}_V(\coprod_{i=0}^{m} V_i).$ We call the minimum such $m$ the \emph{degree of generation} of $V$, if it exists.

\begin{example} \label{M(m)FinGen}
 The $\FIW$--module $M_{\W}(\bm)$ is generated in degree $m$ by the identity map $\text{id}_m$ in $\Hom_{\FIW}(\bm, \bm),$ the basis for $M_{\W}(\bm)_{m}$. More generally, given a nonzero $\W_m$--representation $U$, the $\FIW$--module $M_{\W}(U) :=  M_{\W}(\bm) \otimes_{k[\W_m]} U$ is generated in degree $m$ by $M_{\W}(U)_m = U$.
\end{example}

Just as in \cite[Remark 2.13, Proposition 2.16]{CEF}, the finitely generated $\FIW$--modules are precisely those which admit a surjection by an $\FIW$--module of the form $\oplus_{i} M_{\W}(\bm_i)$. 

\begin{prop}\label{FinGen}
 An $\FIW$--module is finitely generated in degree $\leq m$ if and only if it admits a surjection $\oplus_{i} M_{\W}(\bm_i) \twoheadrightarrow V$ for some finite sequence of integers $\{m_i\}$, with $m_i \leq m$ for each $i$. 
\end{prop}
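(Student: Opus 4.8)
\textbf{Proof plan for Proposition \ref{FinGen}.}

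The plan is to prove both implications directly, closely following the argument for \cite[Remark 2.13, Proposition 2.16]{CEF} and using the adjunction of Proposition \ref{Adjunction} together with Example \ref{M(m)FinGen}. For the ``if'' direction, suppose there is a surjection $\Phi\colon \bigoplus_i M_{\W}(\bm_i) \twoheadrightarrow V$ with each $m_i \leq m$. By Example \ref{M(m)FinGen}, each summand $M_{\W}(\bm_i)$ is generated in degree $\leq m$ by the single element $\mathrm{id}_{m_i} \in M_{\W}(\bm_i)_{m_i}$, so the finite set $\{\Phi(\mathrm{id}_{m_i})\}_i$ generates $V$; since a surjection of $\FIW$--modules sends a generating set to a generating set (the image of an $\FIW$--submodule is an $\FIW$--submodule, and $\mathrm{span}$ commutes with surjections), $V$ is generated in degree $\leq m$, hence finitely generated in degree $\leq m$.

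For the ``only if'' direction, suppose $V$ is finitely generated in degree $\leq m$ by elements $v_1, \dots, v_\ell$ with $v_i \in V_{m_i}$, $m_i \leq m$. Each $v_i$ determines a $k[\W_{m_i}]$--module map $k[\W_{m_i}] \to V_{m_i} = \pi_{m_i}(V)$ sending $1 \mapsto v_i$; equivalently, by Proposition \ref{Adjunction}, an $\FIW$--module map $M_{\W}(\bm_i) = \mu_{m_i}(k[\W_{m_i}]) \to V$ whose image contains $v_i$. (More efficiently, one can take the $k[\W_{m_i}]$--submodule $U_i \subseteq V_{m_i}$ generated by $v_i$ and use $M_{\W}(U_i) \to V$, but $k[\W_{m_i}]$ works fine and keeps the summands uniform.) Assembling these over $i = 1, \dots, \ell$ gives an $\FIW$--module map $\Phi\colon \bigoplus_{i=1}^\ell M_{\W}(\bm_i) \to V$. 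The image of $\Phi$ is an $\FIW$--submodule of $V$ containing all the $v_i$, hence equals $\mathrm{span}_V(\{v_i\}) = V$; thus $\Phi$ is surjective, and all $m_i \leq m$ as required.

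I do not expect any serious obstacle here: the statement is essentially a reformulation of finite generation via the free objects $M_{\W}(\bm)$, and every ingredient (the explicit description of $M_{\W}(\bm)$ as $\Ind_{\W_{n-m}}^{\W_n} k$, its generation in degree $m$, and the adjunction $\mu_m \dashv \pi_m$) is already in place. The only point requiring a line of care is the well-definedness of the map $M_{\W}(\bm_i) \to V$ induced by $v_i$ — but this is exactly what the adjunction in Proposition \ref{Adjunction} provides, since giving an $\FIW$--map out of $\mu_{m_i}(W)$ is the same as giving a $\W_{m_i}$--map $W \to \pi_{m_i}(V)$. One should also note that finite generation in degree $\leq m$ is by definition equivalent to being generated (as an $\FIW$--module) by $\coprod_{i \le m} V_i$ together with finiteness of a generating set, so the two phrasings in the statement match; this is immediate from Definition \ref{DefnFinGen} and the fact that each $V_i$ is finitely generated as a $k[\W_i]$--module is \emph{not} needed, since we only require a finite generating set of $\FIW$--module generators.
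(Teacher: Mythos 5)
Your proposal is correct and takes essentially the same approach as the paper: the paper's proof of the ``only if'' direction writes down exactly the map your adjunction argument produces (sending the basis element $f \in \Hom_{\FIW}(\bm_i, \bn)$ of $M_{\W}(\bm_i)_n$ to $f_*(v_i)$), and its ``if'' direction is your observation that the image is generated by the images of the identity morphisms $\mathrm{id}_{m_i}$, which lie in degrees $m_i \leq m$. Invoking Proposition \ref{Adjunction} instead of defining the map explicitly is only a packaging difference, one the paper itself acknowledges in the Yoneda-style remark following the proposition.
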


\begin{proof}
Given any finitely generated $\FIW$--module $V = \Span_V( v_1, \ldots, v_{\ell})$, with $v_i \in V_{m_i}$, the map 
\begin{align*}
\bigoplus_{i=1}^{\ell} M_{\W}(\bm_i) & \longrightarrow V \\
f  & \longmapsto f_*(v_i)  \qquad f \in \Hom_{\W}(\bm_i, \bn), \text{ the basis for } M_{\W}(\bm_i)_n
\end{align*}
is the desired surjection of $\FIW$--modules.  

Conversely, the image of an $\FIW$--module $\bigoplus_{i=1}^{\ell} M_{\W}(\bm_i)$ under an $\FIW$--module map is generated by the images of the identity morphisms $\{\text{id}_{m_i} \}_{i=1}^{\ell}$. \end{proof}

Given an $\FIW$--module $V$, any $n$, and any $v \in V_n$, then we have a surjective map of $\FIW$--modules $$M_{\W}(\bn) \twoheadrightarrow \text{Span}_V(\{v\}) \subseteq V \qquad \text{ given by $f \mapsto f_*(v)$.}$$ Moreover, any map $M_{\W}(\bn) \to V$ can be described in this way by taking $v$ to be the image of id$_n \in M_{\W}(\bn)_n$. This observation is a form of Yoneda lemma for the category of $\FIW$--modules. 

\begin{rem} \label{M(U)Surjects} {\bf ($M_{\W}(U) \twoheadrightarrow \Span(U)$).} Given an $\FIW$--module $V$, and $\W_m$ subrepresentation $U$ of $V_m$, then by an argument as in Proposition \ref{FinGen}, the $\FIW$--module $$M_{\W}(U):= U \otimes_{k[\W_m]} M_{\W}(\bm)$$ surjects onto the span of $U$ in $V$.  \end{rem}

In \cite[Proposition 2.17]{CEF}, Church--Ellenberg--Farb describe the compatibility of degree of generation, and finite generation, with short exact sequences of FI--modules. Their results hold for $\FIW$--modules:

\begin{prop} Let $0 \to U \to V \to Q \to 0$ be a short exact sequence of $\FIW$--modules. If $V$ is generated in degree $\leq m$ (resp. finitely generated), then $Q$ is generated in degree $\leq m$ (resp. finitely generated). Conversely, if both $U$ and $Q$ are generated in degree \
$\leq m$ (resp. finitely generated), then $V$ is generated in degree $\leq m$ (resp. finitely generated).
\end{prop}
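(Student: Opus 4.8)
The plan is to exploit the fact that a short exact sequence of $\FIW$--modules is exact in each degree — kernels, images, and cokernels of $\FIW$--module maps being computed pointwise — together with one elementary observation which I would record first: any map of $\FIW$--modules $T\colon A\to B$ carries spans to spans, in the precise sense that $T(\Span_A(S)) = \Span_B(T(S))$ for every subset $S\subseteq\coprod_n A_n$. This holds because $T$ is $k$--linear and commutes with every morphism $f_*$: on one hand $T(\Span_A(S))$ is an $\FIW$--submodule of $B$ containing $T(S)$, hence contains $\Span_B(T(S))$; on the other, every element of $\Span_B(T(S))$ is a $k$--linear combination of elements $g_*(T(s)) = T(g_*(s))$ with $s\in S$, hence lies in $T(\Span_A(S))$.

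For the forward implication, let $\phi\colon V\twoheadrightarrow Q$ be the quotient map, which is surjective in each degree. If $S\subseteq\coprod_{i\le m}V_i$ generates $V$ — finite if $V$ is finitely generated — then by the observation $\Span_Q(\phi(S)) = \phi(\Span_V(S)) = \phi(V) = Q$, while $\phi(S)\subseteq\coprod_{i\le m}Q_i$ (and is finite when $S$ is). Hence $Q$ is generated in degree $\le m$ (resp. finitely generated).

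For the converse, let $\psi\colon U\hookrightarrow V$ be the inclusion, choose a generating set $S_U\subseteq\coprod_{i\le m}U_i$ of $U$ and $S_Q\subseteq\coprod_{i\le m}Q_i$ of $Q$, and for each $q\in S_Q$, say $q\in Q_j$, pick a preimage $\widetilde q\in V_j$ under the surjection $\phi_j$. I claim $S := \psi(S_U)\cup\{\widetilde q : q\in S_Q\}$ generates $V$. Given $n$ and $v\in V_n$, the element $\phi(v)\in Q_n = \Span_Q(S_Q)_n$ is a $k$--linear combination $\sum_k c_k\,(g_k)_*(q_k)$ with $q_k\in S_Q$; setting $v' := \sum_k c_k\,(g_k)_*(\widetilde q_k)\in\Span_V(S)_n$ gives $\phi(v')=\phi(v)$, so $v-v'\in\ker\phi_n = \psi(U_n) = \psi(\Span_U(S_U))_n = \Span_V(\psi(S_U))_n\subseteq\Span_V(S)_n$ by exactness at $V$ and the span observation applied to $\psi$. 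Thus $v\in\Span_V(S)_n$, so $V=\Span_V(S)$; since $S$ lies in degrees $\le m$ (and is finite when $S_U$ and $S_Q$ are), this gives the claim. I expect this pointwise lifting-and-chasing step, and keeping straight that $\FIW$--module maps interact with the span operation as above, to be the only real content; there is no deep obstacle.

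As an alternative, one can deduce both statements from Proposition \ref{FinGen} once one knows each $M_{\W}(\bm)$ is projective in $\FIW$--Mod, which follows from the adjunction $\mu_m\dashv\pi_m$ of Proposition \ref{Adjunction}: the right adjoint $\pi_m$ (evaluation at $\bm$) is exact, so $\mu_m=M_\W(-)$ preserves projectives, and $k[\W_m]$ is free over $k[\W_m]$. Then, given surjections $\bigoplus_i M_{\W}(\mathbf{a}_i)\twoheadrightarrow U$ and $\bigoplus_j M_{\W}(\mathbf{b}_j)\twoheadrightarrow Q$ with $a_i,b_j\le m$, lift the second along $V\twoheadrightarrow Q$ using projectivity and combine to obtain a surjection $\bigoplus_i M_{\W}(\mathbf{a}_i)\oplus\bigoplus_j M_{\W}(\mathbf{b}_j)\twoheadrightarrow V$ (surjectivity by a routine diagram chase), whose source is generated in degree $\le m$ and finite when the two families are. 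I would present the elementary argument as the main proof, since it mirrors \cite[Proposition 2.17]{CEF} and avoids invoking projectivity, and mention this second route as a remark.
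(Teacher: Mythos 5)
Your main argument is correct and is exactly the route the paper intends: the paper's proof is just the one-line remark that the statements follow ``by considering images or lifts of an appropriate generating set,'' and your pointwise image/lifting argument (with the span-commutes-with-maps observation) is a careful write-up of precisely that. The alternative via projectivity of $M_{\W}(\bm)$ is also fine but is not needed and is not what the paper does.
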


These statements can be shown by considering images or lifts of an appropriate generating set. 

\begin{defn}{\bf (Finite Presentation).}
 A finitely generated $\FIW$--module $V$ is \emph{finitely presented} with \emph{generator degree} $g$ and \emph{relation degree} $r$ if there is a surjection $$ \bigoplus_{m=1}^g M_{\W}(\bm)^{\oplus \ell_m} \twoheadrightarrow V $$ with a kernel finitely generated in degree at most $r$. 
\end{defn}

The Noetherian property, proved in Section \ref{SectionNoetherian} below, implies that all finitely generated $\FIW$--modules are in fact finitely presented. 

\subsubsection{The functor $H_0$}

In analogy with \cite[Definition 2.18]{CEF}, we define a functor $$H_0:  \FIW \text{--Mod} \to  \bigoplus_m \W_m \text{--Rep}$$ with the property that $H_0(M_\W(U))_m = U_m,$ that is, $H_0$ is a left inverse to $M_{\W}$.  

\begin{defn} \label{DefnH0} {\bf (The Functor $H_0$).} Given an $\FIW$--module $V$, we define the functor $H_0$ by
\begin{align*}
H_0 & : \FIW \text{--Mod}   \longrightarrow \bigoplus_m \W_m \text{--Rep} \\
& (H_0(V))_n =  V_n \bigg/ \bigg(\text{span}_V \big(  \coprod_{k<n} V_k \big) \bigg)_n
\end{align*}

The spaces $(H_0(V))_n$ are a minimal set of $\W_n$--representations generating the $\FIW$--module $V$. As noted in \cite{CEF}, these representations vanish for $n>m$ if and only if $V$ is generated in degree $\leq m$, and moreover $V$ is finitely generated if and only if $H_0(V)$ is a finitely generated $k$--module.

We can put an $\FIW$--module structure on the $\W_n$--representations $(H_0(V))_n$ by letting $I_n$ act by $0$ for all $n$. We denote this $\FIW$--module by $H_0(V)^{\FIW}$. \end{defn}

There is a natural surjection $V \twoheadrightarrow H_0(V)^{\FIW}.$ Note that we could equivalently characterize the $\FIW$--module $H_0(V)^{\FIW}$ as the largest quotient of $V$ with the property that all $\FIW$ morphisms $f: \bm \to \bn$ with  $m \neq n$ act by $0$: in any such quotient, all images $f_*(V_m) \subseteq V_n$ must necessarily be $0$.  

\begin{rem} \label{SurjectionHoV} {\bf($ M_{\W} (H_0(V)) \twoheadrightarrow V$).} 
 Let $V$ be an $\FIW$--module over characteristic zero. As suggested by Remark \ref{M(U)Surjects}, there is a (noncanonical) surjection $ M_{\W} (H_0(V)) \twoheadrightarrow V.$
The proof given in \cite[Proposition 2.43]{CEF} for $\FI_A$--modules applies directly to the cases of $\FI_{BC}$ and $\FI_D$. 
\end{rem}

\subsection{Restriction of $\FIW$--modules} \label{Section:Restriction}

The natural embeddings $S_n \hookrightarrow D_n \hookrightarrow B_n$ give inclusions of categories 
$\FI_A \hookrightarrow \FI_{D} \hookrightarrow \FI_{BC},$ which define restriction operations on the corresponding $\FIW$--modules. These operations, together with the \emph{induction} functors that we will define in Section \ref{Section:Induction}, will be our main tools for studying the interactions of the three families of Weyl groups. 
 
 Notably, we will show in Proposition \ref{RestrictionPreservesFinGen} that restriction of $\FIW$--modules preserves the property of finite generation. We will use this result to establish the Noetherian property for $\FI_D$ and $\FI_{BC}$--modules, Theorem \ref{Noetherian}. We use Proposition \ref{RestrictionPreservesFinGen} again to prove \cite[Theorem \ref{FIW2-PolyDimCharP}]{FIW2}, which states that the dimensions of finitely generated $\FI_D$ and $\FI_{BC}$--modules over arbitrary fields are eventually polynomial. In both cases, Proposition \ref{RestrictionPreservesFinGen} reduces the proofs to the type A case, which are established by Church--Ellenberg--Farb--Nagpal \cite{CEF}. 

\begin{defn}{\bf (Restriction).}
 Given a family of inclusions $\W_n \hookrightarrow \oW_n$, any $\oFIW$--module $V$ inherits the structure of an $\FIW$--module by restricting the functor $V$ to the subcategory $\FIW$ in $\oFIW$. We call this construction $\Res_{\W}^{\oW} V$, \emph{the restriction of $V$ to $\FIW$.}
\end{defn}

\begin{prop} \label{RestrictionPreservesFinGen} {\bf (Restriction preserves finite generation).}
For each family of Weyl groups $\W \subseteq \oW$, the restriction $\Res_{\W}^{\oW} V$ of a finitely generated $\oFIW$--module $V$ is finitely generated as an $\FIW$--module. Specifically, 
\begin{enumerate}
 \item \label{WntoSn} Given an $\FI_{BC}$--module $V$ finitely generated in degree $\leq m$, $\Res^{BC}_{A} V$ is finitely generated as an $\FI_A$--module in degree $\leq m$.
 \item \label{WntoW'n} Given an $\FI_{BC}$--module $V$ finitely generated in degree $\leq m$, $\Res^{BC}_{D} V$ is finitely generated as an $\FI_D$--module in degree $\leq m$.
\item \label{W'ntoSn} Given an $\FI_D$--module $V$ finitely generated in degree $\leq m$, $\Res^{D}_{A} V$ is finitely generated as an $\FI_A$--module in degree $\leq (m+1)$.
\end{enumerate}
\end{prop}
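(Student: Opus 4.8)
The plan is to reduce all three statements to the structural description of finitely generated $\FIW$--modules provided by Proposition \ref{FinGen}: a finitely generated $\FIW$--module in degree $\leq m$ is a quotient of a finite direct sum $\bigoplus_i M_{\W}(\bm_i)$ with $m_i \leq m$. Since restriction is exact (it is defined pointwise, simply forgetting some morphisms) and since restriction commutes with direct sums, a surjection $\bigoplus_i M_{\oW}(\bm_i) \twoheadrightarrow V$ of $\oFIW$--modules restricts to a surjection $\bigoplus_i \Res_{\W}^{\oW} M_{\oW}(\bm_i) \twoheadrightarrow \Res_{\W}^{\oW} V$ of $\FIW$--modules. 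By the proposition stating compatibility of finite generation with surjections, it therefore suffices to prove the claim for each $V = M_{\oW}(\bm)$ individually; that is, I need only bound the degree of generation of $\Res_{\W}^{\oW} M_{\oW}(\bm)$ as an $\FIW$--module.

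For parts \eqref{WntoSn} and \eqref{WntoW'n} I claim the restriction of $M_{BC}(\bm)$ is already generated in degree $\leq m$. Concretely, $M_{BC}(\bm)_n$ has $k$--basis $\Hom_{\FI_{BC}}(\bm,\bn)$, and every such morphism $f$ factors as $f = \sigma \circ I_{m,n}$ with $\sigma \in B_n$; hence $f = f_* (\mathrm{id}_m)$ where $\mathrm{id}_m \in M_{BC}(\bm)_m$. Now $I_{m,n}$ and every $\sigma \in B_n$ are morphisms of $\FI_D$ (for $m < n$ this is exactly Remark \ref{HomFID=HomFIBC}; and $B_n = \End_{\FI_{BC}}(\bn)$ acts within $\FI_{BC}$), and similarly $I_{m,n}$ together with $S_n = \End_{\FI_A}(\bn)$ generate all of $\Hom_{\FI_A}(\bm,\bn) \subseteq \Hom_{\FI_{BC}}(\bm,\bn)$; but the point is subtler for $\FI_A$, since not every $\sigma \in B_n$ lies in $S_n$. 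The clean way to see both cases at once: $\Res_{\W}^{BC}M_{BC}(\bm)$ is the $\FIW$--module spanned by a single element, namely $\mathrm{id}_m \in M_{BC}(\bm)_m$, because the span of $\{\mathrm{id}_m\}$ under $\FIW$ morphisms already contains all of $\Hom_{\FI_{BC}}(\bm,\bn)$: the $\oW_n$--orbit of $I_{m,n}$ equals $\Hom_{\FI_{BC}}(\bm,\bn)$, and this orbit is covered by $\W_n \cdot (\text{inclusions from } \bm)$ since cosets $\oW_n/\oW_{n-m}$ are already represented by elements of $\W_n$ (for $D_n \subseteq B_n$: $[B_n:D_n]=2$ but $D_n$ still acts transitively on $\Hom_{\FI_{BC}}(\bm,\bn)$ when $m\geq 1$, because the sign-flip $(n\;-n)$ used to fix parity lies in the stabilizer $H_{m,n}$; for $S_n \subseteq B_n$ one uses that the target of an $\FI_A$--morphism is sign-preserving so the relevant coset representatives are genuinely in $S_n$). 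I would spell out the $D_n$ transitivity argument carefully, as it is the one place a naive count would mislead.

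For part \eqref{W'ntoSn} the degree must go up by one, and this is the essential obstacle: an $\FI_D$--morphism $\bm \to \bn$ need not factor through an $\FI_A$--morphism, because it may reverse signs, and a sign-reversal cannot be undone by any element of $S_n$ acting on $\bn$ — but it *can* be undone after passing to $\bf(n{+}1)$, using the extra coordinate. The plan: given $V$ a finitely generated $\FI_D$--module in degree $\leq m$, reduce as above to $V = M_D(\bm)$, and show $\Res_A^D M_D(\bm)$ is generated in degree $\leq m+1$. A basis element $f \in \Hom_{\FI_D}(\bm,\bn)$ reverses some set of signs; I would show that the image of any element of $M_D(\bm)_{m+1}$ under appropriate $\FI_A$ morphisms, together with the $S_n$--action, suffices — intuitively, in $M_D(\bm)_{m+1}$ one has enough room (one spare coordinate) to encode an arbitrary sign pattern via an $\FI_D$--isomorphism of $\bf(m{+}1)$, and then only sign-preserving (i.e. $\FI_A$) maps and $S_n$ are needed to reach $\bn$. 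Alternatively, and perhaps more cleanly, I would use Remark \ref{RemarkMDisMBC}, $M_D(\bm)_n \cong \Res^{B_n}_{D_n} M_{BC}(\bm)_n$ for $m \neq n$, to write $\Res_A^D M_D(\bm)$ in terms of $\Res_A^{BC} M_{BC}(\bm)$ in all degrees $n > m$, deduce generation in degree $\leq m$ in those degrees from part \eqref{WntoSn}, and then separately account for degree exactly $m$ (where $M_D(\bm)_m \cong k[D_m] \neq \Res^{B_m}_{D_m}k[B_m]$): the discrepancy between $k[D_m]$ and what is generated from lower degrees is resolved precisely by allowing generators in degree $m+1$. Verifying that degree $m+1$ genuinely suffices — i.e. that every element of $M_D(\bm)_m$ lies in the span of $M_D(\bm)_{m+1}$ under $\FI_A$ morphisms — is the technical heart, and I expect it to come down to exhibiting, for each $w \in D_m$, an explicit $\FI_D$--morphism $g: \bm \to \bf(m{+}1)$ and an $S_{m+1}$ (or $\FI_A$) manipulation recovering $w_*$, using the $(m{+}1)$st coordinate as a "reservoir" for an odd sign flip.
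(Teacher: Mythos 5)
Your reduction to the represented modules $M_{\oW}(\bm)$ via Proposition \ref{FinGen}, and your sketch of part (\ref{W'ntoSn}) (route an $\FI_D$--morphism through ${\bf (m+1)}$ so that an odd sign reversal is recorded by a degree-$(m+1)$ generator), are essentially the paper's argument. But the ``clean way to see both cases at once'' fails in the one case that is actually nontrivial, type A: the single element $\mathrm{id}_m$ does \emph{not} generate $\Res^{BC}_{A} M_{BC}(\bm)$, and the cosets $B_n/B_{n-m}$ are \emph{not} all represented by elements of $S_n$ (there are $2^m\, n!/(n-m)!$ cosets, and $S_n$-translates of $I_{m,n}$ account for only $n!/(n-m)!$ of them). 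Concretely, the $\FI_A$--span of $\mathrm{id}_m$ at level $n$ is exactly the span of the sign-preserving injections $\bm \to \bn$, a proper sub--$\FI_A$--module (a copy of $M_A(\bm)$); already for $m=1$ it has dimension $n$ inside the $2n$-dimensional $M_{BC}({\bf 1})_n$. Since no $\FI_A$--morphism can create a sign reversal, the basis vectors $e_f$ with $f$ sign-reversing are never reached from $\mathrm{id}_m$, and your parenthetical for $S_n \subseteq B_n$ does not repair this. (Also, your $D_n$--transitivity argument requires $n>m$, not $m \geq 1$; at $n=m$ it fails, though harmlessly, since one may simply include all of $M_{BC}(\bm)_m$ among the generators.)

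The missing idea, which is the paper's key step, is to take as generating set the \emph{whole} of $M_{BC}(\bm)_m \cong k[B_m]$ and to use pre-composition by $B_m$ alongside post-composition by $S_n$: these two actions together are transitive on $\Hom_{\FI_{BC}}(\bm,\bn)$. Given $f$, choose $\sigma \in S_n$ so that $\sigma^{-1}\circ f$ has image $\{\pm 1, \ldots, \pm m\}$, then $w \in B_m$ with $\sigma^{-1}\circ f \circ w^{-1} = I_{m,n}$; then $f = \sigma \circ I_{m,n}\circ w$ and $e_f = \sigma_*\big((I_{m,n})_*(e_w)\big)$ lies in the $\FI_A$--span of the degree-$m$ basis. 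The point is that the sign data of $f$ is absorbed into the \emph{choice of generator} $e_w$ rather than into any $\FI_A$--morphism; this is exactly what your single-generator argument cannot do. With part (\ref{WntoSn}) so repaired, part (\ref{WntoW'n}) follows at once since $\FI_A \subseteq \FI_D$, and part (\ref{W'ntoSn}) goes through along the lines you sketch, with generating set the bases of $M_D(\bm)_m$ and $M_D(\bm)_{m+1}$ and the factorization $f = \sigma \circ I_{m+1,n}\circ g$ with $\sigma \in S_n$ and $g \in \Hom_{\FI_D}(\bm, {\bf (m+1)})$.
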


\begin{proof}[Proof of Proposition \ref{RestrictionPreservesFinGen}(\ref{WntoSn})]

The key to the proof is the fact that for each $m, n$ with $m \leq n$, the actions of $S_n$ on the right and $B_m$ on the left are together transitive on the cosets $ B_n / B_{n-m} \cong \Hom_{\FI_{BC}}(\bm, \bn).$  

 We first prove the claim for the $\FI_{BC}$--module $M_{BC}(\bm)$ for fixed $m$. Recall that $$M_{BC}(\bm)_n = \Span_k \{ e_f \; | \; f \in \Hom_{\FI_{BC}}(\bm, \bn) \};$$  we identify $\Hom_{\FI_{BC}}(\bm, \bn)$ with the set of inclusions $$f:\{ \pm1 , \pm 2, \ldots, \pm m \} \hookrightarrow \{ \pm1 , \pm 2, \ldots, \pm n \}$$ satisfying $f(-c) = -f(c)$ for all $c = \pm 1, \ldots, \pm m.$
 
Take as generating set $$S = \{ e_w \; | \; w \in \Hom_{\FI_{BC}}(\bm, \bm) \cong B_m \}, \qquad \text{basis for for $M_{BC}(\bm)_m$},$$ and take any inclusion $ f \in \Hom_{\FI_{BC}}(\bm, \bn)$; we will show $e_f$ is in the $\FI_A$ span of $S$. There is some $\sigma^{-1} \in S_n$ so that the postcomposite $ \sigma^{-1} \circ f$ has image $\{ \pm 1, \pm 2, \ldots, \pm m \} \subseteq \{ \pm 1, \pm 2, \ldots , \pm n \}.$

Additionally, there  is some $w^{-1} \in B_m$ so that the precomposite $ \sigma ^{-1} \circ f \circ w^{-1}$ is the natural inclusion $I_{m,n}$. Thus $f = \sigma \circ I_{m,n} \circ w,$ and $e_f =  \big( \sigma_* \circ (I_{m,n})_* \big) (e_w)$ is in the $\FI_A$--span of $S$. 

\begin{figure}[h!]
\begin{center}
\setlength\fboxsep{5pt}
\setlength\fboxrule{0.5pt}
\fbox{ \includegraphics[scale=4]{./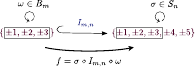}}
\caption{ {\small $\Hom_{\FI_{BC}}(\bm, \bn) = S_n \cdot  I_{m,n} \cdot B_m$ }}
\label{fig:Factorization}
\end{center}
\end{figure}

\noindent It follows that $\Res^{BC}_A M_{BC}(\bm)$ is finitely generated as an $\FI_A$--module by degree--$m$ generators. 

Now, let $V$ be any finitely generated $\FI_{BC}$--module. By Proposition \ref{FinGen}, there is an $\FI_{BC}$--module map  $\bigoplus_{a=0}^{m} M_{BC}(\ba)^{\oplus b_a} \twoheadrightarrow V$ which consists of a sequence of surjections of the underlying $k$--modules. Considered as a map of $\FI_A$--modules, this same map is a surjection $$\Res_{A}^{BC} \bigg( \bigoplus_{a=0}^{m} M_{BC}(\ba)^{\oplus b_a}  \bigg) =  \bigoplus_{a=0}^{m} \big( \Res_{A}^{BC}  M_{BC}(\ba) \big)^{\oplus b_a} \twoheadrightarrow \Res_{A}^{BC} V.$$ It follows that $\Res_{A}^{BC} V$ is finitely generated over $\FI_A$ by generators of degree $\leq m$. 
\end{proof}

\begin{proof}[Proof of Proposition \ref{RestrictionPreservesFinGen}(\ref{WntoW'n})]
 This follows from Proposition \ref{RestrictionPreservesFinGen}(\ref{WntoSn}), which implies that $\Res^{BC}_{D} V$ is finitely generated in degree $\leq m$ by the action of $\FI_A  \subseteq \FI_D$. 
\end{proof}

\begin{proof}[Proof of Proposition \ref{RestrictionPreservesFinGen}(\ref{W'ntoSn})]

 The proof of Proposition \ref{RestrictionPreservesFinGen}(\ref{W'ntoSn}) is similar to that of Proposition \ref{RestrictionPreservesFinGen}(\ref{WntoSn}). However, $B_m$ acts transitively by precomposition on the subset of maps in $\Hom_{\FI_{BC}}(\bm, \bn)$ with a given image, whereas when $n>m$ there are two orbits of maps in $\Hom_{\FI_D}(\bm, \bn)$ with a given image under the action of $D_m$ -- the orbit of those maps which reverse an even number of signs, and the orbit of those maps which reverse an odd number. For this reason, $\Res_{A}^{D} M_{D}(\bm)$ is not generated in degree $\leq m$. 

We again begin with the $\FI_D$--module $M_{D}(\bm)$. We have $$M_D(\bm)_n = \Span_k \{ e_f \; | \; f \in \Hom_{\FI_D}(\bm, \bn) \};$$ where each $f$ is an inclusion $f:\{ \pm1 , \pm 2, \ldots, \pm m \} \hookrightarrow \{ \pm1 , \pm 2, \ldots, \pm n \}$ satisfying $f(-c) = -f(c)$ for all  $c = \pm 1, \ldots, \pm m.$ If $m=n$, then $f$ must reverse an even number of signs; if $m<n$, then $f$ can reverse an even or odd number of signs. 

Take as generating set the bases for $M_D(\bm)_{m}$ and $M_D(\bm)_{m+1}$, $$S = \{ e_w \; | \; w \in \Hom_{\FI_D}(\bm, \bm) \text{ or } \Hom_{\FI_D}(\bm, {\bf (m+1)})  \}.$$  Suppose $n>m$, and let $ f \in \Hom_{\FI_D}(\bm, \bn)$. Take $\sigma^{-1} \in S_n$ so that $ \sigma^{-1} \circ f$ has image $\{ \pm 1, \pm 2, \ldots, \pm m \}$ in $\{ \pm 1, \pm 2, \ldots , \pm n \}.$  Then there is some $g \in \Hom_{\FI_D}(\bm, {\bf m+1})$ so that $ \sigma^{-1} \circ f = I_{m+1,n} \circ g$, and so $e_f = \sigma_* \circ (I_{m+1,n})_* (e_g).$ Thus $M_D(\bm)$ is generated by the generators $S$ in degrees $m$ and $(m+1)$. 

\begin{figure}[h!]
\begin{center}
\setlength\fboxsep{5pt}
\setlength\fboxrule{0.5pt}
\fbox{ \includegraphics[scale=4]{./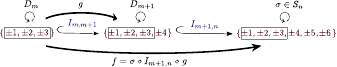}}
\caption{ {\small $\Hom_{\FI_{D}}(\bm, \bn) = S_n \cdot  I_{m+1,n} \cdot \Hom_{\FI_D}(\bm, {\bf (m+1)})$ }}
\label{fig:FactorizationTypeD}
\end{center}
\end{figure}

\noindent Again,  any finitely generated $\FI_D$--module $V$ admits a surjection by some $\FI_D$--module of the form $\bigoplus_{a=0}^{m} M_D(\ba)^{\oplus b_a}.$ It follows that $\Res^{D}_{A} V$ is generated by the images of generating sets for $\Res^{D}_{A} M_D(\bm_i)$ for each $i$, each in degree $\leq (m+1)$.
\end{proof}

\begin{rem} {\bf ($\Res^{B_n}_{S_n}$ does not preserve 'surjectivity' of consistent sequences).}
 We note the $\FI_{BC}$--module structure in Proposition \ref{RestrictionPreservesFinGen}(\ref{WntoSn})\ is necessary. Consider, for example, the sequence of regular representations $k[B_n]$ and inclusions $k[B_{n-1}] \hookrightarrow k[B_n].$ This sequence does not have an $\FI_{BC}$--module structure, but does form a consistent sequence in the sense of Definition \ref{DefnConsistent}. It 'surjects' in the sense of Definition \ref{DefnRepStability}, that is, for each $n$ the image of $k[B_{n-1}]$ generates $k[B_n]$ as a $k[B_n]$--module. The restriction of this sequence to $S_n$ gives a consistent sequence of $S_n$--representations that fails to 'surject', since (for example) the basis element of $k[B_n]$ corresponding to the signed permutation matrix $- id$ is not in the $S_n$--span of the image of $k[B_{n-1}]$ for any $n$. 
\end{rem}

\subsection{Induction of $\FIW$--modules} \label{Section:Induction}

In Section \ref{Section:Restriction} we analyzed the restriction functor on $\FIW$--modules. Just as with group representations, restriction has a left adjoint, a procedure for inducing $\FI_A$ and $\FI_D$--modules up to functors from $\FI_D$ or $\FI_{BC}$. This construction, which uses the theory of \emph{Kan extensions}, was described to us by Peter May. In this section we will define induction of $\FIW$--modules and establish some properties of this operation. 

For present purposes, we are particularly interested in studying induction from $\FI_{D}$ to $\FI_{BC}$. This will enable us to use our theory of $\FI_{BC}$--modules to recover results for finitely generated $\FI_D$--modules, including representation stability (Section \ref{SectionFinGenRepStability}) and existence of character polynomials \cite[Section \ref{FIW2-SectionFinGenCharPoly}]{FIW2}. The results for $\FI_{BC}$ make extensive use of the branching rules for the hyperoctahedral group, but the $D_n$ analogues of these rules are more troublesome. The properties of induction established here make our main results accessible in type D. 

\begin{rem}\label{NaiveInduction}{\bf (The naive definition of induction).} We note that the naive ``pointwise'' definition of induction of $\FIW$--modules is not well defined: If we were to define $\Ind^{\oW}_{\W} V$ so that in degree $n$ it were the representation $ \Ind^{\oW_n}_{\W_n} V_n,$ then the resulting sequence would not in general have the structure of an FI$_{\oW_n}$--module. 
 
Consider, for example, the sequence of trivial $D_n$--representations, with $V_n = k$ for all $n$, and all $\FI_D$ maps acting as isomorphisms. Then $\Ind_{D_n}^{B_n} k  \cong k \oplus k^{\varepsilon}$ is a sum of the trivial representation $k$ and the one-dimensional sign representation  $k^{\varepsilon}$ associated to the character $\varepsilon: B_n \to B_n / D_n \cong \{\pm 1\}.$ This cannot be a $\FI_{BC}$--module since, for example, the signed permutation $(-n \; n) \in B_n$ acts by multiplication by $-1$ on a summand of the image  $I_{m,n}(V_m)\subseteq V_n$ for any $m < n$, in violation of Lemma \ref{PromotionToFI}. 
%More generally, if $V$ is any finitely generated $\FI_{BC}$--module, then $$\Ind_{D_n}^{B_n} \Res_{D_n}^{B_n} V_n = V_n \oplus V_n \otimes \varepsilon,$$ which does not carry an $\FI_{BC}$--module structure for the same reasons as above. 
\end{rem}

There is, however, a natural way to define induction of $\FIW$--modules, using a standard category--theoretic universal construct: the left Kan extension. General constructions and properties of Kan extensions are given in Mac Lane \cite[Chapter 10]{MacLaneWorking} (see also notes by Riehl \cite{RiehlHomotopyNotes}), which we briefly outline. Then in Definition \ref{DefnInd} below we will define induction of $\FIW$--modules using a concrete description of these constructions as they apply to the categories $\FIW$. 

Given a subcategory $\FIW \subseteq \FI_{\oW}$,  and an $\FIW$--module $V$, we denote by $\Ind_{\W}^{\oW} V$ the \emph{left Kan extension} of $V$ along the inclusion of categories. %This is an $\oFIW$--module  $$\Ind_{\W}^{\oW} V: \FI_{\oW} \to k\sMod$$

$$\xymatrix{ \FIW  \ar[r]^V  \ar@{^{(}->}[d] & k\sMod \\
 \oFIW \ar@{-->}[ur]_{\Ind_{\W}^{\oW} V} & }$$
 
 The induction map $$\Ind_{\W}^{\oW}: \FIW\text{-Mod} \longrightarrow \oFIW\text{--Mod}$$ is functorial on the functor category of $\FIW$--modules. In particular, given two $\FIW$--modules $V$ and $W$ and a map of $\FIW$--modules $F: V \to W$, there is a corresponding map of $\oFIW$--modules

$$\Ind_{\W}^{\oW}\; F: \Ind_{\W}^{\oW}\; V \longrightarrow \Ind_{\W}^{\oW}\; W;$$
assigned in a manner that respects composition of $\FIW$--module maps.

The functor $\Ind_{\W}^{\oW}$ is the left adjoint to $\Res_{\W}^{\oW}$, and satisfies the associated properties recognizable from the familiar adjunction for induction and restriction of group representations.  For any $\FIW$--module $V$,  there is a canonical map of $\FIW$--modules $$\eta_{V}: V \to \Res_{\W}^{\oW} ( \Ind_{\W}^{\oW} \; V )$$ defined by the \emph{unit} map, the natural transformation 
$ \eta: id \to (\Res_{\W}^{\oW}  \Ind_{\W}^{\oW} ).$

Given any $\oFIW$--module $U$ and $\FIW$--module map $V \to \Res_{\W}^{\oW} \; U$, there exists a unique map of $\oFIW$--modules $\a: \Ind_{\W}^{\oW} V \to U$ such that the following diagram commutes.

$$\xymatrix{ & \Res_{\W}^{\oW} \; (\Ind_{\W}^{\oW}\; V) \ar[dr]^{\Res_{\W}^{\oW} \a} &  \\
 V \ar[ru]^{\eta} \ar[rr] && \Res_{\W}^{\oW} U }$$

\noindent This correspondence defines a bijection
\begin{align*} \left\{ \begin{array}{c} \text{ $\FIW$\;--Module Maps } \\ V  \longrightarrow \Res_{\W}^{\oW} \; U \end{array} \right\} \quad \longleftrightarrow \quad \left\{ \begin{array}{c} \text{ $\oFIW$--Module Maps} \\ \Ind_{\W}^{\oW} \;V  \longrightarrow U  \end{array} \right\} 
\end{align*}
which is natural in the inputs $V$ and $U$. % in the sense FINISH THIS! CHECKCHECK

We can describe the induced functor explicitly. Following Mac Lane \cite[Chapter 10.4]{MacLaneWorking}, we define the $\oW_n$--representation $(\Ind_{\W}^{\oW} V)_n$ as a certain \emph{coend}, the coequalizer of two maps $\phi$ and $\psi$. 

$$ \xymatrix{
   \displaystyle  \bigoplus_{p \leq q \leq n} M_{\oW}({\bf q})_n \otimes_k M_{\W}({\bf p})_q \otimes_k V_p     { \ar@<-1ex>[r]_-{\psi}} { \ar@<1ex>[r]^-{\phi}} &  \displaystyle \bigoplus_{r \leq n} M_{\oW}({\bf r})_n \otimes_k V_r \ar[r] & (\Ind_{\W}^{\oW} V)_n \\
  f \otimes g \otimes v  \ar@{|->}[r]^{\phi} & f \otimes g_*(v) \\ 
 f \otimes g \otimes v  \ar@{|->}[r]^{\psi} & f \circ g \otimes v }
$$
 
In parallel with the $k$--modules $\Ind_H^G V := k[G] \otimes_{k[H]} V$, the induced functor $\Ind^{\oW}_{\W} V$ is sometimes called a \emph{tensor product of functors over a category} and written $\FI_{\oW} \otimes_{\FIW} V$. We summarize its construction in the following definition.

\begin{defn}{\bf (Induction).} \label{DefnInd} Given an $\FIW$--module $V$, and an inclusion of categories $\FIW \hookrightarrow \FI_{\oW}$, we define the \emph{induced $\FI_{\oW}$--module} $\Ind_{\W}^{\oW} V$ by 
$$ (\Ind_{\W}^{\oW} V)_n = \bigoplus_{r \leq n} M_{\oW}({\bf r})_n \otimes_k V_r \bigg/  \langle \quad f\otimes g_*(v) = (f\circ g) \otimes v \quad \vert \quad \text{ $g$ is an $\FIW$ morphism} \rangle .$$
with the action of $h \in \Hom_{\oW}(\bm, \bn)$ by $h_*: g\otimes v \longmapsto (h\circ g) \otimes v.$

We emphasize that induction is left adjoint to restriction, and satisfies the naturality properties described above. 
\end{defn}

We encourage the reader to verify that $(\Ind_{\W}^{\oW} V)_n $ is in fact a quotient of the induced representation $\Ind_{\W_n}^{\oW_n} (V_n)$ by relations which require the stabilizer $H_{\ell,n} = \Stab(I_{\ell,n})$ to act trivially on the image of $(\Ind_{\W}^{\oW} V)_{\ell} $ in $(\Ind_{\W}^{\oW} V)_n $, and so ensure an $\FIW$--module structure (as in Lemma \ref{PromotionToFI}.)

%\begin{prop}\label{IndFinGen}{\bf (Induction respects finite generation).} If $V$ is an $\FIW$--module finitely generated in degree $\leq m$, then the induced module $\Ind_{\W}^{\oW} \; V$ is a finitely generated $\oFIW$--module in degree $\leq m$.
%\end{prop}

%\begin{proof}[Proof of Proposition \ref{IndFinGen}] If $V$ is generated by a finite set of elements $v_m \in V_m$, then it is easily seen that $\Ind_{\W}^{\oW} \; V$ is generated by the images of the elements $$\text{id}_m \otimes v_m \in  M_{\oW}(\bm)_m \otimes_k V_m \qquad \text{ in $(\Ind_{\W}^{\oW} \; V)_m$.}  \qedhere$$
%\end{proof}

\begin{prop}\label{IndMisM}{\bf ($\Ind_{\W}^{\oW} M_{\W}(\bm) \cong M_{\oW}(\bm)$).} Given categories $\FIW \subseteq \oFIW$ and any integer $m$, there is an isomorphism of $\oFIW$\,--modules $$\Ind_{\W}^{\oW} M_{\W}(\bm) \cong M_{\oW}(\bm).$$
In other words, the functor $\Ind_{\W}^{\oW}$ preserves represented functors. 
\end{prop}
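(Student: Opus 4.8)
\subsection*{Proof proposal}

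The plan is to establish the isomorphism abstractly, from the universal properties of the functors involved, rather than manipulating the coend of Definition \ref{DefnInd} directly; this keeps the argument uniform over the inclusions $\FI_A \subseteq \FI_D \subseteq \FI_{BC}$ and sidesteps the subtlety of $\Hom_{\FI_D}$ recorded in Remark \ref{HomFID=HomFIBC}. The two ingredients are the adjunction $\Ind_{\W}^{\oW} \dashv \Res_{\W}^{\oW}$ from Definition \ref{DefnInd}, and, for each group family, the adjunction $\mu_m \dashv \pi_m$ of Proposition \ref{Adjunction}, where $\pi_m$ is evaluation at $\bm$ and $\mu_m^{\W}(-) = M_{\W}(\bm) \otimes_{k[\W_m]} (-)$, so that $M_{\W}(\bm) = \mu_m^{\W}(k[\W_m])$ and likewise $M_{\oW}(\bm) = \mu_m^{\oW}(k[\oW_m])$.

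First I would consider the two composite functors $\W_m\text{--Rep} \to \oFIW\text{--Mod}$ given by $\Ind_{\W}^{\oW} \circ \mu_m^{\W}$ and by $\mu_m^{\oW} \circ \Ind_{\W_m}^{\oW_m}$ (first induce the representation from $\W_m$ up to $\oW_m$, then apply $\mu_m^{\oW}$). Each is a composite of left adjoints, hence a left adjoint. Computing the right adjoints: the right adjoint of $\Ind_{\W}^{\oW} \circ \mu_m^{\W}$ is $\pi_m^{\W} \circ \Res_{\W}^{\oW}$, and the right adjoint of $\mu_m^{\oW} \circ \Ind_{\W_m}^{\oW_m}$ is $\Res_{\W_m}^{\oW_m} \circ \pi_m^{\oW}$; unwinding the definitions, each of these sends an $\oFIW$--module $U$ to the $\W_m$--representation $\Res_{\W_m}^{\oW_m}(U_m)$. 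Since both composites are left adjoint to the same functor, uniqueness of adjoints yields a natural isomorphism $\Ind_{\W}^{\oW} \circ \mu_m^{\W} \cong \mu_m^{\oW} \circ \Ind_{\W_m}^{\oW_m}$.

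I would then evaluate this natural isomorphism at the regular representation $k[\W_m]$. The left-hand side becomes $\Ind_{\W}^{\oW}(\mu_m^{\W}(k[\W_m])) = \Ind_{\W}^{\oW} M_{\W}(\bm)$, and the right-hand side becomes $\mu_m^{\oW}(\Ind_{\W_m}^{\oW_m} k[\W_m]) = \mu_m^{\oW}(k[\oW_m]) = M_{\oW}(\bm)$, using that induction of representations carries $k[\W_m]$ to $k[\oW_m]$. This produces the asserted isomorphism $\Ind_{\W}^{\oW} M_{\W}(\bm) \cong M_{\oW}(\bm)$ of $\oFIW$--modules, and one checks it is natural in $m$.

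There is no genuine difficulty here once the two adjunctions are in place; the only care needed is bookkeeping: verifying that both composite right adjoints really are the functor $U \mapsto \Res_{\W_m}^{\oW_m}(U_m)$, and confirming that the isomorphism supplied by uniqueness of adjoints agrees with the one induced by the unit $\eta$ described above (so it is the ``expected'' comparison map). If a hands-on verification is preferred instead, one unwinds Definition \ref{DefnInd}: the coend relation $f \otimes g_*(v) = (f\circ g)\otimes v$, applied with $v = \mathrm{id}_{\bm} \in M_{\W}(\bm)_m$, forces every generator of $(\Ind_{\W}^{\oW} M_{\W}(\bm))_n$ into the $r=m$ summand and then identifies $M_{\oW}(\bm)_n \otimes_{k[\W_m]} k[\W_m]$ with $M_{\oW}(\bm)_n$ via $f \otimes \mathrm{id}_{\bm} \mapsto f$; the single nontrivial point is that this collapse is compatible with the $\oW_n$--action $h_* : f\otimes v \mapsto (h\circ f)\otimes v$, which is immediate.
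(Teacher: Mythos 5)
Your argument is correct, but it proves the proposition by a different route than the paper. The paper's proof is a direct computation: it writes down the composition map $g \otimes f \mapsto g \circ f$ from $\bigoplus_{m \leq r \leq n} M_{\oW}({\bf r})_n \otimes_k M_{\W}(\bm)_r$ to $M_{\oW}(\bm)_n$ and checks that it descends through the coend relations to an isomorphism of $\oFIW$--modules --- essentially the ``hands-on verification'' you sketch in your last paragraph, where the relation with $v = \mathrm{id}_{\bm}$ collapses everything to the $r=m$ summand and provides the inverse. Your main argument instead composes the two adjunctions $\mu_m \dashv \pi_m$ (Proposition \ref{Adjunction}) and $\Ind_{\W}^{\oW} \dashv \Res_{\W}^{\oW}$, identifies both composite right adjoints with $U \mapsto \Res_{\W_m}^{\oW_m}(U_m)$, and invokes uniqueness of left adjoints, then evaluates at $k[\W_m]$ using $\mu_m(k[\W_m]) \cong M_{\W}(\bm)$ and $\Ind_{\W_m}^{\oW_m} k[\W_m] \cong k[\oW_m]$; all of these identifications are available in the paper, so the argument goes through and is, as you say, uniform in the three inclusions and automatically natural in $m$ and in the representation variable. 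What the paper's concrete proof buys in exchange is an explicit description of the isomorphism as ``compose morphisms,'' which makes its compatibility with the unit $\eta$ transparent --- a point that matters when the identification is fed into the commutative diagram in Proposition \ref{VisResIndV}; in your approach this compatibility is the bookkeeping step you rightly flag as needing verification (or you can simply note that your isomorphism and the composition map agree because both are adjuncts of the same map $k[\W_m] \to \Res_{\W_m}^{\oW_m}\big(M_{\oW}(\bm)_m\big)$ sending $1 \mapsto \mathrm{id}_{\bm}$).
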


\begin{proof}[Proof of Proposition \ref{IndMisM}] It is straightforward to verify that the map
\begin{align*} 
\bigoplus_{m \leq r \leq n} M_{\oW}({\bf r})_n \otimes_k M_{\W}(\bm)_r & \longrightarrow M_{\oW}(\bm)_n \\
 g \otimes f & \longmapsto g \circ f  \qquad \text{with} \quad f \in \Hom_{\W}(\bm, {\bf r}), \quad g \in \Hom_{\oW}({\bf r}, \bn) 
\end{align*}
factors through an isomorphism of $\FI_{\oW}$--modules $$(\Ind_{\W}^{\oW} \; M_{\W}(\bm) )_n \xrightarrow{\cong} M_{\oW}(\bm)_n . \qedhere $$
\end{proof}

\begin{cor} \label{PromoteMtoInd}
Given an $\FIW$--module finitely generated in degree $\leq m$, the natural surjection of $\FIW$--modules of Proposition \ref{FinGen} $$S: \bigoplus^m_a M_{\W}(\ba)^{b_a} \longrightarrow  V$$ can be promoted to a surjection of $\FI_{\oW}$--modules $$(\Ind_{\oW}^{\W} \; S) : \bigoplus_a^m  M_{\oW}(\ba)^{b_a} \longrightarrow \Ind_{\W}^{\oW} V.$$
\end{cor}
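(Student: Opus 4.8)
The statement is essentially formal once Proposition \ref{IndMisM} is in hand, so the plan is to apply the functor $\Ind_{\W}^{\oW}$ directly to the surjection $S$ and then identify its source. First I would use that $\Ind_{\W}^{\oW}$ is a left adjoint (to $\Res_{\W}^{\oW}$), hence additive, hence commutes with finite direct sums. Combined with Proposition \ref{IndMisM}, which supplies natural isomorphisms $\Ind_{\W}^{\oW} M_{\W}(\ba) \cong M_{\oW}(\ba)$, this yields an isomorphism of $\oFIW$--modules
$$ \Ind_{\W}^{\oW}\Big( \bigoplus_{a=0}^{m} M_{\W}(\ba)^{\oplus b_a} \Big) \;\cong\; \bigoplus_{a=0}^{m} M_{\oW}(\ba)^{\oplus b_a}. $$
Transporting the functorial image $\Ind_{\W}^{\oW} S$ along this isomorphism then produces a map $\bigoplus_{a=0}^{m} M_{\oW}(\ba)^{\oplus b_a} \to \Ind_{\W}^{\oW} V$, which is the candidate map of the corollary.

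It then remains to check surjectivity. Here I would invoke that $\Ind_{\W}^{\oW}$, being a left adjoint between the abelian functor categories (kernels and cokernels computed pointwise), is right exact, and therefore carries the exact sequence $\bigoplus_{a} M_{\W}(\ba)^{\oplus b_a} \xrightarrow{S} V \to 0$ to an exact sequence $\bigoplus_{a} M_{\oW}(\ba)^{\oplus b_a} \to \Ind_{\W}^{\oW} V \to 0$; equivalently $\coker S = 0$ forces $\coker(\Ind_{\W}^{\oW} S) = \Ind_{\W}^{\oW}(\coker S) = 0$. Alternatively one can argue pointwise from the coend presentation of Definition \ref{DefnInd}: in each degree $n$, $(\Ind_{\W}^{\oW} V)_n$ is a quotient of $\bigoplus_{r \leq n} M_{\oW}({\bf r})_n \otimes_k V_r$, the map $S$ induces $\mathrm{id} \otimes S_r$ on each summand, which is surjective because $S_r$ is, and surjectivity descends compatibly with the coequalizer maps $\phi$ and $\psi$.

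Finally I would record the compatibility that makes the word ``promoted'' literal: the unit $\eta_V : V \to \Res_{\W}^{\oW}(\Ind_{\W}^{\oW} V)$ intertwines $S$ with $\Res_{\W}^{\oW}(\Ind_{\W}^{\oW} S)$, so $\Ind_{\W}^{\oW} S$ is genuinely the induced version of the original generating surjection. I do not expect a real obstacle: all the content sits in Proposition \ref{IndMisM} together with the standard fact that a left adjoint between abelian categories preserves finite direct sums and is right exact. The only point to be careful about is to use right exactness (or the explicit coend) rather than mere functoriality in order to conclude that $\Ind_{\W}^{\oW} S$ is surjective.
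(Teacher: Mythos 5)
Your argument is correct and is essentially the paper's own: the corollary is treated there as immediate from Proposition \ref{IndMisM} together with the fact that $\Ind_{\W}^{\oW}$, being a left adjoint, preserves direct sums and is right exact, which is exactly the reasoning made explicit in the proof of Corollary \ref{IndGenRelDegrees}. Your pointwise coend argument and the remark about the unit $\eta_V$ are fine supplementary checks but not needed beyond that.
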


\begin{cor}\label{IndGenRelDegrees} {\bf(Induction respects generation and relation degree).}
Suppose that $V$ is a finitely generated $\FIW$--module with degree of generation $\leq g$ and relation degree $\leq r$. Then $\Ind_{\W}^{\oW} V$ is also finitely generated, with generation degree $\leq g$ and has relation degree $\leq r$. \end{cor}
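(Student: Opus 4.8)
The plan is to lift a finite presentation of $V$ through the functor $\Ind_{\W}^{\oW}$, exploiting that this functor, being left adjoint to $\Res_{\W}^{\oW}$, preserves colimits — in particular it is right exact and commutes with finite direct sums — together with the identification $\Ind_{\W}^{\oW} M_{\W}(\bm) \cong M_{\oW}(\bm)$ of Proposition \ref{IndMisM}. Given that the surjectivity half of the statement is already Corollary \ref{PromoteMtoInd}, the real content is the control of the relations.

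First I would record the presentation supplied by the hypotheses. Since $V$ has degree of generation $\leq g$ and relation degree $\leq r$, there is a surjection $S\colon P \twoheadrightarrow V$ with $P \cong \bigoplus_{a} M_{\W}(\ba)^{\oplus \ell_a}$, the index $a$ running over $\{1,\dots,g\}$ and the $\ell_a$ finite, and with kernel $K := \ker S$ finitely generated in degree $\leq r$. Applying Proposition \ref{FinGen} to $K$ yields a surjection $Q \twoheadrightarrow K$ with $Q \cong \bigoplus_{a} M_{\W}(\ba)^{\oplus m_a}$, the index $a$ now running over $\{1,\dots,r\}$ and the $m_a$ finite. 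Writing $\psi\colon Q \to P$ for the composite $Q \twoheadrightarrow K \hookrightarrow P$, we have $V = \coker \psi$, i.e.\ $Q \xrightarrow{\;\psi\;} P \xrightarrow{\;S\;} V \to 0$ is an exact sequence of $\FIW$--modules.

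Next I would apply $\Ind_{\W}^{\oW}$ to this sequence. Because $\Ind_{\W}^{\oW}$ is left adjoint to $\Res_{\W}^{\oW}$ it preserves all colimits, so it is right exact and commutes with finite direct sums; concretely this is also visible from the coend description in Definition \ref{DefnInd}, since cokernels, the tensor products over $k$, and direct sums of $\oFIW$--modules are all formed pointwise in $k\sMod$. Hence $\Ind_{\W}^{\oW} Q \xrightarrow{\;\Ind_{\W}^{\oW}\psi\;} \Ind_{\W}^{\oW} P \xrightarrow{\;\Ind_{\W}^{\oW} S\;} \Ind_{\W}^{\oW} V \to 0$ is exact, and by Proposition \ref{IndMisM} we may identify $\Ind_{\W}^{\oW} P \cong \bigoplus_{a} M_{\oW}(\ba)^{\oplus \ell_a}$ (indices $a \leq g$) and $\Ind_{\W}^{\oW} Q \cong \bigoplus_{a} M_{\oW}(\ba)^{\oplus m_a}$ (indices $a \leq r$); taking only the right-hand map recovers Corollary \ref{PromoteMtoInd}.

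Finally I would read off the conclusion. The surjection $\Ind_{\W}^{\oW} S$ from $\bigoplus_{a \leq g} M_{\oW}(\ba)^{\oplus \ell_a}$ shows, via Proposition \ref{FinGen}, that $\Ind_{\W}^{\oW} V$ is finitely generated in degree $\leq g$. By right exactness, $\ker(\Ind_{\W}^{\oW} S) = \operatorname{im}(\Ind_{\W}^{\oW}\psi)$, which is a quotient of $\Ind_{\W}^{\oW} Q \cong \bigoplus_{a \leq r} M_{\oW}(\ba)^{\oplus m_a}$ and hence finitely generated in degree $\leq r$. Thus $\Ind_{\W}^{\oW} V$ is finitely presented with generator degree $\leq g$ and relation degree $\leq r$, as asserted. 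The only step demanding genuine care is the right exactness of $\Ind_{\W}^{\oW}$ (and its commutation with direct sums) in the category of $\oFIW$--modules; once that is established the remainder is formal. A route that avoids invoking it would be to chase an explicit generating set for $\ker(\Ind_{\W}^{\oW} S)$ directly through the coend formula of Definition \ref{DefnInd}, but that bookkeeping is noticeably heavier and seems unnecessary.
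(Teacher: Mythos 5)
Your proposal is correct and follows essentially the same route as the paper: both take a finite presentation of $V$ by modules $M_{\W}(\ba)$, apply $\Ind_{\W}^{\oW}$ using its right exactness as a left adjoint, and identify the induced free modules via Proposition \ref{IndMisM} to read off the generation and relation degree bounds. Your extra care in tracking $\ker(\Ind_{\W}^{\oW} S)$ as a quotient of $\Ind_{\W}^{\oW} Q$ just makes explicit what the paper leaves implicit.
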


\begin{proof}[Proof of Corollary \ref{IndGenRelDegrees}] Suppose $V$ is an $\FIW$--module with generation degree $\leq g$ and relation degree $\leq r$, that is, there exists an exact sequence of $\FIW$--modules:
$$ \bigoplus_{m=1}^r M_{\W}(\bm)^{\oplus p_m} \longrightarrow  \bigoplus_{m=1}^g M_{\W}(\bm)^{\oplus \ell_m} \twoheadrightarrow V. $$
The functor  $ \Ind_{\W}^{\oW} \; $ is a left adjoint and therefore it is right exact (see Weibel \cite[Theorem 2.6.1]{WeibelIntro}). Applying this functor to the above sequence yields the exact sequence 
$$ \bigoplus_{m=1}^r M_{\oW}(\bm)^{\oplus p_m} \longrightarrow  \bigoplus_{m=1}^g M_{\oW}(\bm)^{\oplus \ell_m} \twoheadrightarrow \Ind_{\W}^{\oW} V,  $$
and we conclude that $g$ and $r$ are upper bounds on the generation and relation degree of  $ \Ind_{\W}^{\oW} V$.

This result can also be verified concretely: If $V$ is generated by a finite set of elements $v_m \in V_m$, then it is easily seen that $\Ind_{\W}^{\oW} \; V$ is generated by the images of the elements $$\text{id}_m \otimes v_m \in  M_{\oW}(\bm)_m \otimes_k V_m \qquad \text{ in $(\Ind_{\W}^{\oW} \; V)_m$.}  \qedhere$$
\end{proof}

\begin{prop}\label{VintoResIndV} {\bf( $V \hookrightarrow (\Res_{\W}^{\oW} \; \Ind_{\W}^{\oW} \; V)$).} Given an $\FIW$--module $V$ and an inclusion of categories $\FIW \hookrightarrow \oFIW$, the natural map of $\FIW$--modules 
\begin{align*}
V & \longrightarrow \Res_{\W}^{\oW} \; \Ind_{\W}^{\oW} \; V  \\
v \in V_n & \longmapsto id_n \otimes v \qquad \in M_{\oW}(\bn)_n \otimes V_n
\end{align*}
 is injective.
\end{prop}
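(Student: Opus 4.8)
The plan is to prove injectivity pointwise: I would show that for each $n$ the component map $V_n\to(\Ind_{\W}^{\oW}V)_n$, $v\mapsto 1\otimes v$ (writing $1$ for $\mathrm{id}_{\bn}$), is injective, by exhibiting a $k$-linear retraction of it. First I would make the target concrete. Using the coend formula of Definition \ref{DefnInd}, together with the fact (recorded just after that definition) that every morphism of $\oFIW$ factors through a natural inclusion $I_{r,n}$, which is itself an $\FIW$-morphism, one sees that the canonical map from the top summand
\[ M_{\oW}(\bn)_n\otimes_{k[\W_n]}V_n \;=\; \Ind_{\W_n}^{\oW_n}(V_n)\;\longrightarrow\;(\Ind_{\W}^{\oW}V)_n \]
is surjective, with kernel the $k[\oW_n]$-submodule $J_n$ generated by all elements $\sigma\otimes (I_{m,n})_*(v)-\sigma s\otimes (I_{m,n})_*(v)$ with $m<n$, $v\in V_m$, $\sigma\in\oW_n$, and $s\in H_{m,n}^{\oW}=\Stab_{\oW_n}(I_{m,n})\cong\oW_{n-m}$; these are exactly the relations forcing the stabilizer to act trivially on images of $V_m$, as previewed after Definition \ref{DefnInd}. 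Under this identification the $n$th component of the unit $\eta_V$ is the composite of the split injection $v\mapsto 1\otimes v$ into $\Ind_{\W_n}^{\oW_n}(V_n)$ with the quotient by $J_n$, so it suffices to prove $(1\otimes V_n)\cap J_n=0$ in $\Ind_{\W_n}^{\oW_n}(V_n)$.

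To do this I would construct a $k$-linear map $r_n\colon\Ind_{\W_n}^{\oW_n}(V_n)\to V_n$ with $r_n(1\otimes v)=v$ and $r_n(J_n)=0$; such an $r_n$ descends to a retraction of $\eta_{V,n}$, which gives the result. Filter $V_n$ by the $\W_n$-submodules $F_m:=\big(\Span_V(\coprod_{k\le m}V_k)\big)_n$, so that $0=F_{-1}\subseteq F_0\subseteq\cdots\subseteq F_{n-1}\subseteq F_n=V_n$, and note that every generator of $J_n$ is supported on $F_{n-1}$. Choosing $k$-module complements compatibly with this filtration, I would define $r_n$ on $\sigma\otimes w$, for $w$ of filtration level $m<n$, by first "normalizing'' $\sigma$: replace $\sigma$ by an endomorphism $\bar\sigma\in\W_n$ that agrees with $\sigma$ on the first $m$ coordinates. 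This is possible because the restriction $\sigma|_{\bm}$ is an $\oFIW$-morphism $\bm\to\bn$ with $m<n$, and for $m<n$ such a morphism already lies in $\FIW$ — this is Remark \ref{HomFID=HomFIBC} for the inclusion $\FI_D\subseteq\FI_{BC}$, and is immediate for $\FI_A\subseteq\FI_D\subseteq\FI_{BC}$ — so $\sigma|_{\bm}$ extends to an element of $\W_n$; one then sets $r_n(\sigma\otimes w)$ to be the image of $w$ under the structure map indexed by $\bar\sigma|_{\bm}\in\Hom_{\FIW}(\bm,\bn)$. After checking that $r_n$ is well defined on the tensor product over $k[\W_n]$, the key computation is that on a generator of $J_n$ the two terms $\sigma\otimes w$ and $\sigma s\otimes w$ are sent to the structure maps indexed by $\bar\sigma|_{\bm}$ and $\overline{\sigma s}|_{\bm}$ respectively; since $s\in\Stab_{\oW_n}(I_{m,n})$ fixes the first $m$ coordinates, these two $\FIW$-morphisms coincide, so the outputs agree and $r_n$ kills $J_n$.

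The main obstacle is precisely this last well-definedness verification: one must arrange the normalization $\sigma\mapsto\bar\sigma$ and the complements in the filtration coherently enough that $r_n$ respects simultaneously the $k[\W_n]$-balancing relations defining $\Ind_{\W_n}^{\oW_n}(V_n)$ and all of the stabilizer relations generating $J_n$ — in particular the choice of $\bar\sigma$ must disturb $\sigma$ only in coordinates beyond the filtration level of the element it is paired with. The three category inclusions require slightly different bookkeeping here, the case $\FI_D\subseteq\FI_{BC}$ being the one in which Remark \ref{HomFID=HomFIBC} enters directly; once the retraction is in place, injectivity of $\eta_V$ in every degree, hence of $\eta_V$ as a map of $\FIW$-modules, follows immediately.
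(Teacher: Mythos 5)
Your global strategy -- presenting $(\Ind_{\W}^{\oW} V)_n$ as $\Ind_{\W_n}^{\oW_n}(V_n)/J_n$ with $J_n$ the stabilizer relations and building a $k$-linear left inverse of $v \mapsto \mathrm{id}_n \otimes v$ that kills $J_n$ -- is the same as the paper's (the paper also constructs a left inverse $\widetilde{L}$ defined on the coend presentation). But your normalization step has the three cases exactly backwards, and this is a genuine gap. The equality $\Hom_{\FIW}(\bm,\bn)=\Hom_{\oFIW}(\bm,\bn)$ for $m<n$ holds \emph{only} for the pair $\FI_D\subseteq\FI_{BC}$ (Remark \ref{HomFID=HomFIBC}); for $\FI_A\subseteq\FI_D$ and $\FI_A\subseteq\FI_{BC}$ it fails, because an $\oFIW$-morphism may reverse signs while $\FI_A$-morphisms must preserve them. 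Concretely, if $\sigma=(1\;{-1})\in B_n$ (or $(1\;{-1})(2\;{-2})\in D_n$) and $m\geq 1$, there is no $\bar\sigma\in S_n$ agreeing with $\sigma$ on $\bm$, so your recipe does not define $r_n(\sigma\otimes (I_{m,n})_*v)$; and such $\sigma$ genuinely occur among the generators $\sigma\otimes(I_{m,n})_*v-\sigma s\otimes(I_{m,n})_*v$ of $J_n$, all of which must be killed since $r_n$ is only $k$-linear. In the type-A cases the paper uses a different mechanism: every $\oFIW$-morphism $g$ factors as $g=\s\circ\tilde g$ with $\s\in(\Z/2\Z)^n$ and $\tilde g$ the underlying sign-preserving ($\FI_A$) injection, and $g\mapsto\tilde g$ respects composition because forgetting signs is a quotient $B_n\twoheadrightarrow S_n$; setting $L(g\otimes v)=\tilde g_*(v)$ then gives a left inverse on all summands at once, with no filtration, no complements, and no normalization. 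It is precisely in the case $\FI_D\subseteq\FI_{BC}$ -- where $D_n$ is \emph{not} a quotient of $B_n$ and no composition-respecting $g\mapsto\tilde g$ exists -- that the paper resorts to the kind of case analysis your plan resembles, using Remark \ref{HomFID=HomFIBC} for $r<n$ and for elements in the image of lower degrees, and the correction $(-n\;n)\circ g$ on elements outside $\Span_V(V_{n-1})$.

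Two further problems remain even in the one case your normalization does cover. The proposition is stated over an arbitrary ring $k$, and the filtration $F_m\subseteq V_n$ need not admit $k$-module complements (a submodule of a $k$-module is not in general a direct summand), so the first step of your construction is unavailable outside the field case. And the issue you yourself flag as the main obstacle -- compatibility of the normalization with the $k[\W_n]$-balancing relations when elements of $\W_n$ fail to preserve your chosen complements -- is not resolved by any mechanism in the proposal; the paper sidesteps it by defining $L$ on the pure tensors of the full coend presentation $\bigoplus_{r\le n} M_{\oW}({\bf r})_n\otimes_k V_r$ and verifying the single identity $L(g\circ h\otimes v)=L(g\otimes h_*(v))$ for $\FIW$-morphisms $h$. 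As written, your proposal sketches at most the $\FI_D\subseteq\FI_{BC}$ case and fails for the two inclusions out of $\FI_A$, which you dismissed as immediate.
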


It should not be surprising that the map $V \longrightarrow (\Res_{\W}^{\oW} \; \Ind_{\W}^{\oW} \; V)$ is injective since, heuristically, the relations defining the quotient $ (\Ind_{\W}^{\oW} \; V)_n$ come from relations already imposed on $V$ by its $\FIW$--module structure. 

It seems that there ought to be a proof of Proposition \ref{VintoResIndV} -- injectivity of the unit map $\eta_V$ -- using formal properties of the adjunction, and the fact that injectivity holds for the represented functors $M_{\W}(\bm)$. We have found this formal proof to be elusive, however. A proof specific to the three categories $\FIW$ is given below. 

%Note: idea of proof. One might show that the map V into Ind_{H}^{G} V = \oplus_{G/H} cV is injective by showing it has a left inverse: we can make a map of vector spaces from \oplus_{G/H} cV to V that maps each coset-translate cV back onto V. This map is not natural since it depends on choice of coset reps, and it is probably not an k[H]-morphism, but it still does the trick.

\begin{proof}[Proof of Proposition \ref{VintoResIndV}] 

We first address the case where $\W_n$ is the symmetric group $S_n$, and $\oW_n$ is $D_n$ or $B_n$. To prove the proposition, it suffices to show that the underlying maps of $k$--modules $$V_n \longrightarrow \Big(\Res_{A}^{\oW} \; \Ind_{A}^{\oW} \; V \Big)_n$$ are injective; to do so we will construct left inverses $\widetilde{L}_n$ of the $k$--module maps. 

Fix $n$. We define a map of $k$--modules $$ L: \bigoplus_{r \leq n} M_{\oW}({\bf r})_n \otimes_k V_r \longrightarrow V_n$$ as follows: For any pure tensor of the form $$g \otimes v \in M_{\oW}({\bf r})_n \otimes_k V_r \qquad \text{with $\FI_{\oW}$ morphism } g: {\bf r} \to \bn,$$ we can factor $g$ as $g = \s \circ \tilde{g}$, where $$\s  \in (\Z /2 \Z)^n \in B_n \cong (\Z /2 \Z)^n \rtimes S_n,$$ and $\tilde{g}$ is a (uniquely determined) $\FI_A$--morphism. We assign 
\begin{align*}
 L: \bigoplus_{r \leq n} M_{\oW}({\bf r})_n \otimes_k V_r &\longrightarrow V_n \\
 g\otimes v & \longmapsto \tilde{g}(v) 
\end{align*}
Because the assignment $g \longmapsto \tilde{g}$ respects composition, $$L(f\circ h \otimes v) = L(f \otimes h(v)) \qquad \text{for all $\FI_A$ morphisms $h$.}$$ Hence, $L$ factors through the quotient $(\Ind_{A}^{\oW} \; V)_n$

$$ L: \bigoplus_{r \leq n} M_{\oW}({\bf r})_n \otimes_k V_r \twoheadrightarrow (\Ind_{A}^{\oW} \; V)_n \xrightarrow{\widetilde{L\;}} V_n.$$

The composite map of $k$--modules

\begin{align*} \begin{matrix}
 V_n  & \longrightarrow & (\Ind_{A}^{\oW} \; V)_n = (\Res_{A}^{\oW} \; \Ind_{A}^{\oW} \; V)_n & \xrightarrow{\widetilde{L\;}} &  V_n \\
 v & \longmapsto & id_n \otimes v & \longmapsto & id_n(v)=v
 \end{matrix}
\end{align*}
\noindent is the identity, which implies that the natural map of $\FI_A$--modules $$ V_n \longrightarrow (\Res_{A}^{\oW} \; \Ind_{A}^{\oW} \; V)_n $$ is injective.

Next we address the induction of $\FI_D$--modules up to $\FI_{BC}$--modules.  We will use the same outline as in the first case, but there are additional subtleties: unlike with $S_n$,  the group $D_n$ is not a quotient of $B_n$, and there is no way to associate an $\FI_D$ morphism $\tilde{g}$ to each $\FI_{BC}$ morphism $g$ in a manner that respects composition. 

We will, however, still define a left inverse $\widetilde{L\;}$ as above. Again, for each fixed $n$, we define a map of $k$--modules
$$ L: \bigoplus_{r \leq n} M_{BC}({\bf r})_n \otimes_k V_r \longrightarrow V_n$$ 
on the pure tensors $g \otimes v$ with $g: {\bf r} \to \bn$ and  $v \in V_r$, as follows. 

If $r \neq n$, then $\Hom_{\FI_{BC}}({\bf r}, \bn) \cong \Hom_{\FI_{D}}({\bf r}, \bn)$ by Remark \ref{HomFID=HomFIBC}, and so $g(v)$ is a well-defined element of $V_n$. In this case we define
$$L: g \otimes v \longmapsto g(v) \in V_n.$$

Similarly, suppose $g \in \End_{\FI_{BC}}(\bn)$ but $v$ is in the image of $V_{r}$ for some $r<n$, say, $v=f(u)$ for some $\FI_D$--morphism $f: {\bf r} \to \bn$. Then $$g\circ f \in \Hom_{\FI_{BC}}({\bf r}, \bn) = \Hom_{\FI_{D}}({\bf r}, \bn),$$ and $(g \circ f)(u)$ is a well-defined element of $V_n$. In this case we define
$$L: g \otimes f(u) \longmapsto (g \circ f)(u) \in V_n.$$
Both assignments satisfy $$L(g\circ h \otimes v) = L(g \otimes h(v)) \qquad \text{for all $\FI_D$--morphisms $h$.}$$

Finally, suppose $g \otimes v$ is a pure tensor with $$g \in B_n \cong \End_{\FI_{BC}}(\bn) \quad \text{and} \quad v \in V_n \text{ such that } v \notin \text{Span}_{V}(V_{n-1}).$$ Since $D_n \subseteq B_n$ has index two, either $g \in D_n,$ or $(-n \; n)\circ g \in D_n.$

We define
\begin{align*}
 L( g\otimes v) =  &  \left\{ \begin{array}{ll}
         g(v) & \mbox{if $g \in D_n$ },\\
         \big( (-n \; n)\circ g\big) (v) & \mbox{if $g \notin D_n$}.\end{array} \right.
\end{align*}
In this case, too, $$L(g \otimes h(u))=L(g\circ h \otimes u) \qquad \text{ for all $\FI_D$--morphisms $h$:}$$ since, by assumption on $v$, we can write $v=h(u)$ only if $h$ is an element of $\End_{\FI_D}(\bn) \cong D_n$, and so $g \in D_n$ if and only if $g\circ h \in D_n$. 

Once again, $L$ will factor through the quotient $(\Ind_D^{BC} V)_n$, and gives the desired left inverse. The map $V \to \Res_D^{BC} \; \Ind_D^{BC} \; V$ is injective, as desired. 
\end{proof}

Having established Proposition \ref{VintoResIndV}, we can now prove a critical fact about finitely generated $\FI_D$--modules. 

\begin{prop} \label{VisResIndV} {\bf( $V_n \cong (\Res_{D}^{BC}\Ind_{D}^{BC} V)_n$ for $n$ large).} Suppose $V$ is an $\FI_D$--module finitely generated in degree $\leq m$. Then $$ V_n \xrightarrow{\cong} (\Res_{D}^{BC}\; \Ind_{D}^{BC} V)_n$$ is an isomorphism of $D_n$--representations for all $n>m$. In particular, every finitely generated $\FI_D$--module $V$ is, for $n$ greater than its degree of generation, the restriction of an $\FI_{BC}$--module. 
\end{prop}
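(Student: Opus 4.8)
The plan is to leverage the injectivity of the unit map $V \to \Res_D^{BC}\Ind_D^{BC} V$ already proved in Proposition \ref{VintoResIndV}, so that it only remains to check that, in degree $n > m$, the $D_n$--equivariant map
$$ V_n \longrightarrow (\Ind_D^{BC} V)_n, \qquad v \longmapsto \mathrm{id}_{\bn}\otimes v $$
is \emph{surjective}. Since $(\Ind_D^{BC} V)_n$ is a quotient of $\bigoplus_{r \leq n} M_{BC}({\bf r})_n \otimes_k V_r$, it suffices to show that the class of every pure tensor $g\otimes v$, with $g \in \Hom_{\FI_{BC}}({\bf r},\bn)$ and $v \in V_r$, equals $\mathrm{id}_{\bn}\otimes w$ for some $w \in V_n$.

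First I would treat the case $r < n$. Here Remark \ref{HomFID=HomFIBC} gives $\Hom_{\FI_{BC}}({\bf r},\bn) = \Hom_{\FI_D}({\bf r},\bn)$, so $g$ is an $\FI_D$--morphism, and applying the defining coend relation $f\otimes g_*(v) = (f\circ g)\otimes v$ with $f = \mathrm{id}_{\bn}$ yields $g\otimes v = \mathrm{id}_{\bn}\otimes g_*(v)$, where $g_*(v)\in V_n$ is computed via the $\FI_D$--module structure of $V$. So every tensor supported in degrees $r<n$ already comes from $V_n$.

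The case $r = n$ is the only real point, and this is where the hypothesis $n > m$ is used. Given $g \in \End_{\FI_{BC}}(\bn) \cong B_n$ and $v \in V_n$, the assumption that $V$ is generated in degree $\leq m < n$ lets us write $v = \sum_\ell \phi^{(\ell)}_*(v_\ell)$ as a finite sum, with each $\phi^{(\ell)}\colon {\bf k}_\ell \to \bn$ an $\FI_D$--morphism, $k_\ell \leq m$, and $v_\ell \in V_{k_\ell}$. Applying the coend relation to each summand gives $g\otimes v = \sum_\ell (g\circ\phi^{(\ell)})\otimes v_\ell$; since each composite $g\circ\phi^{(\ell)}\colon {\bf k}_\ell \to \bn$ has source strictly below $n$, it is again an $\FI_D$--morphism by Remark \ref{HomFID=HomFIBC}, so the $r<n$ case rewrites the $\ell$-th term as $\mathrm{id}_{\bn}\otimes (g\circ\phi^{(\ell)})_*(v_\ell)$. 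Summing, $g\otimes v = \mathrm{id}_{\bn}\otimes w$ with $w = \sum_\ell (g\circ\phi^{(\ell)})_*(v_\ell) \in V_n$.

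Combining the two cases, $V_n \to (\Ind_D^{BC} V)_n = (\Res_D^{BC}\Ind_D^{BC} V)_n$ is surjective for $n > m$; together with injectivity from Proposition \ref{VintoResIndV} and the fact that it is the degree-$n$ component of a map of $\FI_D$--modules (hence $D_n$--equivariant), it is an isomorphism. The main obstacle is precisely the $r=n$ term: one cannot simply absorb a sign-reversing $g \in B_n \setminus D_n$ into an $\FI_D$--morphism (the transposition $(n\;{-}n)$ is not an $\FI_D$--morphism), so the obstruction must be pushed down into lower-degree pieces of $V$, which is exactly what $n$ exceeding the generation degree makes possible; everything else is a direct manipulation of the coend relations.
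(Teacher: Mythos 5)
Your proof is correct, and it takes a genuinely more hands-on route than the paper's. The paper never touches the coend relations at this point: it takes the surjection $\bigoplus_{a\le m} M_D(\ba)^{\oplus b_a}\twoheadrightarrow V$ from Proposition \ref{FinGen}, applies $\Ind_D^{BC}$ (using right-exactness and Proposition \ref{IndMisM} to identify the induced free modules with $M_{BC}(\ba)$), and then uses naturality of the unit map together with Remark \ref{RemarkMDisMBC} --- the left vertical arrow $\bigoplus_a M_D(\ba)_n^{\oplus b_a}\to\bigoplus_a\Res_D^{BC}M_{BC}(\ba)_n^{\oplus b_a}$ is an isomorphism once $n>m$ --- to conclude by a diagram chase that $V_n\to(\Res_D^{BC}\Ind_D^{BC}V)_n$ is surjective, with injectivity again supplied by Proposition \ref{VintoResIndV}. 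You instead argue directly on the coend presentation of $(\Ind_D^{BC}V)_n$, rewriting every class $g\otimes v$ as $\mathrm{id}_{\bn}\otimes w$: for sources $r<n$ this is Remark \ref{HomFID=HomFIBC} plus the defining relation, and for $r=n$ you use generation in degree $\le m<n$ to push $v$ into lower degrees, which is exactly where the hypothesis $n>m$ enters (the same point at which the paper invokes Remark \ref{RemarkMDisMBC}). Both arguments rest on the same two ingredients --- generation below degree $n$ and the coincidence of $\FI_D$ and $\FI_{BC}$ morphisms below top degree --- but yours is element-level and self-contained, needing only Definition \ref{DefnInd} and Proposition \ref{VintoResIndV}, and it makes the role of the generation degree completely transparent; the paper's version is more formal, reuses the machinery already established for the represented functors (Propositions \ref{IndMisM} and \ref{FinGen}, naturality of the unit), and for that reason transports more readily to other inclusions of categories where an analogue of Remark \ref{RemarkMDisMBC} is available.
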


\begin{proof}[Proof of Proposition \ref{VisResIndV}] The map $$V_n \to (\Res_{D}^{BC}\; \Ind_{D}^{BC} V)_n$$ is injective by Proposition \ref{VintoResIndV}, and so it suffices to show that this map is surjective for $n>m$.  Since $V$ is finitely generated in degree $\leq m$, by Proposition \ref{FinGen} we have a surjection of $\FI_D$--modules $$ S: \bigoplus_{a=0}^{m} M_D(\ba)^{\oplus b_a} \twoheadrightarrow V.$$ Inducing both sides up to $\FI_{BC}$ gives a surjective map

$$ \bigoplus_{a=0}^{m} M_{BC}(\ba)^{\oplus b_a} = \Ind_{D}^{BC} \; \bigg(\bigoplus_{a=0}^{m} M_D(\ba)^{\oplus b_a} \bigg)   \xrightarrow{\Ind_{D}^{BC} \; S} \Ind_{D}^{BC} \;  V $$

\noindent where the first equality follows from Proposition \ref{IndMisM}. By naturality of the unit map $\eta$, these maps fit together into a commutative diagram
\begin{align*}
\xymatrix{
 & \displaystyle \bigoplus_{a=0}^{m} M_D(\ba)^{\oplus b_a} \ar[d] \ar[rrr]^{S} &&& V \ar[d] \\
& \displaystyle \bigoplus_{a=0}^{m}  \Res_D^{BC} M_{BC}(\ba)^{\oplus b_a} \ar[rrr]^{\Res_D^{BC} \; \Ind_D^{BC} \; S} &&& \Res_D^{BC} \; \Ind_D^{BC} \; V 
}
\end{align*}

By Remark \ref{RemarkMDisMBC}, the left vertical arrow $$ \bigoplus_{a=0}^{m} M_D(\ba)_n^{\oplus b_a} \longrightarrow \bigoplus_{a=0}^{m} \Res_{D}^{BC} M_{BC}(\ba)_n^{\oplus b_a} $$ is an isomorphism of $D_n$--representations for $n>m$, and so the composite
$$ \bigoplus_{a=0}^{m} M_D(\ba)_n^{\oplus b_a} \xrightarrow{\cong} \bigoplus_{a=0}^{m}  \Res_D^{BC} M_{BC}(\ba)_n^{\oplus b_a} \twoheadrightarrow (\Res_D^{BC} \; \Ind_D^{BC} \; V )_n $$ is surjective for $n>m$. By commutativity, the right vertical arrow $$V_n \longrightarrow (\Res_D^{BC} \; \Ind_D^{BC} \; V)_n $$ must also surject for these values of $n$, which proves the claim.
\end{proof}

\begin{rem} 
Let $V$ be an $\FI_D$--module. In Proposition \ref{VisResIndV} we proved the isomorphism of $D_n$--representations $$ V_n \xrightarrow{\cong} (\Res_{D}^{BC}\; \Ind_{D}^{BC} V)_n \qquad \text{for $n>m$.}$$ We note that the analogous statements about $\Ind_{A}^{D}$ and $\Ind_{A}^{BC}$ are false. This is apparent from the $\FI_A$--modules $M_{A}(\bm)$ with $m>0$. We have
$$ \rank_{k} \; M_{A}(\bm)_n = [S_n : S_{n-m}] = \frac{n!}{(n-m)!} $$
In contrast, by Proposition \ref{IndMisM}, we have $ \Ind_{A}^{D} M_A(\bm) = M_D(\bm)$ and $ \Ind_{A}^{BC} M_A(\bm) = M_{BC}(\bm),$ with
\begin{align*}
 & \rank_{k} \; M_{D}(\bm)_n = [D_n : D_{n-m}] = \left\{ \begin{array}{c} 2^{m-1} n! \qquad  n=m \\ \displaystyle \frac{2^m \, n!}{(n-m)!} \qquad n>m  \end{array} \right. \\& \\ 
 & \rank_{k} \; M_{BC}(\bm)_n = [B_n : B_{n-m}] =  \frac{2^m \, n!}{(n-m)!}. 
\end{align*}
We see that $M_A(\bm)_n$ is a proper sub--$k[S_n]$--module of $ \Res_{A}^{D} \; \Ind_{A}^{D} M_A(\bm)_n$ and $ \Res_{A}^{BC} \Ind_{A}^{BC} M_A(\bm)_n$ for all $n$.
\end{rem}

\section{Constraints on finitely generated $\FIW$--modules}

Church--Ellenberg--Farb \cite{CEF} relate finite generation of an $\FI_A$--module to certain constraints on the shape of the Young diagrams in the irreducible representations of each representation $V_n$. We develop analogous results for $\FI_D$ and $\FI_{BC}$.

\subsection{The weight of an $\FIW$--module} \label{Section:Weight}

\begin{defn} \label{Defn:Weight} {\bf (Weight).} Let $k$ be a field of characteristic zero. Church--Ellenberg--Farb \cite[Definition 2.50]{CEF} define the \emph{weight of an $\FI_A$--module} to be $\leq d$ if for every $n \geq 0$ and every irreducible constituent $V(\y)_n$ of $V_n$ has $|\y| \leq d$ (in the notation described in Section \ref{BackgroundRepStability}). 

Similarly, we define the \emph{weight of a $B_n$--representation} $V_n$ to be $\leq d$ if every irreducible representation $V(\y)_n = V(\y^+, \y^-)_n$ in $V_n$ satisfies $|\y^+| + |\y^-| \leq d.$ We define the \emph{weight of an $\FI_{BC}$--module} $V$ to be $\leq d$ if $V_n$ has weight $\leq d$ for each $n$.  We define the \emph{weight of an $\FI_{D}$--module} $V$ as the weight of the $\FI_{BC}$--module $\Ind_{D}^{BC} V$. 

An $\FIW$--module $V$ has \emph{finite weight} if it is of weight $\leq d$ for some $d \geq 0$, and we call the minimum such $d$ the weight of $V$, weight$(V)$. We say that the \emph{weight of a Young diagram $\y = (\y_0, \y_1, \ldots, \y_\ell)$} is $$\y_1 + \cdots + \y_{\ell} = |\y|-\y_0.$$ 
\end{defn}
Over characteristic zero, the weight of submodules and quotients of $V$ are at most weight$(V)$.

\begin{prop}\label{WeightM(m)}
 Let $k$ be a field of characteristic zero. Then (in the notation of Section \ref{BackgroundRepStability}), a $\W_n$--representation $V(\y)_n$ is contained in $M_{\W}(\bm)$ if and only if $|\y| \leq m$. In type $D$, $M_{D}(\bm)$ decomposes completely into representations of the form $V(\y)_n$.
\end{prop}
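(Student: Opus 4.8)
The plan is to compute the irreducible decomposition of $M_\W(\bm)_n \cong \Ind_{\W_{n-m}}^{\W_n} k$ directly from the iterated Pieri/branching rules, read off the answer in the stable $V(\y)_n$ notation of Section \ref{BackgroundRepStability}, and then deduce the type D statements by restriction from type B/C.

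\textbf{Types A and B/C.} Iterating the one-step branching rule for $\Ind_{\W_{n-1}}^{\W_n}$ (Pieri's formula; for type B/C this is Equation (\ref{Eqn:Pieri})), $M_\W(\bm)_n$ decomposes into the irreducibles indexed by the partitions, respectively double partitions, of $n$ obtainable from the one-row partition $(n-m)$, respectively the double partition $((n-m),\varnothing)$, by adjoining $m$ boxes one at a time. The combinatorial heart is the observation that such a (double) partition $\mu$ of $n$ is reachable this way precisely when its Young diagram(s) contain the starting shape: ``only if'' is immediate since adjoining boxes never shrinks the first row, and ``if'' follows from the standard fact that between nested Young diagrams $A\subseteq B$ one can interpolate by single boxes — at any intermediate diagram $C\subsetneq B$ the first row $r$ with $C_r<B_r$ supplies an addable cell of $C$ lying in $B$, since $C_{r-1}=B_{r-1}\ge B_r>C_r$ — applied to each component (no column constraints appear in Equation (\ref{Eqn:Pieri})), the total number of added boxes being forced to be $m$. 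Now in the notation $V(\y)_n = V_{\y[n]}$ (type A) or $V(\y^+,\y^-)_n = V_{(\y^+[n-|\y^-|],\,\y^-)}$ (type B/C), the first row of the (first) component is $n-|\y|$, respectively $n-|\y^+|-|\y^-|$, whenever $V(\y)_n\neq 0$; so the reachability condition ``first row $\ge n-m$'' is exactly $|\y|\le m$, and when $|\y|\le m$ it holds for every $n$ with $V(\y)_n\neq 0$. This simultaneously shows every constituent of $M_A(\bm)_n$ and of $M_{BC}(\bm)_n$ already has the form $V(\y)_n$.

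\textbf{Type D.} By Remark \ref{RemarkMDisMBC}, $M_D(\bm)_n\cong\Res^{B_n}_{D_n}M_{BC}(\bm)_n$ for $n\neq m$ (and $M_D(\bm)_n=0$ for $n<m$). Since restriction is additive and the type-D object $V(\y)_n$ is by definition $\Res^{B_n}_{D_n}V(\y^+,\y^-)_n$, the type B/C decomposition immediately gives that $M_D(\bm)_n$ decomposes completely into representations $V(\y)_n$ for all $n\neq m$; the case $n=m$, where $M_D(\bm)_m\cong k[D_m]$ is the regular representation, is checked by hand — every irreducible $D_m$--representation is a constituent of $V(\y)_m$ for a suitable double partition $\y$ (take $\y^-$ to be the ``negative'' partition and strip the first row off the ``positive'' one), and the multiplicities in $k[D_m]$ are consistent with assembling these constituents into full $V(\y)_m$ summands. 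For the equivalence ``$V(\y)_n\subseteq M_D(\bm)$ iff $|\y|\le m$'': ``if'' is just restriction of the type B/C statement, and ``only if'' (for $n$ large) uses that $\Res^{B_n}_{D_n}V_{(\alpha,\beta)}$ and $\Res^{B_n}_{D_n}V_{(\alpha',\beta')}$ share an irreducible constituent only when $\{\alpha,\beta\}=\{\alpha',\beta'\}$ as unordered pairs; matching $V(\y)_n$ against the constituents of $M_D(\bm)_n$ then forces either $\y^+[n-|\y^-|]$ to be the long-first-row partition of a B/C-constituent, giving $|\y|\le m$, or $\y^-$ to have first part $\ge n-m$, which is impossible once $n>m+\y^-_1$ since $\y^-$ is fixed. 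The weight bound alone also follows formally: the weight of $M_D(\bm)$ equals the weight of $\Ind_D^{BC}M_D(\bm)$, which by Proposition \ref{IndMisM} is $M_{BC}(\bm)$, of weight $m$.

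\textbf{Expected obstacle.} Types A and B/C are a routine unwinding of Pieri, the only care being the elementary box-interpolation lemma. The real subtlety is type D, where the unordered-pair indexing of $D_n$-irreducibles and the ``split'' representations mean that the clean ``iff'' must carry a ``for $n$ sufficiently large'' caveat and the $n=m$ case must be treated separately; I expect tracking which double partitions $(\y^+,\y^-)$ collapse to the same $V(\y)_n$ under restriction to be the fiddliest part.
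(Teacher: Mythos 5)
Your proof is correct and follows essentially the same route as the paper: the iterated Pieri/branching rules identify the constituents of $M_A(\bm)_n$ and $M_{BC}(\bm)_n$ as exactly those $V(\y)_n$ whose padded first row is at least $n-m$, i.e. $|\y|\leq m$, and type D is handled via $M_D(\bm)_n \cong \Res^{B_n}_{D_n} M_{BC}(\bm)_n$ for $n>m$ together with the regular representation at $n=m$, where the split constituents pair up because they have equal dimension. Your extra care with the type D ``only if'' direction (the large-$n$ caveat ruling out the case $\y^-_1 \geq n-m$) is more explicit than the paper's one-line appeal to the B/C result, but it is the same argument in substance.
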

In type A and B/C, it is immediate that all representations over characteristic zero decompose into irreducible representations of the form $V(\y)_n$. In type D, this is precisely the statement that all 'split' irreducible representations occur in pairs $V\{\mu, +\} \oplus V\{\mu, -\}$. 

\begin{proof}[Proof of Proposition \ref{WeightM(m)}]
 The branching rules for the symmetric groups implies that $$V(\y)_n \quad \text{ occurs in } M_{A}(\bm)_n \cong \Ind_{S_{n-m}}^{S_n} k \qquad$$ if and only if  $\y[n]$ can be built from the partition $(n-m)$ by adding one box at a time;  these are exactly those diagrams $\y[n]$ with largest part $n-|\y| \geq (n-m)$, equivalently, with $ |\y| \leq m$.

 Similarly, by the branching rules for the hyperoctahedral group (Equation (\ref{Eqn:Pieri}), Section \ref{RepTheoryWn}), $$V(\y)_n = V(\y^+, \y^-)_n \text{ appears in $M_{BC}(\bm)_n$}$$ precisely when $(\y^+, \y^-)_n$ contains the double partition $((n-m), \varnothing)$, that is, when the largest part of $\y^+$, $n-(|\y^+|+|\y^-|)$, is at least $(n-m)$. We conclude that $V(\y)_n$ is contained in $M_{BC}(\bm)$ if and only if $|\y^+| + |\y^-| \leq m$.

Finally, in type D, by Remark \ref{RemarkMDisMBC} we have $$ M_D(\bm)_n = \left\{ \begin{array}{l}
         k[D_m]  \qquad \mbox{if $n=m$},\\
         \Res^{BC}_D M_{BC}(\bm)_n  \qquad \mbox{if $n>m$}, \end{array} \right. $$

When $n=m$, all $D_n$--representations $V(\y)_m$ necessarily satisfy $|\y|<m$, and conversely every irreducible subrepresentation appears in the regular representation $k[D_m]$ with multiplicity equal to its dimension. The split representations $V_{\{\y^-, +\}}$ and $V_{\{\y^-, -\}}$, being of equal dimension, occur in pairs. For $n>m$, the result follows immediately from the identification  $M_D(\bm)_n \cong \Res^{BC}_D M_{BC}(\bm)_n$ and the result in type B/C.
\end{proof}

Theorem \ref{WeightM(m)} (with Proposition \ref{IndMisM} in type D) imply:

\begin{cor} \label{M(m)isWeightm} The $\FIW$--module $M_{\W}(\bm)$ has weight $m$.
\end{cor}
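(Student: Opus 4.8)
Looking at this, I need to prove Corollary \ref{M(m)isWeightm}: the $\FIW$-module $M_{\W}(\bm)$ has weight exactly $m$.

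The plan is to deduce this directly from Proposition \ref{WeightM(m)}, which characterizes exactly which irreducible representations $V(\y)_n$ appear in $M_{\W}(\bm)$ — namely those with $|\y| \le m$. First I would handle types A and B/C together: by Proposition \ref{WeightM(m)}, every irreducible constituent $V(\y)_n$ of $M_{\W}(\bm)_n$ satisfies $|\y| \le m$, which is exactly the statement that $M_{\W}(\bm)$ has weight $\le m$ in the sense of Definition \ref{Defn:Weight}. For the reverse inequality, I would exhibit an irreducible constituent with $|\y| = m$: for instance, taking $n$ large and $\y$ the double partition $(\varnothing, (1^m))$ in type B/C (or $\y = (1^m)$ in type A), Proposition \ref{WeightM(m)} guarantees $V(\y)_n$ actually occurs in $M_{\W}(\bm)_n$ since $|\y| = m \le m$, so the weight is not $\le m-1$. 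Hence weight$(M_{\W}(\bm)) = m$.

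For type D, I would argue via the definition: the weight of $M_D(\bm)$ is by Definition \ref{Defn:Weight} the weight of the $\FI_{BC}$-module $\Ind_D^{BC} M_D(\bm)$. By Proposition \ref{IndMisM}, $\Ind_D^{BC} M_D(\bm) \cong M_{BC}(\bm)$, whose weight we have just computed to be $m$. This immediately gives weight$(M_D(\bm)) = m$ as well. (One should note that Proposition \ref{WeightM(m)} also confirms that $M_D(\bm)$ genuinely decomposes into representations of the form $V(\y)_n$, so the weight is well-defined in the naive sense too, but the cleanest route is through the induction definition.)

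I do not anticipate a serious obstacle here: the corollary is essentially a repackaging of Proposition \ref{WeightM(m)}. The only mild subtlety is making sure the lower bound weight $\ge m$ is justified — i.e., that some constituent of size exactly $m$ really occurs, not merely that constituents of size $\le m$ are permitted — but Proposition \ref{WeightM(m)} is stated as an "if and only if," so the existence of such a constituent (for any $n \ge m$, since $M_{\W}(\bm)_m \cong k[\W_m]$ already contains every irreducible of $\W_m$) is immediate. The type D case requires invoking Proposition \ref{IndMisM} to transport the computation, which is the one cross-reference that does real work.
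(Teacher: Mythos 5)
Your argument is correct and is exactly the paper's route: the corollary is stated there as an immediate consequence of Proposition \ref{WeightM(m)}, with Proposition \ref{IndMisM} supplying the type D case via the definition of weight through $\Ind_{D}^{BC}$. One tiny caveat: your parenthetical lower-bound justification at $n=m$ is off in type A (in the stable notation the constituents of $k[S_m]$ all have weight at most $m-1$), but your main argument---taking $n$ large and $\y=(1^m)$, resp.\ $(\varnothing,(1^m))$, and using the ``if and only if'' in Proposition \ref{WeightM(m)}---already gives the needed constituent of weight exactly $m$.
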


\begin{thm} \label{WnDiagramSizes}{\bf (Degree of generation bounds weight).}  Suppose that $V$ is an $\FIW$--module over a field of characteristic zero.
  If $V$ is finitely generated in degree $\leq m$, then weight($V$) $\leq m$.
\end{thm}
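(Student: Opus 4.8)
The plan is to reduce the statement to the already-established facts about the modules $M_{\W}(\bm)$. First I would invoke Proposition \ref{FinGen}: since $V$ is finitely generated in degree $\leq m$, there is a surjection of $\FIW$--modules $\bigoplus_{i} M_{\W}(\bm_i) \twoheadrightarrow V$ with each $m_i \leq m$. Over a field of characteristic zero, a surjection of $\W_n$--representations $\bigoplus_i M_{\W}(\bm_i)_n \twoheadrightarrow V_n$ means that every irreducible constituent of $V_n$ is an irreducible constituent of some $M_{\W}(\bm_i)_n$. So it suffices to bound the weight of each $M_{\W}(\bm_i)$.

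The second step is to cite Corollary \ref{M(m)isWeightm} (equivalently Proposition \ref{WeightM(m)}), which says $M_{\W}(\bm_i)$ has weight exactly $m_i$. Hence every irreducible constituent $V(\y)_n$ appearing in $M_{\W}(\bm_i)_n$ satisfies $|\y| \leq m_i \leq m$ (in type B/C, $|\y^+|+|\y^-| \leq m_i \leq m$; in type D, by the definition of weight for $\FI_D$--modules, one passes through $\Ind_D^{BC}$, but Proposition \ref{WeightM(m)} already records that $M_D(\bm_i)$ decomposes into representations of the form $V(\y)_n$ with $|\y| \leq m_i$, and $\Ind_D^{BC} M_D(\bm_i) \cong M_{BC}(\bm_i)$ by Proposition \ref{IndMisM}). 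Combining, every irreducible constituent of $V_n$ has weight $\leq m$ for every $n$, which is precisely the statement weight$(V) \leq m$.

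A small point to handle carefully in type D: the weight of an $\FI_D$--module $V$ is \emph{defined} as the weight of $\Ind_D^{BC} V$, so I should phrase the argument in terms of $\Ind_D^{BC} V$ rather than $V$ directly. By Corollary \ref{IndGenRelDegrees}, $\Ind_D^{BC} V$ is finitely generated in degree $\leq m$ as an $\FI_{BC}$--module; applying the type B/C case of the argument to $\Ind_D^{BC} V$ then gives weight$(\Ind_D^{BC} V) \leq m$, which by definition is weight$(V) \leq m$. Alternatively one applies $\Ind_D^{BC}$ to the surjection $\bigoplus_i M_D(\bm_i) \twoheadrightarrow V$, using Proposition \ref{IndMisM} to identify $\Ind_D^{BC} M_D(\bm_i) \cong M_{BC}(\bm_i)$, and proceeds as before.

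I do not anticipate a serious obstacle here; the content has essentially been front-loaded into Proposition \ref{FinGen}, Proposition \ref{WeightM(m)}/Corollary \ref{M(m)isWeightm}, and (for type D) Proposition \ref{IndMisM} and Corollary \ref{IndGenRelDegrees}. The only place requiring mild care is the bookkeeping in type D around the definition of weight via induction, and the observation that passing to a quotient (or the image of a surjection) does not increase weight in characteristic zero, which follows from complete reducibility of $\W_n$--representations.
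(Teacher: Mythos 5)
Your proof is correct and follows essentially the same route as the paper: reduce to the free modules via Proposition \ref{FinGen}, apply Proposition \ref{WeightM(m)}/Corollary \ref{M(m)isWeightm}, and in type D pass through $\Ind_D^{BC}$ (your "alternative" of inducing the surjection and invoking Proposition \ref{IndMisM} is exactly the paper's Corollary \ref{PromoteMtoInd}). No gaps.
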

\begin{proof}[Proof of Theorem \ref{WnDiagramSizes}]
 
By Proposition \ref{FinGen}, any $\FIW$--module $V$ finitely generated in degree $\leq m$ is a quotient of some $\FIW$--module of the form $ \bigoplus_{a=0}^m M_{\W}(\ba)^{b_a}$. Therefore, we conclude Theorem \ref{WnDiagramSizes} from Proposition \ref{WeightM(m)} and (for $\FI_D$--modules) Corollary \ref{PromoteMtoInd}.

Theorem \ref{WnDiagramSizes} is proven for $\FI_A$--modules in \cite[Proposition 2.51]{CEF}. 
\end{proof}

Theorem \ref{WnDiagramSizes} strongly constrains which irreducible representations can occur in $V_n$ once $n$ is large relative to the degree of generation of $V$. The following corollary gives some examples of irreducible components which are excluded. 

\begin{cor}\label{NoSign} Suppose that $V$ is an $\FIW$--module over a characteristic zero field, generated in degree $\leq m$. 
\begin{itemize} 
 \item If $\W_n$ is $S_n$, then for all $n>(m+1)$ the $S_n$--representation $V_n$ cannot contain the alternating representation. 
 \item If $\W_n$ is $D_n$ or $B_n$, then for all $n>(m+1)$ the $\W_n$--representation $V_n$ cannot contain the pullback of the alternating representation.
 \item  If $\W_n$ is $B_n$, then for all $n>m$ the $B_n$--representation $V_n$ cannot contain the 'sign' representation associated to the character $$\varepsilon: B_n \twoheadrightarrow B_n/D_n \cong \{\pm 1\}.$$
\end{itemize}
\end{cor}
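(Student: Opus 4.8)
The plan is to reduce all three statements to Theorem~\ref{WnDiagramSizes}. Since $V$ is finitely generated in degree $\leq m$ over a field of characteristic zero, that theorem gives $\mathrm{weight}(V)\leq m$, so every irreducible constituent of every $V_n$ must be ``small'': in the notation of Section~\ref{BackgroundRepStability}, every constituent $V(\y)_n$ of $V_n$ has $|\y|\leq m$ (resp.\ $|\y^+|+|\y^-|\leq m$ in type B/C). Each of the three listed representations turns out to have weight growing linearly in $n$, hence is too large to be such a constituent once $n$ exceeds the stated bound.

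I would start with $\W_n=S_n$: the alternating representation is $V_{(1^n)}$, which is $V(\y)_n$ with $\y=(1^{n-1})$, so $|\y|=n-1$; if it occurred in $V_n$ then $\mathrm{weight}(V)\geq n-1$, and with $\mathrm{weight}(V)\leq m$ this forces $n\leq m+1$, so for $n>m+1$ it cannot occur. The same computation handles the pullback in type B/C: the pullback of the alternating $S_n$-representation along $\pi\colon B_n\twoheadrightarrow S_n$ is $V_{((1^n),\varnothing)}$, i.e.\ $V((1^{n-1}),\varnothing)_n$, again of weight $n-1$, so again $n>m+1$ excludes it. For the sign representation $\Q^{\varepsilon}$ of $B_n$, Section~\ref{RepTheoryWn} identifies it with $V_{(\varnothing,(n))}$, for which $|\y^+|+|\y^-|=0+n=n$; if it occurred in $V_n$ then $\mathrm{weight}(V)\geq n$, contradicting $\mathrm{weight}(V)\leq m$ once $n>m$.

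The one case requiring slightly more care is $\W_n=D_n$, because weight for $\FI_D$-modules is defined only through $\Ind_D^{BC}$. The pullback of the alternating representation along $D_n\hookrightarrow B_n\twoheadrightarrow S_n$ is $\Res^{B_n}_{D_n}V_{((1^n),\varnothing)}$, which (as $(1^n)\neq\varnothing$) is the irreducible $V_{\{(1^n),\varnothing\}}$, i.e.\ $V((1^{n-1}),\varnothing)_n$ in the $D_n$ notation. For $n>m+1$ the index $n$ exceeds the degree of generation, so Proposition~\ref{VisResIndV} gives $V_n\cong(\Res^{BC}_D\Ind^{BC}_D V)_n$ (one could instead use only the injection of Proposition~\ref{VintoResIndV}, since over characteristic zero the constituents of a subrepresentation lie among those of the ambient module). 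By the restriction rule from $B_n$ to $D_n$ recalled in Section~\ref{RepTheoryDn}, $V_{\{(1^n),\varnothing\}}$ appears in $\Res^{B_n}_{D_n}V_{(\alpha,\beta)}$ only when $(\alpha,\beta)$ equals $((1^n),\varnothing)$ or $(\varnothing,(1^n))$, and both of these $B_n$-irreducibles have weight $\geq n-1$. Hence if the pullback of the alternating representation occurs in $V_n$, then $\Ind^{BC}_D V$ has a constituent of weight $\geq n-1$ in degree $n$, so $\mathrm{weight}(V)=\mathrm{weight}(\Ind^{BC}_D V)\geq n-1$, forcing $n\leq m+1$ — a contradiction. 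The main (and rather mild) obstacle is exactly this detour through $\Ind^{BC}_D$ in type D; the other two cases are immediate size counts against Theorem~\ref{WnDiagramSizes}.
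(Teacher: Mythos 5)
Your proposal is correct and follows essentially the same route as the paper: bound weight$(V)$ by $m$ via Theorem~\ref{WnDiagramSizes}, observe that the alternating representation, its pullback, and the sign representation $V(\varnothing,(n))$ have weights $n-1$, $n-1$, and $n$ respectively, and in type D pass through $\Ind_D^{BC}V$ using Proposition~\ref{VisResIndV} together with the fact that $V_{\{(1^n),\varnothing\}}$ only restricts from $V_{((1^n),\varnothing)}$ or $V_{(\varnothing,(1^n))}$. No gaps.
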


\begin{proof}[Proof of Corollary \ref{NoSign}]
 If $V$ is an $\FIW$--module generated in degree $\leq m$ as above, then weight$(V)$ is at most $m$ by Theorem \ref{WnDiagramSizes}. The alternating $S_n$--representation $V(1, 1, \ldots, 1)$ has weight $(n-1)$, as does its pullback to $B_n$, $V((1, 1, \ldots, 1), \varnothing),$ so neither representation can occur in $V_n$ once $n>(m+1)$. 
 
 The $B_n$--representation $V(\varnothing, (n))$ has weight $n$, so it can occur in $V_n$ only when $n\leq m$. 
 
 The alternating $S_n$--representation pulls back to the $D_n$--representation $V\{(1, 1, \ldots, 1), \varnothing\}.$ Suppose there existed an $\FI_D$--module $V$ wherein this pullback occurred in $V_n$ for some $n>(m+1)$.  By the classification of $D_n$--representations described in Section \ref{RepTheoryDn}, this pullback $D_n$--representation is the restriction of either the $B_n$--representation $V((1, 1, \ldots, 1), \varnothing)$ of weight $(n-1)$ or the $B_n$--representation $V(\varnothing, (1, 1, \ldots, 1))$ of weight $n$; it does not occur in the restriction of any other $B_n$--representation. By Proposition \ref{VisResIndV}, the $\FI_{BC}$--module $\Ind_{D}^{BC} V$ must contain one of these two representations in degree $n$, contradicting Theorem \ref{WnDiagramSizes}. 
\end{proof}

\begin{prop} {\bf(Split representations do not occur in finitely generated $\FI_D$--modules for $n>2m$).} \label{NoSplitIrreps} Let $k$ be a field of characteristic zero, and suppose that $V$ is an $\FI_D$--module over $k$ finitely generated in degree $\leq m$. Then for any $n>2m$, the $D_n$--representation $V_n$ does not contain any 'split' irreducible representations, that is, all of its irreducible components are of the form $V_{\{ \y,\; \mu\}}$ for $\y \neq \mu$.
\end{prop}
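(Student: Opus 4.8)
The plan is to transport the module to type B/C, where the weight bound from Theorem \ref{WnDiagramSizes} applies, and then exploit the fact that split irreducible $D_n$-representations are ``too balanced'' to survive that bound. First I would observe that since $n > 2m \geq m$, Proposition \ref{VisResIndV} gives an isomorphism of $D_n$-representations $V_n \cong (\Res_D^{BC}\Ind_D^{BC} V)_n$; set $W := \Ind_D^{BC} V$. By Corollary \ref{IndGenRelDegrees}, $W$ is a finitely generated $\FI_{BC}$-module generated in degree $\leq m$, so Theorem \ref{WnDiagramSizes} gives $\mathrm{weight}(W) \leq m$. Unwinding the padded/stable partition notation of Section \ref{BackgroundRepStability} (cf.\ Definition \ref{Defn:Weight}), this means: every irreducible constituent $V_{(\mu^+,\mu^-)}$ of $W_n$ satisfies $|\mu^+| + |\mu^-|$ minus the largest part of $\mu^+$, i.e.\ $n$ minus the largest part of $\mu^+$, is $\leq m$.

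Next I would recall from the classification in Section \ref{RepTheoryDn} which $B_n$-irreducibles can contribute a split $D_n$-representation upon restriction. For $n$ even and $\mu \vdash n/2$, the split representations $V_{(\mu,+)}$ and $V_{(\mu,-)}$ occur in $\Res_{D_n}^{B_n} V_{(\mu^+,\mu^-)}$ precisely when $\mu^+ = \mu^- = \mu$; the restriction of any other $B_n$-irreducible $V_{(\mu^+,\mu^-)}$ (those with $\mu^+ \neq \mu^-$) is the non-split $V_{\{\mu^+,\mu^-\}}$. Consequently, if $V_n \cong (\Res_D^{BC} W)_n$ contained any split irreducible, then $W_n$ would contain the $B_n$-irreducible $V_{(\mu,\mu)}$ for some $\mu \vdash \tfrac{n}{2}$.

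Finally I would derive the contradiction. The largest part of $\mu$ is at most $|\mu| = \tfrac{n}{2}$, so the weight of $V_{(\mu,\mu)}$, namely $n$ minus that largest part, is at least $\tfrac{n}{2}$; since $n > 2m$ this exceeds $m$, contradicting $\mathrm{weight}(W) \leq m$. Hence no split irreducible occurs in $V_n$ when $n > 2m$, so every irreducible constituent of $V_n$ is of the form $V_{\{\y,\mu\}}$ with $\y \neq \mu$.

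The only genuinely delicate point is the bookkeeping in the first step that identifies ``weight $\leq m$'' with the inequality bounding $n$ by $m$ plus the largest part of $\mu^+$; once the stable-notation dictionary of Section \ref{BackgroundRepStability} is in place, the remaining steps are immediate from the cited results (Proposition \ref{VisResIndV}, Corollary \ref{IndGenRelDegrees}, Theorem \ref{WnDiagramSizes}) and the restriction rules of Section \ref{RepTheoryDn}. I note that one may alternatively replace the use of Proposition \ref{VisResIndV} by the injectivity of the unit map $V \hookrightarrow \Res_D^{BC}\Ind_D^{BC} V$ from Proposition \ref{VintoResIndV}: in characteristic zero this already forces every irreducible constituent of $V_n$ to be a constituent of $(\Res_D^{BC} W)_n$, which is all the argument needs.
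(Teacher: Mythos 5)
Your proof is correct, and it takes a genuinely different (though closely related) route from the paper's. The paper argues directly from the presentation: by Proposition \ref{FinGen} every constituent of $V_n$ appears in some $M_D(\ba)_n$ with $a \leq m$, which by Remark \ref{RemarkMDisMBC} equals $\Res_{D_n}^{B_n} M_{BC}(\ba)_n$ for $n > a$, and then Pieri's rule (Equation (\ref{Eqn:Pieri})) forces any $B_n$--constituent $V_{(\y,\mu)}$ of $M_{BC}(\ba)_n$ to satisfy $|\y| \geq n-m > m \geq |\mu|$, so $\y \neq \mu$ and no split representation can appear after restriction. You instead pass through the induction functor: $V_n \cong (\Res_D^{BC}\Ind_D^{BC}V)_n$ (Proposition \ref{VisResIndV}, or just the injectivity of Proposition \ref{VintoResIndV} plus semisimplicity, as you note), transfer the generation bound to $\Ind_D^{BC}V$ (Corollary \ref{IndGenRelDegrees}, or more simply Corollary \ref{PromoteMtoInd}, which needs no relation-degree hypothesis; in fact, since the weight of an $\FI_D$--module is \emph{defined} via $\Ind_D^{BC}$, Theorem \ref{WnDiagramSizes} applied to $V$ itself already gives what you need), and then observe that a split constituent would force some $V_{(\mu,\mu)}$, $\mu \vdash n/2$, into $(\Ind_D^{BC}V)_n$, whose weight $n-\mu_1 \geq n/2 > m$ violates the weight bound. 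Both arguments ultimately rest on the same branching-rule computation for $M_{BC}(\ba)_n$ (that is what proves Theorem \ref{WnDiagramSizes} via Proposition \ref{WeightM(m)}); the paper's version is more self-contained and incidentally records the stronger conclusion $|\y| > m \geq |\mu|$ for every constituent, while yours is shorter given the weight machinery and isolates cleanly the only representation-theoretic input about $D_n$, namely that split irreducibles arise only from restricting $V_{(\mu,\mu)}$. Your translation of ``weight $\leq m$'' into ``$n$ minus the largest part of $\mu^+$ is $\leq m$'' is the right reading of the stable notation, so there is no gap there.
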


\begin{proof}[Proof of Proposition \ref{NoSplitIrreps}] There is a surjection of $\FI_D$--modules $ \bigoplus_{a=0}^m M_{D}(\ba)^{b_a} \twoheadrightarrow V$ by Proposition \ref{FinGen}, so every irreducible component of $V_n$ must appear in $M_D(\ba)_n$ for some $a \leq m$. Moreover, by Remark \ref{RemarkMDisMBC} we have an isomorphism of $D_n$--representations $M_D(\ba)_n = \Res_{D_n}^{B_n} M_{BC}(\ba)_n,$ and so every irreducible component of $V_n$ must appear in $\Res_{D_n}^{B_n}M_{BC}(\ba)_n$ for some $a \leq m$. 

The branching rules (Equation (\ref{Eqn:Pieri}), Section \ref{RepTheoryWn}) imply that the irreducible representation $V_{(\y, \mu)} \subseteq M_{BC}(\ba)_n$ only if $$((n-m), \varnothing) \subseteq ((n-a), \varnothing) \subseteq (\y, \mu),$$ and so in particular $$|\y| \geq (n-m) > m \geq |\mu| \qquad \text{ for all $n > 2m$.}$$ Thus, $V_{(\y, \mu)} \subseteq M_{BC}(\ba)_n$ only if $|\y| \neq |\mu|$, and so by restriction to $D_n$ we conclude that when $n > 2m$, $V_n$ only contains irreducible components of the form $V_{\{ \y,\; \mu\}}$ for $\y \neq \mu$.
\end{proof}

\subsection{Coinvariants and stability degree} \label{Section:CoinvariantsAndStabilityDeg}

{ \noindent \bf  Shifted $\FIW$--modules.  \quad} The category $\FIW$ contains isomorphic copies of itself as proper subcategories. We use these inclusions to define the shifting operation on $\FIW$--modules. As in \cite[Section 2.4]{CEF}, by shifting and passing to coinvariants, we define the \emph{stability degree} of an $\FIW$--module, and, in Lemma \ref{FinGenImpliesStabilityDeg}, we find a lower bound on the stability degree of a finitely presented $\FIW$--module. In Section \ref{SectionFinGenRepStability}, we will use this concept to prove the equivalence of finite generation of an $\FI_{BC}$--module with representation stability in the sense of Church--Farb \cite{RepStability}. 

We first introduce some notation: given nonnegative integers $n$ and $a$, we denote the $\FIW$ object
$$ {\bf (n+a)} := \{ \pm 1, \pm 2,  \ldots \pm (n+a) \}.$$
We remark that addition here corresponds to disjoint union of finite sets; $ {\bf (n+a)} \cong \bn \sqcup \ba $. 

\begin{defn} {\bf (Shifts $\amalg_{[-a]} : \FIW \to \FIW$)}
For each $a \geq 0$, there are maps 
\begin{align*} 
 \bn = \{ \pm 1, \ldots, \pm n \} & \hookrightarrow \{ \pm 1, \ldots, \pm (n+a) \} = ({\bf n+a}) \\
 d & \longmapsto (d+a)
\end{align*}
These maps define functors 
\begin{align*}
\amalg_{[-a]} : \FIW & \longrightarrow \FIW \\
\bn & \longmapsto {\bf (n+a) } \\ 
\{ f: \bm \to \bn \} & \longmapsto \{ \amalg_{[-a]}(f) :{\bf (m+a)} \to {\bf (n+a) } \}
\end{align*}
where
\begin{align*}
 \amalg_{[-a]}(f) \text{ maps } \left\{ \begin{array}{l}
         d \mapsto d  \qquad \mbox{if $d \leq a$},\\
         (d+a) \mapsto (f(d)+a).  \end{array} \right.
\end{align*}  
\end{defn}

\noindent Figure \ref{fig:Shifted} gives a schematic of the functor $\amalg_{[-2]}$.

\begin{figure}[h!]
\begin{center}
\setlength\fboxsep{5pt}
\setlength\fboxrule{0.5pt}
\fbox{ \includegraphics[scale=3.1]{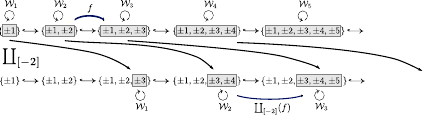} }
\caption{ {\small The functor $\amalg_{[-a]}(f): \FIW \to \FIW$}} 
\label{fig:Shifted}
\end{center}
\end{figure}

\begin{defn}{\bf (Shifted $\FIW$--modules).}
 Given an $\FIW$--module $V: \FIW \to k$--Mod, we define the \emph{shifted} $\FIW$--module $S_{+a}V$ by $S_{+a}V = V \circ \amalg_{[-a]}.$
\end{defn}

The $\W_n$--representation $(S_{+a}V)_n$ is the restriction of $V_{n+a}$ to the copy of $\W_n$ acting on $\{\pm(1+a), \ldots \pm(n+a)\} \subseteq {\bf (n+a) }$. 

\begin{defn} {\bf (Coinvariants functor $\t$).} We define $\text{$\t: \FIW-$Mod $ \to \FIW-$Mod}$ to be the coinvariants functor, as follows: for an $\FIW$--module $V$, let $\t V$ be the $\FIW$--module with
$$ (\t V)_n = (V_n)_{\W_n} := k \otimes_{k[\W_n]} V_n $$ 
\end{defn}

That is, $(\t V)_n$ is the largest quotient of $V_n$ on which $\W_n$ acts trivially. When $k$ is a field of characteristic zero, the map $V_n \twoheadrightarrow (V_n)_{\W_n}$ is the projection onto the invariant subspace $(V_n)^{\W_n}$.

\begin{defn} {\bf (The graded $k[T]$--module $\Phi_a(V)$).} Fix an integer $a \geq 0$. We define
\begin{align*}
  \Phi_a : \FIW \text{--Mod} & \longrightarrow k[T]\text{--Mod} \\ 
        V & \longmapsto \bigoplus_{n \geq 0} (\t \circ S_{+a} V)_n  = \bigoplus_{n \geq 0} (V_{n+a})_{\W_n}     
\end{align*}
The action of $T$ is by the maps  $(V_{n+a})_{\W_n} \to (V_{n+1+a})_{\W_{n+1}}$ induced by the maps $(I_{n+a})_* : V_{n+a} \to V_{n+1+a}$.  
\end{defn}

\begin{rem} \label{CoinvariantsBranchingRule}
 Note that each graded piece $\Phi_a(V)_n = (V_{n+a})_{\W_n}$ has the structure of an $\W_a$--module, and $T$ acts $\W_a$--equivariantly. Over characteristic zero, the multiplicity of a $\W_a$--representation $U$ in $(V_{n+a})_{\W_n}$ is equal to the multiplicity of $U \boxtimes k$ in the restriction $\Res^{\W_{n+a}}_{\W_a \times \W_n} V_{n+a}$, given by the branching rules (Equation (\ref{Eqn:WnInvariantsDecomp}), Section \ref{RepTheoryWn}.) \end{rem}

\begin{defn} {\bf (Injectivity degree; Surjectivity degree).} An $\FIW$--module has \emph{injectivity degree $ \leq s$} (respectively, \emph{surjectivity degree $ \leq s$}) if for every $a \geq 0$ and for all $n \geq s$, the map $\Phi_a(V)_n \to \Phi_a(V)_{n+1}$ induced by $T$ is injective (respectively, surjective). The minimum such $s$ is called the \emph{injectivity degree} (respectively, the \emph{surjectivity degree}).
\end{defn}

\begin{defn} \label{Defn:StabDeg} {\bf (Stability degree).} An $\FIW$--module has \emph{stability degree $\leq s$} if for every $a \geq 0$ and $n \geq s$, the map $\Phi_a(V)_n \to \Phi_a(V)_{n+1}$ induced by $T$ is an isomorphism of vector spaces; equivalently, an isomorphism of $\W_a$--representations. The \emph{stability degree} is the minimum such $s$; it is the maximum of injectivity and surjectivity degree.
\end{defn}

Explicitly, $V$ has stability degree $\leq s$ if $(V_{n+a})_{\W_{n}} \cong (V_{n+1+a})_{\W_{n+1}}$for every $a \geq 0$ and $n \geq s$.

\noindent Figure \ref{fig:StabilityDegree} shows an $\FIW$--module $V$ with stability degree $3$. 

\begin{figure}[h!]
\begin{center}
\setlength\fboxsep{5pt}
\setlength\fboxrule{0.5pt}
\fbox{ \includegraphics[scale=2]{./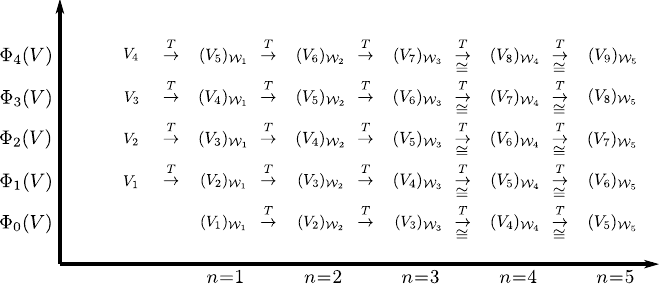}}
\caption{Stability degree $3$}
\label{fig:StabilityDegree} 
\end{center}
\end{figure}

The following results (Remark \ref{StabilityToQuotients}, Propositions \ref{StabilityDegreeM(m)}, \ref{FiniteGenerationSurjectivityDegree}, and  \ref{StabilityKernelCokernel}, and Lemma \ref{FinGenImpliesStabilityDeg})  are proven by Church--Ellenberg--Farb \cite[Section 2.4]{CEF} for $\FI_A$--modules, and their proofs generalize readily to $\FI_{BC}$ and $\FI_D$.

\begin{rem} \label{StabilityToQuotients}
 Let $V$ be an $\FIW$--module with surjectivity degree $\leq s$ and injectivity degree $\leq t$. Since $\Phi_a$ is right exact, quotients of $V$ also have surjectivity degree $\leq s$. Furthermore, when $k$ contains $\Q$, $\Phi_a$ is left exact, and any submodule of $V$ has injectivity degree $\leq t$. 
\end{rem}

Remark \ref{StabilityToQuotients} generalizes \cite[Remark 2.36]{CEF}. The following proposition is proven in \cite[Proposition 2.37]{CEF} for $\FI_A$. We adapt their proof to $\FI_{BC}$ and $\FI_D$.

\begin{prop} \label{StabilityDegreeM(m)}
 For $m \geq 0$, the $\FIW$--module $M_{\W}(\bm)$ has injectivity degree $0$ and surjectivity degree $m$.
\end{prop}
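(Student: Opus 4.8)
The plan is to compute the graded $k[T]$--module $\Phi_a(M_{\W}(\bm))$ explicitly and show that the transition maps $T$ are always injective, and are surjective once $n\geq m$. By definition $\Phi_a(M_{\W}(\bm))_n = (M_{\W}(\bm)_{n+a})_{\W_n}$, where $\W_n$ sits inside $\W_{n+a}$ acting on the last $n$ coordinates. Recall $M_{\W}(\bm)_{n+a}$ has $k$--basis $\Hom_{\FIW}(\bm,{\bf (n+a)})$, which we may describe as the set of $m$--tuples $(f(1),\dots,f(m))$ with $f(j)\in {\bf (n+a)}$ arising from an $\FIW$ morphism (in type A these are sign-preserving injections, in type B/C arbitrary such injections, in type D the even/odd distinction only matters when $m=n+a$, hence is irrelevant here since $m<n+a$ whenever $n>0$; and the $n=0$ case one checks separately). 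Thus $(M_{\W}(\bm)_{n+a})_{\W_n}$ has a basis indexed by $\W_n$--orbits of such morphisms under postcomposition.

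The key combinatorial step is to identify these orbits. Since $\W_n$ acts on $\Hom_{\FIW}(\bm,{\bf (n+a)})$ by postcomposing with the copy of $\W_n$ permuting (and in type B/C, D, signing) the coordinates $\pm(a+1),\dots,\pm(a+n)$, two morphisms $f,f'$ lie in the same orbit iff they agree, as signed partial functions, on the part of their image landing in ${\bf a} = \{\pm 1,\dots,\pm a\}$, and their images meet the block $\{\pm(a+1),\dots,\pm(a+n)\}$ in the same number of points with a compatible structure. Concretely an orbit is recorded by: a choice of a subset $J\subseteq\bm$ together with an $\FIW$-morphism $J\to {\bf a}$ describing where those elements go, and the remaining $|J^c|$ elements of $\bm$ all mapped injectively into the last block — but since $\W_n$ acts transitively on ordered injections of a fixed size into that block (again the type D parity is invisible because the block has size $n\geq 1$ and we can always realize both parities), the orbit is determined solely by $J$ and the morphism $J\to {\bf a}$, provided $n$ is large enough to accommodate $|J^c|$ many distinct images, i.e. provided $n\geq |J^c|$. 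Hence for $n\geq m$ the set of orbits is in natural bijection with $\coprod_{J\subseteq\bm}\Hom_{\FIW}(J,{\bf a})$, \emph{independent of $n$}, and the map $T$ is visibly the identity on this indexing set — so $T$ is an isomorphism for $n\geq m$, giving surjectivity degree $\leq m$. Sharpness (that it is exactly $m$ and not smaller) follows by taking $J=\bm$, $a=0$: the orbit counting the case where all of $\bm$ maps into the last block only becomes available at $n=m$.

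For injectivity degree $0$ I would argue that $T:\Phi_a(M_{\W}(\bm))_n\to\Phi_a(M_{\W}(\bm))_{n+1}$ is injective for \emph{all} $n\geq 0$. On the level of basis sets, $T$ sends the orbit of a morphism $f:\bm\to{\bf (n+a)}$ to the orbit of $I_{n+a}\circ f:\bm\to{\bf (n+1+a)}$ (relabel the last block by the inclusion). From the orbit description above — orbit $\leftrightarrow$ (subset $J$ of $\bm$ mapping into ${\bf a}$, the morphism $J\to{\bf a}$, and the cardinality $|J^c|$) — this operation is injective on orbits: if $I_{n+a}\circ f$ and $I_{n+a}\circ f'$ are in the same $\W_{n+1}$--orbit, then restricting attention to ${\bf a}$ shows $J=J'$ with the same morphism to ${\bf a}$, and the counts $|J^c|=|(J')^c|$ agree automatically, so $f,f'$ were already in the same $\W_n$--orbit. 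Passing to the free $k$--modules on these basis sets, $T$ is injective, so the injectivity degree is $0$.

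The main obstacle I anticipate is bookkeeping the orbit classification cleanly and uniformly across the three types — in particular making sure the type D subtlety (isomorphisms must flip an even number of signs) genuinely does not intervene. The point, which I would isolate as a small lemma, is Remark \ref{HomFID=HomFIBC}/\ref{RemarkMDisMBC}: once the target block $\{\pm(a+1),\dots,\pm(a+n)\}$ has $n\geq 1$ coordinates, both sign-parities of any prescribed partial map into it are realizable by an element of the acting $D_n$ (compose with a sign change on the ``unused'' coordinate), so the $D_n$--orbit structure on the relevant homs coincides with the $B_n$--orbit structure, and the computation is literally the same as in type B/C. (One should separately note the edge case $n=0$, where $\Phi_a(M_\W(\bm))_0 = (M_\W(\bm)_a)_{\W_0}=M_\W(\bm)_a$ with $\W_0$ trivial, and simply check the first map $T$ by hand.) Modulo this, the statement follows, matching \cite[Proposition 2.37]{CEF} in type A.
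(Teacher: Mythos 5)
Your computation in types A and B/C is correct and is essentially the paper's own proof: identify $\Phi_a(M_{\W}(\bm))_n$ with the free module on functions $f\colon\{\pm1,\dots,\pm m\}\to\{\pm1,\dots,\pm a,\star\}$ (signed, injective away from $\star$, with at most $n$ pairs sent to $\star$), note these index sets are nested, and note they stabilize once $n\geq m$; sharpness at $a=0$ is the same observation.

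The gap is in type D, and it sits exactly at the point you deferred to a ``small lemma.'' Your claim that for $n\geq 1$ the $D_n$--orbit structure on $\Hom_{\FIW}(\bm,{\bf (n+a)})$ coincides with the $B_n$--orbit structure fails whenever a morphism hits every pair of the moving block $\{\pm(a+1),\dots,\pm(a+n)\}$: then there is no ``unused'' coordinate on which to place a compensating sign change, and the single $B_n$--orbit splits into two $D_n$--orbits distinguished by the parity of sign reversals into that block. This occurs exactly when $|f^{-1}(\star)|=n$ (counting pairs), which is possible throughout the range $m-a\leq n\leq m$; in the same range your side remark that $m=n+a$ cannot happen for $n>0$ is also false as soon as $a<m$ (take $n=m-a$). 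A concrete instance: for $m=1$, $a=1$, the space $\Phi_1(M_D({\bf 1}))_1=(M_D({\bf 1})_2)_{D_1}$ is $4$--dimensional, spanned by the classes of $1\mapsto 1$, $1\mapsto -1$, $1\mapsto 2$, $1\mapsto -2$ (the acting copy of $D_1$ on $\{\pm 2\}$ is trivial), whereas $\Phi_1(M_D({\bf 1}))_2$ is $3$--dimensional and $T$ sends both $[1\mapsto 2]$ and $[1\mapsto -2]$ to the same class, since $(2\;{-2})(3\;{-3})\in D_2$. So in type D the transition maps are not inclusions of basis sets, your orbit classification does not hold, and injectivity cannot be obtained by the ``literally the same as B/C'' route. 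This is precisely the subtlety the paper's proof isolates: its type D bookkeeping carries extra parity data (the ``two copies'' in its description of $B^D_{\leq n}$) because the $D_n$--orbits genuinely differ from the $B_n$--orbits in this range. Note moreover that the parity-doubled orbits at level $n=|f^{-1}(\star)|$ are merged at level $n+1$, which is in direct tension with the injectivity-degree-$0$ assertion in type D; so this is not a cosmetic detail you can wave away — the type D case demands a separate and careful orbit analysis rather than a reduction to type B/C.
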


\begin{proof}[Proof of Proposition \ref{StabilityDegreeM(m)}]
Fix $a \geq 0$. By definition, $(S_{+a}M_{\W}(\bm))_n$ is the free vector space over $\Hom_{\FIW}(\bm, {\bf (n+a) })$, where $\W_n$ acts by postcomposition, permuting $$\{\pm(1+a), \ldots, \pm (n+a) \} \subseteq {\bf (n+a) }.$$ In type BC, two maps in $\Hom_{\FI_{BC}}(\bm, {\bf (n+a) })$ are in the same orbit if they restrict to the same function on their inverse images of $\{\pm 1, \ldots, \pm a \}$. Thus we can identify $\Phi_a(M_{BC}(\bm))$ with the vector spaces with bases:
\begin{align*} n \neq 0, \quad B^{BC}_{\leq n} = \{  f: & \{ \pm 1 , \ldots, \pm m \} \to \{\pm 1, \ldots,  \pm a, \star \} \;  \vert \;  \\ &  \; \text{Away from $\star$, $f$ injects and  $f(-d) = -f(d)$;}   \quad |f^{-1}(\star)| \leq n \} \end{align*}
 $\displaystyle \; \; n=0, \quad B^{BC}_{\leq 0} = \Hom_{\FI_{BC}}(\bm, \ba)$.

Type D, however, is slightly more subtle. When $n>(m-a)$, then the orbit of a map $g \in \Hom_{\FI_{D}}(\bm, {\bf (n+a) })$ is no longer determined by the restriction of $g$ to $g^{-1}(\{\pm 1, \ldots, \pm a \})$. Since $g$ can reverse an even or odd number of signs, but all elements of $D_n$ only reverse an even number, the orbit of $g$ will also depend on whether $g$ reverses an even or odd number of signs of numerals in $g^{-1}(\{\pm (a+1), \ldots, \pm (n+a) \})$. 

Thus $\Phi_a(M_{D}(\bm))$ are the vector spaces freely spanned by:
\begin{align*}  
 n \neq 0, n>(m-a), \quad & B^{D}_{\leq n}  \text{ is two copies of }  \\ 
& \{  \; f: \{ \pm 1 , \ldots, \pm m \} \to \{\pm 1, \ldots, \pm a, \star \} \; \vert  \;  \\ 
& \text{Away from $\star$, $f$ injects and  $f(-d) = -f(d)$;} \quad |f^{-1}(\star)| \leq n \}  
\end{align*}
\begin{align*} n \neq 0, n=m-a, \quad &  B^{D}_{\leq (m-a)} \text{ is one copy of } \\ 
 \{ & \; f: \{ \pm 1 , \ldots, \pm m \} \to \{\pm 1, \ldots, \pm a, \star \} \; \vert  \\ & 
\; \text{Away from $\star$, $f$ injects and  $f(-d) = -f(d)$;}  \quad |f^{-1}(\star)| \leq n \}  \end{align*}
$\displaystyle \qquad n=0,\quad B^{D}_{\leq 0} = \Hom_{\FI_{D}}(\bm, \ba)$\\

 In all cases, however, we have inclusions $B^{\W}_{\leq n} \hookrightarrow B^{\W}_{\leq(n+1)}$, and so $M_{\W}(\bm)$ has injectivity degree $0$. Moreover, once $n \geq m$, all maps $f$ automatically satisfy the condition on $|f^{-1}(\star)|$, and so $B^{\W}_{\leq n} = B^{\W}_{\leq(n+1)}$ in this range. We conclude that $M_{\W}(\bm)$ has surjectivity degree $m$. 
\end{proof}

\begin{prop}\label{FiniteGenerationSurjectivityDegree}
 If an $\FIW$--module $V$ is generated in degree $\leq m$, then $V$ has surjectivity degree $\leq m$. 
\end{prop}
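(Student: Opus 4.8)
The plan is to reduce at once to the free modules $M_{\W}(\bm)$, whose surjectivity degree was computed in Proposition \ref{StabilityDegreeM(m)}, and then transport the bound along a surjection using right-exactness of $\Phi_a$.

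First I would invoke Proposition \ref{FinGen}: since $V$ is generated in degree $\leq m$, there is a surjection of $\FIW$--modules $\bigoplus_i M_{\W}(\bm_i) \twoheadrightarrow V$ with $m_i \leq m$ for every $i$. (If $V$ is not assumed finitely generated we simply allow an arbitrary index set, taking one free summand $M_{\W}(\bm_i)$ for each element of a generating set contained in $\coprod_{k \leq m} V_k$; the argument of Proposition \ref{FinGen} applies verbatim.) Next I would record that for each $a \geq 0$ the functor $\Phi_a$ commutes with arbitrary direct sums: the shift $V \mapsto V_{n+a}$ preserves direct sums, and the coinvariants functor $(-)_{\W_n} = k \otimes_{k[\W_n]} (-)$ is a left adjoint (to restriction of scalars along the augmentation $k[\W_n] \to k$), hence preserves direct sums as well. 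Therefore $\Phi_a\bigl(\bigoplus_i M_{\W}(\bm_i)\bigr) = \bigoplus_i \Phi_a(M_{\W}(\bm_i))$ as graded $k[T]$--modules.

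By Proposition \ref{StabilityDegreeM(m)}, each $M_{\W}(\bm_i)$ has surjectivity degree $\leq m_i \leq m$, i.e.\ the map induced by $T$ on $\Phi_a(M_{\W}(\bm_i))$ is surjective in all degrees $n \geq m$. A direct sum of surjections is a surjection, so $\bigoplus_i M_{\W}(\bm_i)$ has surjectivity degree $\leq m$. Finally, $\Phi_a$ is right exact (being the composite of the exact shift functor with the right-exact coinvariants functor), so the surjection above induces, for each $a$, a $T$--equivariant surjection of graded $k[T]$--modules $\Phi_a\bigl(\bigoplus_i M_{\W}(\bm_i)\bigr) \twoheadrightarrow \Phi_a(V)$; as noted in Remark \ref{StabilityToQuotients} this forces $V$ to have surjectivity degree $\leq m$ as well.

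There is no substantial obstacle here: the statement is a formal consequence of Propositions \ref{FinGen} and \ref{StabilityDegreeM(m)} together with right-exactness of $\Phi_a$, exactly mirroring the $\FI_A$ case in \cite{CEF}. The only point that deserves a sentence of care is the interaction of $\Phi_a$ with (possibly infinite) direct sums, which is immediate once one observes that coinvariants is a left adjoint.
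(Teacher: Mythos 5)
Your proposal is correct and follows the paper's own argument essentially verbatim: surject a direct sum of modules $M_{\W}(\bm_i)$ with $m_i \leq m$ onto $V$ via Proposition \ref{FinGen}, bound the surjectivity degree of the free modules by Proposition \ref{StabilityDegreeM(m)}, and transfer the bound to the quotient using right-exactness of $\Phi_a$ (Remark \ref{StabilityToQuotients}). The extra care you take with $\Phi_a$ commuting with direct sums is a fine (and correct) elaboration of a step the paper leaves implicit, but it is not a different approach.
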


Church--Ellenberg--Farb prove this result for $\FI_A$ in \cite[Proposition 2.39]{CEF}. 

\begin{proof}[Proof of Proposition \ref{FiniteGenerationSurjectivityDegree}]
By Lemma \ref{FinGen}, any $\FIW$--module $V$ generated in degree $\leq m$ admits a surjection from some $\FIW$--module $\bigoplus_{a=0}^m M_{\W}(\ba)^{\oplus b_a}$, which has surjectivity degree $m$ by Proposition \ref{StabilityDegreeM(m)}. The result follows from Remark \ref{StabilityToQuotients}.
\end{proof}

\begin{prop} \label{StabilityDegreeM(U)}
 Given a nonzero $\W_m$--representation $U$, the $\FIW$--module $M_{\W}(U)$ has injectivity degree $0$ and surjectivity degree $\leq m$. 
\end{prop}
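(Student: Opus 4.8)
The plan is to treat the two bounds separately. The surjectivity bound is immediate: by Example \ref{M(m)FinGen} the module $M_{\W}(U) = M_{\W}(\bm)\otimes_{k[\W_m]}U$ is generated in degree $\leq m$ (in degree $m$, by $M_{\W}(U)_m = U$), so Proposition \ref{FiniteGenerationSurjectivityDegree} gives surjectivity degree $\leq m$ with no further work.

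The substance is the claim that the injectivity degree is $0$, i.e.\ that $T\colon \Phi_a(M_{\W}(U))_n \to \Phi_a(M_{\W}(U))_{n+1}$ is injective for every $a\geq 0$ and every $n\geq 0$. First I would record the natural isomorphism of graded $k[T]$--modules
$$\Phi_a\big(M_{\W}(U)\big)\;\cong\;\Phi_a\big(M_{\W}(\bm)\big)\otimes_{k[\W_m]}U ,$$
with $T$ acting through the left factor. This is just unwinding definitions: $M_{\W}(U)_{n+a} = M_{\W}(\bm)_{n+a}\otimes_{k[\W_m]}U$, the copy of $\W_n\subseteq\W_{n+a}$ used to form $\Phi_a$ acts by postcomposition on the first factor and trivially on $U$, and postcomposition commutes with the precomposition action of $\W_m = \End_{\FIW}(\bm)$, so passing to $\W_n$--coinvariants commutes with $-\otimes_{k[\W_m]}U$. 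Then I would quote the computation inside the proof of Proposition \ref{StabilityDegreeM(m)}: it identifies $\Phi_a(M_{\W}(\bm))_n$ with the free $k$--module $k[B^{\W}_{\leq n}]$ on the orbit set $B^{\W}_{\leq n} = \Hom_{\FIW}(\bm,{\bf (n+a)})/\W_n$, and exhibits the map induced by $T$ as coming from postcomposition with $I_{n+a}$, which on orbit sets is the inclusion $B^{\W}_{\leq n}\hookrightarrow B^{\W}_{\leq n+1}$. Both the $\W_m$--action and this inclusion are induced on $\Hom_{\FIW}(\bm,-)$ by pre- and post-composition respectively, which commute, so the inclusion is $\W_m$--equivariant. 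Hence $B^{\W}_{\leq n}$ is a $\W_m$--invariant subset of $B^{\W}_{\leq n+1}$, yielding a decomposition
$$k[B^{\W}_{\leq n+1}]\;\cong\;k[B^{\W}_{\leq n}]\ \oplus\ k\big[B^{\W}_{\leq n+1}\setminus B^{\W}_{\leq n}\big]$$
of $k[\W_m]$--modules. Thus the $T$--maps on $\Phi_a(M_{\W}(\bm))$ are split injections of $k[\W_m]$--modules in every degree, and since $-\otimes_{k[\W_m]}U$ is additive it preserves split injections; therefore the $T$--maps on $\Phi_a(M_{\W}(U))$ are injective in every degree, which is exactly injectivity degree $0$.

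I expect the only delicate point to be the bookkeeping in type $D$, where the proof of Proposition \ref{StabilityDegreeM(m)} describes $B^{D}_{\leq n}$ by a case split (one versus two copies of a set of partial functions, according to whether $n=m-a$ or $n>m-a$) tracking the parity of the sign reversals occurring outside the image of $\{\pm 1,\ldots,\pm a\}$. The argument above needs only that $B^{D}_{\leq n}$ carries a $D_m$--action for which $B^{D}_{\leq n}\hookrightarrow B^{D}_{\leq n+1}$ is equivariant, and this follows from the commutativity of pre- and post-composition of $\FI_D$--morphisms regardless of the parity description, so nothing beyond citing that proof should be required. (Over a field of characteristic zero one can sidestep this entirely: $U$ is a direct summand of $k[\W_m]^{\oplus r}$ for some $r$, hence $M_{\W}(U)$ is a direct summand — in particular a submodule — of $M_{\W}(k[\W_m])^{\oplus r}\cong M_{\W}(\bm)^{\oplus r}$, and one concludes from Proposition \ref{StabilityDegreeM(m)} together with Remark \ref{StabilityToQuotients}.)
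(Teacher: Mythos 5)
Your proof is correct and follows essentially the same route as the paper: surjectivity degree $\leq m$ from generation in degree $m$ via Proposition \ref{FiniteGenerationSurjectivityDegree}, and injectivity degree $0$ by observing that $k[B^{\W}_{\leq n}]$ sits as a $k[\W_m]$--equivariant summand inside $k[B^{\W}_{\leq n+1}]$ and that this split injection survives $-\otimes_{k[\W_m]}U$, which is exactly the argument the paper imports from Church--Ellenberg--Farb and applies to $B^{BC}_{\leq n}$ and $B^{D}_{\leq n}$. You simply spell out the equivariance and the identification $\Phi_a(M_{\W}(U))\cong\Phi_a(M_{\W}(\bm))\otimes_{k[\W_m]}U$ in more detail than the paper does.
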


\begin{proof}[Proof of \ref{StabilityDegreeM(U)}]
Since $M_{\W}(U)$ is generated in degree $m$, it has surjectivity degree $\leq m$ by Proposition \ref{FiniteGenerationSurjectivityDegree}. Church--Ellenberg--Farb show that $M_A(U)$ has injectivity degree $0$ \cite[Proposition 2.38]{CEF} by noting that (in the notation of the proof of Proposition \ref{StabilityDegreeM(m)}), $k[B^A_{\leq n}]$ embeds as a $k[S_m]$--equivariant summand of $k[B^A_{\leq (n+1)}]$, and so the maps 
$$ \Phi_a (M_A(U))_n =  \Phi_a (M_A(m))_n \otimes_{k[S_m]} U \longrightarrow \Phi_a (M_A(m))_{n+1} \otimes_{k[S_m]} U = \Phi_a (M_A (U))_{n+1}$$
are injective for all $a,n$. Applying the same argument to $B^{BC}_{\leq n}$ and $B^D_{\leq n}$, we conclude that $M_{BC}(U)$ and $M_D(U)$ have injectivity degree $0$. 
\end{proof}

\begin{prop}\label{StabilityKernelCokernel}
 Let $f:V \to U$ be a morphism of $\FIW$--modules, and assume that $k$ contains $\Q$. Suppose that $V$ has injectivity degree $\leq B$ and surjectivity degree $\leq C$, and that $U$ has injectivity degree $\leq D$ and surjectivity degree $\leq E$. Then $\ker(f)$ has injectivity degree $\leq B$ and surjectivity degree $\leq \max(C,D)$, and $ \coker(f)$ has injectivity degree $\leq \max(C,D)$ and surjectivity degree $\leq E$.  
\end{prop}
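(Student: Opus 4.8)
The plan is to deduce everything from the exactness of the functor $\Phi_a$ together with the snake lemma for short exact sequences of graded $k[T]$--modules; the argument adapts the type A case of Church--Ellenberg--Farb \cite{CEF} directly.

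First fix $a \geq 0$; since injectivity and surjectivity degree are conditions ranging over all $a$, it suffices to bound, uniformly in $a$, the relevant degrees of $\Phi_a$ applied to $\ker(f)$ and $\coker(f)$. Recall $\Phi_a(W) = \bigoplus_n (\t \circ S_{+a} W)_n$. The shift $S_{+a}$ is exact, being precomposition with the functor $\amalg_{[-a]}$, and the coinvariants functor $\t$ is right exact; when $\Q \subseteq k$ one has $(W_n)_{\W_n} = (W_n)^{\W_n}$ and taking $\W_n$--invariants is an exact functor of $k[\W_n]$--modules. Hence under the standing hypothesis $\Q \subseteq k$ the functor $\Phi_a$ is exact. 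This is the one place the hypothesis is used, and it is exactly what lets us control the ``injectivity'' (submodule) halves of the claim; the ``surjectivity'' (quotient) halves need only right exactness, which is the content of Remark \ref{StabilityToQuotients}.

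Next, factor $f$ through its image, giving short exact sequences of $\FIW$--modules
$$ 0 \to \ker(f) \to V \to \mathrm{im}(f) \to 0 \qquad \text{and} \qquad 0 \to \mathrm{im}(f) \to U \to \coker(f) \to 0, $$
in which $\mathrm{im}(f)$ is simultaneously a quotient of $V$ (via $f$) and a sub--$\FIW$--module of $U$; by Remark \ref{StabilityToQuotients} it has surjectivity degree $\leq C$ and, using $\Q \subseteq k$, injectivity degree $\leq D$. Apply the exact functor $\Phi_a$ to each, obtaining short exact sequences of graded $k[T]$--modules. For a short exact sequence $0 \to M' \to M \to M'' \to 0$ of graded $k[T]$--modules, write $K_\bullet = \ker(T \colon M_\bullet \to M_{\bullet+1})$ and $Q_\bullet = M_\bullet / T(M_{\bullet - 1})$ for the graded kernel and cokernel of $T$, and similarly $K',Q'$ and $K'',Q''$ for $M'$ and $M''$. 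Applying the snake lemma to the square comparing degrees $n$ and $n+1$ via $T$ produces, for each $n$, an exact sequence
$$ 0 \to K'_n \to K_n \to K''_n \to Q'_{n+1} \to Q_{n+1} \to Q''_{n+1} \to 0, $$
and a graded $k[T]$--module has injectivity degree $\leq s$ precisely when $K_n = 0$ for all $n \geq s$, and surjectivity degree $\leq s$ precisely when $Q_{n+1} = 0$ for all $n \geq s$.

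Finally, read off the bounds. Taking $M' = \Phi_a(\ker f)$, $M = \Phi_a(V)$, $M'' = \Phi_a(\mathrm{im}\, f)$: the injection $K'_n \hookrightarrow K_n$ shows $\ker(f)$ has injectivity degree $\leq B$; and for $n \geq \max(C,D)$ both $K''_n$ (injectivity degree of $\mathrm{im}\,f$ is $\leq D$) and $Q_{n+1}$ (surjectivity degree of $V$ is $\leq C$) vanish, so exactness forces $Q'_{n+1} = 0$, i.e.\ $\ker(f)$ has surjectivity degree $\leq \max(C,D)$. Taking $M' = \Phi_a(\mathrm{im}\, f)$, $M = \Phi_a(U)$, $M'' = \Phi_a(\coker f)$: for $n \geq \max(C,D)$ both $K_n$ (injectivity degree of $U$ is $\leq D$) and $Q'_{n+1}$ (surjectivity degree of $\mathrm{im}\,f$ is $\leq C$) vanish, so $K''_n = 0$, i.e.\ $\coker(f)$ has injectivity degree $\leq \max(C,D)$; and $Q''_{n+1}$ is a quotient of $Q_{n+1}$, which vanishes for $n \geq E$, so $\coker(f)$ has surjectivity degree $\leq E$. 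The only thing to keep straight is which degree each snake term occupies; beyond that bookkeeping there is no real obstacle once exactness of $\Phi_a$ has been secured.
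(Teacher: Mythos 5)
Your proof is correct and follows essentially the same route as the paper: the paper's argument (following \cite[Proposition 2.44]{CEF}) is precisely to use exactness of $\Phi_a$ over $k \supseteq \Q$ and diagram-chase the maps induced by $T$ between consecutive degrees of the sequences $0 \to \Phi_a(\ker f) \to \Phi_a(V) \to \Phi_a(U)$ and $\Phi_a(V) \to \Phi_a(U) \to \Phi_a(\coker f) \to 0$. Your only variation is to factor $f$ through its image and package the chase as the snake lemma, which is a harmless repackaging of the same bookkeeping.
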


\begin{proof}[Proof of Proposition \ref{StabilityKernelCokernel}]
 The proof given for $\FI_A$ in \cite[Proposition 2.44]{CEF} carries through directly. The idea is to use exactness of the functor $\Phi_a$, and perform diagram chases on the following diagrams:
{\small
\begin{align*} \mkern-36mu \mkern-36mu
 \xymatrix{
& 0 \ar[r] & \Phi_a(\ker f ) \ar[d] \ar[r] & \Phi_a(V) \ar[d] \ar[r] & \Phi_a(U) \ar[d] \ar@{}[dr]|{\text{and}} &   \Phi_a(V) \ar[d] \ar[r] & \Phi_a(U) \ar[d] \ar[r] & \Phi_a(\coker f ) \ar[d]  \ar[r] & 0 \\
& 0 \ar[r] & \Phi_a(\ker f ) \ar[r] & \Phi_a(V) \ar[r] & \Phi_a(U) &   \Phi_a(V) \ar[r] & \Phi_a(U) \ar[r] & \Phi_a(\coker f ) \ar[r] & 0 \\
} \end{align*} }
\end{proof}

\begin{lem} \label{FinGenImpliesStabilityDeg} Suppose $k$ contains $\Q$.
 Let $V$ be a finitely presented $\FIW$--module with generator degree $g$ and relation degree $r$. Then $V$ has stability degree $ \leq \max(r,g)$.  
\end{lem}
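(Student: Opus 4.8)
The plan is to reduce the statement to the stability degrees of the free modules $M_{\W}(\bm)$, which we already control by Proposition \ref{StabilityDegreeM(m)}, and then track how injectivity and surjectivity degrees propagate through the finite presentation using Proposition \ref{StabilityKernelCokernel}. Since $V$ is finitely presented with generator degree $g$ and relation degree $r$, there is an exact sequence of $\FIW$--modules
\[ \bigoplus_{m=1}^{r} M_{\W}(\bm)^{\oplus p_m} \xrightarrow{\;\varphi\;} \bigoplus_{m=1}^{g} M_{\W}(\bm)^{\oplus \ell_m} \twoheadrightarrow V, \]
so $V \cong \coker(\varphi)$.

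First I would record the relevant data for the two free modules appearing here. By Proposition \ref{StabilityDegreeM(m)}, each $M_{\W}(\bm)$ has injectivity degree $0$ and surjectivity degree $m$; since injectivity and surjectivity degrees of a finite direct sum are the maxima of those of the summands, the domain $W := \bigoplus_{m=1}^{r} M_{\W}(\bm)^{\oplus p_m}$ of $\varphi$ has injectivity degree $0$ and surjectivity degree $\leq r$, and the codomain $W' := \bigoplus_{m=1}^{g} M_{\W}(\bm)^{\oplus \ell_m}$ has injectivity degree $0$ and surjectivity degree $\leq g$. (Here I use that $\Phi_a$ commutes with finite direct sums, which is immediate from its definition as a direct sum of coinvariant spaces.) Then I apply Proposition \ref{StabilityKernelCokernel} to the map $\varphi: W \to W'$, taking in its notation $B = 0$, $C = r$, $D = 0$, $E = g$. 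The conclusion is that $\coker(\varphi)$ has injectivity degree $\leq \max(C,D) = \max(r,0) = r$ and surjectivity degree $\leq E = g$. Since stability degree is by Definition \ref{Defn:StabDeg} the maximum of the injectivity and surjectivity degrees, $V = \coker(\varphi)$ has stability degree $\leq \max(r,g)$, as claimed. The hypothesis that $k$ contains $\Q$ is exactly what is needed to invoke Proposition \ref{StabilityKernelCokernel} (which requires left-exactness of $\Phi_a$).

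There is essentially no hard step here: the argument is a bookkeeping exercise combining three already-established facts. The only point requiring a moment's care is the behavior of injectivity/surjectivity degree under finite direct sums, but this follows at once because $\Phi_a(\bigoplus_i X_i) = \bigoplus_i \Phi_a(X_i)$ as graded $k[T]$--modules, and a direct sum of maps is injective (resp.\ surjective) iff each summand is. One might also note, as an alternative that avoids even Proposition \ref{StabilityKernelCokernel}, that Proposition \ref{FiniteGenerationSurjectivityDegree} already gives surjectivity degree $\leq g$ directly from generation in degree $\leq g$; the work of Proposition \ref{StabilityKernelCokernel} is then only needed to bound the injectivity degree by $\max(r,0)=r$, using that $\ker(\varphi')$ for $\varphi': W' \twoheadrightarrow V$ is generated in degree $\leq r$ hence (by Proposition \ref{FiniteGenerationSurjectivityDegree} and Remark \ref{StabilityToQuotients}) has the needed surjectivity bound, feeding into the kernel-cokernel estimate. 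Either route gives the bound $\max(r,g)$.
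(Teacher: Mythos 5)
Your proof is correct and follows essentially the same route as the paper: both reduce to Proposition \ref{StabilityDegreeM(m)} for the free modules and feed the resulting injectivity/surjectivity bounds into Proposition \ref{StabilityKernelCokernel} to bound the cokernel. The only cosmetic difference is that the paper applies the kernel--cokernel estimate to the inclusion of the kernel $K$ (using Proposition \ref{FiniteGenerationSurjectivityDegree} to give $K$ surjectivity degree $\leq r$), while you apply it to the map $\varphi$ from a free cover of $K$; the bounds and conclusion are identical.
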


We mimic the proof of \cite[Proposition 2.47]{CEF}.

\begin{proof}[Proof of Lemma \ref{FinGenImpliesStabilityDeg}]
 By assumption, there is an exact sequence $$ 0 \longrightarrow K \longrightarrow \bigoplus_{a=0}^g M_{\W}(\ba)^{\oplus b_a} \longrightarrow V \longrightarrow 0$$ with kernel $K$ generated in degree $\leq r$. By Proposition \ref{StabilityDegreeM(m)}, $\bigoplus_{i=0}^g M_{\W}(\bm_i)^{\oplus b_i}$ has injectivity degree $0$ and surjectivity degree $g$. By Proposition \ref{FiniteGenerationSurjectivityDegree}, $K$ has surjectivity degree $\leq r$. The result then follows from Proposition \ref{StabilityKernelCokernel}.
\end{proof}

%\begin{lem}\label{InductionStabDegree} {\bf (Induction preserves generation and relation degree.)}  If $V$ is an $\FIW$--module with generation degree $\leq g$ and stability degree $\leq r$, then $\Ind_{\W}^{\oW} V$ also has generation degree $\leq g$ and stability degree $\leq r$.
%\end{lem}

%\begin{proof}[Proof of Lemma \ref{InductionStabDegree}]
 %Given a partial resolution for $V$:
 %$$  \longrightarrow \bigoplus_{a=1}^r M_{\W}(\ba)^{\oplus b_a} \longrightarrow \bigoplus_{a=1}^g M_{\W}(\ba)^{\oplus b_a} \longrightarrow V $$ by Corollary \ref{PromoteMtoInd}, we may apply the functor $\Ind_{\W}^{\oW}$ to obtain a complex 
 % $$  \longrightarrow \bigoplus_{a=1}^r M_{\oW}(\ba)^{\oplus b_a} \longrightarrow \bigoplus_{a=1}^g M_{\oW}(\ba)^{\oplus b_a} \longrightarrow \Ind_{\W}^{\oW}\; V $$ SO $\bigoplus_{a=1}^r M_{\oW}(\ba)^{\oplus b_a}$ SURJECTS ONTO A SUBMODULE CONTAINING THE KERNEL, BUT THE KERNEL MAY BE GENERATED IN HIGHER DEGREE
%\end{proof}

%Combining Lemmas \ref{FinGenImpliesStabilityDeg} and \ref{InductionStabDegree} gives:
%\begin{cor} \label{IndStabDegMax(r,g)}
% Suppose $k$ contains $\Q$, and let $V$ be a finitely presented $\FIW$--module with generator degree $g$ and relation degree $r$. Then $\Ind_{\W}^{\oW} V$ has stability degree $ \leq \max(r,g)$.  
%\end{cor}

\begin{lem} \label{StabilityDegreeBoundsLambda1} {\bf ($ \y_1^+ \leq$ stability degree ).}
Suppose that $V$ is an $\FI_{BC}$--module over a field $k$ of characteristic zero, and suppose $V$ has stability degree $s$. For any $n \geq 0$, and (in the notation of Section \ref{BackgroundRepStability}) any $V(\y^+, \y^-)_n$ in $V_n$, the largest part $\y^+_1$ of $\y^+$ satisfies $\y^+_1 \leq s$. 
\end{lem}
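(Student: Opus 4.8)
The plan is to reduce the statement to a concrete question about the decomposition of the coinvariants $\Phi_a(V)_n$, and then to read off the constraint on $\y_1^+$ directly from the branching rules. The key link is Remark~\ref{CoinvariantsBranchingRule}: over characteristic zero, the multiplicity of a $\W_a$-representation $U$ in the graded piece $\Phi_a(V)_n = (V_{n+a})_{\W_n}$ equals the multiplicity of $U \boxtimes k$ in $\Res^{\W_{n+a}}_{\W_a \times \W_n} V_{n+a}$. So if $V(\y^+, \y^-)_n$ appears in $V_n$ for some particular $n$, I want to choose $a$ and $U$ cleverly so that this constituent ``survives'' to contribute a nonzero class to $\Phi_a(V)$ in a degree where stability degree $s$ forces a contradiction if $\y_1^+ > s$.

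First I would set up notation: suppose $V(\y^+,\y^-)_n$ is an irreducible constituent of $V_n$, where in the stable notation $\y^+[n-\ell]$ has first part $n - \ell - m$ with $|\y^+| = m$, $|\y^-| = \ell$. Write $N = n$ for the degree in question. Next I would use Equation~(\ref{Eqn:WnInvariantsDecomp}): the multiplicity of $V_{(\nu^+, \nu^-)} \boxtimes k$ in $\Res^{B_N}_{B_a \times B_{N-a}} V_{(\mu^+,\mu^-)}$ is $1$ exactly when $\nu^+$ is obtained from $\mu^+$ by removing $(N-a)$ boxes from distinct columns (and $\nu^- = \mu^-$), and $0$ otherwise. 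Applying this with $\mu = \y[N] = (\y^+[N-\ell], \y^-)$, I get that $(V_N)_{\W_{N-a}}$, viewed as a $\W_a$-module, contains the representation indexed by the double partition obtained by stripping $(N-a)$ boxes from distinct columns of $\y^+[N-\ell]$. The point is that this stripping is possible — i.e.\ $(V_{a+j})_{\W_j}$ is nonzero in the relevant $\W_a$-isotypic component for a run of consecutive values of $j$ — precisely as long as $N - a$ does not exceed the number of columns available minus what's needed, and the run of $j$'s for which the class is present versus absent is controlled by $\y_1^+$, the first (longest) part of $\y^+$.

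The heart of the argument: I would fix the ``target'' $\W_a$-constituent to be the one coming from stripping the \emph{entire} first row of $\y^+[N-\ell]$ down past the second row — more precisely, track the isotypic component indexed by $(\y^+, \y^-)$ restricted appropriately — and observe that the map $T: \Phi_a(V)_j \to \Phi_a(V)_{j+1}$ fails to be an isomorphism at $j$ roughly equal to $\y_1^+$, because at that stage a box can no longer be removed from the first row without violating the distinct-columns condition (the first row drops to the length of the second row). Concretely, the $\W_a$-multiplicity of the relevant constituent in $(V_{N})_{\W_{N-a}}$ jumps (or drops) as $N-a$ crosses the threshold set by $\y_1^+ - \y_2^+$ or so, which contradicts the stability-degree hypothesis unless $\y_1^+ \le s$. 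I would make this precise by choosing $a = |\y^+| + |\y^-| - \y_1^+ + (\text{something})$ so that the critical transition in $\Phi_a(V)$ occurs at degree $n = \y_1^+$, forcing $\y_1^+ \le s$ by Definition~\ref{Defn:StabDeg}.

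The main obstacle I anticipate is bookkeeping: pinning down exactly which $a$ to use and verifying that the chosen $\W_a$-isotypic component genuinely changes multiplicity at the claimed degree — this requires care with the distinct-columns (Pieri) condition and with the padding conventions of Section~\ref{BackgroundRepStability}, and one must make sure the constituent one is tracking is actually present (nonzero multiplicity) on \emph{both} sides of the transition for ``injectivity fails'' or present on one side only for ``surjectivity fails.'' A secondary subtlety is that $V_n$ need not decompose with the padded partition $\y[n]$ being well-defined for all $n$, so I should phrase everything in terms of the single fixed $n$ in the hypothesis rather than a stable range. Once the right $a$ is identified, the contradiction with stability degree $\le s$ should be immediate from the branching-rule description of $\Phi_a$.
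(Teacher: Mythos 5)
Your strategy---find a degree where the multiplicity of some $\W_a$--isotypic piece of $\Phi_a(V)$ is forced to jump or drop, contradicting stability degree $\leq s$---has a genuine gap: the hypothesis supplies only a single irreducible constituent $V(\y^+,\y^-)_n \subseteq V_n$ in a single degree $n$. The ``critical transition'' you want to exhibit near $j = \y^+_1$ is a comparison between $\Phi_a(V)_j$ and $\Phi_a(V)_{j+1}$, i.e.\ between coinvariants of $V_{j+a}$ and $V_{j+1+a}$, and for every degree other than $n$ you know nothing about the decomposition of $V$; constituents of $V_{n\pm 1}$ that you do not control can supply exactly the $\W_a$--isotypic classes whose absence your argument needs, so no failure of injectivity or surjectivity of $T$ is forced by the branching rules alone. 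If instead you imagine the constituent $V(\y)_j$ being present for a whole range of $j$ (so that its contribution to $\Phi_a(V)_j$ genuinely switches on at $j=\y^+_1$, as it does for the module $V(\y)$ itself), you are assuming multiplicity stability across degrees---which is essentially what this lemma is used to prove in Theorem \ref{StabilityDegreeImpliesRepStability}, so that reading is circular. Your own closing caveat (``phrase everything in terms of the single fixed $n$'') removes the two degrees you would need to compare, and no amount of care with the Pieri/distinct-columns bookkeeping repairs this.

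The paper avoids the problem by not localizing the obstruction at degree $n$ at all: it passes to generators. Every constituent of $V_n$ occurs in $M_{BC}(H_0(V))_n$ via the surjection $M_{BC}(H_0(V)) \twoheadrightarrow V$ (Remark \ref{SurjectionHoV}), and by the branching rule (Equation (\ref{Eqn:WnInducedDecomp})) any $V(\y)_n$ appearing in $M_{BC}(V_{\mu})_n$ satisfies $\y^+_1 \leq \mu^+_1$. It then bounds $\mu^+_1$ for each constituent $V_{\mu} \subseteq H_0(V)_m$ as follows: the quotient $H_0(V)^{\FI_{BC}}$, on which all maps $(I_n)_*$ act by zero, inherits surjectivity degree $\leq s$ from $V$ (Remark \ref{StabilityToQuotients}); on the other hand, with $a = m - \mu^+_1$ and $\mu = \eta[m]$, the coinvariants $\Phi_a(H_0(V)^{\FI_{BC}})_{\mu^+_1}$ contain $V_{\eta}$ and hence are nonzero, and since $T$ acts by zero this forces the surjectivity degree to be at least $\mu^+_1$. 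Hence $\mu^+_1 \leq s$ and therefore $\y^+_1 \leq s$. Some reduction of this kind to $H_0(V)$ (or an equivalent device that converts the single-degree hypothesis into a statement about generators, where the zero maps make nonvanishing coinvariants incompatible with small surjectivity degree) is the missing idea in your proposal.
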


Lemma \ref{StabilityDegreeBoundsLambda1} parallels \cite[Proposition 2.42]{CEF}, and we adapt this argument. In Theorem \ref{StabilityDegreeImpliesRepStability}, we will use Lemma \ref{StabilityDegreeBoundsLambda1} to relate the stability degree of an $\FI_{BC}$--module $V$ to the representation stability of $\{V_n\}$. 

\begin{proof}[Proof of Lemma \ref{StabilityDegreeBoundsLambda1}]

Let $\y$ and $\mu$ denote the double partitions $\y = (\y^+, \y^-)$ and $\mu = (\mu^+, \mu^-)$, and let $\y^+_1$ and $\mu^+_1$ denote the largest parts of $\y^+$ and $\mu^+$, respectively. Let $m=|\mu|$, and denote $\nu = \y[n]$.  First we note that every irreducible representation $V(\y)_n = V_{\nu}$ in $M_{BC}(V_\mu)_n$ must satisfy $\y^+_1 \leq \mu^+_1$. By the branching rules for $B_n$, Equation (\ref{Eqn:WnInducedDecomp}), \begin{gather*}
V(\y)_n = V_{\nu} \quad \subseteq \qquad  M_{BC}(V_\mu)_n = \Ind_{B_m \times B_{n-m}}^{B_n} V_{\mu} \boxtimes k  \\
 \Longleftrightarrow                                                                                    \\                                                                                                                                                                                                                                                                                                                           
\nu^-=\mu^-      \quad \text{and} \quad \text{ $\nu^+$ can be built from $\mu^+$ by adding $(n-m)$ boxes in distinct columns.}                                                                                                                                                                                                                                                                                                                                                                                                 \end{gather*}
A box can be added to the end of the second row $\mu_2^+$ only if $\mu_1^+>\mu_2^+.$ This implies that $\y^+_1$, the second row in $\nu^+ = \y^+[n-|\y^-|]$, must be no larger than $\mu^+_1$, the first row of $\mu^+$. Some small cases are shown in Figure \ref{fig:y_1+StabilityDegree}.

\begin{center}
\begin{figure}[h!]
\fbox{ \includegraphics[scale=1.3]{./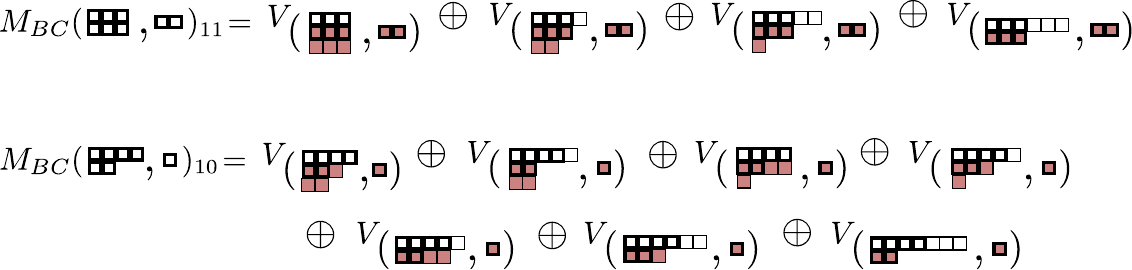}}
\caption{ {\small Illustrating the inequality $\y^+_1 \leq \mu^+_1$.  The double partitions $\mu$ are bolded, and $\y$ are shaded in.}} 
\label{fig:y_1+StabilityDegree}
\end{figure}
\end{center}

\vspace{-.8cm}
We next claim that if $V$ has stability degree $s$, then every irreducible subrepresentation $V_{\mu}$ of $H_0(V)_m$ must satisfy $\mu_1^+ \leq s$. Recall from Definition \ref{DefnH0} that $H_0(V)^{\FI_{BC}}$ denotes the $\FI_{BC}$--module structure on $H_0(V)$ wherein all morphisms $I_n$ act by zero. The surjection $V \twoheadrightarrow H_0(V)^{\FI_{BC}}$ implies by Remark \ref{StabilityToQuotients} that $H_0(V)^{\FI_{BC}}$ has surjectivity degree $\leq s$. Suppose that $V_{\mu} \subseteq H_0(V)_m$, and let $\eta$ be the double partition such that $\mu = \eta[m]$. Remark \ref{CoinvariantsBranchingRule} and the branching rules (Equation (\ref{Eqn:WnInvariantsDecomp}), Section \ref{RepTheoryWn}) imply that $(V_{\mu})_{B_{\mu^+_1}} = V_{\eta}$. It follows that when $a = m- \mu^+_1$, $$\Phi_a(H_0(V)^{\FI_{BC}})_{\mu^+_1} =(H_0(V)^{\FI_{BC}}_m)_{B_{\mu^+_1}} \qquad \text{contains $V_{\eta}$,}$$  and so in particular the coinvariants $\Phi_a(H_0(V)^{\FI_{BC}})_{\mu^+_1}$ are nonzero. Since $T$ acts by zero, the 
surjectivity degree of $H_0(V)^{\FI_{BC}}$ must be greater than $\mu_1^+$, and we conclude that $\mu_1^+ \leq s$.  

Now, suppose that $V$ is an $\FI_{BC}$--module over characteristic zero with stability degree $s$. We've shown that every irreducible component $V_{\mu}$ of $H_0(V)$ satisfies $\mu^+_1 \leq s$, and so by the first paragraph above, for each $n$, every $V(\y)_n$ in $M_{BC}(H_0(V))_n$ satisfies $\y^+_1 \leq \mu^+_1 \leq s$. Since by Remark \ref{SurjectionHoV} there is a surjection $M_{BC}(H_0(V)) \twoheadrightarrow V$, we conclude that for each $n$, every irreducible constituent $V(\y)_n$ of $V_n$ satisfies $\y^+_1 \leq s$. \end{proof}

\subsection{The Noetherian property}\label{SectionNoetherian}

In \cite[Theorem 2.60]{CEF}, Church--Ellenberg--Farb prove a Noetherian property for $\FI_{A}$--modules over Noetherian rings containing $\Q$. This theorem is later generalized by Church--Ellenberg--Farb--Nagpal in \cite[Theorem 1.1]{CEFN} to arbitrary Noetherian rings. We will show that the same is true for modules over all three categories $\FIW$.

\begin{thm} {\bf ($\FIW$--modules are Noetherian).} \label{Noetherian} Let $k$ be a Noetherian ring. Then any sub--$\FIW$--module of a finitely generated $\FIW$--module over $k$ is itself finitely generated.
\end{thm}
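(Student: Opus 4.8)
The plan is to reduce the Noetherian property for $\FI_{BC}$ and $\FI_D$--modules to the known case of $\FI_A$--modules over arbitrary Noetherian rings \cite[Theorem 1.1]{CEFN}, using the restriction functor studied in Section \ref{Section:Restriction}. The key observation is that restriction along the inclusions $\FI_A \hookrightarrow \FI_D \hookrightarrow \FI_{BC}$ is an exact, faithful functor that commutes with the formation of sub--$\FIW$--modules: if $W \subseteq V$ is a sub--$\FIW$--module, then $\Res^{\oW}_A W \subseteq \Res^{\oW}_A V$ is a sub--$\FI_A$--module, and a collection of elements of $\coprod W_n$ that $\FI_A$--generates $\Res^{\oW}_A W$ certainly $\FIW$--generates $W$. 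Combined with Proposition \ref{RestrictionPreservesFinGen}, which says restriction preserves finite generation, this is enough.

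Concretely, I would argue as follows. Let $V$ be a finitely generated $\FI_{BC}$--module over the Noetherian ring $k$, and let $W \subseteq V$ be a sub--$\FI_{BC}$--module. By Proposition \ref{RestrictionPreservesFinGen}(\ref{WntoSn}), $\Res^{BC}_A V$ is a finitely generated $\FI_A$--module over $k$. By the Noetherian property for $\FI_A$--modules \cite[Theorem 1.1]{CEFN}, the sub--$\FI_A$--module $\Res^{BC}_A W$ of $\Res^{BC}_A V$ is finitely generated, say in degree $\leq N$; let $S = \{w_1, \ldots, w_\ell\}$ with $w_i \in W_{n_i}$, $n_i \leq N$, be a finite $\FI_A$--generating set. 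Then $S$ also generates $W$ as an $\FI_{BC}$--module: the $\FI_{BC}$--submodule $\Span_W(S)$ contains the $\FI_A$--span of $S$, which is all of $W$, so $\Span_W(S) = W$. Hence $W$ is finitely generated. The identical argument handles $\FI_D$--modules, using Proposition \ref{RestrictionPreservesFinGen}(\ref{W'ntoSn}) (restriction from $\FI_D$ to $\FI_A$ preserves finite generation, raising the generation degree by at most one) in place of part (\ref{WntoSn}). The case of $\FI_A$--modules is precisely \cite[Theorem 1.1]{CEFN}, so nothing new is needed there.

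I should also note that the statement ``any sub--$\FIW$--module of a finitely generated $\FIW$--module is finitely generated'' is equivalent to the ascending chain condition on sub--$\FIW$--modules of any fixed finitely generated $\FIW$--module (an ascending chain $W_1 \subseteq W_2 \subseteq \cdots$ has finitely generated union $W = \bigcup W_i$, whose finitely many generators all lie in some $W_j$, forcing $W = W_j$); either formulation follows from the reduction above, and I would state the proof in terms of finite generation of submodules as in the theorem statement.

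The main obstacle is not in the logic of the reduction, which is short, but in making sure the necessary input is genuinely available: we are relying on \cite[Theorem 1.1]{CEFN} for $\FI_A$--modules over an \emph{arbitrary} Noetherian ring (not merely one containing $\Q$), and on Proposition \ref{RestrictionPreservesFinGen}, whose proof in turn rests on the double-coset transitivity fact $\Hom_{\FI_{BC}}(\bm,\bn) = S_n \cdot I_{m,n} \cdot B_m$ (and its type D variant). Once those are in hand, the only thing to check carefully is the elementary point that the $\FI_A$--span of a set is contained in its $\FI_{BC}$--span (respectively $\FI_D$--span) inside a given module, so that an $\FI_A$--generating set for the restriction is automatically an $\FIW$--generating set for the original submodule; this is immediate from $\FI_A \subseteq \FI_D \subseteq \FI_{BC}$ as subcategories.
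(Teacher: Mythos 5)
Your proposal is correct and follows essentially the same route as the paper: restrict $V$ and the submodule to $\FI_A$, invoke Proposition \ref{RestrictionPreservesFinGen} together with the Noetherian property for $\FI_A$--modules over arbitrary Noetherian rings \cite[Theorem 1.1]{CEFN}, and observe that an $\FI_A$--generating set is automatically an $\FIW$--generating set since $\FI_A \subseteq \FI_D \subseteq \FI_{BC}$. The extra remarks on the ascending chain condition are fine but not needed.
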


\begin{proof}
 Suppose that $V$ is a finitely generated $\FIW$--module, and $U$ is any sub--$\FIW$--module. When $\W$ is $S_n$, the result is \cite[Theorem 1.1]{CEFN}. If $\W_n$ is $B_n$ or $D_n$, then by Propositions \ref{RestrictionPreservesFinGen}(\ref{WntoSn}) and (\ref{W'ntoSn}), the restriction of $V$ to $\FI_A$ is finitely generated as an $\FI_A$--module. Thus by \cite[Theorem 1.1]{CEFN}, $U$ is finitely generated over $\FI_A \subseteq \FIW$, and therefore it is finitely generated over $\FI_{\W}$. 
\end{proof}

\subsection{Finite generation and representation stability} \label{SectionFinGenRepStability}

Representation stability is a concept introduced by Church and Farb in \cite{RepStability}; the definition is given in Section \ref{BackgroundRepStability}. In \cite[Section 2.7]{CEF}, Church--Ellenberg--Farb prove that for an $\FI_A$--module $V$ over a field $k$ of characteristic zero, finite generation is equivalent to uniform representation stability for the sequence of $S_n$--representations $\{V_n\}$. We will show, in Theorems \ref{FinGenImpliesRepStability} and \ref{RepStabilityImpliesFinGen}, the analogous results for $\FI_{BC}$. In summary:

\begin{thm} \label{FinGenIffRepStable} {\bf (Summary: Finite generation $\Longleftrightarrow$ Uniform representation stability).}
 Let $V$ be an $\FIW$--module over a field of characteristic zero. The $V$ is finitely generated if and only if $\{V_n \}$ is uniformly representation stable with respect to the maps induced by the natural inclusions $I_n : \bn \hookrightarrow {\bf (n+1)}$. 
\end{thm}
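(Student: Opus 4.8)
The plan is to deduce Theorem~\ref{FinGenIffRepStable} by combining the two directions proven separately in the body (Theorems~\ref{FinGenImpliesRepStability} and \ref{RepStabilityImpliesFinGen}), but the substantive content—and what I would actually carry out—is the forward direction: a finitely generated $\FIW$--module is uniformly representation stable, with explicit stable range. I would handle type A and type B/C first, and then reduce type D to type B/C via the induction machinery of Section~\ref{Section:Induction}.

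\textbf{Type A and type B/C.} Let $V$ be finitely generated, so by Proposition~\ref{FinGen} there is a surjection $\bigoplus_{a\le g} M_{\W}(\ba)^{\oplus \ell_a}\twoheadrightarrow V$, and by the Noetherian property (Theorem~\ref{Noetherian}) the kernel is finitely generated, say in degree $\le r$, so $V$ is finitely presented. First I would verify the \emph{injectivity} and \emph{surjectivity} axioms of Definition~\ref{DefnRepStability}. Surjectivity of $(I_n)_*$ as a $k[\W_{n+1}]$--map is immediate for $n\ge g$ since $V$ is generated in degree $\le g$ (the image of $\coprod_{k\le g}V_k$ already generates). Injectivity follows from Lemma~\ref{FinGenImpliesStabilityDeg}: $V$ has stability degree $\le\max(g,r)$, hence for $n\ge\max(g,r)$ the coinvariant maps $\Phi_0(V)_n\to\Phi_0(V)_{n+1}$ are isomorphisms, and a standard argument (each graded piece $\Phi_a$) upgrades this to injectivity of $(I_n)_*$ itself in the stable range; this is exactly the adaptation of \cite[Section 2.7]{CEF} that the excerpt has been setting up. For the \emph{multiplicity} axiom, the key inputs are Theorem~\ref{WnDiagramSizes} (degree of generation $\le g$ forces weight $\le d$, so $|\y^+|+|\y^-|\le d$ for every constituent $V(\y^+,\y^-)_n$ of $V_n$) together with Lemma~\ref{StabilityDegreeBoundsLambda1} (stability degree $\le s:=\max(g,r)$ forces $\y^+_1\le s$). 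These two bounds together mean that, for $n$ large, only finitely many irreducibles $V(\y)_n$ can appear, \emph{and} the padded-partition notation $\y[n]$ of Section~\ref{BackgroundRepStability} is genuinely defined and stable in that range (once $n-|\y^-|-|\y^+|\ge\y^+_1$, i.e.\ once $n\ge \max(g,r)+d$). The multiplicity $c_{\y,n}$ is then computed via Frobenius reciprocity / the branching formula (\ref{Eqn:WnInvariantsDecomp}) as the dimension of a fixed coinvariant space $\Phi_a(V)_{a}$ evaluated at $a=|\y^+|+|\y^-|+\cdots$ appropriately shifted; the stability-degree isomorphisms $\Phi_a(V)_n\cong\Phi_a(V)_{n+1}$ for $n\ge s$ then show $c_{\y,n}$ is eventually constant, with a bound on the onset that is uniform in $\y$ because $|\y|\le d$ is bounded. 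Assembling the arithmetic gives the stated range $n\ge\max(g,r)+d$.

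\textbf{Type D.} Here the branching rules are awkward, so instead of redoing the argument I would invoke Proposition~\ref{VisResIndV}: for $n>g$ (the degree of generation), $V_n\cong(\Res^{BC}_D\Ind^{BC}_D V)_n$ as $D_n$--representations, and by Corollary~\ref{IndGenRelDegrees} the $\FI_{BC}$--module $W:=\Ind^{BC}_D V$ is finitely generated with the same generation and relation degrees. Applying the type B/C case to $W$ gives uniform representation stability of $\{W_n\}$ in the stated range; then restricting to $D_n$ and using the dictionary of Section~\ref{RepTheoryDn} (the $D_n$-notation $V(\y)_n:=\Res^{B_n}_{D_n}V(\y)_n$ is defined precisely so that $\Res^{BC}_D$ sends a stable $B_n$-pattern to a stable $D_n$-pattern, with the split pairs $V_{\{\mu,+\}}\oplus V_{\{\mu,-\}}$ handled by Proposition~\ref{NoSplitIrreps}, which rules them out entirely once $n>2g$) transfers uniform representation stability to $\{V_n\}$; the extra hypothesis ``$n\ge g+1$ when $d=0$'' is exactly the threshold in Proposition~\ref{VisResIndV}.

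\textbf{Converse direction.} For the ``only if'', suppose $\{V_n,(I_n)_*\}$ is uniformly representation stable for $n\ge N$. Surjectivity of $(I_n)_*$ as a $k[\W_{n+1}]$--map for $n\ge N$ says precisely that $V$ is generated by $\coprod_{k\le N}V_k$; since each $V_k$ is finite-dimensional (finite-dimensionality of the $V_n$ is part of Definition~\ref{DefnRepStability}, or follows from the stable multiplicities plus the bounded weight), $V$ is finitely generated in degree $\le N$. This is the content of Theorem~\ref{RepStabilityImpliesFinGen} and requires no characteristic-zero input beyond what is already assumed.

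\textbf{Main obstacle.} The delicate point is not any single lemma—each is in place—but the bookkeeping that makes the stable range \emph{uniform} in $\y$: one must simultaneously control (i) when $\y[n]$ is a legitimate padded (double) partition, (ii) when the coinvariant/branching computation of $c_{\y,n}$ stabilizes, and (iii), in type D, the exceptional value of $n$ where $V(\y)_n$ fails to be irreducible and the split representations could intrude. The weight bound $|\y|\le d$ from Theorem~\ref{WnDiagramSizes} and the part bound $\y^+_1\le s$ from Lemma~\ref{StabilityDegreeBoundsLambda1} are what collapse an a priori $\y$-dependent threshold into the single bound $\max(g,r)+d$; verifying that these two constraints really do suffice, and that the type-D reduction does not cost more than the one extra unit in the $d=0$ case, is where the care is needed.
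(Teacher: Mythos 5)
Your proposal is correct and follows essentially the same route as the paper: finite generation $\Rightarrow$ finite presentation (Noetherian property), stability degree $\leq\max(g,r)$ via Lemma~\ref{FinGenImpliesStabilityDeg}, the coinvariant/weight arguments of Theorem~\ref{StabilityDegreeImpliesRepStability} in types A and B/C, the reduction of type D to B/C via Propositions~\ref{VisResIndV} and~\ref{IndMisM}/Corollary~\ref{IndGenRelDegrees}, and the easy converse of Theorem~\ref{RepStabilityImpliesFinGen}. The only cosmetic difference is that you invoke Proposition~\ref{NoSplitIrreps} for the split $D_n$--representations, whereas the paper absorbs them into the definition of the stable notation $V(\y)_n$ in type D; both work.
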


%%%%%%%%%%%%%%%%%%%%%%%%%%%%%%
 %Church--Ellenberg--Farb prove that $S_n$--representations are determined up to isomorphism by their coinvariants. Specifically, in the notation given in Section \ref{BackgroundRepStability}, they show:

%\begin{lem} \cite[Lemma SOMETHING]{CEF} Assume that $k$ is a field of characteristic zero. Let $d \leq m \leq n$ be nonnegative integers. Let $X$ be an $S_m$--representation, and $U$ be an $S_n$--representation. Suppose that $X \cong \oplus_{\y} c_{\y} V(\y)_m$ and $U \cong \oplus_{\y} b_{\y} V(\y)_m$, where $\y$ ranges over partitions of size at most $d$. If for each $0 \leq a \leq d$, the coinvariants $$ X_{S_{m-a}} \cong U_{S_{n-a}} $$ are isomorphic as $S_a$--representations, then $c_\y = b_\y$ for all $\y$. In particular, $X_{S_{m-a}} = 0$ for all $0 \leq a \leq d$ if and only if $X=0$. 
%\end{lem}

%Following the method of proof in \cite{CEF}, we show that the same is true for representations of $B_n$. 
%%%%%%%%%%%%%%%%%%%%%%%%%%%%%%

We first show that, in the notation of Section \ref{BackgroundRepStability}, sequences of $B_n$--representations of the form $\oplus_{\y} c_{\y} V(\y)_n$ over characteristic zero are determined up to isomorphism by their coinvariants. We will use this result to relate finite generation with representation stability.

\begin{lem}{\bf (The $B_a$--representations $(V(\y)_n)_{B_{n-a}}$ stabilize for $n \geq a + \y^+_1$)} \label{CoinvariantsIndependent} Suppose $k$ is a field of characteristic zero. Given a double partition $\y = (\y^+, \y^-)$, the $B_a$--representations $(V(\y)_n)_{B_{n-a}}$ are independent of $n$ for $n \geq a + \y^+_1$, where $\y^+_1$ denotes the largest part of $\y^+$. 
\end{lem}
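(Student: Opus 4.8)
The plan is to compute the $B_a$-representation $(V(\y)_n)_{B_{n-a}} = k \otimes_{k[B_{n-a}]} V(\y)_n$ by Frobenius reciprocity: its multiplicity of a given irreducible $B_a$-representation $V_{(\mu^+,\mu^-)}$ equals the multiplicity of $V_{(\mu^+,\mu^-)} \boxtimes k$ in $\Res^{B_n}_{B_a \times B_{n-a}} V(\y)_n$. So I first rewrite the coinvariants as
\begin{align*}
(V(\y)_n)_{B_{n-a}} \;\cong\; \bigoplus_{(\mu^+,\mu^-)} \Big( \text{mult. of } V_{(\mu^+,\mu^-)}\boxtimes k \text{ in } \Res^{B_n}_{B_a\times B_{n-a}} V_{\y[n]} \Big)\, V_{(\mu^+,\mu^-)},
\end{align*}
where $\y[n] = (\y^+[n-\ell], \y^-)$ is the padded double partition with $\ell = |\y^-|$.

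Next I invoke the branching rule for $B_n$, Equation (\ref{Eqn:WnInvariantsDecomp}): the multiplicity of $V_{(\mu^+,\mu^-)} \boxtimes k$ in the restriction $\Res^{B_n}_{B_a \times B_{n-a}} V_{(\nu^+,\nu^-)}$ is $1$ if $\mu^+$ is obtained from $\nu^+$ by removing $(n-a)$ boxes from distinct columns and $\mu^- = \nu^-$ (this forces $|\mu^-| = |\nu^-|$), and $0$ otherwise. Here $\nu^+ = \y^+[n-\ell]$ and $\nu^- = \y^-$. The condition $\mu^- = \y^-$ is immediate; the content is the combinatorial claim: the set of partitions $\mu^+$ of size $a - |\mu^-| = a - \ell$... wait, more precisely, $|\mu^+| = n - (n-a) - \text{(size adjustments)}$; I need $|\nu^+| = n - \ell$, remove $n-a$ boxes, so $|\mu^+| = n - \ell - (n-a) = a - \ell$. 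The claim is that the set of such $\mu^+$ — partitions obtained from $\y^+[n-\ell]$ by deleting $n-a$ boxes, no two in the same column — is independent of $n$ once $n \geq a + \y^+_1$, under the natural identification coming from the stable notation $V(\cdot)_a$. This is exactly the type A statement underlying \cite[Proposition 2.42]{CEF} (or can be extracted from the proof of Lemma \ref{StabilityDegreeBoundsLambda1}): removing $n-a$ boxes from distinct columns of $\y^+[n-\ell]$ can only ever remove boxes from the first row (any removal from row $2$ or below, combined with the distinct-column constraint, would require removing more than the available boxes once the first row is long), and when $n - \ell \geq a + \y^+_1 - \ell$, i.e. $n \geq a + \y^+_1$, the first row of $\y^+[n-\ell]$ has length $n - \ell - |\y^+| \geq a - \ell - |\y^+| + \y^+_1 + \ell$... let me just say: once $n$ is at least $a + \y^+_1$, the first row is long enough that the resulting $\mu^+$ (after removing $n-a$ boxes from distinct columns, all necessarily from the top row) has its own first row still $\geq \y^+_2$, hence $\mu^+ = \eta[a]$ for a fixed partition $\eta$ independent of $n$ — and as $\mu^+$ ranges over the valid choices, the multiset $\{\mu^+\}$ stabilizes.

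Thus for $n \geq a + \y^+_1$ the decomposition of $(V(\y)_n)_{B_{n-a}}$ into irreducible $B_a$-representations is the same for all such $n$, which gives the claim. The main obstacle is the bookkeeping in the previous paragraph: verifying precisely that (i) only first-row boxes can be removed when the distinct-columns constraint is imposed and the first row is sufficiently long, and (ii) the resulting set of partitions $\mu^+$, together with the identification $V(\mu)_a$-style stable labels, really is $n$-independent with the stated threshold $a + \y^+_1$. I expect this to follow closely the combinatorics already carried out in the proof of Lemma \ref{StabilityDegreeBoundsLambda1} and in \cite[Proposition 2.42]{CEF}, essentially reducing to the type A fact about $\Res^{S_n}_{S_a \times S_{n-a}}$ applied to $\y^+$, since the $\y^-$-part is simply carried along unchanged.
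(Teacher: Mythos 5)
Your framework is the same as the paper's: compute $(V(\y)_n)_{B_{n-a}}$ via Remark \ref{CoinvariantsBranchingRule} and the branching rule (\ref{Eqn:WnInvariantsDecomp}), getting $\bigoplus_{\mu^+} V_{(\mu^+,\y^-)}$ summed over partitions $\mu^+$ obtained from $\y^+[n-|\y^-|]$ by removing $(n-a)$ boxes, no two in the same column, and then argue that this index set is independent of $n$. The problem is that the one precise combinatorial claim you make in that step is false: it is not true that the removed boxes are ``all necessarily from the top row.'' Take $\y=((1),\varnothing)$, $a=2$, $n=5$: one removes $3$ boxes from $(4,1)$, and besides the all-top-row removal giving $\mu^+=(1,1)$ there is the removal of the row-two box together with two top-row boxes giving $\mu^+=(2)$. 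So boxes below the first row genuinely occur, the coinvariants here are $V_{((1,1),\varnothing)}\oplus V_{((2),\varnothing)}$, and in particular the sum is not a single ``padded'' constituent $\eta[a]$ as your sketch suggests -- if your only-top-row claim were right, the coinvariants would always be a single irreducible, which is wrong.

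What is actually true, and what the paper uses, is the weaker statement: the columns containing a box below the top row of $\y^+[n-|\y^-|]$ number at most $\y^+_1$, so any removal of $(n-a)>\y^+_1$ boxes from distinct columns must include \emph{at least one} top-row box. Peeling off that one box turns $\y^+[n-|\y^-|]$ into $\y^+[(n-1)-|\y^-|]$, and the remaining $(n-1)-a$ removed boxes constitute exactly a valid removal at level $n-1$; this gives a bijection between the two index sets and hence an isomorphism $(V(\y)_n)_{B_{n-a}}\cong(V(\y)_{n-1})_{B_{(n-1)-a}}$ of $B_a$--representations for all $n>a+\y^+_1$, which is the lemma. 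Your appeal to Lemma \ref{StabilityDegreeBoundsLambda1} and \cite[Proposition 2.42]{CEF} does not fill this in: those results bound $\y^+_1$ by the stability degree and do not contain the stabilization of horizontal-strip removals, so as written the key step of your argument is missing (and, where made explicit, incorrect).
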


\begin{proof}[Proof of Lemma \ref{CoinvariantsIndependent}]
By Remark \ref{CoinvariantsBranchingRule} and the branching rules (Equation \ref{Eqn:WnInvariantsDecomp}), 
$$(V(\y^+, \y^-)_n)_{B_{n-a}} = V\big(\y^+\big[n-|\y^-|\big], \y^-\big)_{B_{n-a}} = \bigoplus_{\mu^+} V(\mu^+, \y^-) $$ summed over all partitions $\mu^+$ that can be constructed from $\y^+[n-|\y^-|]$ by removing $(n-a)$ boxes from distinct columns. Once $(n-a) > \y^+_1$, at least one box must be removed from the top row of $\y^+[n-|\y^-|]$. Removing one box from the top row of the padded partition $\y^+[n-|\y^-|]$ associated to $n$ yields the padded partition $\y^+[(n-1)-|\y^-|]$ associated to $(n-1)$, and removing the remaining  $((n-1)-a)$ boxes from distinct columns gives the decomposition of $(V(\y^+, \y^-)_{n-1})_{B_{(n-1)-a}}$. Thus, as $B_a$--representations,
$$(V(\y^+, \y^-)_{n-1})_{B_{(n-1)-a}} \cong (V(\y^+, \y^-)_n)_{B_{n-a}} \qquad \text{for all $n>a + \y^+_1$,}$$ and the lemma follows. \end{proof}

\begin{lem} \label{CoinvariantsDetermineRep} {\bf ($B_n$--representations are determined by their coinvariants).} Assume that $k$ is a field of characteristic zero. Let $\Lambda$ be a set of double partitions $\y = (\y^+, \y^-)$ of size at most $d$, and set $M = \max_{\Lambda} \y^+_1$, where $\y^+_1$ denotes the largest part of $\y^+$. Let $n \geq m \geq (d+M)$ be nonnegative integers. Suppose that for some $B_m$--representation $V_m$ and $B_n$--representation $V_n$,  
$$ V_m \cong \bigoplus_{\y \in \Lambda} b_{\y} V(\y)_m \qquad \text{ and } \qquad V_n \cong \bigoplus_{\y \in \Lambda} c_{\y} V(\y)_n,$$  If for each $0 \leq a \leq d$, the coinvariants $ (V_m)_{B_{m-a}} \cong (V_n)_{B_{n-a}} $ are isomorphic as $B_a$--representations, then $c_\y = b_\y$  for all $\y \in \Lambda$. 
\end{lem}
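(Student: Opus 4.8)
The plan is to prove that the tuple of coinvariant spaces $\big((V_n)_{B_{n-a}}\big)_{0\le a\le d}$, regarded as $B_a$--representations, already determines the multiplicities $c_\y$, and then to compare this tuple for $V_n$ with the one for $V_m$ via the hypothesis. The ranges are arranged precisely so this works: since every $\y\in\Lambda$ has $|\y^+|+|\y^-|\le d$ and $\y^+_1\le M$, and we only ever pass to $B_{\bullet-a}$--coinvariants with $a\le d$, the condition $\bullet\ge a+\y^+_1$ needed to apply Lemma \ref{CoinvariantsIndependent} holds for both $\bullet=m$ and $\bullet=n$ (as $m,n\ge d+M$). So the $B_a$--representation $(V(\y)_n)_{B_{n-a}}$ is independent of $n$ throughout the relevant range; write $W_\y^{(a)}$ for it, so that $(V_n)_{B_{n-a}}\cong\bigoplus_{\y}c_\y W_\y^{(a)}$ and $(V_m)_{B_{m-a}}\cong\bigoplus_{\y}b_\y W_\y^{(a)}$ as $B_a$--representations.

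The main input is a pair of facts about a single building block, which I would read off from Pieri's formula (Equation (\ref{Eqn:WnInducedDecomp})) together with Frobenius reciprocity: (i) $W_\y^{(a)}=0$ whenever $a<|\y|:=|\y^+|+|\y^-|$; and (ii) $W_\y^{(|\y|)}\cong V_{(\y^+,\y^-)}$, the irreducible $B_{|\y|}$--representation indexed by the double partition $(\y^+,\y^-)$. Both follow from the observation that $V_{(\mu^+,\mu^-)}$ occurs in $(V(\y)_n)_{B_{n-a}}$ only if $\mu^-=\y^-$ and the skew shape $\y^+[n-|\y^-|]/\mu^+$ is a horizontal strip, which forces $\mu^+$ to contain $\y^+$ row by row; hence $|\mu^+|\ge|\y^+|$, so $a=|\mu^+|+|\y^-|\ge|\y|$, with equality only for $\mu^+=\y^+$ (and then the strip is genuinely horizontal because the padding enlarges only the first part). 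This is the type B/C analogue of the computation behind the corresponding type A statement in \cite{CEF}.

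Given (i) and (ii), I would induct on $D=0,1,\dots,d$, proving that $c_\y=b_\y$ for every $\y\in\Lambda$ with $|\y|=D$. Passing to $B_{\bullet-D}$--coinvariants and invoking the stable-range observation above, the hypothesis $(V_m)_{B_{m-D}}\cong(V_n)_{B_{n-D}}$ becomes an isomorphism of $B_D$--representations
$$ \bigoplus_{\y\in\Lambda} c_\y\,W_\y^{(D)}\;\cong\;\bigoplus_{\y\in\Lambda} b_\y\,W_\y^{(D)}. $$
By (i) only the terms with $|\y|\le D$ survive; by the inductive hypothesis the terms with $|\y|<D$ agree on the two sides and may be cancelled (finite-dimensional representations over a characteristic-zero field are semisimple, so cancellation is legitimate); and by (ii) what remains, $\bigoplus_{|\y|=D}c_\y V_{(\y^+,\y^-)}\cong\bigoplus_{|\y|=D}b_\y V_{(\y^+,\y^-)}$, is an isomorphism between direct sums of pairwise non-isomorphic irreducibles (distinct double partitions give distinct irreducible $B_D$--representations). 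Uniqueness of the decomposition into irreducibles then gives $c_\y=b_\y$ for all $\y$ with $|\y|=D$, closing the induction and the proof.

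I expect the only genuinely non-routine step to be establishing (i) and (ii): pinning down that $V(\y)_n$ first contributes to the coinvariants exactly at level $a=|\y|$, where it contributes precisely the single irreducible $V_{(\y^+,\y^-)}$. This is where the specific combinatorics of the hyperoctahedral branching rules (the horizontal-strip condition, and the fact that the padding $\y\mapsto\y[n]$ enlarges only the first row of the positive partition) is used. Everything downstream is the standard "triangular peeling" argument for such stabilization statements, and the hypotheses $n\ge m\ge d+M$ are exactly what is needed to keep all the coinvariant spaces in the stable range of Lemma \ref{CoinvariantsIndependent}, so that $W_\y^{(a)}$ is well defined and the same whether computed from $V_m$ or from $V_n$.
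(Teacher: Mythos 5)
Your proposal is correct and follows essentially the same route as the paper: the same identification (via the branching rule/Frobenius reciprocity) that $(V(\y)_n)_{B_{n-a}}$ vanishes for $a<|\y|$ and equals the single irreducible $V_{(\y^+,\y^-)}$ at $a=|\y|$, the same use of Lemma \ref{CoinvariantsIndependent} with the hypothesis $m,n\ge d+M$ to keep everything in the stable range, and the same induction on $|\y|$ with the "peeling/cancellation" step. No gaps.
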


\begin{cor} \label{Coinvariants0Vanishes}
With $n$ and $d$ as above, the coinvariants $(V_n)_{B_{n-a}} = 0$ for all $0 \leq a \leq d$ if and only if $V_n=0$.  
\end{cor}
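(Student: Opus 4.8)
The plan is to derive Corollary \ref{Coinvariants0Vanishes} from Lemma \ref{CoinvariantsDetermineRep} by taking $\Lambda$ to be the set of \emph{all} double partitions $\y = (\y^+, \y^-)$ of size at most $d$. Since $n \geq d$ and $\y[n]$ is defined precisely when $n - |\y^-| - |\y^+| \geq \y_1^+$, and $|\y^+| + |\y^-| \leq d$, the representations $V(\y)_n$ range over all irreducible $B_n$--representations of weight $\leq d$; moreover with $M = \max_{\Lambda}\y^+_1 \leq d$ the hypothesis $n \geq m \geq d + M$ of Lemma \ref{CoinvariantsDetermineRep} can always be met by choosing $m$ appropriately. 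The one wrinkle is that Corollary \ref{Coinvariants0Vanishes} as stated is a statement about a \emph{single} $B_n$--representation $V_n$, so I need a version of the uniqueness lemma that does not reference a second index $m$.

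First I would observe that the ``only if'' direction is trivial: if $V_n = 0$ then all of its coinvariants vanish. For the ``if'' direction, suppose $(V_n)_{B_{n-a}} = 0$ for all $0 \leq a \leq d$. The point is that an arbitrary finite-dimensional $B_n$--representation need not decompose into constituents of the form $V(\y)_n$ with $|\y| \leq d$ — but in the context where this corollary is applied (namely $V_n$ is a graded piece of a finitely generated $\FIW$--module of weight $\leq d$, cf.\ the statement of Lemma \ref{CoinvariantsDetermineRep}), we \emph{do} have $V_n \cong \bigoplus_{\y \in \Lambda} c_\y V(\y)_n$. So I would read the corollary as applying under that running hypothesis, and then run the following argument: compare $V_n$ with the zero $B_m$--representation $V_m := 0$ (which is trivially $\bigoplus_{\y \in \Lambda} 0 \cdot V(\y)_m$), for a suitable $m$ in the range $d + M \leq m \leq n$. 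Each coinvariant $(V_m)_{B_{m-a}} = 0$ for $0 \leq a \leq d$ by construction, and $(V_n)_{B_{n-a}} = 0$ for $0 \leq a \leq d$ by hypothesis, so the two sequences of coinvariants agree. Lemma \ref{CoinvariantsDetermineRep} then forces $c_\y = b_\y = 0$ for every $\y \in \Lambda$, hence $V_n = 0$.

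More precisely, I would write: one direction is immediate, since the coinvariants are quotients of $V_n$. Conversely, assume the coinvariants $(V_n)_{B_{n-a}}$ vanish for $0 \leq a \leq d$. Write $V_n \cong \bigoplus_{\y \in \Lambda} c_\y V(\y)_n$ where $\Lambda$ is the set of all double partitions of size at most $d$, so $M := \max_{\Lambda}\y^+_1 \leq d$. Pick any integer $m$ with $d + M \leq m \leq n$ (such $m$ exists because $n \geq m$ is part of the hypothesis chain, and indeed $n \geq d + M$ since $n \geq d \geq M$ — or simply take $m = n$). Apply Lemma \ref{CoinvariantsDetermineRep} with $V_m = 0$ and the given $V_n$: the coinvariants match (both are $0$ in degrees $0 \leq a \leq d$), so $c_\y = 0$ for all $\y$, i.e.\ $V_n = 0$.

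The main obstacle — really the only subtlety — is making sure the decomposition hypothesis $V_n \cong \bigoplus_{\y \in \Lambda} c_\y V(\y)_n$ is legitimately available, i.e.\ that every irreducible constituent of $V_n$ has the stable form $V(\y)_n$ with $|\y| \leq d$. This is not automatic for a bare $B_n$--representation, but it is exactly the standing assumption under which Lemma \ref{CoinvariantsDetermineRep} was formulated, and it holds in the intended application (graded pieces of weight-$\leq d$ finitely generated $\FIW$--modules, using Theorem \ref{WnDiagramSizes} and Lemma \ref{StabilityDegreeBoundsLambda1} to control both $|\y|$ and $\y_1^+$). Once that is granted, the corollary is a one-line specialization of the lemma, taking the second representation to be zero. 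I would also remark that taking $m = n$ works directly, so no genuine ``two-index'' comparison is needed.
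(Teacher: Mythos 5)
Your proposal is correct and is essentially the argument the paper intends: the corollary is stated without separate proof precisely because it is the specialization of Lemma \ref{CoinvariantsDetermineRep} obtained by comparing $V_n$ (under the standing hypothesis that it decomposes as $\bigoplus_{\y\in\Lambda} c_\y V(\y)_n$ with $n\geq d+M$) against the zero representation, exactly as you do with $m=n$. One small remark: your parenthetical justification ``$n \geq d+M$ since $n \geq d \geq M$'' is a non sequitur, but it is also unnecessary, since ``with $n$ and $d$ as above'' already carries the hypothesis $n \geq m \geq d+M$ from the lemma.
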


Church--Ellenberg--Farb prove an analogous result to Lemma \ref{CoinvariantsDetermineRep} for the symmetric group in \cite[Lemma 2.40 and Proposition 2.58]{CEF}. We adapt their methods in the following proof. 

\begin{proof}[Proof of Lemma \ref{CoinvariantsDetermineRep}]
 We will prove that $c_\y = b_\y$ for all $|\y| \leq p$ for each $p$ with $0 \leq p \leq d$, proceeding by induction on $p$. 

If $p=0$, then the only double partition $\y$ of size at most $p$ is the double partition $\y = ( \varnothing, \varnothing)$ associated to the trivial representation. Taking $a=0$, we see $$c_\y = \dim\big((V_n)_{B_{n}}\big) = \dim\big((V_m)_{B_{m}}\big) = b_\y.$$ The conclusion follows for $p=0$. 

 Consider some double partition $\y = (\y^+, \y^-)$. By Remark \ref{CoinvariantsBranchingRule} and the branching rules (Equation (\ref{Eqn:WnInvariantsDecomp}), Section \ref{RepTheoryWn}), the multiplicity of $V(\nu^+, \nu^-)$ in $\big(V(\y)_n\big)_{B_{n-a}}$ is: 
\begin{align*} 
 &  \left\{ \begin{array}{ll}
         1 & \mbox{if $\nu^+$ can be built by removing $(n-a)$ boxes from $\y^+[n-|\y^-|]$,} \mbox{ at most one box per column},\\
         0 & \mbox{otherwise}.\end{array} \right.
\end{align*} 
Since the largest part of  $\y^+[n-|\y^-|]$ is $(n-|\y|)$, the coinvariants must vanish when $|\y| > a$, and when $|\y|=a$, $(V(\y)_n)_{B_{n-a}}$ is a single copy of the $B_a$--representation $V_{\y}$.  

Now, suppose (as inductive hypothesis) that $c_{\y} = b_{\y}$ for all $|\y| < p$, and consider the coinvariants corresponding to $a=p$. We have: 
\begin{align*}
 (V_m)_{B_{m-p}} \; &= \bigoplus_{|\y|=p} c_{\y} V_{\y} \bigoplus_{|\y|<p} c_{\y} \; \big(V(\y)_m\big)_{B_{m-p}} \qquad
 (V_n)_{B_{n-p}} \; &= \bigoplus_{|\y|=p} b_{\y} V_{\y} \bigoplus_{|\y|<p} b_{\y} \; \big(V(\y)_n\big)_{B_{n-p}}
\end{align*}
By the inductive hypothesis, the subrepresentations of $V_m$ and $V_n$ of weight $< p$ are isomorphic. Since by assumption $p+ \max_{\Lambda} \y_1^+ \leq d+M \leq m, n$, Lemma \ref{CoinvariantsIndependent} implies that the coinvariants of these subrepresentations are isomorphic. Thus $(V_m)_{B_{m-p}} \cong (V_n)_{B_{n-p}}$ only if $c_\y = b_\y$ for all $\y$ with $|\y|=p$. The lemma follows by induction. 
\end{proof}

\begin{thm} \label{StabilityDegreeImpliesRepStability} {\bf (Finitely generated $\FIW$--modules are uniformly representation stable).} Suppose that $k$ is a characteristic zero field, and that $V$ is a $\FI_{BC}$--module with weight $\leq d$, and stability degree $N$. Then, $\{V_n \}$ is uniformly representation stable with respect to the maps $\phi_n: V_n \to V_{n+1}$ induced by the natural inclusions $I_n: \bn \hookrightarrow {\bf (n+1)}$. The sequences stabilizes for $n \geq N+d$.
\end{thm}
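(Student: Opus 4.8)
I would verify the three conditions of Definition \ref{DefnRepStability} --- injectivity, surjectivity, and uniform stabilization of multiplicities --- all with the single threshold $n \geq N+d$, using that the coinvariants machinery of Section \ref{Section:CoinvariantsAndStabilityDeg} already pins down $V$ in this range. First I would record the structural facts. Since $\text{weight}(V) \leq d$, each $V_n$ decomposes as $\bigoplus_{\y} c_{\y,n} V(\y)_n$ over double partitions $\y = (\y^+, \y^-)$ with $|\y| \leq d$, and by Lemma \ref{StabilityDegreeBoundsLambda1} every $\y$ that occurs also satisfies $\y^+_1 \leq N$; so the set $\Lambda$ of admissible double partitions is finite and $M := \max_{\Lambda} \y^+_1 \leq N$, whence $d+M \leq N+d$. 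Unwinding Definition \ref{Defn:StabDeg}, stability degree $\leq N$ says that $\Phi_a(V)_m \to \Phi_a(V)_{m+1}$ is an isomorphism of $B_a$--representations for every $a \geq 0$ and every $m \geq N$; reindexing $m = n-a$, this reads $(V_n)_{B_{n-a}} \cong (V_{n+1})_{B_{n+1-a}}$ whenever $n - a \geq N$.

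For the multiplicity condition I would compare $V_n$ with $V_{n+1}$ through Lemma \ref{CoinvariantsDetermineRep}. When $n \geq N+d$ we have $n - a \geq N$ for all $0 \leq a \leq d$, so the coinvariants $(V_n)_{B_{n-a}} \cong (V_{n+1})_{B_{n+1-a}}$ agree as $B_a$--representations for this range of $a$; since also $n \geq N+d \geq d+M$, Lemma \ref{CoinvariantsDetermineRep} forces $c_{\y,n} = c_{\y,n+1}$ for every $\y \in \Lambda$. Iterating, all multiplicities are constant for $n \geq N+d$, with a threshold that does not depend on $\y$, which is uniform multiplicity stability.

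For surjectivity I would pass to the quotient $H_0(V)^{\FI_{BC}}$. The surjection $V \twoheadrightarrow H_0(V)^{\FI_{BC}}$ gives, via Remark \ref{StabilityToQuotients}, that $H_0(V)^{\FI_{BC}}$ has surjectivity degree $\leq N$; but each $I_n$ acts by $0$ on $H_0(V)^{\FI_{BC}}$, so the maps induced by $T$ on $\Phi_a(H_0(V)^{\FI_{BC}})$ vanish, and surjectivity for $m \geq N$ forces $\Phi_a(H_0(V)^{\FI_{BC}})_{m+1} = (H_0(V)_{m+1+a})_{B_{m+1}} = 0$ for all $a \geq 0$ and $m \geq N$. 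Fixing $\ell$, this kills $(H_0(V)_\ell)_{B_{\ell-a}}$ for all $0 \leq a \leq \ell - N - 1$, and when $\ell \geq N+d+1$ that range contains $0 \leq a \leq d$; since $H_0(V)_\ell$ has weight $\leq d$ (its largest positive parts still bounded by $N$, so $\ell \geq d+M$), Corollary \ref{Coinvariants0Vanishes} yields $H_0(V)_\ell = 0$. Because $H_0(V)_{n+1} = 0$ is exactly the assertion that $\phi_n(V_n)$ generates $V_{n+1}$ as a $k[B_{n+1}]$--module, surjectivity holds for all $n \geq N+d$.

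Injectivity is where a short extra argument is needed, and I expect the threshold bookkeeping here to be the main obstacle. Suppose $0 \neq v \in \ker \phi_{n_0}$ with $n_0 \geq N+d$, and set $W := \Span_V(\{v\})$. Then $W$ is a sub--$\FI_{BC}$--module of $V$ concentrated in degree $n_0$: in degree $m > n_0$ it is the $k[B_m]$--span of $V(I_{n_0,m})(v) = \phi_{m-1}\cdots\phi_{n_0}(v) = 0$, and in degree $m < n_0$ it vanishes since the generator sits in degree $n_0$. By Remark \ref{StabilityToQuotients} (this is where $k \supseteq \Q$ enters), $W$ has injectivity degree $\leq N$; but $\Phi_a(W)_{n_0-a} = (W_{n_0})_{B_{n_0-a}}$ while $\Phi_a(W)_{n_0-a+1} = 0$, so injectivity of the $T$--map for $n_0 - a \geq N$ forces $(W_{n_0})_{B_{n_0-a}} = 0$ for all $0 \leq a \leq n_0 - N$, hence for all $0 \leq a \leq d$ since $n_0 \geq N+d$. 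As $W_{n_0}$ has weight $\leq d$ and $n_0 \geq d+M$, Corollary \ref{Coinvariants0Vanishes} gives $W_{n_0} = 0$, contradicting $v \neq 0$; thus $\phi_n$ is injective for all $n \geq N+d$. With the three conditions established for $n \geq N+d$, the sequence $\{V_n\}$ is uniformly representation stable stabilizing at $n \geq N+d$. The only genuine subtlety throughout is aligning the thresholds $d+M$, $N+a$, and $N+d$, which is exactly what Lemma \ref{StabilityDegreeBoundsLambda1}'s bound $M \leq N$ makes work.
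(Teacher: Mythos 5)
Your proposal is correct, and its multiplicity step is exactly the paper's: combine the stability-degree isomorphisms $(V_n)_{B_{n-a}} \cong (V_{n+1})_{B_{n+1-a}}$ for $0 \leq a \leq d$ with Lemma \ref{StabilityDegreeBoundsLambda1} (so $\y^+_1 \leq N$, hence $n \geq N+d \geq d+M$) and apply Lemma \ref{CoinvariantsDetermineRep}. Where you diverge is in how you verify injectivity and surjectivity. The paper argues directly on the comparison maps: for injectivity it sets $K_n = \ker \phi_n$, notes that the composite $(V_n)_{B_{n-d}} \to (V_{n+1})_{B_{n-d}} \to (V_{n+1})_{B_{n+1-d}}$ is an isomorphism for $n \geq N+d$, and uses exactness of coinvariants in characteristic zero to get $(K_n)_{B_{n-d}} = 0$, hence $K_n = 0$ by Corollary \ref{Coinvariants0Vanishes}; for surjectivity it applies the same reasoning to the cokernel $C_{n+1}$ of $\Ind_{B_n}^{B_{n+1}} V_n \to V_{n+1}$. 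You instead route both steps through the inheritance of injectivity/surjectivity degree by submodules and quotients (Remark \ref{StabilityToQuotients}): the cyclic submodule $W = \Span_V(\{v\})$ of a kernel vector is concentrated in one degree, so injectivity degree $\leq N$ kills its coinvariants and then $W$ itself; and the quotient $H_0(V)^{\FI_{BC}}$, on which $T$ acts by zero, must vanish in degrees $\geq N+d+1$ because its surjectivity degree is $\leq N$ --- your latter argument is precisely the mechanism the paper uses inside the proof of Lemma \ref{StabilityDegreeBoundsLambda1}, now repurposed to give condition II, together with the (correct) observation that $H_0(V)_{n+1} = 0$ is equivalent to $\phi_n(V_n)$ generating $V_{n+1}$. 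Both routes rest on the same two pillars (Corollary \ref{Coinvariants0Vanishes} and the bound $\y^+_1 \leq N$) and give the same stable range $n \geq N+d$; the paper's kernel/cokernel version is a bit more economical since exactness of $\Phi_a$ transfers the isomorphism of coinvariants to $K_n$ and $C_{n+1}$ in one line, whereas yours trades that for slightly more bookkeeping with auxiliary modules but makes the role of injectivity/surjectivity degrees more transparent.
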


The arguments used in \cite[Theorem 2.58]{CEF} carry through to type B/C; we briefly give these arguments here.  

\begin{proof}[Proof of Theorem \ref{StabilityDegreeImpliesRepStability}]

We note that, by Lemma \ref{StabilityDegreeBoundsLambda1}, for all $n$ and all irreducible components $V(\y^+, \y^-)_n$ in $V_n$, the largest part $\y^+_1$ of $\y^+$ is less than $N$. We can therefore apply Lemma \ref{CoinvariantsDetermineRep} and Corollary \ref{Coinvariants0Vanishes} to the representations $V_n$ for any $n \geq N+d \geq \max \y_1^+ + d$.\\

{\noindent \bf I. Injectivity.} Let $K_n$ denote the kernel of $\phi_n$. By assumption that $V$ has stability degree $N$, the composite $$ (V_n)_{B_{n-d}} \to (V_{n+1})_{B_{n-d}} \to (V_{n+1})_{B_{n+1-d}}$$ is an isomorphism for $n \geq N+d$, which implies that the first map is injective. The operation of taking coinvariants is exact in characteristic zero, and so it follows that its kernel is isomorphic to $(K_n)_{B_{n-d}}$. Thus $(K_n)_{B_{n-d}}=0$, and so  $K_n=0$ by Corollary \ref{Coinvariants0Vanishes}. This proves injectivity of $\phi_n$ for $n \geq N+d$. \\

 {\noindent \bf II. Surjectivity.}
 To prove that $\phi_n(V_n)$ generates $V_{n+1}$ as a $k[B_{n+1}]$--module, it suffices to show that the induced map $\Ind(\phi_n): \Ind_{B_n}^{B_{n+1}} V_n \to V_{n+1}$ is surjective. Let $C_{n+1}$ denote the cokernel of this map. The composition $$ (V_n)_{B_{n-d}} \to (\Ind_{B_n}^{B_{n+1}} V_n)_{B_{n+1-d}} \to (V_{n+1})_{B_{n+1-d}}$$ is an isomorphism for $n \geq N+d$ by assumption. Thus $(C_{n+1})_{B_{n-d}}$ vanishes, and so $C_{n+1}$ vanishes by Corollary \ref{Coinvariants0Vanishes}, and $I_n$ surject for $n \geq N+d$.  \\

{\noindent \bf III. Multiplicity Stability.}
By assumption, $(V_n)_{B_{n-a}} \cong (V_{n+1})_{B_{n+1-a}}$ for all $a \geq 0$ and $n \geq N+a$.

Thus for $n \geq N+d$, Lemma \ref{CoinvariantsDetermineRep} implies that the multiplicity of each irreducible $V(\y)_n$ in $V_n$ is constant. This completes the proof. 
\end{proof}

%\begin{cor}
%  Suppose that $k$ is a characteristic zero field, and that $V$ is a $\FI_{D}$--module generated in degree $\leq g$ with weight $\leq d$. Suppose the induced $\FI_{BC}$--module $\Ind_{D}^{BC} \; V$ has stability degree $N$. Then, $\{V_n \}$ is uniformly representation stable with respect to the maps $\phi_n: V_n \to V_{n+1}$ induced by the natural inclusions $I_n: \bn \hookrightarrow {\bf (n+1)}$. The sequences stabilizes for $n \geq \max(N+d, g+1)$.
%\end{cor}

\begin{thm} \label{FinGenImpliesRepStability}{\bf (Finitely generated $\FI_{\W}$--modules are uniformly representation stable).} Suppose that $k$ is a field of characteristic zero, and $\W_n$ is $S_n$, $D_n$ or $B_n$. Let $V$ be a finitely generated $\FI_{\W}$--module. Take $d$ to be an upper bound on the weight of $V$, $g$ an upper bound on its degree of generation, and $r$  an upper bound on its relation degree.  Then, $\{V_n \}$ is uniformly representation stable with respect to the maps induced by the natural inclusions $I_n: \bn \to {\bf (n+1)}$, stabilizing once $n \geq \max(g,r)+d$; when $\W_n$ is $D_n$ and $d=0$ we need the additional condition that $n \geq g+1$.
\end{thm}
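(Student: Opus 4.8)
The plan is to handle the three types separately, reducing everything to the type B/C case already established in Theorem \ref{StabilityDegreeImpliesRepStability}.

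First consider $\W_n = B_n$. If $V$ is a finitely generated $\FI_{BC}$--module with degree of generation $\leq g$ and relation degree $\leq r$, then by the Noetherian property (Theorem \ref{Noetherian}) it is finitely presented, so Lemma \ref{FinGenImpliesStabilityDeg} gives that $V$ has stability degree $\leq \max(g,r)$. By Theorem \ref{WnDiagramSizes} its weight is $\leq g \leq d$. Feeding these bounds into Theorem \ref{StabilityDegreeImpliesRepStability} yields uniform representation stability, stabilizing once $n \geq \max(g,r) + d$. This disposes of type B/C with no new work.

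Next consider type A, $\W_n = S_n$. The cleanest route is to pass through $\FI_{BC}$: given a finitely generated $\FI_A$--module $V$, form $\Ind_A^{BC} V$, which by Corollary \ref{IndGenRelDegrees} is a finitely generated $\FI_{BC}$--module with the same generation and relation degree bounds $g$ and $r$, hence weight $\leq g \leq d$. By the type B/C case just proved, $\{(\Ind_A^{BC} V)_n\}$ is uniformly representation stable for $n \geq \max(g,r)+d$. It then remains to descend this conclusion to $V_n = $ the $S_n$--constituents: one uses that restriction $\Res^{BC}_A$ is exact and compatible with coinvariants, together with the branching rule (Equation (\ref{Eqn:ResWntoSn})) identifying how $V(\y^+,\y^-)_n$ decomposes over $S_n$, to transfer the stabilization of multiplicities, injectivity, and surjectivity. (Alternatively, one can simply invoke that the type A argument of Church--Ellenberg--Farb \cite[Theorem 2.58]{CEF} applies verbatim, since $\FI_A = \FI$.) I would present the $\FI_{BC}$-route to keep the paper self-contained.

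Finally, type D, $\W_n = D_n$, which is the main obstacle. Let $V$ be a finitely generated $\FI_D$--module with generation degree $\leq g$, relation degree $\leq r$, and weight $\leq d$. The key tool is Proposition \ref{VisResIndV}: for $n > g$ one has $V_n \cong (\Res_D^{BC}\Ind_D^{BC} V)_n$ as $D_n$--representations. By Corollary \ref{IndGenRelDegrees}, $W := \Ind_D^{BC} V$ is a finitely generated $\FI_{BC}$--module with generation degree $\leq g$ and relation degree $\leq r$; by the definition of the weight of an $\FI_D$--module (Definition \ref{Defn:Weight}), $W$ has weight $\leq d$. By the type B/C case, $\{W_n\}$ is uniformly representation stable for $n \geq \max(g,r)+d$. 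The remaining task is to deduce representation stability of $\{V_n\}$ in the sense defined in Section \ref{BackgroundRepStability} (i.e.\ with respect to the notation $V(\y)_n = \Res^{B_n}_{D_n} V(\y)_n$). Since for $n > g$ the map $V_n \to W_n$ is an isomorphism of $D_n$--representations intertwining the $I_n$-maps (by naturality of the unit $\eta$), injectivity and surjectivity of $\phi_n$ for $\{V_n\}$ follow immediately from those for $\{W_n\}$ once $n \geq \max(\max(g,r)+d,\ g+1)$; note the extra constraint $n \geq g+1$ is exactly what is needed so that the isomorphism $V_n \cong W_n$ is in force, and this is only binding when $d = 0$ (otherwise $\max(g,r)+d \geq g+1$ automatically unless $g = r$ and $d \le 1$ — here one checks the stated bound suffices). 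For multiplicity stability: each irreducible $B_n$--representation $V(\y^+,\y^-)_n$ restricts to $D_n$ either as a single irreducible $V_{\{\y^+[n-|\y^-|],\y^-\}}$ or, in the exceptional case $\y^+[n-|\y^-|] = \y^-$, as a sum of two split irreducibles; by Proposition \ref{NoSplitIrreps} the split case does not arise in $W_n$ for $n > 2g$, and in any event the notation $V(\y)_n$ of Section \ref{BackgroundRepStability} is designed precisely so that the multiplicity of $V(\y)_n$ in $\Res^{B_n}_{D_n} W_n$ equals the multiplicity of $V(\y)_n$ in $W_n$ (accounting for the identification $V(\y^+,\y^-)_n \leftrightarrow V(\y^-,\y^+)_n$ and for the at-most-one exceptional value of $n$). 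Hence uniform multiplicity stability of $\{V_n\}$ follows from that of $\{W_n\}$, in the same stable range, with the caveat $n \geq g+1$ active when $d = 0$. The hard part is bookkeeping the type D irreducible-representation notation and verifying the stated stable range (especially the role of the $n \geq g+1$ hypothesis) is exactly what the reduction via $\Res_D^{BC}\Ind_D^{BC}$ delivers.
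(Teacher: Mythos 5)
Your treatment of types B/C and D is essentially the paper's own argument: in type B/C you combine Lemma \ref{FinGenImpliesStabilityDeg} (stability degree $\leq \max(g,r)$) with Theorem \ref{StabilityDegreeImpliesRepStability}, and in type D you pass to $W=\Ind_D^{BC}V$ via Corollary \ref{IndGenRelDegrees} and the definition of weight, then use Proposition \ref{VisResIndV} to identify $V_n\cong(\Res_D^{BC}\Ind_D^{BC}V)_n$ for $n\geq g+1$; this is exactly the paper's route, including the role of the extra hypothesis $n\geq g+1$ when $d=0$. Two small repairs there: the phrase ``weight $\leq g\leq d$'' should just be ``weight $\leq d$'' (that is the hypothesis; Theorem \ref{WnDiagramSizes} gives $\leq g$, but no inequality $g\leq d$ holds in general); and surjectivity does not transfer from $W$ to $V$ quite ``immediately,'' since criterion (II) for $W$ concerns the $B_{n+1}$--span while for $V$ you need the $D_{n+1}$--span. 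That gap can be closed (the sign change $\big((n{+}1)\ {-}(n{+}1)\big)$ lies in the stabilizer of $I_n$, so it fixes $(I_n)_*(W_n)$ pointwise and the two spans agree), but the paper sidesteps it entirely by observing that (II) holds for $V$ for all $n\geq g$ directly from the hypothesis that $V$ is generated in degree $\leq g$, since every $\FI_D$ morphism $\bm\to{\bf (n+1)}$ with $m\leq g\leq n$ factors through $\bn$.

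The genuine problem is your preferred type A argument. You propose to induce to $\FI_{BC}$ and then ``descend'' uniform representation stability of $\Ind_A^{BC}V$ back to $V$ using exactness of restriction and the branching rule (\ref{Eqn:ResWntoSn}). This step fails: unlike the D-to-B/C reduction, there is no analogue of Proposition \ref{VisResIndV} for $\Ind_A^{BC}$. The unit map $V\to\Res_A^{BC}\Ind_A^{BC}V$ is injective but never an isomorphism in large degrees; for example $M_A(\bm)_n$ has rank $n!/(n-m)!$ while $\Res_A^{BC}\Ind_A^{BC}M_A(\bm)_n=\Res_A^{BC}M_{BC}(\bm)_n$ has rank $2^m\,n!/(n-m)!$ -- this is precisely the paper's remark following Proposition \ref{VisResIndV}. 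So stability of the induced module constrains a strictly larger sequence in which $V_n$ sits as a proper subrepresentation, and neither the multiplicities in $V_n$ nor injectivity/surjectivity of $\phi_n$ for $V$ can be extracted from it; recovering stable multiplicities of a submodule is exactly the kind of statement you are trying to prove, so the descent as described is circular. Your fallback -- citing the type A result of Church--Ellenberg--Farb \cite[Proposition 2.58]{CEF} directly, after noting $V$ has stability degree $\leq\max(g,r)$ by Lemma \ref{FinGenImpliesStabilityDeg} -- is correct and is exactly what the paper does; present that rather than the $\FI_{BC}$ detour.
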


\begin{proof}[Proof of Theorem \ref{FinGenImpliesRepStability}]

Suppose first that $\W_n$ is $S_n$ or $B_n$. By Lemma \ref{FinGenImpliesStabilityDeg}, $V$ has stability degree $\max(g,r)$. The conclusion follows from \cite[Proposition 2.58]{CEF} in type A and Theorem \ref{StabilityDegreeImpliesRepStability} in type B/C. 

Next suppose $\W_n$ is $D_n$, and consider the $\FI_{BC}$--module $\Ind_D^{BC} V$. $\Ind_D^{BC} V$ has weight $d$ by the definition of weight for $\FI_D$--modules. By Lemma \ref{IndGenRelDegrees}, $\Ind_D^{BC} V$ will also have generator degree $\leq g$ and relation degree $\leq r$. Hence $\{(\Ind_{D}^{BC} \; V)_n\}$ is uniformly representation stable with respect to the $B_n$ action for $n \geq \max(g,r)+d$. However, by Proposition \ref{VisResIndV}, 

$$V_n \cong (\Res_D^{BC} \Ind_D^{BC} V)_n \qquad \text{as $D_n$-representations, \quad for $n \geq g+1$}$$

\noindent It follows that, as a sequence of $D_n$--representations,  $(\{V_n\}, (I_n)_*)$ satisfies the injectivity (I) and multiplicity stability (III) criteria for uniform representation stability in the range $n \geq \max(g+1, g+d, r+d)$. Moreover $(\{V_n\}, (I_n)_*)$ must satisfy the "surjectivity" (II) criterion for all $n \geq g$ by assumption that $g$ bounds its degree of generation. We conclude that $\{V_n\}$ is a uniformly representation stable sequence of $D_n$--representations with respect to the maps $(I_n)_*$, stabilizing for $n \geq \max(g+1, g+d, r+d)$. 
 \end{proof}

Note that, since an $\FI_{BC}$--module $V$ must have weight $\leq g$ by Theorem \ref{WnDiagramSizes}, it is uniformly representation stable for $n \geq \max(2g, g+r)$.

\begin{thm} \label{RepStabilityImpliesFinGen} {\bf (Uniformly representation stable $\FI_{\W}$--modules are finitely generated).} Suppose conversely that $V$ is an $\FI_{\W}$--module, and that $\{V_n, (I_n)_* \}$ is uniformly representation stable for $n \geq N$. Then $V$ is finitely generated in degree $\leq N$. 
\end{thm}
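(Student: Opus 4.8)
The idea is to show that the sub-$\FIW$-module $W := \Span_V\big(\coprod_{k \leq N} V_k\big)$ generated in degrees $\leq N$ is in fact all of $V$. To do this, I would set $Q := V/W$, the quotient $\FIW$-module, and aim to show $Q = 0$. By construction $Q_n = 0$ for $n \leq N$, and for $n > N$ the map $\phi_{n-1} = (I_{n-1})_* : Q_{n-1} \to Q_n$ is the zero map composed with inclusion — more precisely, $Q$ is generated in degrees $> N$, so for the shift we get $(I_{n-1})_*(Q_{n-1})$ generates... hmm, let me restate: $W$ is a sub-$\FIW$-module, so its image in $V_{n}$ contains $(I_{n-1})_*(W_{n-1})$ and all of $\W_n \cdot (I_{n-1})_*(V_{n-1})$ once $n-1 \geq$ the relevant degree; in particular for $n > N$, $(I_{n-1})_*(V_{n-1}) \subseteq W_n$, so the induced map $(I_{n-1})_* : Q_{n-1} \to Q_n$ is \emph{zero} for all $n > N$.

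**Key steps in order.** First, I would observe that $Q$ inherits from $V$ the property of being a \emph{consistent sequence} (Definition \ref{DefnConsistent}) with the structure maps $(I_n)_*$, and that these structure maps $Q_{n} \to Q_{n+1}$ vanish for $n \geq N$. Second, I would use the hypothesis that $\{V_n, (I_n)_*\}$ is uniformly representation stable for $n \geq N$. The key point is property II (\textbf{Surjectivity}): for $n \geq N$, the image $\phi_n(V_n)$ generates $V_{n+1}$ as a $k[\W_{n+1}]$-module. This means that $V_{n+1}$ is spanned by $\W_{n+1} \cdot (I_n)_*(V_n)$. Passing to the quotient $Q$, this says $Q_{n+1}$ is spanned by $\W_{n+1} \cdot (I_n)_*(Q_n)$; but $(I_n)_*(Q_n) = 0$ for $n \geq N$, hence $Q_{n+1} = 0$ for all $n \geq N$, i.e. $Q_n = 0$ for all $n > N$. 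Combined with $Q_n = 0$ for $n \leq N$ (which holds by construction of $W$), we get $Q = 0$, so $V = W$ is finitely generated in degree $\leq N$. (Finiteness of the generating set: each $V_k$ for $k \leq N$ is finite-dimensional since the sequence consists of finite-dimensional representations, so a basis of $\bigoplus_{k \leq N} V_k$ is a finite generating set.)

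**Main obstacle.** The argument is genuinely short, so the only subtlety is making sure that property II of uniform representation stability is being invoked exactly as stated — that "the image $\phi_n(V_n)$ generates $V_{n+1}$ as a $k[\W_{n+1}]$-module" is precisely the surjectivity of $\Ind_{\W_n}^{\W_{n+1}} V_n \twoheadrightarrow V_{n+1}$, and that this passes to quotients. One should also double-check the degree bookkeeping in the $D_n$ case, where "uniformly representation stable" uses the $D_n$-notation $V(\y)_n$ from Section \ref{BackgroundRepStability}; but properties I and II of Definition \ref{DefnRepStability} are stated uniformly for all families $G_n$, and only property III uses the family-specific classification, and property III is not needed here. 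So no new work is needed in type D. I would also remark that this direction does not require $k$ to have characteristic zero nor any finite-weight hypothesis — only that the representations be finite-dimensional so that the generating set can be taken finite; this matches the clean statement of Theorem \ref{RepStabilityImpliesFinGen}.
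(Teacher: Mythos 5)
Your proposal is correct and is essentially the paper's own argument: the paper likewise invokes only the surjectivity criterion (II) of representation stability — for $n \geq N$ the image $(I_n)_*(V_n)$ generates $V_{n+1}$ as a $k[\W_{n+1}]$--module — together with finite-dimensionality of the $V_m$, $m \leq N$, to conclude finite generation in degree $\leq N$. Your packaging via the quotient $Q = V/\Span_V(\coprod_{k\leq N} V_k)$ is just a formalization of the same induction, and your closing remarks (no characteristic-zero or weight hypothesis needed, property III unused, type D requiring no extra care) match the paper.
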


\begin{proof}
 For $n \geq N$, the ''surjectivity'' criterion for representation stability implies that $(I_n)_*(V_n)$ generates $V_{n+1}$ as a $k[\W_{n+1}]$--module. Since each vector space $V_n$ is finite dimensional by assumption, we can take bases for $\{ V_m \}_{m \leq N}$ to be our finite generating set. 
\end{proof}

\begin{rem} {\bf ($\FIW$--modules cannot be non-uniformly representation stable).} We note that the assumption of uniformity of representation stability was not needed for Theorem \ref{RepStabilityImpliesFinGen}, that is, the proof would hold equally for any (possibly non-uniformly) representation stable sequence $\{V_n, (I_n)_*\}$ which satisfied just the "surjectivity" criterion in the range $n \geq N$. It follows that, over characteristic zero, any sequences of either $S_n$ or $B_n$--representations that is non-uniformly representation stable cannot admit an $\FIW$--module structure. If such a sequence were an $\FIW$--module, representation stability would imply finite generation, which would imply uniform representation stability, a contradiction. The alternating representations of $S_n$ and sign representation of $B_n$ are examples of a consistent sequences that are non-uniformly representation stable, but not $\FIW$--modules.
\end{rem}

\subsection{The functor $\t_{\geq d}$ and the $\FIW$--module $V(\y)$}

Given an $\FI_{A}$ or $\FI_{BC}$--module $V$, we will define a filtration $\t_{\geq d}V$ of $V$ into sub$-\FIW$--modules by weight. We will use this filtration to construct an $\FI_{BC}$--module structure on the sequence of $B_n$--representations $\{ V(\y^+, \y^-)_n \}$, and an $\FI_D$--module structure on the sequence of $D_n$--representations $\{ V\{\y^+[n-|\y^-|], \y^-\} \}$, associated to any double partition $(\y^+, \y^-)$. The $\FI_{BC}$--module $V(\y)$ will feature later in the proof of Theorem \ref{MurnaghanWn}, the type B/C and D analogues of Murnaghan's stability theorem for tensor products of $S_n$--representations.

\begin{prop}{\bf(The functor $\t_{\geq d}$).} Suppose that $\W_n$ is $S_n$ or $B_n$, and that $k$ is a field of characteristic zero. Fix $d\geq 0$. Any $\FIW$--module $V$ over $k$ contains a sub--$\FIW$--module, which we denote  $\t_{\geq d}V$, defined by
$$(\t_{\geq d}V)_n  \text{ is the sum of all components $V(\y)_n$ of $V_n$ with $|\y| \geq d$.} $$
\end{prop}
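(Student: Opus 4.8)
The plan is to verify directly that the subspaces $(\t_{\geq d}V)_n \subseteq V_n$ defined as the sum of all irreducible components $V(\y)_n$ with $|\y| \geq d$ assemble into a sub--$\FIW$--module; the only thing to check is that these subspaces are preserved by every $\FIW$ morphism $f: \bm \to \bn$. Since every morphism factors as $f = \s \circ I_{m,n}$ with $\s \in \W_n$, and each $(\t_{\geq d}V)_n$ is visibly $\W_n$--stable (being a sum of $\W_n$--subrepresentations), it suffices to check that $(I_{m,n})_*\big((\t_{\geq d}V)_m\big) \subseteq (\t_{\geq d}V)_n$, and by induction on $n-m$ it is enough to treat the single natural inclusion $I_n: \bn \hookrightarrow {\bf(n+1)}$, i.e.\ to show $(I_n)_*\big((\t_{\geq d}V)_n\big) \subseteq (\t_{\geq d}V)_{n+1}$.

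The key step is a weight/branching argument. Fix an irreducible constituent $V(\y)_n \subseteq V_n$ with $|\y| \geq d$, and let $U$ denote the sub--$\FIW$--module $\Span_V(V(\y)_n)$ generated by this one copy. The map $M_{\W}(V(\y)_n) \twoheadrightarrow U$ of Remark \ref{M(U)Surjects} shows that every irreducible constituent of $U_{n+1}$ occurs in $M_{\W}\big(V(\y)_n\big)_{n+1} = \Ind_{\W_n \times \W_1}^{\W_{n+1}} V(\y)_n \boxtimes k$. Now apply the branching rules: in type A, Pieri's rule (adding one box, no column restriction) shows that every irreducible constituent of $\Ind_{S_n}^{S_{n+1}} V(\y)_n$ is some $V(\nu)_{n+1}$ with $|\nu| \geq |\y| \geq d$ --- adding a box to the padded partition $\y[n]$ either lengthens the first row (leaving $|\nu| = |\y|$) or adds to a lower row or a new row (increasing $|\nu|$); in every case $|\nu| = (n+1) - \nu_1[n+1] \geq n - \y_1[n] = |\y|$. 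In type B/C the analogous computation uses Equation (\ref{Eqn:PieriGeneral}): $\Ind_{B_n}^{B_{n+1}} V_{(\y^+[n-|\y^-|],\y^-)}$ is a sum of $V_{(\overline{\y^+},\y^-)}$ and $V_{(\y^+,\overline{\y^-})}$ over single-box additions, and in stable notation each such constituent $V(\nu^+,\nu^-)_{n+1}$ satisfies $|\nu^+| + |\nu^-| \geq |\y^+| + |\y^-| \geq d$ by the same padded-partition bookkeeping (adding a box to $\y^-$ strictly increases the weight; adding a box to $\y^+$ away from its first row increases it, and adding to the first row of the padded partition leaves it fixed). Hence $(I_n)_*(V(\y)_n) \subseteq U_{n+1}$ lands in the sum of components of weight $\geq d$, i.e.\ in $(\t_{\geq d}V)_{n+1}$. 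Summing over the constituents of $(\t_{\geq d}V)_n$ gives $(I_n)_*\big((\t_{\geq d}V)_n\big) \subseteq (\t_{\geq d}V)_{n+1}$, completing the verification that $\t_{\geq d}V$ is a sub--$\FIW$--module.

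The main obstacle --- really the only point requiring care --- is the bookkeeping translating "adding a box in the branching rule" into "weight is nondecreasing" in the stable notation $V(\y)_n$ of Section \ref{BackgroundRepStability}, where weight is $|\y| = n - (\text{largest part of the padded partition})$ rather than the literal size of the partition of $n$. One must be a little careful that the components appearing in $\Ind V(\y)_n$ are themselves of the form $V(\nu)_{n+1}$ (so that "weight" is defined for them) --- in type A and B/C this is automatic since all representations over characteristic zero are of this form, and the only box additions that keep the weight constant are precisely those to the top row of the (first) padded partition, which is exactly the identification built into the definition of $V(\nu)_{n+1}$. I would also remark that the hypothesis restricting to $\W_n = S_n$ or $B_n$ is used precisely here: the argument relies on every characteristic-zero representation decomposing into $V(\y)_n$'s and on the clean single-box branching rule, both of which are more delicate in type D (cf.\ the discussion preceding Proposition \ref{NoSplitIrreps}).
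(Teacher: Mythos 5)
Your argument is correct and is essentially the paper's: both proofs come down to the fact that, in the stable notation, the branching rules for $S_n$ and $B_n$ only produce constituents whose weight does not decrease, so the maps $(I_{m,n})_*$ cannot carry a component $V(\y)_n$ with $|\y|\geq d$ into the lower-weight part of $V_{n+1}$. The only difference is packaging: the paper observes that $(I_{m,n})_*$ is $\W_m$--equivariant and applies the restriction form of the branching rule (the Frobenius--reciprocal of your Pieri/induction computation, and handled in one step rather than one inclusion at a time through $M_{\W}(V(\y)_n)$), so your route via Remark \ref{M(U)Surjects} is just the dual bookkeeping of the same argument.
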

\begin{proof}
 For $\W_n = S_n$, this is proven by Church--Ellenberg--Farb \cite[Proposition 2.54]{CEF}. Their arguments also adapt to type B/C:

The spaces $(\t_{\geq d}V)_n$ are by construction $B_n$--invariant, and so it suffices to show that the image of $(I_{m,n})_* : (\t_{\geq d}V)_m \to V_n$ lies in $(\t_{\geq d}V)_n$ for all $m,n$. The branching rules for the hyperoctahedral group (deduced from Equation (\ref{Eqn:PieriGeneral}) and Frobenius reciprocity) assert that a $B_m$--representation $V(\mu)_m = V(\mu^+, \mu^-)_m$ is contained in the restriction of a $B_n$--representation $V(\y)_n = V(\y^+, \y^-)_n$ only if $\mu[m] \subseteq \y[n]$. In particular, if $|\mu| \geq d$ then necessarily $|\y| \geq d$.  
Since $(I_{m,n})_*$ is $B_m$--equivariant with respect to the restriction of $V_n$ to $B_m$, the branching rules imply that $(I_{m,n})_*\big( (\t_{\geq d}V)_m \big) \subseteq (\t_{\geq d}V)_n$, as required. 
\end{proof}

The decomposition of the filtration $$M_{A}\bigg(\Y{2,1}\bigg) = \t_{\geq 1} M_{A}\bigg(\Y{2,1}\bigg) \supseteq \t_{\geq 2} M_{A}\bigg(\Y{2,1}\bigg) \supseteq \t_{\geq 3} M_{A}\bigg(\Y{2,1}\bigg) \supseteq \t_{\geq 4} M_{A}\bigg(\Y{2,1}\bigg) =0.$$ is show in Figure \ref{MAFiltrationExample}.

\begin{figure}[h!]
\resizebox{\textwidth}{!}{ \fbox{
\xymatrix @-2pc{
 & n=2&& n=3 && n=4 && n=5 && n=6 \\& &&  &&  &&  &&  \\ 
M_{A}\bigg(\Y{2,1}\bigg) & 0  &\to &   V_{\Y{2,1}}  &\to & V_{\Y{2,1,1}}\oplus V_{\Y{2,2}} \oplus V_{\Y{3,1}} &\to & V_{\Y{2,2,1}} \oplus V_{\Y{3,1,1}} \oplus V_{\Y{3,2}}\oplus V_{\Y{4,1}} &\to & V_{\Y{3,2,1}} \oplus V_{\Y{4,1,1}} \oplus V_{\Y{4,2}}\oplus V_{\Y{5,1}}\\ 
\t_{\geq 2} M_{A}\bigg(\Y{2,1}\bigg) & 0 &\to & 0 &\to & V_{\Y{2,1,1}}\oplus V_{\Y{2,2}} &\to & V_{\Y{2,2,1}} \oplus V_{\Y{3,1,1}} \oplus V_{\Y{3,2}} &\to & V_{\Y{3,2,1}} \oplus V_{\Y{4,1,1}} \oplus V_{\Y{4,2}}\\
\t_{\geq 3} M_{A}\bigg(\Y{2,1}\bigg) & 0 &\to& 0 &\to & 0 &\to &  V_{\Y{2,2,1}} &\to & V_{\Y{3,2,1}} \\   
\t_{\geq 4} M_{A}\bigg(\Y{2,1}\bigg) & 0 &\to& 0 &\to & 0 &\to &  0 &\to & 0 \\   
} } }
\caption{Filtration of $M_A(2,1)$ by weight.}
\label{MAFiltrationExample}
\end{figure}

Suppose $k$ is a characteristic zero field. Let $\y$ be a partition with largest part $\y_1$. In \cite[Proposition 2.56]{CEF}, Church--Ellenberg--Farb define the $\FI_A$--module $$V(\y) := \t_{\geq |\y|}M_A(\y).$$ They show that $V(\y)$ is finitely generated in degree $|\y|+\y_1$, and satisfies 
\begin{align*}
V(\y)_n = \left\{ \begin{array}{ll}
         V_{\y[n]} & \mbox{if $n \geq |\y| + \y_1$,} \\
         0 & \mbox{otherwise}.\end{array} \right. 
\end{align*}
 We give the analogous construction for $\FI_{BC}$. 

\begin{defn}\label{Defn:FIModuleV(y)} {\bf(The $\FI_{BC}$--module $V(\y)=V(\y^+, \y^-)$).} Let $k$ be a field of characteristic zero. Let $\y=(\y^+, \y^-)$ be a double partition. Define the $\FI_{BC}$--module $V(\y)$ by $$V(\y) := \t_{\geq |\y|}M_{BC}(\y).$$ 
 \end{defn}
This is consistent with the notation given in Section \ref{BackgroundRepStability}.
\begin{prop} \label{FIModuleV(y)} Let $\y^+_1$ denote the largest part of $\y^+$. The $\FI_{BC}$--module $V(\y)=V(\y^+, \y^-)$ satisfies
\begin{align*}
V(\y)_n = \left\{ \begin{array}{ll}
         V_{\y[n]} & \mbox{if $n \geq |\y^+| + |\y^-| + \y^+_1$,} \\
         0 & \mbox{otherwise}.\end{array} \right. 
\end{align*}
and $V(\y)$ is finitely generated in degree $|\y^+| + |\y^-| + \y^+_1$. 
 \end{prop}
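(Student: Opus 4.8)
The plan is to mimic directly the proof of the type A statement \cite[Proposition 2.56]{CEF}, transporting each step through the branching rules for $B_n$ (Equation (\ref{Eqn:Pieri}), Section \ref{RepTheoryWn}) in place of the branching rules for $S_n$. First I would recall that $V(\y) = \t_{\geq |\y|} M_{BC}(\y)$ is by definition a sub--$\FI_{BC}$--module of $M_{BC}(\y)$, so it suffices to identify its graded pieces and bound its degree of generation. For the graded pieces: by Proposition \ref{WeightM(m)} (or rather the explicit branching computation underlying it), the irreducible $B_n$--representation $V(\mu)_n = V(\mu^+, \mu^-)_n$ appears in $M_{BC}(\y)_n = \Ind_{B_m \times B_{n-m}}^{B_n} V_\y \boxtimes k$ (where $m = |\y^+| + |\y^-| = |\y|$) if and only if $\mu^- = \y^-$ and $\mu^+[n-|\y^-|]$ is obtained from $\y^+$ by adding $(n-m)$ boxes, no two in the same column (Equation (\ref{Eqn:WnInducedDecomp})). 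Restricting to those constituents with $|\mu| \geq |\y|$, I need to show that exactly one such $\mu$ survives once $n$ is large, namely $\mu = \y$ itself, with multiplicity one.

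The key inequality is the following: adding $(n-m)$ boxes to $\y^+$ in distinct columns to form $\mu^+[n-|\y^-|]$, the second row of $\mu^+[n-|\y^-|]$ can only grow past $\y^+_1$ if $\y^+$ already had $\y^+_1 > \y^+_2$; more to the point, the constraint $|\mu| \geq |\y|$ forces $|\mu^+| \geq |\y^+|$, which means \emph{no} boxes are added below the first row of $\y^+$ — all $(n-m)$ new boxes go into the first row. Hence $\mu^+ = \y^+[\,\cdot\,]$ with $\mu^+_i = \y^+_i$ for $i \geq 2$, i.e. $\mu = \y$, provided $\mu^+[n-|\y^-|]$ is a legitimate partition, which requires its first row $n - |\y^-| - |\mu^+_{\geq 2}| = n - |\y|$ to be at least its second row $\y^+_2$. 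Combined with the requirement $n - |\y^-| - m = n - |\y| - |\y^+| \geq 0$ implicit in the padding notation, the genuinely binding condition is $n - |\y| \geq \y^+_1$, i.e. $n \geq |\y^+| + |\y^-| + \y^+_1$. Below this range every constituent of $M_{BC}(\y)_n$ of weight $\geq |\y|$ is excluded, so $V(\y)_n = 0$; at or above it, the unique surviving constituent is $V_{\y[n]}$ with multiplicity one. This gives the displayed formula.

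For the degree of generation: since $V(\y) \subseteq M_{BC}(\y)$ and $M_{BC}(\y) = M_{BC}(V_\y)$ is generated in degree $|\y|$ (Example \ref{M(m)FinGen}), finite generation of $V(\y)$ is immediate from the Noetherian property (Theorem \ref{Noetherian}); the only content is the bound $|\y^+| + |\y^-| + \y^+_1$ on the generation degree. By the formula just established, $V(\y)_n = 0$ for $n < |\y^+| + |\y^-| + \y^+_1$, so there is nothing to generate below that degree, and $V(\y)_N \cong V_{\y[N]}$ for $N := |\y^+| + |\y^-| + \y^+_1$ is a single irreducible. It remains to check that this copy of $V_{\y[N]}$ in degree $N$, together with the (zero) lower-degree pieces, generates all of $V(\y)$; equivalently, that the span of $V(\y)_N$ inside $M_{BC}(\y)$ is all of $V(\y)$. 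Since $\Span_V(V(\y)_N)$ is a sub--$\FI_{BC}$--module of $V(\y)$ containing $V(\y)_N$, and $V(\y)_n$ is irreducible (or $0$) for every $n \geq N$, it is enough to show the map $(I_{N,n})_* : V(\y)_N \to V(\y)_n$ is nonzero for all $n > N$. This follows because $(I_{N,n})_*$ is $B_N$-equivariant and the multiplicity of $V(\y)_N = V_{\y[N]}$ in $\Res^{B_n}_{B_N} V(\y)_n$ is nonzero — which is exactly the branching statement (Equation (\ref{Eqn:WnInvariantsDecomp})), since $\y^+[N-|\y^-|]$ is obtained from $\y^+[n-|\y^-|]$ by removing $(n-N)$ boxes from the (long) first row, hence from distinct columns. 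Therefore $\Span_V(V(\y)_N) = V(\y)$ and $V(\y)$ is generated in degree $\leq N$.

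The main obstacle is purely bookkeeping: keeping the padding conventions of Section \ref{BackgroundRepStability} straight (which partition gets padded, and by how much) while running the two branching computations — one for the constituents of the induced module $M_{BC}(\y)_n$, one for the restriction $\Res^{B_n}_{B_N} V(\y)_n$ — and verifying that in both the only relevant moves happen in the first row of $\y^+$ once $n \geq |\y^+| + |\y^-| + \y^+_1$. There is no genuine difficulty beyond type A; the negative partition $\y^-$ is simply carried along untouched by Pieri's formula (Equation (\ref{Eqn:WnInducedDecomp})), so the type A argument of \cite[Proposition 2.56]{CEF} goes through essentially verbatim with $m = |\y|$ and the first-row bound $\y^+_1$ in place of $\y_1$.
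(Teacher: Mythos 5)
Your identification of which Pieri constituents of $M_{BC}(\y)_n$ survive the truncation $\t_{\geq|\y|}$ is inverted, and the key step is a non sequitur. In the stable notation, a constituent $V(\mu^+,\y^-)_n$ of $M_{BC}(\y)_n$ has $\mu^+$ equal to everything \emph{below the top row} of the padded partition $\mu^+[\,n-|\y^-|\,]$, so $|\mu^+| = (|\y^+|-\y^+_1) + b$, where $b$ is the number of the $n-|\y|$ added boxes that land below the top row. Hence $|\mu^+|\geq|\y^+|$ forces $b\geq \y^+_1$; and since an added box below the top row must sit in one of the $\y^+_1$ columns of $\y^+$ with at most one box per column, it forces $b=\y^+_1$ exactly, one new box at the bottom of every column of $\y^+$, with the remaining $n-|\y|-\y^+_1$ boxes going into the first row. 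This is the opposite of your claim that ``no boxes are added below the first row --- all $(n-m)$ new boxes go into the first row'': the configuration you describe yields the constituent $V\big((\y^+_2,\y^+_3,\ldots),\y^-\big)_n$ of weight $|\y|-\y^+_1$, which (unless $\y^+=\varnothing$) is precisely one of those killed by $\t_{\geq|\y|}$, and it does not give $\mu=\y$ as you assert. The corrected configuration is what simultaneously pins down the unique surviving constituent $V_{\y[n]}$ with multiplicity one and produces the threshold: one needs $n-|\y|-\y^+_1\geq 0$ boxes left over for the first row, equivalently the first part $n-|\y|$ of $\y^+[\,n-|\y^-|\,]$ must be at least its \emph{second} part, which is $\y^+_1$ (not $\y^+_2$, as in your validity check). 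So your displayed formula is correct, but the argument as written selects the wrong constituent; fixed up, it is the paper's argument.

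There is also a gap in the generation step. From the nonvanishing of the multiplicity of $V_{\y[N]}$ in $\Res^{B_n}_{B_N} V(\y)_n$ you conclude that the specific $B_N$--equivariant map $(I_{N,n})_*$ is nonzero, but nonzero branching multiplicity only says that \emph{some} nonzero equivariant map exists; the zero map is equally equivariant (compare $H_0(V)^{\FIW}$, where all $I_n$ act by zero on the very same sequence of representations). You need a statement about the actual structure maps of $V(\y)$. The paper supplies it by embedding $V(\y)\subseteq M_{BC}(\y)\subseteq M_{BC}(\ba)$ (via $V_{\y}\subseteq k[B_a]$, $a=|\y|$) and observing that the maps $M_{BC}(\ba)_n\to M_{BC}(\ba)_{n+1}$ are injective because they come from injections of the $\Hom$--set bases; hence the maps of $V(\y)$ are injective, in particular nonzero, and your reduction (irreducibility in each degree $\geq N$ plus nonvanishing of the maps implies generation in degree $N$) then goes through. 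Apart from these two points, your outline --- Pieri decomposition of $M_{BC}(\y)_n$, truncation by weight, generation from the first nonzero degree --- is the same route the paper takes.
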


\begin{proof}
Let $a=|\y|$. By definition, 
 \begin{align*}
M_{BC}(\y) &= \left\{ \begin{array}{ll}
         0 & \mbox{$ n < a $},\\
        \Ind_{B_a \times B_{n-a}}^{B_n} V_{\y} \boxtimes k & \mbox{$ n \geq a$}.\end{array} \right.  
 \end{align*}
and so the branching rule (Equation (\ref{Eqn:WnInducedDecomp})) implies that, for $n \geq |\y|$, $$M_{BC}(\y)_n = \bigoplus_{\mu^+ } V(\mu^+,\y^-)$$
summed over all partitions $\mu^+$ constructed by adding $(n-|\y^+|-|\y^-|)$ boxes in distinct columns of $\y^+$. An irreducible component $V(\mu^+,\y^-)$ can appear in $(\t_{\geq |\y|}M_{BC}(\y))_n$ only if $\y^+_1$ boxes are added to each column of $\y^+$ below the top row. This happens only once $n \geq |\y^+| + |\y^-| + \y^+_1$, and gives the single irreducible $V(\y^+, \y^-)_n$. 

Since $V(\y)$ consists of a single irreducible $B_n$--representation for all $n \geq |\y^+| + |\y^-| + \y^+_1$, to prove finite generation in degree $|\y^+| + |\y^-| + \y^+_1$ it suffices to show that the maps $V(\y)_n \to V(\y)_{n+1}$ are nonzero in this range. 

We can realize $V(\y)$ as a sub--$\FI_{BC}$--module of $M_{BC}(\ba)$: $$ V(\y) \subseteq M_{BC}(\y) \cong  \left\{ \Ind_{B_a \times B_{n-a}}^{B_n} V_{\y} \boxtimes k \right\}_n \subseteq  \left\{ \Ind_{B_a \times B_{n-a}}^{B_n} k[B_a] \boxtimes k \right\}_n \cong M_{BC}(\ba)  .$$ From the definition of $M_{BC}(\ba)$ in terms of $\FI_{BC}$ morphisms $\Hom_{\FI_{BC}}(\ba, -)$, we see that the maps $M_{BC}(\ba)_n \to M_{BC}(\ba)_{n+1}$ are injective, so the maps $V(\y)_n \to V(\y)_{n+1}$ are injective, and the conclusion follows. 
\end{proof}

By restricting the $\FI_{BC}$--module $V(\y^+, \y^-)$ to the subcategory $\FI_D$, we construct an $\FI_D$--module with the following properties.

\begin{cor}\label{FIDModuleV(y)} Given any ordered pair of partitions $\y = (\y^+,\y^-)$ (with $\y^+_1$ the largest part of $\y^+$), there is an $\FI_D$--module $V(\y)_n$ such that 
\begin{align*}
V(\y)_n = \left\{ \begin{array}{ll}
         V_{\{\y^+[n-|\y^-|],\; \y^-\}} & \mbox{if $n \geq |\y^+| + |\y^-| + \y^+_1$, and $\y^+\big[n-|\y^-|\big] \neq \y^-$ } \\
         V_{\{\y^-,\;+\}} \oplus V_{\{\y^-,\;-\}} & \mbox{if $n \geq |\y^+| + |\y^-| + \y^+_1$, and $\y^+\big[n-|\y^-|\big] = \y^-$} \\
         0 & \mbox{otherwise}.\end{array} \right. 
\end{align*}
\end{cor}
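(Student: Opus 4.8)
The plan is to obtain this corollary simply by applying the restriction functor $\Res^{BC}_D$ to the $\FI_{BC}$--module $V(\y^+,\y^-)$ constructed in Definition \ref{Defn:FIModuleV(y)} and analyzed in Proposition \ref{FIModuleV(y)}, and then reading off what happens to each graded piece using the known branching behavior from $B_n$ to $D_n$ described in Section \ref{RepTheoryDn}.

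First I would set $V(\y)_n := \Res^{B_n}_{D_n} \bigl(V(\y^+,\y^-)_n\bigr)$, noting that this is a functor on $\FI_D$ because restriction of an $\FI_{BC}$--module to the subcategory $\FI_D \subseteq \FI_{BC}$ is again a functor (as in the definition of restriction in Section \ref{Section:Restriction}). By Proposition \ref{FIModuleV(y)}, the underlying $B_n$--representation is $V_{\y[n]}$ when $n \geq |\y^+|+|\y^-|+\y^+_1$ and $0$ otherwise; here $\y[n] = (\y^+[n-|\y^-|],\y^-)$ is the padded double partition from Section \ref{BackgroundRepStability}. Second, I would invoke the classification of irreducible $D_n$--representations recalled in Section \ref{RepTheoryDn}: for $n$ in the nonzero range, $\Res^{B_n}_{D_n} V_{(\y^+[n-|\y^-|],\y^-)}$ is the single irreducible $V_{\{\y^+[n-|\y^-|],\,\y^-\}}$ when $\y^+[n-|\y^-|] \neq \y^-$, and is the sum of the two split irreducibles $V_{\{\y^-,+\}} \oplus V_{\{\y^-,-\}}$ when $\y^+[n-|\y^-|] = \y^-$ (which forces $n$ even and $|\y^-| = n/2$). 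Matching these two cases against the stated piecewise formula, together with the vanishing outside the range, gives exactly the claimed description, and finite generation of the $\FI_D$--module follows from finite generation of $V(\y)$ over $\FI_{BC}$ together with Proposition \ref{RestrictionPreservesFinGen}(\ref{WntoW'n}) (restriction $\Res^{BC}_D$ preserves finite generation), so the degree of generation is at most $|\y^+|+|\y^-|+\y^+_1$.

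The one genuine point requiring care — and the place I expect to spend most of the argument — is verifying that the resulting sequence really does carry an $\FI_D$--module structure in the way claimed, i.e.\ that restricting the $\FI_{BC}$--functor $V(\y^+,\y^-)$ to $\FI_D$ is legitimate and that the ``split'' degree (the unique $n$ with $\y^+[n-|\y^-|] = \y^-$) causes no inconsistency. This is not really an obstacle, since restriction of functors is automatic, but one should at least remark that $V(\y)_n$ is genuinely irreducible for all but at most this one value of $n$, matching the remark in Section \ref{BackgroundRepStability}; the split degree is handled uniformly because the $\FI_D$ maps are defined by restricting the $\FI_{BC}$ maps, which are already well defined. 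Thus the proof is essentially a one-line deduction:

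\begin{proof}
Define $V(\y)_n := \Res^{BC}_D V(\y^+,\y^-)_n$, the restriction of the $\FI_{BC}$--module of Definition \ref{Defn:FIModuleV(y)} to the subcategory $\FI_D \subseteq \FI_{BC}$. By Proposition \ref{FIModuleV(y)}, for $n \geq |\y^+|+|\y^-|+\y^+_1$ this is $\Res^{B_n}_{D_n} V_{\y[n]}$ with $\y[n] = (\y^+[n-|\y^-|],\y^-)$, and it is $0$ otherwise. The classification of irreducible $D_n$--representations in Section \ref{RepTheoryDn} shows that $\Res^{B_n}_{D_n} V_{(\y^+[n-|\y^-|],\,\y^-)}$ equals the irreducible $V_{\{\y^+[n-|\y^-|],\,\y^-\}}$ when $\y^+[n-|\y^-|] \neq \y^-$, and equals $V_{\{\y^-,+\}} \oplus V_{\{\y^-,-\}}$ when $\y^+[n-|\y^-|] = \y^-$; this is precisely the stated formula. (The second case occurs for at most one value of $n$.) Finally, since $V(\y^+,\y^-)$ is finitely generated over $\FI_{BC}$ in degree $|\y^+|+|\y^-|+\y^+_1$ by Proposition \ref{FIModuleV(y)}, Proposition \ref{RestrictionPreservesFinGen}(\ref{WntoW'n}) gives that $V(\y)$ is finitely generated as an $\FI_D$--module in the same degree.
\end{proof}
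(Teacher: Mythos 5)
Your proposal is correct and is exactly the paper's route: the corollary is obtained by restricting the $\FI_{BC}$--module $V(\y^+,\y^-)$ of Definition \ref{Defn:FIModuleV(y)} and Proposition \ref{FIModuleV(y)} to the subcategory $\FI_D$, and then reading off the two cases from the classification of irreducible $D_n$--representations in Section \ref{RepTheoryDn}. The added observation that finite generation over $\FI_D$ follows from Proposition \ref{RestrictionPreservesFinGen}(\ref{WntoW'n}) is a harmless (and correct) supplement beyond what the statement requires.
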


As examples, the $\FI_{BC}$--module $V\Big(\Y{1}\;,\; \Y{2,1}\Big)_n$ and its restriction to $\FI_D$ are shown in Figure \ref{V(y,y)Examples}.

\begin{figure}[h!]
\resizebox{ \textwidth}{!}{ \fbox{
\xymatrix @-2pc{
 & n=4 && n=5 && n=6 && n=7 && n=8 &  \\ 
 & &&  &&  &&  && &  \\ 
 \to & 0  &\to &   V_{\Big( \, \Y{1,1}\;,\;\Y{2,1}  \, \Big)} &\to &  V_{\Big( \, \Y{2,1}\;,\;\Y{2,1} \, \Big)}  &\to &   V_{\Big( \, \Y{3,1}\;,\;\Y{2,1} \, \Big)} &\to &   V_{\Big( \, \Y{4,1}\;,\;\Y{2,1} \, \Big)} &\to \\
 & &&  &&  &&  && &  \\  & &&  &&  &&  && &  \\ 
 \to & 0  &\to &   V_{ \left\{ \, \Y{1,1}\;,\;\Y{2,1} \, \right\} } &\to &  V_{ \, \left\{\Y{2,1}\;,\; +  \right\} } \bigoplus V_{\left\{ \, \Y{2,1}\;,\; - \right\} }  &\to &   V_{\left\{ \, \Y{3,1}\;,\;\Y{2,1}  \, \right\}} &\to &   V_{\left\{ \, \Y{4,1}\;,\;\Y{2,1} \, \right\}} &\to \\ 
  & &&  &&  &&  && &  \\  & &&  &&  &&  && &  \\ 
} } }
\caption{The $\FI_{BC}$ and $\FI_D$--modules $V((1), (2,1))_n$.}
\label{V(y,y)Examples}
\end{figure}

We remark that \cite[Proposition 2.56]{CEF}, Proposition \ref{FIModuleV(y)} and Proposition \ref{FIDModuleV(y)} provide a sort of converse to Theorem \ref{FinGenIffRepStable}: Any sequence of finite dimensional $\W_n$--representations over characteristic zero of the form $\bigoplus_{\y} c_{\y} V(\y)_n$ admits the structure of a finitely generated $\FIW$--module.

\section{Tensor products and $\FIW$--algebras} \label{SectionFIAlgebras}

In this section we define the tensor product of $\FIW$--modules, and show that it respects weight and degree of generation. As a consequence we derive Theorem \ref{MurnaghanWn}, the hyperoctahedral analogue of Murnaghan's theorem on the stability of Kronecker coefficients. We define graded $\FIW$--modules and $\FIW$--algebras, and study some finiteness properties (\emph{finite type} and \emph{slope}) of these objects.

\subsection{Tensor products of $\FIW$--modules} 

\begin{defn} {\bf (Tensor product of $\FIW$--modules).} Given $\FIW$--modules $V$ and $W$, the \emph{tensor product} $V \otimes W$ is the $\FIW$--module such that $(V \otimes W)_n = V_n \otimes W_n$ and the $\FIW$--morphisms act diagonally. 
\end{defn}

\begin{prop}{\bf (Tensor products respect finite generation).}\label{TensorsPreserveFinGen} If $V$ and $W$ are finitely generated $\FIW$--modules, then so is $V \otimes W$. If $V$ is generated in degree $\leq m$ and $W$ in degree $\leq m'$, then $V \otimes W$ is generated in degree $\leq m+m'$. If $k$ is a field of characteristic zero, then weight($V \otimes W$) $\leq$ weight($V$) + weight($W$). 
\end{prop}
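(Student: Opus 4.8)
The plan is to reduce the entire statement to the free modules $M_{\W}(\bm)$, where a direct combinatorial argument applies. The combinatorial input I would isolate first is the following joint factorization lemma: if $f\colon\bm\to\bn$ and $f'\colon\bm'\to\bn$ are $\FIW$--morphisms with $n>m+m'$, then there are an object $\bl$ with $\ell\le m+m'$, an $\FIW$--morphism $h\colon\bl\to\bn$, and $\FIW$--morphisms $g\colon\bm\to\bl$, $g'\colon\bm'\to\bl$ with $f=h\circ g$ and $f'=h\circ g'$. Indeed, the union $f(\bm)\cup f'(\bm')\subseteq\bn$ is closed under negation and has size at most $2(m+m')<2n$, so it is the image of an $\FIW$--morphism $h$ out of an object $\bl$ with $\ell\le m+m'$, through which $f$ and $f'$ both factor. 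In types A and BC this is immediate; in type D one must ensure that $h$ can be chosen so that $g$ and $g'$ are genuine $\FI_D$--morphisms and not merely sign--respecting injections, which is possible since $\ell<n$ (Remark \ref{HomFID=HomFIBC}), with the single exceptional configuration $f(\bm)=f'(\bm')=f(\bm)\cup f'(\bm')$ handled by factoring through ${\bf(\ell+1)}$ instead --- still of degree $\le m+m'$.

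From the lemma I would deduce that $M_{\W}(\bm)\otimes M_{\W}(\bm')$ is generated in degree $\le m+m'$: a basis element $e_f\otimes e_{f'}$ in degree $n>m+m'$ equals $h_*(e_g\otimes e_{g'})$, and $e_g\otimes e_{g'}$ lies in degree $\ell\le m+m'$. Now if $V$ is generated in degree $\le m$ and $W$ in degree $\le m'$, Proposition \ref{FinGen} gives surjections from finite direct sums $\bigoplus_i M_{\W}(\bm_i)$ and $\bigoplus_j M_{\W}(\bm'_j)$ with $m_i\le m$, $m'_j\le m'$; tensoring over $k$ produces a surjection $\bigoplus_{i,j}M_{\W}(\bm_i)\otimes M_{\W}(\bm'_j)\twoheadrightarrow V\otimes W$ whose source is generated in degree $\le m+m'$, which proves the degree bound. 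For finite generation, observe in addition that for each $n$ the $k$--modules $V_n$ and $W_n$ are finitely generated (each is a quotient of a finite sum of the finite--rank free $k$--modules $M_{\W}(\bm_i)_n$), hence $(V\otimes W)_n=V_n\otimes_k W_n$ is a finitely generated $k$--module; a finite generating set for $V\otimes W$ can therefore be drawn from $\coprod_{n\le m+m'}(V\otimes W)_n$. None of this uses the characteristic of $k$.

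For the weight bound, assume $k$ has characteristic zero and set $d=\mathrm{weight}(V)$, $d'=\mathrm{weight}(W)$. In type A or BC, $V_n$ is a sum of irreducibles $V(\y)_n$ with $|\y|\le d$ and $W_n$ a sum of $V(\mu)_n$ with $|\mu|\le d'$, so it is enough to bound the weight of each summand $V(\y)_n\otimes V(\mu)_n$. By Proposition \ref{WeightM(m)}, $V(\y)_n$ is a $\W_n$--subrepresentation of $M_{\W}(\ba)_n$ with $a=|\y|$, and $V(\mu)_n$ of $M_{\W}({\bf b})_n$ with $b=|\mu|$; hence $V(\y)_n\otimes V(\mu)_n$ is a subrepresentation of $\bigl(M_{\W}(\ba)\otimes M_{\W}({\bf b})\bigr)_n$. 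By the previous paragraph that $\FIW$--module is finitely generated in degree $\le a+b$, so by Theorem \ref{WnDiagramSizes} it has weight $\le a+b\le d+d'$; consequently every irreducible constituent of $V_n\otimes W_n$ has the form $V(\nu)_n$ with $|\nu|\le d+d'$. For $\FI_D$--modules, whose weight is defined as that of their induction to $\FI_{BC}$, I would invoke the natural surjection of $\FI_{BC}$--modules $(\Ind_D^{BC}V)\otimes(\Ind_D^{BC}W)\twoheadrightarrow\Ind_D^{BC}(V\otimes W)$, descended from the evident surjection of the corresponding $B_n$--representations; since weight does not increase under quotients, the already--proved type BC case gives $\mathrm{weight}(V\otimes W)=\mathrm{weight}\bigl(\Ind_D^{BC}(V\otimes W)\bigr)\le\mathrm{weight}(\Ind_D^{BC}V)+\mathrm{weight}(\Ind_D^{BC}W)=d+d'$.

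The main obstacle is the type D bookkeeping: verifying that the joint factorization can be carried out through genuine $\FI_D$--morphisms without exceeding degree $m+m'$, and checking that the comparison map $(\Ind_D^{BC}V)\otimes(\Ind_D^{BC}W)\to\Ind_D^{BC}(V\otimes W)$ is well defined at the level of $\FI_{BC}$--modules and not merely for each $B_n$ separately. The types A and BC arguments follow the pattern of \cite{CEF} nearly verbatim.
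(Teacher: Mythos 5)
Your treatment of the generation-degree bound and of finite generation is correct and is essentially the paper's argument: reduce via Proposition \ref{FinGen} to $M_{\W}(\bm)\otimes M_{\W}(\bm')$ and factor a pair $(f,f')$ jointly through low degree. The paper's factorization is a bit cleaner than your lemma: it factors $(f,f')$ through the object ${\bf (m+m')}$ itself, so that (apart from the trivial degenerate cases $m=0$, $m'=0$, or $n=m+m'$) all three morphisms involved go between objects of distinct degrees, and by Remark \ref{HomFID=HomFIBC} no sign-parity condition is ever imposed in type D; your exceptional configuration and the sign-flip adjustments simply never arise. Your weight argument in types A and B/C is exactly the paper's, via Proposition \ref{WeightM(m)} and Theorem \ref{WnDiagramSizes}.

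The genuine gap is the type D weight clause. You bound $\mathrm{weight}(V\otimes W)=\mathrm{weight}\big(\Ind_D^{BC}(V\otimes W)\big)$ using a claimed natural surjection of $\FI_{BC}$--modules $(\Ind_D^{BC}V)\otimes(\Ind_D^{BC}W)\twoheadrightarrow\Ind_D^{BC}(V\otimes W)$ ``descended from the evident surjection of the corresponding $B_n$--representations,'' but that descent fails. The standard surjection for the index-two subgroup $D_n\leq B_n$, namely $\Ind_{D_n}^{B_n}V_n\otimes\Ind_{D_n}^{B_n}W_n\to\Ind_{D_n}^{B_n}(V_n\otimes W_n)$, $(b\otimes v)\otimes(b'\otimes w)\mapsto b\otimes(v\otimes b^{-1}b'w)$ if $b^{-1}b'\in D_n$ and $0$ otherwise, does not kill the kernel of the map onto $(\Ind_D^{BC}V)_n\otimes(\Ind_D^{BC}W)_n$. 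Already for $V=W=M_D({\bf 0})$ one has $(\Ind_D^{BC}V)_n=M_{BC}({\bf 0})_n=k$ by Proposition \ref{IndMisM}, and the element $x=\big((-n\;n)\otimes 1-1\otimes 1\big)\otimes(1\otimes 1)$ maps to zero in $(\Ind_D^{BC}V)_n\otimes(\Ind_D^{BC}W)_n$; yet, since $(-n\;n)\notin D_n$ kills the first term, its image under the evident map followed by the canonical quotient $\Ind_{D_n}^{B_n}\big((V\otimes W)_n\big)\to\big(\Ind_D^{BC}(V\otimes W)\big)_n=k$ is $-1\neq 0$. Note also that the comparison map furnished by abstract nonsense points the other way: $\Res_D^{BC}$ is strictly monoidal, so its left adjoint $\Ind_D^{BC}$ is oplax monoidal, giving $\Ind_D^{BC}(V\otimes W)\to(\Ind_D^{BC}V)\otimes(\Ind_D^{BC}W)$; a surjection in your direction would require its own construction, which is precisely the step you flagged but did not supply. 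The paper uses no comparison of inductions here at all: it argues pointwise that every constituent of $V(\mu)_n\otimes V(\y)_n$ has size at most $|\mu|+|\y|$, because this product embeds in $M_{\W}(|\mu|)_n\otimes M_{\W}(|\y|)_n$, whose weight is $\leq|\mu|+|\y|$ by the generation bound and Theorem \ref{WnDiagramSizes}. If you want to keep an induction-based route in type D, a plausible repair is to use the canonical map $\Ind_D^{BC}(V\otimes W)\to(\Ind_D^{BC}V)\otimes(\Ind_D^{BC}W)$ and prove it injective in the degrees above $\mathrm{weight}(V)+\mathrm{weight}(W)$ (in characteristic zero submodule weights do not exceed ambient weights), for instance using Propositions \ref{VintoResIndV} and \ref{VisResIndV}; but that argument still needs to be written.
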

 
\begin{proof}[Proof of Proposition \ref{TensorsPreserveFinGen}] 

To prove finite generation, we follow the arguments of \cite[Proposition 2.61]{CEF}. By Proposition \ref{FinGen}, the $\FIW$--modules $V$ and $W$ are quotients of $\FIW$--modules of the form $\oplus_{a=0}^{m}M_{\W}(\ba)^{b_a}$ and $\oplus_{a=0}^{m'}M_{\W}(\ba)^{b_a}$, respectively. It is therefore enough to show that the $\FIW$--module $X : = M_{\W}(\bm) \otimes M_{\W}(\bm')$ is finitely generated in degree $\leq (m+m')$. 

The $\W_n$--representation $X_n$ is, by definition, $ \Span_k \{ (f,f') \in \Hom_{\FIW}(\bm, \bn) \times  \Hom_{\FIW}(\bm', \bn) \}.$
When $n \geq m+m'$, for given $(f,f') \in  X_n$ there exists some $(g,g') \in X_{m+m'}$ and some $h \in \Hom_{\FIW}({\bf m+m'}, \bn)$ so that $h_*(g,g') := (h\circ g, h \circ g') = (f,f').$ We conclude that $X$ is finitely generated in degree $\leq (m+m').$ 

To prove subadditivity of weights, it suffices to show that, in the notation of Section \ref{BackgroundRepStability}, any $\W_n$--representation $V(\nu)_n$ occurring in the product $V(\mu)_n \otimes V(\y)_n$ must satisfy $|\nu| \leq \big(|\mu| + |\y|\big)$. 

Fix $n$. By Proposition \ref{WeightM(m)}, $V(\mu)_n$ and $V(\y)_n$ occur in $M_{\W}\big(|\mu|\big)_n$ and  $M_{\W}\big(|\y|\big)_n$, respectively, and so $V(\mu)_n \otimes V(\y)_n$ is a $\W_n$--subrepresentation of  $M_{\W}\big(|\mu|\big)_n \otimes M_{\W}\big(|\y|\big)_n$. Since $M_{\W}\big(|\mu|\big) \otimes M_{\W}\big(|\y|\big)$ is generated in degree $\leq \big(|\mu| + |\y|\big)$, by Theorem \ref{WnDiagramSizes} it has weight $\leq \big(|\mu| + |\y|\big)$.
\end{proof}

\subsection{An analogue of Murnaghan's stability theorem in types B/C and D} \label{SectionMurnaghan}

It is a classical result of Murnaghan \cite{MurnaghanKronecker} that the \emph{Kronecker coefficients} of the symmetric group, the structure constants $K^{\nu}_{\y, \mu}$ in the decomposition of a tensor product $$V(\y)_n \otimes V(\mu)_n = \bigoplus_{\nu}  K^{\nu}_{\y, \mu} V(\nu)_n,$$ are eventually constant in $n$. The analogous statement is true for the Weyl groups in type B/C and D, and we present a proof here using our results on representation stability for finitely generated $\FIW$--modules -- just as Church--Ellenberg--Farb do in type A \cite[Theorem 2.65]{CEF}. 

The theory of $\FIW$--modules offers a natural conceptual setting for these stability results; in this context, Murnaghan's stability theorem and its variants are immediate consequences of the fact that tensor products of finitely generated $\FIW$--modules are themselves finitely generated.

%CHECKCHECK Can find stable range for these coefficients? Hpw does it compare to the literature?
\begin{thm}\label{MurnaghanWn}{\bf (Murnaghan's stability theorem for $B_n$).} For any pair of double partitions $\y= (\y^+, \y^-)$ and $\mu = (\mu^+, \mu^-)$, there exist nonnegative integers $g^{\nu}_{\y, \mu}$, independent of $n$, such that for all $n$ sufficiently large:
\begin{equation}\label{Eqn:MurnaghanBn} V(\y)_n \otimes V(\mu)_n = \bigoplus_{\nu}  g^{\nu}_{\y, \mu} V(\nu)_n. \end{equation} 
The coefficients $g^{\nu}_{\y, \mu}$ are nonzero for only finitely many double partitions $\nu$.
\end{thm}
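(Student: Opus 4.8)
The plan is to deduce this from the $\FIW$--module machinery already developed, exactly as Church--Ellenberg--Farb do in type A. First I would recall from Definition \ref{Defn:FIModuleV(y)} and Proposition \ref{FIModuleV(y)} that for each double partition $\y = (\y^+, \y^-)$ there is a finitely generated $\FI_{BC}$--module $V(\y) = \t_{\geq |\y|}M_{BC}(\y)$ whose value in degree $n$ is the irreducible $B_n$--representation $V(\y)_n$ once $n$ is large (and $0$ otherwise), with generation degree $|\y^+|+|\y^-|+\y^+_1$. Likewise $V(\mu)$ is a finitely generated $\FI_{BC}$--module. Then I would form the tensor product $W := V(\y) \otimes V(\mu)$, which by Proposition \ref{TensorsPreserveFinGen} is again a finitely generated $\FI_{BC}$--module, with degree of generation bounded by the sum of the two generation degrees and weight $\leq |\y| + |\mu|$.

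Next I would apply the equivalence between finite generation and uniform representation stability: by Theorem \ref{FinGenImpliesRepStability} (with the stability degree bound from Lemma \ref{FinGenImpliesStabilityDeg}), the sequence $\{W_n\} = \{V(\y)_n \otimes V(\mu)_n\}$ is uniformly representation stable. Unpacking the definition of uniform representation stability (Definition \ref{DefnRepStability}, property III), this means that when we decompose $W_n = \bigoplus_\nu c_{\nu,n} V(\nu)_n$ into irreducible $B_n$--representations, each multiplicity $c_{\nu,n}$ is eventually independent of $n$; call the stable value $g^{\nu}_{\y,\mu}$. These are manifestly nonnegative integers and independent of $n$, and for $n$ in the stable range the decomposition $V(\y)_n \otimes V(\mu)_n = \bigoplus_\nu g^{\nu}_{\y,\mu} V(\nu)_n$ holds, which is Equation (\ref{Eqn:MurnaghanBn}).

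It remains to check that only finitely many $\nu$ have $g^{\nu}_{\y,\mu} \neq 0$. Here I would invoke Theorem \ref{WnDiagramSizes} (degree of generation bounds weight): since $W$ is finitely generated in some degree $\leq m$, it has weight $\leq m$, so every irreducible constituent $V(\nu)_n = V(\nu^+,\nu^-)_n$ of $W_n$ satisfies $|\nu^+| + |\nu^-| \leq m$. Combined with Lemma \ref{StabilityDegreeBoundsLambda1} (the largest part $\nu^+_1$ of $\nu^+$ is bounded by the stability degree of $W$), the double partitions $\nu$ that can occur are confined to a finite set, so only finitely many $g^{\nu}_{\y,\mu}$ are nonzero.

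The argument is essentially a bookkeeping exercise once the $\FIW$--module infrastructure is in place; the one point requiring genuine care — and what I expect to be the main obstacle — is making sure the stable ranges line up. Specifically, one must verify that the sequence $\{V(\y)_n \otimes V(\mu)_n\}$ agrees with $\{W_n\}$ already in the range where $W$ is representation stable, i.e. that the degrees below which $V(\y)_n$ or $V(\mu)_n$ vanishes or fails to equal the intended irreducible do not interfere with the stabilization threshold $\max(g,r) + d$ coming from Theorem \ref{FinGenImpliesRepStability}. This amounts to tracking the three numerical bounds (generation degree, relation degree, weight) for $V(\y)$, $V(\mu)$, and their tensor product, and taking $n$ large enough to dominate all of them; it is routine but is where an explicit stable range, if desired, would have to be computed.
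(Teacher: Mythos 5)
Your proposal is correct and follows essentially the same route as the paper: realize $V(\y)$ and $V(\mu)$ as the finitely generated $\FI_{BC}$--modules of Proposition \ref{FIModuleV(y)}, apply Proposition \ref{TensorsPreserveFinGen} to their tensor product, and conclude via Theorem \ref{FinGenImpliesRepStability}. Your extra bookkeeping on weight and stable ranges (Theorem \ref{WnDiagramSizes}, Lemma \ref{StabilityDegreeBoundsLambda1}) is a harmless elaboration of what the paper leaves implicit.
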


\begin{proof}[Proof of Theorem \ref{MurnaghanWn}]
 The $\FI_{BC}$--modules $V(\y)$ and $V(\mu)$ are finitely generated by Proposition \ref{FIModuleV(y)}, and so by Proposition \ref{TensorsPreserveFinGen} their product $V(\y)\otimes V(\mu)$ is finitely generated, and therefore is uniformly representation stable by Theorem \ref{FinGenImpliesRepStability}. 
\end{proof}

We remark that Steven Sam suggested to us an alternate proof of Theorem \ref{MurnaghanWn}, using properties of character polynomials. 

By restricting both sides of Equation (\ref{Eqn:MurnaghanBn}) to action of $D_n$, we conclude:

\begin{cor}\label{MurnaghanDn}{\bf (Murnaghan's stability theorem for $D_n$).} With double partitions $\y= (\y^+, \y^-)$ and $\mu = (\mu^+, \mu^-)$ as above, for all $n$ sufficiently large the tensor product of the $D_n$--representations $V(\y)_n \otimes V(\mu)_n$ has a stable decomposition:
$$ V(\y)_n \otimes V(\mu)_n = \bigoplus_{\nu}  g^{\nu}_{\y, \mu} V(\nu)_n $$ 
where $g^{\nu}_{\y, \mu}$ are the structure constants of Equation (\ref{Eqn:MurnaghanBn}).
\end{cor}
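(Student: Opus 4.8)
The plan is to derive Corollary \ref{MurnaghanDn} directly from Theorem \ref{MurnaghanWn} by restricting the identity in Equation (\ref{Eqn:MurnaghanBn}) along the inclusion $D_n \hookrightarrow B_n$. Recall that for a double partition $\y = (\y^+, \y^-)$ the $D_n$--representation denoted $V(\y)_n$ was \emph{defined} in Section \ref{BackgroundRepStability} to be $\Res^{B_n}_{D_n} V(\y)_n$ of the corresponding $B_n$--representation. So the left-hand side of the desired $D_n$--identity is literally $\Res^{B_n}_{D_n}\big(V(\y)_n \otimes V(\mu)_n\big)$, since restriction commutes with tensor product (the $D_n$--action on a tensor product of $B_n$--representations is the diagonal action inherited from $B_n$). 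Applying $\Res^{B_n}_{D_n}$ to both sides of Equation (\ref{Eqn:MurnaghanBn}), which holds for all $n$ sufficiently large, immediately yields
$$ V(\y)_n \otimes V(\mu)_n = \Res^{B_n}_{D_n}\!\!\bigoplus_{\nu} g^{\nu}_{\y,\mu} V(\nu)_n = \bigoplus_{\nu} g^{\nu}_{\y,\mu}\, \Res^{B_n}_{D_n} V(\nu)_n = \bigoplus_{\nu} g^{\nu}_{\y,\mu}\, V(\nu)_n, $$
as $D_n$--representations, where the last equality is again just the definition of $V(\nu)_n$ as a $D_n$--representation. Since restriction is an additive (exact) functor, the multiplicities $g^{\nu}_{\y,\mu}$ are unchanged, and they are nonzero for only finitely many $\nu$ because this was true in type B/C.

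The only subtlety worth a sentence is that the symbols $V(\nu)_n$ for $D_n$, unlike for $B_n$, need not be irreducible or even pairwise non-isomorphic: when $\nu^+[n-|\nu^-|] = \nu^-$ the representation $V(\nu)_n$ splits as $V_{\{\nu^-,+\}} \oplus V_{\{\nu^-,-\}}$, and distinct double partitions $\nu,\nu'$ with $\{\nu^+[n-|\nu^-|],\nu^-\} = \{\nu'^+[n-|\nu'^-|],\nu'^-\}$ as unordered pairs give the same $D_n$--representation. Thus the statement is \emph{not} a clean irreducible decomposition of the tensor product for $D_n$; it is an equality of virtual (indeed genuine) $D_n$--representations with the \emph{same structure constants} as in type B/C. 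This is exactly what the corollary claims, so no further work is needed — the point is simply to observe that restriction is exact and compatible with the defining notation for $V(\y)_n$ in type D.

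There is essentially no obstacle here: the corollary is a formal consequence of Theorem \ref{MurnaghanWn} and the definitions. The one thing to be careful about in writing it up is to phrase the conclusion so the reader understands that the $V(\nu)_n$ on the right are the $D_n$--representations in the sense of Section \ref{BackgroundRepStability} (possibly reducible), and that the coefficients $g^{\nu}_{\y,\mu}$ are literally the type B/C Kronecker-type coefficients of Equation (\ref{Eqn:MurnaghanBn}), not some new collection obtained by further decomposing into genuine irreducible $D_n$--representations. If one wanted a decomposition into irreducible $D_n$--representations one could post-compose with the branching rules from $B_n$ to $D_n$ described in Section \ref{RepTheoryDn}, but that is not what the corollary asserts and would needlessly complicate the statement, so I would not do it.
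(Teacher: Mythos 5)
Your proposal is correct and is essentially the paper's own argument: the paper derives Corollary \ref{MurnaghanDn} precisely by restricting both sides of Equation (\ref{Eqn:MurnaghanBn}) to $D_n$ and invoking the definition of $V(\nu)_n$ in type D as $\Res^{B_n}_{D_n}$ of the corresponding $B_n$--representation. Your additional remarks on the possible reducibility of the $V(\nu)_n$ for $D_n$ are accurate but not needed for the statement.
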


 The observation that Murnaghan's theorem follows from the theory of finitely generated $\FI_A$--modules is given by \cite[Theorem 2.65]{CEF}. Theorem \ref{MurnaghanWn} is a natural counterpart to Murnaghan's theorem, however, we have consulted with a number of experts and have not been able to find the result in the literature.

\subsection{Graded $\FIW$--modules and graded $\FIW$--algebras}

In analogy to Church--Ellenberg--Farb \cite[Section 2.10]{CEF}, we define graded $\FIW$--modules, $\FIW$--algebras, $\FIW$--ideals, and the dual notions for each. We define the finiteness criteria finite type and slope. 

\begin{defn}{\bf (Graded $\FIW$--modules; Finite type; Slope).} A \emph{graded $\FIW$--module} $V=\oplus_i V^i$ is a functor from $\FIW$ to the category of graded $k$--modules. Each graded piece $V^i$ is an $\FIW$--module; we say $V$ has \emph{finite type} if $V^i$ is a finitely generated for all $i$. 

Suppose $k$ is a field of characteristic zero, and let $V$ be a graded $\FIW$--module supported in nonnegative degrees. We say that the \emph{slope} of $V$ is $\leq m$ if $V^i$ has weight $\leq m\cdot i$ for all $i$.  \end{defn}

\begin{example} The polynomial algebras $V_n = k[x_1, \ldots, x_n]$ from Example \ref{Example:PolyAlg} form a graded $\FIW$--module of finite type, graded by total degree. The graded piece $V_n^d := k[x_1, \ldots, x_n]_{(d)}$ is finitely generated in degree $\leq d$. When $k$ is a field of characteristic zero $V$ has slope $\leq 1$ by Theorem \ref{WnDiagramSizes}.  
\end{example}

\noindent The tensor product of graded $\FIW$--modules $U=\oplus_i U^i$ and $W=\oplus_j W^j$ is the graded $\FIW$--module $$U\otimes W = \bigoplus_{\ell} (U \otimes W)^{\ell} := \bigoplus_{\ell} \bigg( \bigoplus_{i+j=\ell} (U^i \otimes W^j) \bigg). $$

By applying Proposition \ref{TensorsPreserveFinGen} to each summand $(U^i \otimes W^j)$, we conclude that the induced grading on the tensor product of graded $\FIW$--modules respects weight and finite generation properties, in the following sense.

\begin{prop} \label{TensorsPreserveFinType} {\bf (Tensor product preserves finite type and slope).} Let $U$ and $W$ be graded $\FIW$--modules of finite type, supported in nonnegative grades, with $U^0 \cong W^0 \cong M_{\W}({\bf 0})$. Then the tensor product $U \otimes W$ is a graded $\FIW$--module of finite type. When $k$ is a characteristic zero field,  $U\otimes W$ will have slope $\leq m$ whenever $U$ and $V$ have slopes $\leq m$. 
\end{prop}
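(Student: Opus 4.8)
The plan is to deduce Proposition \ref{TensorsPreserveFinType} directly from Proposition \ref{TensorsPreserveFinGen} by unwinding the definition of the induced grading on $U \otimes W$. First I would fix an integer $\ell \geq 0$ and examine the graded piece $(U \otimes W)^{\ell} = \bigoplus_{i+j = \ell} (U^i \otimes W^j)$. Because $U$ and $W$ are supported in nonnegative grades, this is a \emph{finite} direct sum: the indices $(i,j)$ range over the finitely many pairs of nonnegative integers with $i + j = \ell$. For each such pair, $U^i$ and $W^j$ are finitely generated $\FIW$--modules by the hypothesis that $U$ and $W$ have finite type, so Proposition \ref{TensorsPreserveFinGen} gives that $U^i \otimes W^j$ is a finitely generated $\FIW$--module. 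A finite direct sum of finitely generated $\FIW$--modules is finitely generated (this is the direct-sum statement for finite generation, which follows immediately from Proposition \ref{FinGen}, or from the short exact sequence proposition on generation degree). Hence $(U \otimes W)^{\ell}$ is finitely generated for every $\ell$, which is exactly the assertion that $U \otimes W$ has finite type.

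For the slope statement, I would again fix $\ell$ and work over a characteristic zero field. Suppose $U$ and $W$ both have slope $\leq m$, so $\mathrm{weight}(U^i) \leq m i$ and $\mathrm{weight}(W^j) \leq m j$ for all $i,j$. For each pair $(i,j)$ with $i + j = \ell$, Proposition \ref{TensorsPreserveFinGen} gives
$$\mathrm{weight}(U^i \otimes W^j) \leq \mathrm{weight}(U^i) + \mathrm{weight}(W^j) \leq m i + m j = m \ell.$$
Since the weight of a finite direct sum is the maximum of the weights of the summands (an irreducible constituent of $\bigoplus_{i+j=\ell} U^i \otimes W^j$ is an irreducible constituent of one of the summands), we get $\mathrm{weight}\big((U \otimes W)^{\ell}\big) \leq m \ell$. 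As $\ell$ was arbitrary, $U \otimes W$ has slope $\leq m$.

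I do not expect a genuine obstacle here; the proposition is essentially a bookkeeping corollary of Proposition \ref{TensorsPreserveFinGen}. The one point that needs a word of care is the role of the hypotheses $U^0 \cong W^0 \cong M_{\W}(\mathbf{0})$ and support in nonnegative grades: these guarantee that the sums $\bigoplus_{i+j=\ell}$ are finite (support in nonnegative grades does this on its own) and that $U \otimes W$ is again supported in nonnegative grades with $(U\otimes W)^0 \cong M_{\W}(\mathbf{0}) \otimes M_{\W}(\mathbf{0}) \cong M_{\W}(\mathbf{0})$, so that "slope" is even defined for $U \otimes W$ in the sense of the definition (graded module supported in nonnegative degrees). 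So the only thing to watch is to state explicitly that the relevant direct sums are finite before invoking the direct-sum stability of finite generation and of weight; everything else is immediate from the already-established Proposition \ref{TensorsPreserveFinGen}.
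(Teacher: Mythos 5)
Your argument is correct and matches the paper's own reasoning, which likewise deduces the proposition by applying Proposition \ref{TensorsPreserveFinGen} to each summand $U^i \otimes W^j$ of the (finite) sum defining $(U\otimes W)^{\ell}$. Your extra remarks about finiteness of the sums and the role of the nonnegative-grading hypothesis are just a more explicit version of the same bookkeeping.
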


\noindent Church--Ellenberg--Farb prove this result in type A \cite[Proposition 2.70]{CEF}.

% \begin{defn}{\bf (The graded $\FIW$--module $V^{\otimes \bullet}$.)} \end{defn}

\begin{defn}{\bf($\FIW$--algebras).} A \emph{(graded) $\FIW$--algebra} $A = \bigoplus A^i$ is a functor from $\FIW$ to the category of (graded) $k$--algebras. A sub--$\FIW$--module $V$ \emph{generates} $A$ as an $\FIW$--algebra if $V_n$ generates $A_n$ as a $k$--algebra for all $n$.  \end{defn}

\begin{defn} {\bf(Free associative $\FIW$--algebras).} Given a graded $\FIW$--module $V$, we define the \emph{free associative algebra on $V$} as the graded $\FIW$--algebra  $$ k \langle V \rangle := \bigoplus_{j=0}^{\infty} V^{\otimes j}.$$ Any $\FIW$--algebra $A$ generated by $V$ admits a surjection of $\FIW$--algebras $k\langle V \rangle \twoheadrightarrow A.$
\end{defn}

Proposition \ref{TensorsPreserveFinType} implies that $k \langle - \rangle$ respects the weight and finite generation properties of the gradings of a graded $\FIW$--module $V$, and consequently so does any $\FIW$--algebra that $V$ generates. Propositions \ref{k<>PreservesFinType} and \ref{ModulesGenerateFiniteType} are proven in type A by Church--Ellenberg--Farb \cite[Proposition 2.73 and Theorem 2.74]{CEF}.

\begin{prop}{\bf (The functor $k \langle - \rangle$ preserves finite type and slope).} \label{k<>PreservesFinType} Let $V$ be a graded $\FIW$--module supported in nonnegative grades, with $V^0 \cong M_{\W}({\bf 0})$. If $V$ has finite type, then $k \langle V \rangle$ has finite type. If $V$ is a graded $\FIW$--module over characteristic zero with slope $\leq m$, then $k \langle V \rangle$ has slope $\leq m$.
\end{prop}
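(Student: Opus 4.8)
The plan is to reduce the statement about $k\langle V\rangle$ directly to Proposition \ref{TensorsPreserveFinType}, exactly as Church--Ellenberg--Farb do in type A. Recall $k\langle V\rangle = \bigoplus_{j=0}^{\infty} V^{\otimes j}$, so as a graded $\FIW$--module it is the direct sum over $j$ of the $j$-fold tensor powers, with the grading on $V^{\otimes j}$ inherited from the grading on $V$. The key observation is that the hypothesis $V^0 \cong M_{\W}({\bf 0})$ means $V^0$ is the unit object for $\otimes$, so in each fixed total grade $i$ only finitely many tensor factors $V^{\otimes j}$ contribute nontrivially: a summand of $(V^{\otimes j})^i$ is a tensor of graded pieces $V^{i_1}\otimes\cdots\otimes V^{i_j}$ with $i_1+\cdots+i_j = i$, and since $V$ is supported in nonnegative grades with $V^0$ the unit, at most $i$ of the factors can have positive grade. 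Hence $(k\langle V\rangle)^i$ is a \emph{finite} direct sum of tensor products of finitely many graded pieces $V^{i_\ell}$ with $i_\ell \geq 1$ and $\sum i_\ell = i$ (after cancelling the unit factors).

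First I would set up this bookkeeping precisely: write $(k\langle V \rangle)^i = \bigoplus_{j \geq 0} (V^{\otimes j})^i$ and identify $(V^{\otimes j})^i = \bigoplus_{i_1 + \cdots + i_j = i} V^{i_1} \otimes \cdots \otimes V^{i_j}$, then note that, using $V^0 \cong M_{\W}({\bf 0})$ as a $\otimes$-unit, each such summand with $i_\ell = 0$ for some $\ell$ is isomorphic to a summand appearing for smaller $j$; after discarding these redundancies, $(k\langle V\rangle)^i$ is a finite direct sum indexed by compositions of $i$ into positive parts. Then I would apply Proposition \ref{TensorsPreserveFinType} inductively: each $V^{i_\ell}$ is finitely generated (finite type hypothesis on $V$), so by repeated application of the finite-type half of Proposition \ref{TensorsPreserveFinType} (or directly Proposition \ref{TensorsPreserveFinGen}) each tensor product $V^{i_1}\otimes\cdots\otimes V^{i_j}$ is finitely generated; a finite direct sum of finitely generated $\FIW$--modules is finitely generated, so $(k\langle V\rangle)^i$ is finitely generated, giving finite type.

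For the slope bound, I would argue similarly over a characteristic zero field: if $V$ has slope $\leq m$, then $\text{weight}(V^{i_\ell}) \leq m\cdot i_\ell$, and by the weight-subadditivity clause of Proposition \ref{TensorsPreserveFinGen}, $\text{weight}(V^{i_1}\otimes\cdots\otimes V^{i_j}) \leq m(i_1 + \cdots + i_j) = m\cdot i$. Since weight of a finite direct sum is the maximum of the weights, $\text{weight}((k\langle V\rangle)^i) \leq m\cdot i$, which is exactly the assertion that $k\langle V\rangle$ has slope $\leq m$. The only mildly delicate point — and the one I'd be most careful about — is the reduction using $V^0 \cong M_{\W}({\bf 0})$ to ensure that each fixed grade $i$ receives contributions from only finitely many $j$; this is what makes "finite type" (finiteness grade-by-grade, not globally) the right statement and prevents the infinite direct sum over $j$ from causing trouble. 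Everything else is a routine unwinding of definitions plus invocation of Proposition \ref{TensorsPreserveFinType}, so I expect no serious obstacle beyond stating that reduction cleanly.
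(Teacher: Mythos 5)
Your argument is correct and is essentially the paper's own proof: the paper likewise deduces the statement by applying Proposition \ref{TensorsPreserveFinType} (built on Proposition \ref{TensorsPreserveFinGen}) to each summand $V^{\otimes j}$ of $k\langle V\rangle$, with the support assumption guaranteeing that each graded piece involves only finitely many tensor powers. Your explicit bookkeeping with $V^0 \cong M_{\W}({\bf 0})$ acting as the tensor unit simply spells out, in more detail than the paper does, the same grade-by-grade finiteness reduction and the additivity of the weight bound.
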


We can deduce Proposition \ref{k<>PreservesFinType} by applying Proposition \ref{TensorsPreserveFinType} to each summand $V^{\otimes j}$ of the free associative algebra $k \langle V \rangle$. The assumption on the support of $V$ implies that any graded piece of $k \langle V \rangle$ only involves finitely many summands $V^{\otimes j}$.

 If $A$ is an $\FIW$--algebra generated by an $\FIW$--module $V$, we can deduce Propositions \ref{ModulesGenerateFiniteType} and \ref{SlopeFinGenFromGenerators}  from the surjection of graded $\FIW$--algebras $k\langle V \rangle \twoheadrightarrow A$.

\begin{prop}{\bf (Finite type $\FIW$--modules generate $\FIW$--algebras of finite type and slope).} \label{ModulesGenerateFiniteType} Suppose that $A$ is an $\FIW$--algebra generated by a graded $\FIW$--module $V$ of finite type, supported in nonnegative grades. Then if $V$ has finite type, so does $A$. For $k$ a field of characteristic zero, if $V$ has slope $\leq m$ then $A$ has slope $\leq m$. 
\end{prop}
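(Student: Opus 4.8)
\textbf{Proof proposal for Proposition \ref{ModulesGenerateFiniteType}.}

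The plan is to reduce the statement about the general $\FIW$--algebra $A$ to the corresponding statement about the free associative $\FIW$--algebra $k\langle V\rangle$, which has already been handled in Proposition \ref{k<>PreservesFinType}. The bridge between the two is the canonical surjection of graded $\FIW$--algebras
$$ k\langle V\rangle = \bigoplus_{j=0}^{\infty} V^{\otimes j} \twoheadrightarrow A, $$
which exists because $V$ generates $A$ as an $\FIW$--algebra (and hence $V_n$ generates $A_n$ as a $k$--algebra for every $n$, giving compatible surjections $k\langle V_n\rangle \twoheadrightarrow A_n$ that assemble into a map of $\FIW$--algebras). This map is homogeneous, so for each fixed degree $i$ it restricts to a surjection of $\FIW$--modules $(k\langle V\rangle)^i \twoheadrightarrow A^i$.

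First I would invoke Proposition \ref{k<>PreservesFinType}: since $V$ has finite type and is supported in nonnegative grades with $V^0 \cong M_{\W}(\mathbf 0)$, the free associative algebra $k\langle V\rangle$ has finite type, i.e. each graded piece $(k\langle V\rangle)^i$ is a finitely generated $\FIW$--module. Quotients of finitely generated $\FIW$--modules are finitely generated (this is immediate from Proposition \ref{FinGen}, or from the short exact sequence discussion preceding it), so the surjection $(k\langle V\rangle)^i \twoheadrightarrow A^i$ forces $A^i$ to be finitely generated for every $i$; hence $A$ has finite type. Next, for the slope statement with $k$ of characteristic zero: again by Proposition \ref{k<>PreservesFinType}, if $V$ has slope $\leq m$ then $k\langle V\rangle$ has slope $\leq m$, meaning $\text{weight}\big((k\langle V\rangle)^i\big) \leq m\cdot i$ for all $i$. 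Over characteristic zero the weight of a quotient $\FIW$--module is at most the weight of the original (noted just after Definition \ref{Defn:Weight}; in type D one passes through $\Ind_D^{BC}$, which is right exact by Corollary \ref{IndGenRelDegrees}'s proof and preserves the relevant weight bound). Applying this to $(k\langle V\rangle)^i \twoheadrightarrow A^i$ gives $\text{weight}(A^i) \leq m\cdot i$, so $A$ has slope $\leq m$.

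The only genuinely nontrivial ingredient is ensuring that the free-algebra surjection $k\langle V\rangle \twoheadrightarrow A$ really is a map of \emph{graded $\FIW$--algebras} — that the algebra generation hypothesis, which is phrased levelwise, is compatible with the $\FIW$--morphisms — but this is a formal check: the $\FIW$--morphisms act by algebra homomorphisms on both sides and the generating submodule $V$ is an $\FIW$--submodule of $A$, so the induced map from the free algebra commutes with the $\FIW$--structure by construction. Everything else is a routine application of the already-established facts that (i) $k\langle -\rangle$ preserves finite type and slope, (ii) finite generation passes to quotients, and (iii) weight is non-increasing under quotients in characteristic zero. Thus I do not anticipate any real obstacle; the proposition is essentially a corollary of Proposition \ref{k<>PreservesFinType}.
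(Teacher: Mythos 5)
Your argument is exactly the paper's: the proposition is deduced from the surjection of graded $\FIW$--algebras $k\langle V\rangle \twoheadrightarrow A$ together with Proposition \ref{k<>PreservesFinType}, using that finite generation passes to quotients and that weight is non-increasing under quotients in characteristic zero. The proposal is correct and matches the intended proof.
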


\begin{prop} {\bf } \label{SlopeFinGenFromGenerators}  Let $A$ be an $\FIW$--algebra generated by a graded $\FIW$--module $V$ concentrated in grade $d$. If $V$ is finitely generated in degree $\leq m$, then the $i^{th}$ graded piece $A^i$ is finitely generated in degree $ \leq \left( \frac{i}{d}\right)m$, and moreover if $k$ is a characteristic zero field then weight$ (A^i) \leq \left( \frac{i}{d}\right)$weight$(V)$. 
\end{prop}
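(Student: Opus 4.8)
The plan is to deduce Proposition \ref{SlopeFinGenFromGenerators} from Proposition \ref{ModulesGenerateFiniteType} together with the explicit bookkeeping provided by the free associative algebra. First I would observe that since $V$ is concentrated in grade $d$, the free associative algebra $k\langle V\rangle = \bigoplus_{j\geq 0} V^{\otimes j}$ has its $j$-th tensor power sitting entirely in grade $jd$; hence the graded piece $k\langle V\rangle^{i}$ is nonzero only when $d\mid i$, in which case $k\langle V\rangle^{i} = V^{\otimes (i/d)}$. The surjection of graded $\FIW$--algebras $k\langle V\rangle \twoheadrightarrow A$ restricts in each grade to a surjection of $\FIW$--modules $k\langle V\rangle^{i} \twoheadrightarrow A^{i}$, so it suffices to bound the degree of generation (and, over characteristic zero, the weight) of $V^{\otimes (i/d)}$.

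The key step is then an induction on $j = i/d$ using Proposition \ref{TensorsPreserveFinGen}: if $V$ is generated in degree $\leq m$, then $V^{\otimes j}$ is generated in degree $\leq jm$, since the degree of generation of a tensor product is subadditive. Applying this with $j = i/d$ gives that $A^{i}$, as a quotient, is generated in degree $\leq (i/d)m$, which is the first claim. For the statement about weights over a characteristic zero field, I would run the parallel induction: Proposition \ref{TensorsPreserveFinGen} also gives $\mathrm{weight}(V^{\otimes j}) \leq j\cdot\mathrm{weight}(V)$, and since weight does not increase under quotients over characteristic zero (as noted after Definition \ref{Defn:Weight}), we get $\mathrm{weight}(A^{i}) \leq (i/d)\,\mathrm{weight}(V)$.

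I do not anticipate any real obstacle here; the proposition is essentially a specialization of Proposition \ref{ModulesGenerateFiniteType} to a module concentrated in a single grade, where the combinatorics of the grading become transparent. The only point requiring a little care is to make sure the edge cases ($d\nmid i$, giving $A^{i}$ not necessarily zero if $A$ has generators outside the image of $V$ in those grades — but by hypothesis $A$ is generated by $V$, so $A^{i}=0$ when $d\nmid i$) and the grade-zero piece are handled, and these are immediate. I would write the proof as a short paragraph invoking the surjection $k\langle V\rangle \twoheadrightarrow A$, the identification $k\langle V\rangle^{i} = V^{\otimes(i/d)}$, and the subadditivity statements of Proposition \ref{TensorsPreserveFinGen}.

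\begin{proof}[Proof of Proposition \ref{SlopeFinGenFromGenerators}]
Since $A$ is generated as an $\FIW$--algebra by the graded $\FIW$--module $V$, there is a surjection of graded $\FIW$--algebras $k\langle V \rangle \twoheadrightarrow A$. Because $V$ is concentrated in grade $d$, the summand $V^{\otimes j}$ of $k\langle V \rangle = \bigoplus_{j \geq 0} V^{\otimes j}$ lies entirely in grade $jd$. Hence $A^i = 0$ unless $d \mid i$, and when $i = jd$ the surjection restricts to a surjection of $\FIW$--modules $V^{\otimes j} \twoheadrightarrow A^i$. By repeated application of Proposition \ref{TensorsPreserveFinGen}, if $V$ is generated in degree $\leq m$ then $V^{\otimes j}$ is generated in degree $\leq jm = \left(\frac{i}{d}\right)m$; as a quotient of $V^{\otimes j}$, the $\FIW$--module $A^i$ is therefore also generated in degree $\leq \left(\frac{i}{d}\right)m$. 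Similarly, when $k$ is a field of characteristic zero, Proposition \ref{TensorsPreserveFinGen} gives weight$(V^{\otimes j}) \leq j\cdot$weight$(V)$, and since weight does not increase under passing to quotients over characteristic zero, weight$(A^i) \leq \left(\frac{i}{d}\right)$weight$(V)$.
\end{proof}
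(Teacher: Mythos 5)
Your proof is correct and follows exactly the route the paper intends: the paper only sketches this result, remarking that Propositions \ref{ModulesGenerateFiniteType} and \ref{SlopeFinGenFromGenerators} follow from the surjection $k\langle V\rangle \twoheadrightarrow A$, and your argument simply makes explicit the identification $k\langle V\rangle^{i} = V^{\otimes(i/d)}$ together with the subadditivity of generation degree and weight from Proposition \ref{TensorsPreserveFinGen}. Your handling of the edge cases ($d \nmid i$ and the grade-zero piece) is also consistent with the hypotheses, so no changes are needed.
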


\begin{defn}{\bf ($\FIW$--ideals).} Given a graded $\FIW$--algebra $A$, an $\FIW$--ideal $I$ of $A$ is a graded sub--$\FIW$--algebra of $A$ such that $I_n$ is a homogeneous ideal in $A_n$ for each $n$. 
\end{defn}

\begin{defn} {\bf (Co--$\FIW$--modules, Co--$\FIW$--algebras, finite type).} A \emph{graded co--$\FIW$--module} is a functor from the dual category $\FIW^{op}$ to the category of graded $k$--modules, and similarly a \emph{graded co--$\FIW$--algebra} is a functor to the category of graded $k$--algebras. When $k$ is a field, then we say that a graded co--$\FIW$--module $V$ has \emph{finite type} if its dual  $V^*$, defined by $V^*_n = \Hom_k(V_n, k)$, has finite type. Similarly, $V$ has \emph{slope} $\leq m$ if $V^*$ does. 
\end{defn}

\begin{prop}{\bf (Finite type co--$\FIW$--modules generate co--$\FIW$--algebras of finite type).} \label{CoModulesGenerateFiniteType}
 Let $k$ be a Noetherian commutative ring. Suppose that $A$ is a graded co--$\FIW$--algebra containing a graded co--$\FIW$--module $V$ supported in positive grades. If $V$ has finite type, then the subalgebra $B$ of $A$ generated by $V$ is a graded co--$\FIW$--algebra of finite type.  When $k$ is a field of characteristic zero and $V$ is a graded co--$\FIW$--module  of slope $\leq m$, then $B$ has slope $\leq m$.
\end{prop}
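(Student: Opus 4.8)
The statement is the co--$\FIW$ analogue of Proposition~\ref{ModulesGenerateFiniteType}, so the strategy is to reduce to that result by dualizing. Write $B = \bigoplus_i B^i$ for the sub--co--$\FIW$--algebra of $A$ generated by $V$; its graded pieces $B^i$ are co--$\FIW$--modules, and by definition $B$ has finite type (resp.\ slope $\leq m$) precisely when the dual graded $\FIW$--module $B^* = \bigoplus_i (B^i)^*$ does. So it suffices to show that $B^*$ is a graded $\FIW$--module of finite type (resp.\ slope $\leq m$), given that $V^*$ is.

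First I would package the hypothesis. Since $V$ is supported in positive grades and has finite type, its dual $V^*$ is a graded $\FIW$--module supported in positive grades with each graded piece finitely generated; say $(V^i)^*$ is generated in some finite degree, and if $k$ has characteristic zero, weight$((V^i)^*) \leq m\cdot i$. Next, observe that the multiplication maps $B_n^i \otimes_k B_n^j \to B_n^{i+j}$ of the $k$--algebra $B_n$ are surjective (for $i,j>0$, by the defining property that $V_n$ generates $B_n$), and they are co--$\FIW$--equivariant because $B$ is a co--$\FIW$--algebra. Dualizing, and using that for a Noetherian ring $k$ with $V_n$ finitely generated over $k$ we have $(B_n^i \otimes B_n^j)^* \cong B_n^{i*} \otimes B_n^{j*}$ compatibly with the $\W_n$--action, the comultiplication $(B^{i+j})^* \hookrightarrow (B^i)^* \otimes (B^j)^*$ is an injection of $\FIW$--modules. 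Iterating, $(B^i)^*$ embeds as a sub--$\FIW$--module of a finite sum of tensor products $(V^{i_1})^* \otimes \cdots \otimes (V^{i_r})^*$ with $i_1 + \cdots + i_r = i$ and each $i_\ell \geq 1$ (so $r \leq i$, whence only finitely many such tensor products occur). Each such tensor product is finitely generated by Proposition~\ref{TensorsPreserveFinGen}, a finite sum of them is finitely generated, and $(B^i)^*$ is a sub--$\FIW$--module of it, so $(B^i)^*$ is finitely generated by the Noetherian property, Theorem~\ref{Noetherian}. In characteristic zero, Proposition~\ref{TensorsPreserveFinGen} also gives weight$((V^{i_1})^* \otimes \cdots \otimes (V^{i_r})^*) \leq \sum_\ell \text{weight}((V^{i_\ell})^*) \leq \sum_\ell m\, i_\ell = m\, i$, and since weight passes to submodules, weight$((B^i)^*) \leq m\, i$; that is, $B^*$ has slope $\leq m$.

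\textbf{Main obstacle.} The essential point that makes this more delicate than the contravariant-free Proposition~\ref{ModulesGenerateFiniteType} is that there is no ``free co--$\FIW$--algebra'' surjecting onto $B$ in the naive way; instead one must run the dual argument, and the crux is that dualization turns the surjection $k\langle V\rangle_n \twoheadrightarrow B_n$ into an injection of $\FIW$--modules $B^* \hookrightarrow (k\langle V\rangle)^*$ rather than a surjection. Hence finite generation of $B^*$ does \emph{not} follow from finite generation of the ambient object by a mere quotient argument; it genuinely requires the Noetherian property (Theorem~\ref{Noetherian}) to pass from the finitely generated tensor-product module down to its sub--$\FIW$--module $B^*$. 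This is exactly why the hypothesis on $k$ is ``Noetherian commutative ring'' (for finite type) and strengthens to ``field of characteristic zero'' (for the slope bound, where one needs the weight theory). I would take care to state the $k$--linear duality compatibility $(M\otimes_k N)^* \cong M^* \otimes_k N^*$ only for $k$--finitely-generated $M,N$ over Noetherian $k$ — which holds here since each $B_n^i$ is finitely generated over $k$, being a quotient of a finite tensor power of the finitely generated $k$--module $V_n$.
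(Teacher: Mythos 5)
Your proposal is correct in architecture and is essentially the paper's proof: the paper likewise dualizes, views $B^*$ as a graded sub--$\FIW$--module of $k\langle V\rangle^*$ (a finite sum of tensor products $(V^{i_1})^*\otimes\cdots\otimes(V^{i_r})^*$ in each grade), and then invokes finite generation of tensor products together with Theorem \ref{Noetherian}, with the slope bound coming from subadditivity of weight and the fact that weight only decreases on submodules. You also correctly identify why Noetherianity is genuinely needed (dualization turns the surjection from the tensor algebra into an injection).

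One local step should be repaired. The claim that each pairwise multiplication map $B_n^i\otimes_k B_n^j\to B_n^{i+j}$ is surjective for $i,j>0$ is false in general: $B^{i+j}_n$ may contain generators coming from $V^{i+j}_n$ itself that are not products of lower-grade elements, and some $B^i_n$ may even vanish (e.g.\ if $V$ is concentrated in grade $2$, the map $B^1_n\otimes B^3_n\to B^4_n$ is zero). So the ``iterate the comultiplication'' step does not literally run. The fix is to skip the iteration: since $V$ generates $B$ and is supported in positive grades, the total multiplication map $\bigoplus_{i_1+\cdots+i_r=i,\; i_\ell\geq 1} V^{i_1}_n\otimes\cdots\otimes V^{i_r}_n\to B^i_n$ (the degree-$i$ piece of $k\langle V\rangle_n\twoheadrightarrow B_n$) is surjective and co--$\FIW$--equivariant, and dualizing it gives directly the embedding of $(B^i)^*$ into the finite sum of tensor products that your argument uses; from there your appeal to Proposition \ref{TensorsPreserveFinGen} and Theorem \ref{Noetherian}, and the weight estimate in characteristic zero, go through exactly as in the paper. (Your caveat about $(M\otimes_k N)^*\cong M^*\otimes_k N^*$ is well taken; strictly this needs projectivity rather than just finite generation over Noetherian $k$, but over a field — the setting in which the paper defines finite type and slope for co--$\FIW$--modules — it is automatic, and the paper elides the same point.)
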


\begin{proof}[Proof of Proposition \ref{CoModulesGenerateFiniteType}]
The proposition follows just as in the proof of \cite[Proposition 2.77]{CEF}, by considering the dual space $B^*$ as a graded sub--$\FIW$--algebra of $k\langle V \rangle^*.$ Theorem \ref{Noetherian}, the Noetherian property for $\FIW$--modules over Noetherian rings, implies that each graded piece of $B^*$ is finitely generated. Moreover, over a characteristic zero field, the weights of the graded pieces of $k\langle V \rangle^*$ give an upper bound of on the weights of those of $B^*$, and the result follows. 
\end{proof}

% CHECKCHECK Where do we need ``supported in positive grades'' vs ``supported in nonnegative grades''? Where do we need to specify V^0 = M(O)?

\section{An application: The cohomology of generalized flag varieties and diagonal coinvariant algebras} \label{SectionCoinvariantAlgebras}

 Let $\W_n$ be a finite reflection group acting on an $n$--dimensional vector space $V$ over a field $k$. Let $x_1, x_2, \ldots, x_n$ denote a basis for $V$. Then $$k[\bX^{(r)}(n)] := k[x_1^{(1)}, \ldots x_n^{(1)}, \ldots, x_1^{(r)}, \ldots, x_n^{(r)}]$$ is a polynomial ring isomorphic to the symmetric algebra on $V^{\bigoplus r}$; the algebra $k[\bX^{(r)}(n)]$ has an action of $\W_n$ induced by the diagonal action of $\W_n$ on $V^{\bigoplus r}$. This ring has a natural grading by $r$--tuples $J=(j_1, \ldots, j_r) \in \Z_{\geq 0}^r,$ where $j_i$ designates the total degree in variables $x_1^{(i)}, \ldots x_n^{(i)}$. 

Let $\cI_n$ be the ideal generated by the constant-term-free $\W_n$--invariant polynomials. The \emph{$r$-diagonal coinvariant algebra} is the $k$--algebra $\cC^{(r)}(n) := k[\bX^{(r)}(n)] / \cI_n.$ 

Since $\cI_n$ is homogeneous with respect to the multigrading on $k[\bX^{(r)}(n)]$ , the quotient has the same multigrading
 $$\cC^{(r)}(n) = \bigoplus_{d=0}^{\infty} \bigoplus_{|J|=d} \cC^{(r)}_J(n).$$

\noindent The structure of $\cC^{(r)}(n)$ as a $\W_n$--representation over characteristic zero has been the subject of extensive study. The coinvariant algebra $\cC^{(1)}(n)$ appeared in classical representation theory and Lie theory;  Borel \cite{Borel1953} proved that the algebra $\cC^{(1)}(n)$ is the cohomology of a generalized flag manifold, which we will define below. The diagonal coinvariant algebras $\cC^{(2)}(n)$ were first investigated in type A by Garsia and Haiman \cite{GarsiaHaiman93} for their relationship to \emph{MacDonald polynomials}, but these algebras were subsequently found to have rich connections to numerous objects in algebraic combinatorics; see Haiman \cite{HaimanCombinatorics} for a survey. 

In 2002 Haiman established a formula for the characters of the $S_n$--representations $\cC^{(2)}(n)$ in terms of MacDonald polynomials, and deduces a number of combinatorial consequences for the spaces $\cC^{(2)}(n)$ \cite{HaimanVanishing}. A refinement of the formulas for these characters was conjectured by Haglund--Haiman--Loehr--Remmel--Ulyanov \cite{HaglundHaimanLoehrRemmelUlyanov}.

In 2003 Gordon \cite{GordonDiagonal} studied coinvariant algebras associated to a Coxeter group $W_n$. He resolved a conjecture of Haiman \cite{HaimanConjectures} by computing the Hilbert series of a quotient ring closely related to $\cC^{(2)}(n)$. Bergeron and Biagioli computed the trivial and alternating component of $\cC^{(2)}(n)$ in type B \cite{BergeronBiagioli}. In 2011, Bergeron analyzed the algebras $\cC^{(r)}(n)$ associated to a general complex reflection group $W=G(m,p,n)$ \cite{BergeronComplexReflection}. Bergeron shows, for fixed group $W$, the multigraded Hilbert polynomial associated to $\cC^{(r)}(n)$ can be described in terms of Schur polynomials in a form independent of $r$, and Bergeron computes these series in special cases. In general, the structure (or even dimension) of $\cC^{(r)}(n)$ is not known for $n>3$. Additional background on coinvariant algebras can be found in Bergeron's book \cite{BergeronBook}. 

Church--Ellenberg--Farb \cite[Theorem 3.4]{CEF} proved that when $\W_n$ is $S_n$ acting on the representation $V_n = M_{A}({\bf 1})_n$ over a field $k$ of characteristic zero, the resultant coinvariant algebra $\cC^{(r)} := k[M_{A}({\bf 1})^{\bigoplus r}]/\cI$ is a graded co--$\FI_{A}$--module of finite type, and that moreover the graded pieces $(\cC_J^{(r)})^*$ of the dual $\FI_A$--module have weight at most $|J|$. Together with Nagpal, these authors showed that even over positive characteristic, the dimensions of the graded pieces are eventually polynomial \cite[Theorem 1.9]{CEFN}. We can extend their results as follows.

\begin{thm}\label{CoinvariantAlgebrasCoFI}{\bf (Diagonal coinvariant algebras are finite type).} Let $k$ be a field, and let $V_n \cong k^n$ be the canonical representation of $\W_n$ by (signed) permutation matrices. Given $r \in \Z_{>0}$, the sequence of coinvariant algebras $$\cC^{(r)} := k[V_{\bullet}^{\bigoplus r}]/\cI$$ is a graded co--$\FIW$--algebra of finite type. When $k$ has characteristic zero, the weight of the multigraded component $\cC^{(r)}_J$ is $\leq |J|$.
\end{thm}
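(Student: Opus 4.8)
The plan is to follow the type A argument of Church--Ellenberg--Farb \cite[Theorem 3.4]{CEF}, the only genuinely new points being the identification of the relevant ``canonical'' $\FIW$--module and a check that the polynomial construction is functorial in types B/C and D. First I would promote $k[\bX^{(r)}(\bullet)]$ to a graded (in fact multigraded) co--$\FIW$--algebra. The key observation is that the canonical representation $V_\bullet = k^\bullet$, on which $\W_n$ acts on $\langle x_1, \dots, x_n\rangle$ by (signed) permutation matrices, is an $\FIW$--module: the inclusions $x_i \mapsto x_i$ furnish the maps $(I_n)_*$, and the stabilizer $H_{m,n}\cong \W_{n-m}$ only permutes and (in types B/C, D) negates the variables $x_{m+1},\dots,x_n$, so it fixes $\langle x_1,\dots,x_m\rangle$ pointwise and Lemma \ref{PromotionToFI} applies. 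Dualizing, $V_\bullet^*$ is a co--$\FIW$--module spanned by the variables, where an $\FIW$ morphism $f\colon \bm\to\bn$ sends a variable $x_i$ to $\varepsilon_j x_j$ when $f(j)=\pm i$ (with $\varepsilon_j$ the sign of $f(j)$; injectivity together with $f(-a)=-f(a)$ forces at most one such $j$) and to $0$ otherwise, and this assignment respects composition. Applying the symmetric-algebra functor to $(V_\bullet^*)^{\oplus r}$ then realizes $k[\bX^{(r)}(\bullet)]$ as a multigraded co--$\FIW$--algebra, and I would verify that the structure maps carry the ideals $\cI_n$ into one another -- for $f=I_{m,n}$, specializing the last $n-m$ variables to $0$ sends a $\W_n$--invariant polynomial to a $\W_m$--invariant one; for $f = \sigma\circ I_{m,n}$ one composes with the variable-permuting automorphism -- so $\cI$ is a co--$\FIW$--ideal and $\cC^{(r)}=k[\bX^{(r)}(\bullet)]/\cI$ becomes a graded co--$\FIW$--algebra, multigraded by $r$--tuples $J$.

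Next I would extract a generating submodule. Let $W := \bigoplus_{|J|=1}\cC^{(r)}_J$ be the total-degree-$1$ part of $\cC^{(r)}$; it is a co--$\FIW$--submodule supported in the single positive grade $1$, and since its image in each $\cC^{(r)}(n)$ is spanned by the images of the variables $x_j^{(i)}$ -- which generate $\cC^{(r)}(n)$ as a $k$--algebra -- it generates $\cC^{(r)}$ as a co--$\FIW$--algebra. To run Proposition \ref{CoModulesGenerateFiniteType} I need $W$ to be of finite type and, over characteristic zero, of slope $\leq 1$ (equivalently, being concentrated in grade $1$, of weight $\leq 1$). As $W$ is a co--$\FIW$--quotient of $(V_\bullet^*)^{\oplus r}$ and weight passes to co--$\FIW$--quotients, it suffices to see that $V_\bullet^*$ has finite type and weight $\leq 1$, equivalently that the $\FIW$--module $V_\bullet = k^\bullet$ is finitely generated of weight $\leq 1$. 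Now $k^\bullet$ is generated in degree $\leq 1$: in types A and B/C this is immediate (the single element $x_1$ has orbit spanning each $V_n$), and in type D it follows from Proposition \ref{RestrictionPreservesFinGen}, since $k^\bullet$ in type D is the restriction of $k^\bullet$ in type B/C. By Theorem \ref{WnDiagramSizes}, a degree-$\leq 1$ generated $\FIW$--module has weight $\leq 1$, which gives the required bound.

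Finally I would invoke Proposition \ref{CoModulesGenerateFiniteType} with $A = \cC^{(r)}$ and generating module $W$: it yields that $\cC^{(r)}$ is a graded co--$\FIW$--algebra of finite type, and, over characteristic zero, of slope $\leq 1$ -- that is, each total-degree-$d$ piece $\bigoplus_{|J|=d}\cC^{(r)}_J$ is finitely generated and of weight $\leq d$. Because the co--$\FIW$--structure maps preserve the full multigrading, this total-degree decomposition is one of co--$\FIW$--modules, so each individual multigraded component $\cC^{(r)}_J$ is a co--$\FIW$--summand of the total-degree-$|J|$ piece; in particular $\cC^{(r)}_J$ is finitely generated, and over characteristic zero its weight is at most $|J|$, as claimed.

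I expect the only real obstacle to be the bookkeeping in the first step: confirming that ``restrict the variables along $f$'' assembles into an honest functor on $\FIW^{op}$ in types B/C and D (where the sign conditions on morphisms must be tracked) and that it is compatible with passing to the ideals of $\W_n$--invariants. Everything after that is formal -- Proposition \ref{CoModulesGenerateFiniteType}, Theorem \ref{WnDiagramSizes}, and the stability of weight under co--$\FIW$--quotients and summands -- and parallels the type A proof of \cite[Theorem 3.4]{CEF} line for line.
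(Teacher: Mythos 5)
Your proposal is correct and follows essentially the same route as the paper: endow $k[V_\bullet^{\oplus r}]$ with the co--$\FIW$--structure given by the projections dual to $I_n$, check that the invariant ideals $\cI_n$ are preserved, apply Proposition \ref{CoModulesGenerateFiniteType} to the degree-one generating co--$\FIW$--module, and get the weight bound from the canonical module having weight $1$ together with additivity of weight under tensor products. Your extra care with the type D case and with refining the total grading to the multigrading by $J$ only spells out steps the paper leaves implicit.
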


\begin{proof}[Proof of Theorem \ref{CoinvariantAlgebrasCoFI}] Let $\W_n$ be the Weyl group $S_n$, $D_n$, or $B_n$, then let $V$ be the $\FIW$--module associated to the canonical $n$--dimensional $\W_n$--representations $V_n \cong k^n$ by (signed) permutation matrices. Then $V$ is $M_A({\bf 1})$ in type A, $M_{BC}(\varnothing, \Y{1})$ in type B/C, and $\Res_{D}^{BC} M_{BC}(\varnothing, \Y{1})$ in type D. 

In each type, the sequence $\cC^{(r)}$ has a co--$\FIW$--module structure, determined as follows. The polynomial rings $$k[V_{n}^{\bigoplus r}] \cong k[x_1^{(1)}, \ldots x_n^{(1)}, \ldots, x_1^{(r)}, \ldots, x_n^{(r)}]$$ admit a co--$\FIW$--module structure defined by the natural $\W_n$--action and the projection maps 

\begin{align*}
(I_n)^*: k[V_{n+1}^{\bigoplus r}]  & \longrightarrow k[V_{n}^{\bigoplus r}]  \\
 x_j^{(i)} & \longmapsto \left\{ \begin{array}{ll}
         x_j^{(i)} & \mbox{$ i \leq n  $},\\
        0 & \mbox{$ i = n+1$}.\end{array} \right.
\end{align*}

\noindent The ideals $\cI_n$ form a co--$\FIW$--submodule under this action (even though they are not preserved by the natural $\FIW$--module structure on the spaces $k[V_{\bullet}^{\bigoplus r}]$), and hence the quotient space $\cC^{(r)}$ inherits a co--$\FIW$--module structure (though not an $\FIW$--module structure) for any fixed $r$.

Since $\cC^{(r)}(n)^*$ is generated as an algebra by its degree $1$ part, the co--$\FIW$--algebra $\cC^{(r)}$ has finite type by Proposition \ref{CoModulesGenerateFiniteType}.  

Over characteristic zero, the $\FIW$--module $(V_{\bullet}^{\bigoplus r})^*$ has weight $1$ by Theorem \ref{WnDiagramSizes}. The graded piece $ \cC^{(r)}_J(n)$ is a subquotient of the degree $|J|$ tensor product on $(V_n)^{\bigoplus r}$, and weight is additive under tensor products by Proposition \ref{TensorsPreserveFinGen}.  
\end{proof}

%CHECKCHECK Check proof of Theorem \ref{CoinvariantAlgebrasCoFI} and Proposition \ref{CoModulesGenerateFiniteType}. Which objects are covariant and which are contravariant?

\begin{cor} \label{CoinvariantAlgRepStable}
 Let $k$ be a field of characteristic zero. For $n$ sufficiently large (depending on the $r$-tuple $J$), the sequence $\cC^{(r)}_J(n)$ is uniformly multiplicity stable. 
\end{cor}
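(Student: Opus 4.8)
The plan is to deduce Corollary~\ref{CoinvariantAlgRepStable} directly from Theorem~\ref{CoinvariantAlgebrasCoFI} by combining two facts already available in the excerpt: that finite generation of an $\FIW$--module is equivalent to uniform representation stability (Theorem~\ref{FinGenIffRepStable}, proved via Theorems~\ref{FinGenImpliesRepStability} and~\ref{StabilityDegreeImpliesRepStability}), and that the notions of weight, stability degree, and multiplicity stability are insensitive to passing between a co--$\FIW$--module and its dual $\FIW$--module over a field of characteristic zero.

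First I would fix the $r$--tuple $J$ and consider the single multigraded component $\cC^{(r)}_J$, which by Theorem~\ref{CoinvariantAlgebrasCoFI} is a finitely generated co--$\FIW$--module of weight $\leq |J|$. Dualizing, $(\cC^{(r)}_J)^\ast$ is then a finitely generated $\FIW$--module (over the field $k$ of characteristic zero the dual of a finite type co--$\FIW$--module is finitely generated, as in the definition of finite type for co--$\FIW$--modules), again of weight $\leq |J|$. Next I would apply Theorem~\ref{FinGenImpliesRepStability} to the $\FIW$--module $(\cC^{(r)}_J)^\ast$: it gives that the sequence $\{(\cC^{(r)}_J(n))^\ast\}$ is uniformly representation stable, stabilizing once $n \geq \max(g,r_0) + |J|$, where $g$ is the generation degree and $r_0$ the relation degree of $(\cC^{(r)}_J)^\ast$ (both finite by the Noetherian property, Theorem~\ref{Noetherian}), with the extra condition $n \geq g+1$ in type D when $|J| = 0$. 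Finally, since over characteristic zero each $\W_n$--representation is self-dual (the groups $S_n$, $B_n$, $D_n$ have all irreducibles self-dual and the characters are real-valued), the decomposition of $\cC^{(r)}_J(n)$ into irreducibles has exactly the same multiplicities $c_{\y,n}$ as that of its dual; hence the multiplicities of $\cC^{(r)}_J(n)$ are eventually constant in $n$, i.e.\ the sequence is uniformly multiplicity stable. One should note that the corollary only asserts multiplicity stability, not the injectivity/surjectivity clauses of full representation stability, which is consistent with the fact that the transition maps here go the ``wrong way'' (it is a co--$\FIW$--module); so only the third clause of Definition~\ref{DefnRepStability} is claimed, and that clause transfers through duality without issue.

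The main obstacle, such as it is, is purely bookkeeping: one must be careful that ``uniformly multiplicity stable'' for the co--$\FIW$--module $\cC^{(r)}_J$ is the right translation of ``uniformly representation stable'' for the $\FIW$--module $(\cC^{(r)}_J)^\ast$. The point is that representation stability and its multiplicity clause are statements purely about the isomorphism classes of the representations $V_n$ (not about the maps), so they are preserved under the self-duality isomorphism $V_n \cong V_n^\ast$; the explicit stable range can simply be inherited from Theorem~\ref{FinGenImpliesRepStability} applied to the dual module. No new estimates are required.

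\begin{proof}[Proof of Corollary \ref{CoinvariantAlgRepStable}]
Fix an $r$--tuple $J$. By Theorem \ref{CoinvariantAlgebrasCoFI}, the multigraded component $\cC^{(r)}_J$ is a finitely generated co--$\FIW$--module of weight $\leq |J|$ over the characteristic zero field $k$; equivalently, its dual $(\cC^{(r)}_J)^*$ is a finitely generated $\FIW$--module of weight $\leq |J|$. Let $g$ and $r_0$ be, respectively, the generation and relation degrees of $(\cC^{(r)}_J)^*$; these are finite, the latter by the Noetherian property (Theorem \ref{Noetherian}). By Theorem \ref{FinGenImpliesRepStability}, the sequence $\{(\cC^{(r)}_J(n))^*\}$ is uniformly representation stable, stabilizing once $n \geq \max(g,r_0)+|J|$ (with the extra requirement $n \geq g+1$ in type D when $|J|=0$). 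In particular, by the multiplicity clause of Definition \ref{DefnRepStability}, for each $\y$ the multiplicity of $V(\y)_n$ in $(\cC^{(r)}_J(n))^*$ is eventually independent of $n$.

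Over a field of characteristic zero, every irreducible representation of $S_n$, $B_n$, or $D_n$ is self-dual (the characters are real-valued), so $\cC^{(r)}_J(n) \cong (\cC^{(r)}_J(n))^*$ as $\W_n$--representations and the two have the same multiplicities in their decompositions into irreducibles. Hence the multiplicities of the irreducible constituents $V(\y)_n$ of $\cC^{(r)}_J(n)$ are eventually independent of $n$, uniformly in $\y$; that is, $\cC^{(r)}_J(n)$ is uniformly multiplicity stable for all $n$ in the range above.
\end{proof}
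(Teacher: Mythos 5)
Your proposal is correct and follows essentially the same route as the paper: Theorem \ref{CoinvariantAlgebrasCoFI} gives that the dual $(\cC^{(r)}_J)^*$ is a finitely generated $\FIW$--module of weight $\leq |J|$, Theorem \ref{FinGenImpliesRepStability} then yields uniform representation stability of the dual sequence, and self-duality of $\W_n$--representations in characteristic zero (which the paper also invokes just before Corollary \ref{CoinvariantAlgCharPoly}) transfers the multiplicity stability back to $\cC^{(r)}_J(n)$. Your observation that only the multiplicity clause is asserted, because the transition maps of a co--$\FIW$--module go the wrong way, is exactly the right reading of the statement.
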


Since representations of $\W_n$ are self-dual (a consequence of \cite[Corollary 3.2.14]{GeckPfeiffer}), the characters of $\cC_J^{(r)}(n)$ are given by the those of its dual. By results of \cite{FIW2}, these characters are given by a character polynomial, with degree bounded by \cite[Theorems \ref{FIW2-SnPersinomial} and \ref{FIW2-WnPersinomial}]{FIW2}. 

\begin{cor} \label{CoinvariantAlgCharPoly}
Let $k$ be a field of characteristic zero. For $n$ sufficiently large (depending on the $r$-tuple $J$), the characters of $\cC^{(r)}_J(n)$ are given by a character polynomial $F_J$ of degree $\leq |J|$. In particular the dimension of $\cC^{(r)}_J(n)$ is given by a polynomial $\dim_k \cC^{(r)}_J(n) = F_J(n, 0, 0, 0 \ldots )$ for all $n$ in the stable range. 
\end{cor}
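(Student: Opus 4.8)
The plan is to transfer the question to the dual $\FIW$--module and then apply the character--polynomial theorem of \cite{FIW2}. By Theorem \ref{CoinvariantAlgebrasCoFI}, $\cC^{(r)}_J$ is a co--$\FIW$--module of finite type, so by definition its dual $W := (\cC^{(r)}_J)^*$, with $W_n = \Hom_k(\cC^{(r)}_J(n), k)$, is a finitely generated $\FIW$--module, and the same theorem gives $\mathrm{weight}(W) \leq |J|$.

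Next I would check that the hypotheses of \cite[Theorem \ref{FIW2-WnPersinomial}]{FIW2} are satisfied, which amounts to producing a finite stability degree. By Theorem \ref{Noetherian}, $W$ is finitely presented; say it has generator degree $\leq g$ and relation degree $\leq r$. In types A and B/C, Lemma \ref{FinGenImpliesStabilityDeg} bounds the stability degree of $W$ by $\max(g,r)$, so \cite[Theorem \ref{FIW2-WnPersinomial}]{FIW2} applies directly. In type D one cannot speak of the stability degree of an $\FI_D$--module, so instead I would pass to $\Ind_{D}^{BC} W$: by Corollary \ref{IndGenRelDegrees} this is finitely presented with generator degree $\leq g$ and relation degree $\leq r$, hence (again by Lemma \ref{FinGenImpliesStabilityDeg}) has stability degree $\leq \max(g,r)$, which is exactly the input needed for the $\FI_D$ case of \cite[Theorem \ref{FIW2-WnPersinomial}]{FIW2}. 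In every case one obtains a unique $F_W \in k[X_1, Y_1, X_2, Y_2, \ldots]$ of degree $\leq |J|$ (with $\deg X_i = \deg Y_i = i$) equal to the character of $\W_n$ on $W_n$ for all $n$ in the stated range.

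Since the finite--dimensional representations of $\W_n$ over $k$ are self--dual \cite[Corollary 3.2.14]{GeckPfeiffer}, the character of $\W_n$ on $\cC^{(r)}_J(n)$ coincides with the character of $\W_n$ on $W_n$, so setting $F_J := F_W$ gives the desired character polynomial of degree $\leq |J|$. For the dimension statement, evaluate $F_J$ at the identity of $\W_n$: viewed in the signed cycle notation of Definition \ref{DefnBnCycleType}, the identity of $B_n$ consists of $n$ positive $1$--cycles and nothing else, so $X_1 = n$ while $Y_1 = X_i = Y_i = 0$ for $i \geq 2$; therefore $\dim_k \cC^{(r)}_J(n) = F_J(n, 0, 0, 0, \ldots)$ for all $n$ in the stable range. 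The only genuine subtlety here is the type D bookkeeping just described; the substantive work lies in the sequel, namely the proof of Theorem \ref{FIW2-WnPersinomial} itself.
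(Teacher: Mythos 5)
Your argument is correct and follows essentially the same route as the paper: dualize $\cC^{(r)}_J$ to a finitely generated $\FIW$--module of weight $\leq |J|$ via Theorem \ref{CoinvariantAlgebrasCoFI}, invoke self-duality of $\W_n$--representations to identify the characters, apply the character-polynomial theorem of \cite{FIW2} (the paper cites both Theorems \ref{FIW2-SnPersinomial} and \ref{FIW2-WnPersinomial}, the former covering type A), and evaluate at the identity for the dimension statement. Your extra verification of the stability-degree hypothesis (via finite presentation, Lemma \ref{FinGenImpliesStabilityDeg}, and Corollary \ref{IndGenRelDegrees} in type D) is exactly the bookkeeping the paper leaves implicit.
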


The results in \cite[Theorem \ref{FIW2-PolyDimCharP}]{FIW2} further imply that over fields of any characteristic, the dimensions of the graded pieces of $\cC^{(r)}$ are eventually polynomial. This result was proved in type A by Church--Ellenberg--Farb--Nagpal \cite[Theorem 1.9]{CEFN}.
\begin{cor} \label{CoinvariantAlgPolyDim}
Let $k$ be an arbitrary field. Then for each $r$-tuple $J$, there exists a polynomial $P_J \in \Q[T]$ (depending on $k$) so that
$\dim_k \; \cC^{(r)}_J(n) = P_J(n)$
for all $n$ sufficiently large (depending on $k$ and $J$).
\end{cor}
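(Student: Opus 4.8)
The plan is to reduce Corollary \ref{CoinvariantAlgPolyDim} to the type A case proved by Church--Ellenberg--Farb--Nagpal, using the restriction machinery developed in Section \ref{Section:Restriction}. First I would recall that by Theorem \ref{CoinvariantAlgebrasCoFI} the sequence $\cC^{(r)}$ is a graded co--$\FIW$--algebra of finite type; in particular, for each fixed $r$-tuple $J$, the graded piece $\cC^{(r)}_J$ is a finitely generated co--$\FIW$--module, meaning its dual $(\cC^{(r)}_J)^*$ is a finitely generated $\FIW$--module. Since $\dim_k \cC^{(r)}_J(n) = \dim_k (\cC^{(r)}_J(n))^*$, it suffices to produce an integer-valued polynomial $P_J \in \Q[T]$ with $\dim_k (\cC^{(r)}_J)^*_n = P_J(n)$ for $n$ large.

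Next I would invoke \cite[Theorem \ref{FIW2-PolyDimCharP}]{FIW2}, the statement that the dimensions of a finitely generated $\FIW$--module over an arbitrary field $k$ are eventually equal to an integer-valued polynomial in $n$. Applying this to the finitely generated $\FIW$--module $(\cC^{(r)}_J)^*$ immediately gives the desired polynomial $P_J$, with the dependence on $k$ coming through the dependence in \cite[Theorem \ref{FIW2-PolyDimCharP}]{FIW2}. For completeness I would note how \cite[Theorem \ref{FIW2-PolyDimCharP}]{FIW2} itself is established: in type A it is \cite[Theorem 1.2]{CEFN}; in types B/C and D one uses Proposition \ref{RestrictionPreservesFinGen} to see that restricting a finitely generated $\FI_{BC}$-- or $\FI_D$--module down to $\FI_A$ preserves finite generation (raising the generation degree by at most one in type D), together with the elementary observation that restriction along $\FI_A \hookrightarrow \FI_D \hookrightarrow \FI_{BC}$ does not change the underlying graded $k$--vector spaces $V_n$, hence does not change $\dim_k V_n$. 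Thus the type A polynomial dimension result transports verbatim.

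In summary, the proof is:
\begin{proof}[Proof of Corollary \ref{CoinvariantAlgPolyDim}]
By Theorem \ref{CoinvariantAlgebrasCoFI}, for each $r$-tuple $J$ the graded piece $\cC^{(r)}_J$ is a finitely generated co--$\FIW$--module, so its dual $(\cC^{(r)}_J)^*$ is a finitely generated $\FIW$--module over $k$. Since $\dim_k \cC^{(r)}_J(n) = \dim_k (\cC^{(r)}_J)^*_n$, the claim follows from \cite[Theorem \ref{FIW2-PolyDimCharP}]{FIW2}, which asserts that the dimensions of a finitely generated $\FIW$--module over an arbitrary field are eventually given by an integer-valued polynomial $P_J \in \Q[T]$; the dependence of $P_J$ on $k$ is inherited from that theorem. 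In type A this recovers \cite[Theorem 1.9]{CEFN}.
\end{proof}

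The main obstacle is really not in this corollary itself, which is a short deduction, but is housed in the sequel: establishing \cite[Theorem \ref{FIW2-PolyDimCharP}]{FIW2} over positive characteristic, where the character-polynomial argument (which works in characteristic zero via Corollary \ref{CoinvariantAlgCharPoly}) is unavailable. That step relies on Proposition \ref{RestrictionPreservesFinGen} to push everything to type A and then cite \cite[Theorem 1.2]{CEFN}; the only subtlety one must check is that the $\FIW$--module structure used to apply Proposition \ref{RestrictionPreservesFinGen} genuinely exists on $(\cC^{(r)}_J)^*$ --- which it does, since Theorem \ref{CoinvariantAlgebrasCoFI} furnishes precisely this finitely generated co--$\FIW$--module structure --- and that restriction of the underlying functor leaves the graded pieces, and hence their $k$--dimensions, unchanged.
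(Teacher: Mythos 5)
Your proposal is correct and follows essentially the same route as the paper: Theorem \ref{CoinvariantAlgebrasCoFI} gives the finite-type co--$\FIW$--structure, one passes to the dual finitely generated $\FIW$--module (which has the same dimensions), and the result is then immediate from \cite[Theorem \ref{FIW2-PolyDimCharP}]{FIW2}, whose proof over arbitrary fields indeed goes through Proposition \ref{RestrictionPreservesFinGen} and \cite[Theorem 1.2]{CEFN} exactly as you describe.
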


{\noindent \bf The cohomology of generalized flag manifolds. \quad} Take $k$ to be the complex numbers $\C$. Let ${\G^{\W}_n}$ be a semisimple complex Lie group with Weyl group $\W_n$, and let $\B^{\W}_n$ be a Borel subgroup of $\G^{\W}_n$.  Borel proved that the complex coinvariant algebra $\cC^{(1)}(n)$ is isomorphic as a graded $k[\W_n]$--algebra to the cohomology $H^*(\G^{\W}_n/\B^{\W}_n; \C)$ of the \emph{generalized flag manifold} $\G^{\W}_n/\B^{\W}_n$ \cite{Borel1953}; the isomorphism multiplies the grading by $2$. Specifically, we have\\

%\noindent \resizebox{\textwidth}{!}{\begin{tabular}{l r@{} @{}l } %{l r@{} @{}l r@{} @{}l}
\begin{tabular}{l r@{} @{}l } %{l r@{} @{}l r@{} @{}l}
 \text{Type A$_{n-1}$}:& $\G_n^A$ &= $\SL_n(\C)$ \\ 
% \text{Type A$_{n-1}$}:& $\G_n^A$ &= $\SL_n(\C)$ & $\B^A_n$ &$= \B_n \cap \SL_n(\C)$ \\ 
\multicolumn{3}{l}{  $\G_n^A / \B^A_n = \{ 0 \subseteq V_1 \subseteq V_2 \subseteq \cdots \subseteq V_n = \C^n \; \vert \; \dim_{\C} V_m = m \} $} \\ 
\multicolumn{3}{l}{  The complete flag variety } 
\end{tabular}

\begin{tabular}{l r@{} @{}l }
 $\text{Type B$_{n}$}:$  & $\G_n^B $&$= \SO_{2n+1}(\C) $ \quad(Quadratic form $Q$)\\ 
 %$\text{Type B$_{n}$}:$  & $\G_n^B $&$= \SO_{2n+1}(\C) $&$ \B^B_n $&$= \B_{2n+1} \cap \SO_{2n+1}(\C)$ \quad(Quadratic form $Q$)\\ 
\multicolumn{3}{l}{  $\G_n^B / \B^B_n = \{ 0 \subseteq V_1  \subseteq \cdots \subseteq V_{2n+1} = \C^{2n+1} \;  \vert \; \dim_{\C} V_m = m , \;\; Q(V_i,V_{2n+1-i})=0 \}$} \\ 
\multicolumn{3}{l}{  The variety of complete flags equal to their orthogonal complements } 
\end{tabular}

\begin{tabular}{l r@{} @{}l }
 $\text{Type C$_{n}$}:$ & $\G_n^C $&$= \Sp_{2n}(\C) $ \quad(Symplectic form $L$) \\
% $\text{Type C$_{n}$}:$ & $\G_n^C $&$= \Sp_{2n}(\C) $&$\B^C_n $&$= \B_{2n} \cap \Sp_{2n}(\C)$ \quad(Symplectic form $L$) \\
\multicolumn{3}{l}{  $\G_n^C / \B^C_n = \{ 0 \subseteq V_1  \subseteq \cdots \subseteq V_{2n} = \C^{2n} \;  \vert \; \dim_{\C} V_m = m , \;\; L(V_i,V_{2n-i})=0 \}$} \\ 
\multicolumn{3}{l}{  The variety of complete flags equal to their symplectic complements } 
\end{tabular}

\begin{tabular}{l r@{} @{}l }
 $\text{Type D$_{n}$}:$ & $\G_n^D $&$= \SO_{2n}(\C) $ \quad(Quadratic form $Q$)\\
% $\text{Type D$_{n}$}:$ & $\G_n^D $&$= \SO_{2n}(\C) $&$\B^D_n $&$= \B_{2n} \cap \SO_{2n}(\C)$ \quad(Quadratic form $Q$)\\
\multicolumn{3}{l}{  $\G_n^D / \B^D_n = \{ 0 \subseteq V_1  \subseteq \cdots \subseteq V_{2n} = \C^{2n} \;  \vert \; \dim_{\C} V_m = m , \;\; Q(V_i,V_{2n-i})=0 \}$} \\ 
\multicolumn{3}{l}{  The variety of complete flags equal to their orthogonal complements } \\
\end{tabular}%} 
\\

See (for example) Fulton--Harris \cite{FultonHarris} for more details. Theorem \ref{CoinvariantAlgebrasCoFI} therefore implies: 

\begin{cor} Let $\W$ denote type A, B, C, or D. 
 The cohomology rings $H^*(\G^{\W}_n/\B^{\W}_n; \C)$ are graded co--$\FI_{\W}$--algebras of finite type, that is, for each $m$, $H^m(\G^{\W}_n/\B^{\W}_n; \C)$ are co--$\FIW$--modules of weight $\leq \frac{m}{2}$. In particular, for each $m$, the sequence of $\W_n$--representations $\{ H^m(\G^{\W}_n/\B^{\W}_n; \C)\}_n$ is uniformly representation stable, and the characters are eventually equal to a character polynomial of degree at most $\frac{m}{2}$.
\end{cor}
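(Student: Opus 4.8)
The plan is to deduce this corollary directly from Theorem \ref{CoinvariantAlgebrasCoFI} together with its Corollaries \ref{CoinvariantAlgRepStable} and \ref{CoinvariantAlgCharPoly}, specialized to $r=1$, using Borel's theorem as the bridge between the geometry and the algebra. First I would invoke Borel \cite{Borel1953}: for $\W$ of type A, B, C, or D there is an isomorphism of graded $\C[\W_n]$--algebras $H^*(\G^{\W}_n/\B^{\W}_n;\C) \cong \cC^{(1)}(n)$ under which the cohomological grading is twice the polynomial grading, so $H^{2i}(\G^{\W}_n/\B^{\W}_n;\C) \cong \cC^{(1)}_i(n)$ and $H^m(\G^{\W}_n/\B^{\W}_n;\C)=0$ for $m$ odd (the flag variety is paved by Bruhat cells, all of even real dimension). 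Since Theorem \ref{CoinvariantAlgebrasCoFI} equips $\cC^{(1)}$ with the structure of a graded co--$\FIW$--algebra of finite type, I would transport that structure across Borel's isomorphism: each fixed cohomological degree $H^m(\G^{\W}_n/\B^{\W}_n;\C)$ then becomes a co--$\FIW$--module, identified with $\cC^{(1)}_{m/2}$ for even $m$ and with $0$ for odd $m$.

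Next I would read off finite type and the weight bound. For $r=1$ the multigrading index $J$ is a single integer $i$, and Theorem \ref{CoinvariantAlgebrasCoFI} says (over characteristic zero) that $\cC^{(1)}_i$ has weight $\leq i$; translating through the factor-of-two grading shift, $H^{2i}(\G^{\W}_n/\B^{\W}_n;\C)$ has weight $\leq i = \tfrac{2i}{2}$, i.e.\ $H^m$ has weight $\leq \tfrac m2$, and for odd $m$ the piece vanishes so the bound is trivially satisfied. This is exactly the claim that $H^*(\G^{\W}_n/\B^{\W}_n;\C)$ is a graded co--$\FIW$--algebra of finite type with each $H^m$ a co--$\FIW$--module of weight $\leq \tfrac m2$.

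Finally, for the uniform representation stability and character polynomial assertions I would cite Corollaries \ref{CoinvariantAlgRepStable} and \ref{CoinvariantAlgCharPoly} for $r=1$: the former gives that $\cC^{(1)}_i(n)$, hence $H^{2i}(\G^{\W}_n/\B^{\W}_n;\C)$, is uniformly multiplicity stable for $n$ large, and the latter gives that its characters are eventually a character polynomial of degree $\leq i = \tfrac m2$; self-duality of $\W_n$--representations (as used just before Corollary \ref{CoinvariantAlgCharPoly}) ensures the character of $\cC^{(1)}_i(n)$ coincides with that of the dual $\FIW$--module, so Theorem \ref{FinGenImpliesRepStability} and the results of \cite{FIW2} apply directly. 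I do not anticipate a substantive obstacle here — the corollary is essentially a repackaging of Theorem \ref{CoinvariantAlgebrasCoFI}; the only points needing care are the bookkeeping that Borel's isomorphism is compatible enough across all $n$ to transport the co--$\FIW$--structure, and that the factor-of-two grading shift is tracked consistently throughout.
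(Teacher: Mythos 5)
Your proposal is correct and matches the paper's argument: the corollary is deduced directly from Theorem \ref{CoinvariantAlgebrasCoFI} (with Corollaries \ref{CoinvariantAlgRepStable} and \ref{CoinvariantAlgCharPoly} at $r=1$) by transporting the co--$\FIW$--structure across Borel's isomorphism and tracking the factor-of-two grading shift, exactly as the paper does. No gaps.
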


We can compute the character polynomials for the $r$-diagonal coinvariant algebras $\cC^{(r)}$ for small values of $r$ by hand, by computing the trace of the action of $\W_n$ at each point in a resolution for $\cC^{(r)}(n)$ by $k[\W_n]$--modules. When $\W_n$ is $B_n$, we find the following characters $\chi_J^{(r)}(n)$ of $\cC_J^{(r)}(n)$.  

\begin{align*} 
\chi^{(1)}_{(1)}  &= X_1 - Y_1 \qquad & & (n \geq 1) 
\end{align*}
\begin{align*} 
\chi^{(1)}_{(2)}  &= X_1 + Y_1 + { X_1 \choose 2 } + { Y_1 \choose 2 } + X_2 - Y_2 - X_1Y_1 - 1  \qquad & (n \geq 2) 
\end{align*}
\begin{align*} 
\chi^{(1)}_{(3)}  &= 2 { X_1 \choose 2 } - 2 {Y_1 \choose 2 } + {X_1 \choose 3 } + X_1 { Y_1 \choose 2 } - Y_1 { X_1 \choose 2 } - {Y_1 \choose 3 } \\& +X_3 -Y_3 +X_1X_2 - Y_1X_2 - X_1Y_2 +Y_1Y_2 \qquad  & (n \geq 3)  
\end{align*}
\begin{align*} 
\chi^{(2)}_{(1,1)} &= X_1 + Y_1 + 2 { X_1 \choose 2 } + 2 { Y_1 \choose 2 } - 2 X_1Y_1 - 1 \qquad & (n \geq 2) 
\end{align*}
\begin{align*} 
\chi^{(2)}_{(2,1)} &= Y_1 - X_1 + 4 {X_1 \choose 2} - 4 { Y_1 \choose 2 } + X_2X_1 -X_2Y_1 - X_1Y_2 + Y_1Y_2 \\ & + 3{X_1 \choose 3 } - 3 {Y_1 \choose 3} +3X_1{Y_1 \choose 2} -3 Y_1{X_1 \choose 2} & ( n \geq 3) \end{align*}
\begin{align*} 
\chi^{(3)}_{(1,1,1)} &= -2X_1 + 2Y_1 + 6{X_1 \choose 2} - 6 { Y_1 \choose 2} + 6{X_1 \choose 3 } - 6{Y_1 \choose 3} + 6X_1 {Y_1 \choose 2} - 6Y_1 {X_1 \choose 2} & ( n \geq 3)
 \end{align*} 

We note that the character of $\chi^{(r)}_{(j_1, \ldots, j_r)} = \chi^{(r+1)}_{(j_1, \ldots, j_r, 0)}$, and moreover the characters $\chi^{(r)}_{(j_1, \ldots, j_r)}$ are fixed under permutations of the ordered $r$--tuple $J$. It follows that the above character polynomials determine all characters $\chi^{(r)}_{J}$ for $|J| \leq 3$. 
 
\begin{problem} For each graded piece $\cC^{(r)}_J$, compute the associated character polynomial and the stable decomposition into irreducible representations. Determine the stable ranges of each.   \end{problem}

{\footnotesize 

%\newgeometry{hmargins=.75in}

\bibliographystyle{alpha}
\bibliography{MasterBibliography} 
\setlength{\itemsep}{1pt} %condenses items in bib

 \quad \\ 
\noindent \textit{Jennifer C. H. Wilson \\ University of Chicago \\ wilsonj@math.uchicago.edu} \\
}
\end{document}